\newtheorem{thm}{Theorem}[section]
\newtheorem{prop}[thm]{Proposition}
\newtheorem{lem}[thm]{Lemma}
\newtheorem{cor}[thm]{Corollary}
\newtheorem{conj}[thm]{Conjecture}
\theoremstyle{definition}
\newtheorem{defn}[thm]{Definition}
\theoremstyle{remark}
\newtheorem{remk}[thm]{Remark}
\newtheorem{remks}[thm]{Remarks}
\newtheorem{exm}[thm]{Example}
\newtheorem{exms}[thm]{Examples}
\newtheorem{notat}[thm]{Notation}
\numberwithin{equation}{section}
\newcommand{\thmref}{Theorem~\ref}
\newcommand{\propref}{Proposition~\ref}
\newcommand{\corref}{Corollary~\ref}
\newcommand{\lemref}{Lemma~\ref}
\newcommand{\sC}{{\mathcal C}}
\newcommand{\sF}{{\mathcal F}}
\newcommand{\sI}{{\mathcal I}}
\newcommand{\sK}{{\mathcal K}}
\newcommand{\sL}{{\mathcal L}}
\newcommand{\sM}{{\mathcal M}}
\newcommand{\sO}{{\mathcal O}}
\newcommand{\sP}{{\mathcal P}}
\newcommand{\sQ}{{\mathcal Q}}
\newcommand{\sR}{{\mathcal R}}
\newcommand{\sV}{{\mathcal V}}
\newcommand{\sZ}{{\mathcal Z}}
\newcommand{\C}{{\mathbb C}}
\newcommand{\N}{{\mathbb N}}
\renewcommand{\P}{{\mathbb P}}
\newcommand{\Q}{{\mathbb Q}}
\newcommand{\Z}{{\mathbb Z}}
\newcommand{\fm}{{\mathfrak m}}
\newcommand{\ff}{{\mathfrak f}}
\newcommand{\Irr}{{\rm Irr}}
\newcommand{\Ker}{{\rm Ker}}
\newcommand{\gr}{{\rm gr}}
\newcommand{\CH}{{\rm CH}}
\newcommand{\surj}{\twoheadrightarrow}
\newcommand{\inj}{\hookrightarrow}
\newcommand{\red}{{\rm red}}
\newcommand{\Div}{{\rm Div}}
\newcommand{\Hom}{{\rm Hom}}
\newcommand{\Spec}{{\rm Spec \,}}
\newcommand{\sing}{{\rm sing}}
\newcommand{\ms}{{\rm ms}}
\newcommand{\ab}{\rm ab}
\newcommand{\Gal}{{\rm Gal}}
\newcommand{\divf}{{\rm div}}
\newcommand{\Sch}{{\operatorname{\mathbf{Sch}}}}
\newcommand{\Ab}{{\mathbf{Ab}}}
\newcommand{\cyc}{{\operatorname{\rm cyc}}}
\newcommand{\ds}{{/\kern-3pt/}}
\newcommand{\un}{\underline}
\newcommand{\ov}{\overline}
\renewcommand{\dim}{\text{\rm dim}}
\newcommand{\tuborg}{\left\{\begin{array}{ll}}
\newcommand{\sluttuborg}{\end{array}\right.}
\newcommand{\zar}{{\rm zar}}
\newcommand{\nis}{{\rm nis}}
\newcommand{\reg}{{\rm reg}}
\newcommand{\adiv}{{\rm adiv}}
\newcommand{\wt}{\widetilde}
\newcommand{\wh}{\widehat}
\newcommand{\cont}{{\rm cont}}
\newcommand{\coker}{{\rm Coker}}
\newcommand{\Fil}{{\rm fil}}
\def\cO{\mathcal{O}}
\newcounter{elno}   
\newenvironment{romanlist}{
                         \begin{list}{\roman{elno})
                                     }{\usecounter{elno}}
                      }{
                         \end{list}}
\newcounter{elno-abc}   
\newenvironment{listabc}{
                         \begin{list}{\alph{elno-abc})
                                     }{\usecounter{elno-abc}}
                      }{
                         \end{list}}
\newcounter{elno-abc-prime}   
\newenvironment{listabcprime}{
                         \begin{list}{\alph{elno-abc-prime}')
                                     }{\usecounter{elno-abc-prime}}
                      }{
                         \end{list}}
\begin{document}
\title{Idele class groups with modulus}
\author{Rahul Gupta, Amalendu Krishna}
\address{Fakult\"at f\"ur Mathematik, Universit\"at Regensburg, 
93040, Regensburg, Germany.}
\email{Rahul.Gupta@mathematik.uni-regensburg.de}
\address{Department of Mathematics, Indian Institute of Science,  
Bangalore, 560012, India.}
\email{amalenduk@iisc.ac.in}

\keywords{Cycle with modulus, Algebraic $K$-theory, Class field theory}        

\subjclass[2010]{Primary 14C25; Secondary 14F42, 19E15}

\maketitle

\begin{quote}\emph{Abstract.}  
We prove Bloch's formula for the Chow group of 0-cycles with modulus
on smooth projective varieties over finite fields.
The proof relies on two new results in global ramification theory.
\end{quote}

\setcounter{tocdepth}{1}

\tableofcontents

\section{Introduction}\label{sec:Intro}
\subsection{Motivation}\label{sec:Motn}
In the classical theory of algebraic cycles, Bloch's formula describes the Chow groups of a smooth
variety over a field in terms of certain Zariski or Nisnevich cohomology groups of the $K$-theory
sheaves. This provides a direct bridge between algebraic cycles and algebraic $K$-theory.
For codimension one cycles, such a formula was known to ancients and for codimension two
cycles (on surfaces), this was shown by Bloch \cite{Bloch74}. The most general case was shown
by Quillen \cite{Quillen}. A further progress was made much later by Kato \cite{Kato86} and
Kerz \cite{Kerz09} who showed that Bloch's formula also holds if one uses Milnor $K$-theory
instead of Quillen $K$-theory. From the point of view of algebraic cycles, this is a
significant improvement because Milnor $K$-theory is much simpler to study than Quillen $K$-theory.

The theory of Chow groups with modulus is a recent generalization of the classical Chow groups.
One of the main objectives of Chow groups with modulus is to provide a cycle theoretic
description of the $K$-theory of singular varieties in the same way as
the classical Chow groups do for the $K$-theory of smooth varieties.
Bloch's formula for the Chow groups with modulus is one of the most important steps in
reaching this goal. However, very few cases of this formula are currently known 
(see \cite{BKS}, \cite{Gupta-Krishna} and \cite{Krishna-ANT}), and its general case
is one of the most challenging problems in the theory of
Chow groups with modulus at present.

In this paper, we shall focus only on Bloch's formula for the Chow group of 0-cycles.
Even this special case appears to be so hard at the moment that one is not sure if
this formula would indeed be true eventually. Recently, Kerz and Saito
\cite{Kerz-Saito-2} have conjectured a weaker version of this formula for smooth varieties
over perfect fields. The main contribution of this paper is to verify Bloch's formula for
smooth projective varieties over finite fields.
We do this by proving two fundamental
results in the global ramification theory over finite fields:
a reciprocity theorem for the idele class group with modulus
{\`a} la Kato-Saito \cite{Kato-Saito-2}, and a comparison theorem for two
known notions of abelianized {\'e}tale fundamental groups with modulus. 
Below we describe our main results.

\subsection{Bloch's formula over finite fields}\label{sec:Results}
Let us fix a reduced quasi-projective scheme $X$ of 
pure dimension $d \ge 1$ over a field $k$ and an effective Cartier divisor $D \subset X$.
Let $\sK^M_{d, (X,D)}$ be the Nisnevich
sheaf of relative Milnor $K$-theory on $X$, defined as the kernel of the
canonical surjection $\sK^M_{d, X} \surj \iota_*(\sK^M_{d,D})$, where $\iota \colon D \inj X$
is the inclusion.
We let $C_{KS}(X, D) = H^d_\nis(X, \sK^M_{d, (X,D)})$ and call it `the Kato-Saito idele class group'.
Let $x \in X \setminus D$ be a regular closed point.
One then knows by \cite{Kato86} (see also \cite[Lemma~3.7]{Gupta-Krishna-REC})
that there is a canonical isomorphism
$\Z \xrightarrow{\cong} K^M_0(k(x)) \xrightarrow{\cong} 
H^d_x(X, \sK^M_{d, (X,D)})$, where the latter is the Nisnevich cohomology 
with support. Hence, using the `forget support' map for $x$ and extending
it linearly to the free abelian group on all regular closed points of
$X \setminus D$, we get the `cycle class map' 
\begin{equation}\label{eqn:Cycle-map}
\cyc_{X|D} \colon \sZ_0(X_\reg \setminus D) \to C_{KS}(X, D).
\end{equation}
It is not known if this map factors through $\CH_0(X|D)$ even if $X$ is smooth.
The following result settles Bloch's formula for smooth projective varieties over finite
fields.

\begin{thm}\label{thm:Main-0}
Let $k$ be either a finite field or an algebraic closure of a finite field. 
Let $X$ be a smooth projective variety over $k$
 and let $D \subset X$ be an effective Cartier divisor. 
Then the cycle class map induces an isomorphism 
\[
\cyc_{X|D} \colon \CH_0(X|D) \xrightarrow{\cong} C_{KS}(X, D).
\]
\end{thm}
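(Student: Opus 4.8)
The plan is to assemble Theorem~\ref{thm:Main-0} as a formal consequence of the two preceding theorems together with the cycle-theoretic class field theory of Kerz--Saito~\cite{Kerz-Saito-2} and Binda--Krishna--Saito~\cite{BKS}. The strategy is to compare three objects sitting over a common target: the Chow group with modulus $\CH_0(X|D)$, the Kato--Saito idele class group $C_{KS}(X,D)$, and the abelianized fundamental groups. Both $\CH_0(X|D)$ and $C_{KS}(X,D)$ receive a cycle class map from the free group $\sZ_0(X_\reg \setminus D)$ on regular closed points of the open complement, and both map via reciprocity to (variants of) $\pi^{\ab}_1$. The key point is that these reciprocity maps are compatible with the cycle class constructions, so that $\cyc_{X|D}$ fits into a commutative diagram linking the two class field theories.

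First I would set up the commutative square relating $\cyc_{X|D} \colon \CH_0(X|D) \to C_{KS}(X,D)$ to the reciprocity maps. On the cycle side, \cite{Kerz-Saito-2} and \cite{BKS} give an injective reciprocity homomorphism $\rho'_{X|D} \colon \CH_0(X|D) \to \pi^{\ab}_1(X,D)$ with dense image (now unconditional on $\Char(k)$). On the $K$-theoretic side, Theorem~\ref{thm:Rec-mod} gives that $\rho_{X|D} \colon C_{KS}(X,D) \to \pi^{\adiv}_1(X,D)$ is injective with dense image, restricting to an isomorphism on degree-zero subgroups when $X$ is projective. Both reciprocity maps are defined by sending the class of a closed point $x$ to its associated Frobenius, so they agree after the identification $\pi^{\adiv}_1(X,D) \cong \pi^{\ab}_1(X,D)$ furnished by Theorem~\ref{thm:Main-5}. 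Hence the cycle class map $\cyc_{X|D}$ is forced to be compatible with these identifications: one obtains a commutative diagram in which $\rho'_{X|D}$, $\rho_{X|D}$, and the isomorphism of Theorem~\ref{thm:Main-5} all appear, and $\cyc_{X|D}$ is the induced comparison on the sources.

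From here the proof becomes a diagram chase in the degree-zero and degree parts. Since $X$ is connected smooth projective, both groups carry compatible degree maps to $\Z$, and the cycle class map respects them; so it suffices to treat the degree-zero subgroups $\CH_0(X|D)_0$ and $C_{KS}(X,D)_0$. By Theorem~\ref{thm:Rec-mod}, $\rho^0_{X|D}$ is an isomorphism of finite groups, and by \cite{Kerz-Saito-2}, \cite{BKS} together with Theorem~\ref{thm:Main-5}, the analogous statement holds for $\CH_0(X|D)_0 \to \pi^{\ab}_1(X,D)_0 \cong \pi^{\adiv}_1(X,D)_0$. Two maps onto a common finite target, both isomorphisms on the degree-zero part, force $\cyc^0_{X|D} \colon \CH_0(X|D)_0 \to C_{KS}(X,D)_0$ to be an isomorphism; combined with the degree compatibility (both groups surjecting onto $\Z$ with these kernels) one concludes that $\cyc_{X|D}$ is an isomorphism.

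\textbf{The main obstacle} is not in this final assembly, which is essentially formal, but in verifying the precise compatibility of the two reciprocity maps with the cycle class constructions --- that is, checking that the square relating $\cyc_{X|D}$, $\rho'_{X|D}$ and $\rho_{X|D}$ genuinely commutes at the level of generators and that the degree maps match under all the identifications. This rests entirely on Theorem~\ref{thm:Main-5}, whose isomorphism $\pi^{\adiv}_1(X,D) \cong \pi^{\ab}_1(X,D)$ reconciles the two a priori distinct notions of ramification (at generic points of $D$ versus along pullbacks to curves); once that bridge is in place, the heavy lifting has been done in Theorems~\ref{thm:Rec-mod} and~\ref{thm:Main-5} and in the cited cycle-theoretic class field theory, and Theorem~\ref{thm:Main-0} follows by combining them.
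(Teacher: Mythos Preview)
Your approach is essentially the paper's own, and the assembly is correct in outline. One point deserves to be made explicit rather than left implicit: the very existence of $\cyc_{X|D}$ as a map out of $\CH_0(X|D)$ (rather than merely out of $\sZ_0(U)$) is part of the content of the theorem, not something you may assume at the outset. The paper stresses that ``showing that the cycle class map \ldots\ factors through $\CH_0(X|D)$ is already a very challenging problem.'' In your write-up you speak of $\cyc_{X|D}$ as ``the induced comparison on the sources,'' which is the right idea but should be spelled out: since both composites $\sZ_0(U)\to \pi^{\adiv}_1(X,D)\cong \pi^{\ab}_1(X,D)$ (one through $C_{KS}(X,D)$, the other through $\CH_0(X|D)$) are given by Frobenius substitution and hence agree, and since $\rho_{X|D}$ is \emph{injective} by \thmref{thm:Rec-mod}, the map $\sZ_0(U)\to C_{KS}(X,D)$ must kill $\sR_0(X|D)$. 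Only after this step do you have an honest $\cyc_{X|D}\colon \CH_0(X|D)\to C_{KS}(X,D)$ making the square commute; the degree-zero isomorphism argument and the surjectivity (from \cite[Theorem~2.5]{Kato-Saito-2}) then proceed exactly as you describe.
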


\vskip .3cm

If $X$ is not regular, it is perhaps not expected that
\thmref{thm:Main-0} would hold. Nonetheless, we can prove the following version of 
Bloch's formula.
Let $X$ be an integral normal projective variety of dimension $d \ge 1$ over 
a finite field and $D$ an effective Cartier divisor on $X$. 
Assume that $X \setminus D$ is regular.
Let $\sI_D$ denote the ideal sheaf of $D$. 
For any $n \ge 1$, let $nD \subset X$ be the effective Cartier divisor 
defined by
$\sI^n_D$.

\begin{thm}\label{thm:Main-1*}
The cycle class map induces an isomorphism of pro-abelian groups
\[
\cyc^{\bullet}_{X|D} \colon \{\CH_0(X|nD)\}_{n \in \N} 
\xrightarrow{\cong} \{C_{KS}(X, nD)\}_{n \in \N}.
\]
\end{thm}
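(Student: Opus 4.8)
The plan is to deduce the pro-isomorphism from the two reciprocity maps already at our disposal, reducing everything to a comparison of fundamental groups with modulus in the pro-setting. For each $n$ we have, on the $K$-theoretic side, the reciprocity map $\rho_{X|nD}\colon C_{KS}(X,nD)\to \pi^{\adiv}_1(X,nD)$ of \thmref{thm:Rec-mod}, and on the cycle-theoretic side the Kerz--Saito reciprocity map $\rho'_{X|nD}\colon \CH_0(X|nD)\to\pi^{\ab}_1(X,nD)$ of \cite{Kerz-Saito-2}, \cite{BKS}; both are valid for our integral normal projective $X$ with $X\setminus D$ regular. Since the cycle class map $\cyc_{X|nD}$ and both reciprocities are built from the same rule sending a regular closed point to its local Frobenius, these maps are compatible with a natural comparison map between the two fundamental groups. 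Hence, once the transition maps are checked to be compatible as $n$ varies, it suffices to prove that the cycle class map induces the claimed pro-isomorphism after passing to fundamental groups, and then to transport it back.

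First I would reduce to the finite degree-zero parts. For projective $X$ the degree maps give compatible exact sequences $0\to C_{KS}(X,nD)_0\to C_{KS}(X,nD)\xrightarrow{\deg}\Z$ and $0\to \CH_0(X|nD)_0\to\CH_0(X|nD)\xrightarrow{\deg}\Z$, and $\cyc_{X|nD}$ respects degrees, so the $\Z$-quotients form the same constant pro-system and match under the identity. By \thmref{thm:Rec-mod} the group $\pi^{\adiv}_1(X,nD)_0$ is finite and $\rho_{X|nD}$ restricts to an isomorphism $C_{KS}(X,nD)_0\xrightarrow{\cong}\pi^{\adiv}_1(X,nD)_0$. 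The group $\pi^{\ab}_1(X,nD)_0$ is likewise finite (bounded ramification together with bounded degree), so the injective, dense-image map $\rho'_{X|nD}$ forces $\CH_0(X|nD)_0$ to be finite and yields an isomorphism $\CH_0(X|nD)_0\xrightarrow{\cong}\pi^{\ab}_1(X,nD)_0$. Thus the theorem is reduced to producing an isomorphism of pro-abelian groups
\[
\{\pi^{\ab}_1(X,nD)_0\}_{n\in\N}\xrightarrow{\;\cong\;}\{\pi^{\adiv}_1(X,nD)_0\}_{n\in\N}
\]
compatible with the cycle class data above.

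The comparison of fundamental groups is the heart of the matter, and is where I expect the main difficulty. Both $\pi^{\ab}_1(X,nD)_0$ and $\pi^{\adiv}_1(X,nD)_0$ are finite quotients of $\pi^{\ab}_1(X\setminus D)_0$ by decreasing ramification subgroups $K^{\ab}_{nD}$ and $K^{\adiv}_{nD}$, and there is a natural containment of these filtrations for every $n$, giving canonical comparison surjections. A pro-isomorphism then amounts to the cofinal reverse containment, which translates into the ramification estimate that a covering whose divisorial (generic-point) ramification is bounded by $nD$ has Deligne--Laumon (curve-theoretic) ramification bounded by $m(n)D$ for some $m(n)$. When $X$ is smooth this is exactly the equality $K^{\ab}_{nD}=K^{\adiv}_{nD}$ supplied by \thmref{thm:Main-5}. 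In the normal case I would exploit that $X$, being $R_1$, is regular at every generic point of $D$, so the two conductors agree there verbatim with the smooth computation; the only discrepancy comes from $X_{\sing}$, which has codimension $\ge 2$ and is contained in $D$, and therefore affects the curve-theoretic conductor only through finitely many higher-codimension points, with a contribution bounded independently of the cover. Feeding this bound into the modulus and thickening $D$ to $nD$ absorbs the discrepancy and produces the required cofinality. Concretely this can be carried out either through the local ramification estimates of the paper comparing the two conductors, or by passing to a regular alteration $f\colon X'\to X$ that is an isomorphism over $X\setminus D$, applying \thmref{thm:Main-5} on $X'$, and transporting the identity back along $f$ in the pro-system, where the exceptional contributions over $X_{\sing}\subset D$ die in the limit.

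Finally I would assemble the pieces: chaining the isomorphism $\CH_0(X|nD)_0\cong\pi^{\ab}_1(X,nD)_0$, the pro-isomorphism $\{\pi^{\ab}_1(X,nD)_0\}\cong\{\pi^{\adiv}_1(X,nD)_0\}$, and the isomorphism $\pi^{\adiv}_1(X,nD)_0\cong C_{KS}(X,nD)_0$ produces an isomorphism of pro-abelian groups $\{\CH_0(X|nD)_0\}\cong\{C_{KS}(X,nD)_0\}$. Extending by the common constant $\Z$-quotient via the degree maps, and checking that the resulting comparison is induced by $\cyc_{X|nD}$ (all four maps agree on the class of a regular closed point), yields the desired pro-isomorphism $\cyc^{\bullet}_{X|D}$. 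The sole essential obstacle is the ramification comparison in the third paragraph; everything else is formal given \thmref{thm:Rec-mod}, \thmref{thm:Main-5}, and the Kerz--Saito class field theory.
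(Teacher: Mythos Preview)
Your overall architecture is right and matches the paper's: reduce to degree zero, identify $\CH_0(X|nD)_0\cong\pi^{\ab}_1(X,nD)_0$ via \cite{BKS}, identify $C_{KS}(X,nD)_0\cong\pi^{\adiv}_1(X,nD)_0$ via \thmref{thm:Rec-mod}, and then seek a pro-comparison of the two fundamental groups. The paper does establish exactly that pro-isomorphism (\thmref{thm:C-Rec-iso-3}), so the reduction you describe is legitimate.

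The gap is in how you propose to prove the pro-comparison $\{\pi^{\adiv}_1(X,nD)_0\}\cong\{\pi^{\ab}_1(X,nD)_0\}$. You assert a ``natural containment of these filtrations for every $n$'', but the paper explicitly notes (end of \S\ref{sec:Rec-*}) that for $d\ge 2$ there is no a priori map in either direction between $\pi^{\adiv}_1(X,D)$ and $\pi^{\ab}_1(X,D)$; neither $\Fil_D H^1(K)\subset\Fil^c_D H^1(K)$ nor the reverse is obvious on a merely normal $X$. Your ramification-theoretic fix (``contribution bounded independently of the cover'' over $X_{\sing}$) is not justified: covers can have arbitrarily wild ramification over the singular locus, and nothing in the setup controls this uniformly. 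Your alteration fix likewise requires push-forward functoriality for $C_{KS}$ and for $\pi^{\adiv}_1$ along a proper birational map from a regular model --- but the former is only established in the paper as a \emph{consequence} of the main theorems (\corref{cor:PF**}), so invoking it here would be circular, and the latter is not available.

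The paper's route to the pro-comparison is entirely different and avoids ramification estimates. In \S\ref{sec:Curve-based} it builds the pro-reciprocity $\rho^{\bullet}_{X|D}\colon\{C(X,nD)\}\to\{\pi^{\ab}_1(X,nD)\}$ \emph{directly} by an abstract argument (\lemref{lem:C-Rec-0}): since both $C(X,D')_0$ and $\pi^{\ab}_1(X,D)_0$ are finite (\thmref{thm:KS-fin}, \lemref{lem:KS-C-fin}), Pontryagin duality forces the composite $\wt{C}_{U/X}\to\pi^{\ab}_1(U)\to\pi^{\ab}_1(X,D)$ to factor through some $C(X,D')$. This pro-reciprocity is then shown to be injective (\thmref{thm:C-Rec-iso}) and an isomorphism on degree-zero parts (\corref{cor:C-Rec-iso-1}). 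With this in hand, the actual proof of \thmref{thm:Main-1*} runs through $\pi^{\ab}_1$ alone: the composite $\{\CH_0(X|nD)_0\}\to\{C(X,nD)_0\}\xrightarrow{\rho^{\bullet}_{X|D}}\{\pi^{\ab}_1(X,nD)_0\}$ is a levelwise isomorphism by \cite{BKS} and the second arrow is a pro-isomorphism, so the first is too. The pro-comparison $\{\pi^{\adiv}\}\cong\{\pi^{\ab}\}$ that you put at the center is then a \emph{corollary} (\thmref{thm:C-Rec-iso-3}), deduced from \thmref{thm:Rec-mod} and \corref{cor:C-Rec-iso-1}, not an input. A further point you gloss over is that one must first check the cycle class map descends to a pro-map $\{\CH_0(X|nD)\}\to\{C(X,nD)\}$; the paper handles this in \lemref{lem:CCM-D}, again using the pro-injectivity of $\rho^{\bullet}_{X|D}$.
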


\vskip .3cm

\subsection{Reciprocity theorem for Kato-Saito idele class group}\label{sec:RKSCG}
Let $X$ be an integral and normal quasi-projective variety over a field $k$ and 
$D \subset X$ a closed subscheme of pure codimension one whose complement $U$ is regular.
In this case, there is a notion of fundamental group with modulus $\pi^{\ab}_1(X,D)$
due to Deligne and Laumon \cite{Laumon}. However, it is not known if
$\pi^{\ab}_1(X,D)$ admits a class field theory in terms of the Kato-Saito idele class group with
modulus.
This is arguably the main obstacle in finding a bridge between the
cycle-theoretic and $K$-theoretic ramified class field theories.
In this paper, we completely solve this problem, which immediately implies
\thmref{thm:Main-0}. 

Our solution to the above problem consists of two main steps.
The first step is a reciprocity theorem for $C_{KS}(X,D)$ which goes as follows.
In \cite{Gupta-Krishna-REC}, we introduced a new fundamental group with 
modulus $\pi^{\adiv}_1(X,D)$ (see \S~\ref{sec:Rec-*}) and
constructed a reciprocity map $\rho_{X|D} \colon C_{KS}(X,D)
\to \pi^{\adiv}_1(X,D)$. 
The key difference between $\pi^{\adiv}_1(X,D)$ and
$\pi^{\ab}_1(X,D)$ is that the former measures the ramification of a finite
{\'e}tale covering of $U$ at the
generic points of $D$ while the latter at the points on the 
scheme theoretic inverse image of $D$ on the normalizations 
of integral curves not contained in $D$. 
If $X$ is projective over $k$, there are
degree maps $\deg \colon C_{KS}(X,D) \to \Z$ and 
$\deg' \colon \pi^{\adiv}_1(X,D) \to \wh{\Z}$. We let
$ C_{KS}(X,D)_0 = \Ker(\deg)$ and $\pi^{\adiv}_1(X,D)_0 = \Ker(\deg')$.
We consider $\pi^{\adiv}_1(X,D)$ and $C_{KS}(X,D)$ as topological abelian groups with the profinite
and the discrete topology, respectively.
We show the following.

\begin{thm}\label{thm:Rec-mod}
Assume that $k$ is a finite field. Then the reciprocity map
$\rho_{X|D}$ is injective with dense
image. It induces an isomorphism of finite groups
\[
\rho^0_{X|D} \colon C_{KS}(X,D)_0 \xrightarrow{\cong} \pi^{\adiv}_1(X,D)_0
\]
if $X$ is projective over $k$.
\end{thm}

\vskip .3cm

\subsection{Comparison between two fundamental groups with modulus}
\label{sec:Compn}
In the second step, we prove the following result which may independently act as a key
tool in understanding the ramifications of a finite morphism
from a normal to a regular scheme. This eliminates the difference between
the two notions of ramifications discussed above.

\begin{thm}\label{thm:Main-5}
Let $X$ be a smooth projective variety of pure dimension $d \ge 1$ over a finite
field and let $D \subset X$ be an effective Cartier divisor. 
Then the surjections $\pi^{\ab}_1(X \setminus D) \surj \pi^{\adiv}_1(X,D)$ and
$\pi^{\ab}_1(X \setminus D) \surj \pi^{\ab}_1(X,D)$ have identical kernels.
In particular, there is a canonical isomorphism of profinite groups
\[
\pi^{\adiv}_1(X,D) \cong \pi^{\ab}_1(X,D).
\]
\end{thm}

\vskip .3cm

When $D_\red$ is a simple normal crossing divisor on $X$, the isomorphism
between the tame quotients of $\pi^{\adiv}_1(X,D)$ and $\pi^{\ab}_1(X,D)$
can be deduced from the main results of \cite{Kerz-Schmidt-10} and
\cite{Grothendieck-Murre}.

\vskip .3cm

\subsection{Application to Nisnevich descent for Chow groups 
with modulus}\label{sec:Desc-MC}
If $D \subset X$ is an effective Cartier divisor on a smooth scheme,
the Nisnevich hypercohomology of the sheafified cycle complex $z^p(X|D, *)$ 
with modulus \cite{Binda-Saito}
gives rise to the motivic cohomology with modulus $H^p_{\sM}(X|D, \Z(q))$
together with a canonical map $\CH^p(X|D) \to H^{2p}_{\sM}(X|D, \Z(p))$.
The Nisnevich descent for the Chow groups with modulus asks
whether this map is an isomorphism.
Using \cite[Lemma~3.9]{Gupta-Krishna-2} and \cite[Theorem~1]{Rulling-Saito},
we get the following consequence of \thmref{thm:Main-0}.
This improves \cite[Theorem~2]{Rulling-Saito}.

\begin{cor}\label{cor:Motivic-coh}
Let $X$ be a smooth projective variety of pure dimension $d \ge 1$ over a finite
field and let $D \subset X$ be an effective Cartier divisor such that
$D_\red$ is a simple normal crossing divisor. Then the canonical map
of pro-abelian groups
\[
\{\CH_0(X|nD)\}_{n \in \N} \to  \{H^{2d}_{\sM}(X|nD, \Z(d))\}_{n \in \N}
\]
is an isomorphism.
\end{cor}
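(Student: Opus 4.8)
The plan is to deduce Corollary~\ref{cor:Motivic-coh} from \thmref{thm:Main-0} by comparing the two pro-abelian systems at the level of the relevant cohomology groups. The key inputs are \thmref{thm:Main-0}, which gives for each fixed $n$ an isomorphism $\CH_0(X|nD) \xrightarrow{\cong} C_{KS}(X,nD)$, together with the two cited results \cite[Lemma~3.9]{Gupta-Krishna-2} and \cite[Theorem~1]{Rulling-Saito}. First I would recall that the target $H^{2d}_{\sM}(X|nD,\Z(d))$ is, by construction, the Nisnevich hypercohomology in top degree of the cycle complex with modulus, so that one has a canonical comparison map $\CH_0(X|nD) \to H^{2d}_{\sM}(X|nD,\Z(d))$ for each $n$; these maps are compatible as $n$ varies and hence assemble into a morphism of pro-abelian systems.

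\emph{Setting up the identification of the motivic group.} The role of \cite[Theorem~1]{Rulling-Saito} is to compute the top motivic cohomology with modulus: for $X$ smooth and $D_\red$ a strict normal crossing divisor, it identifies the Nisnevich hypercohomology $H^{2d}_{\sM}(X|nD,\Z(d))$ with the Nisnevich cohomology of the relative Milnor $K$-theory sheaf, i.e.\ with $H^d_\nis(X,\sK^M_{d,(X,nD)}) = C_{KS}(X,nD)$. This is exactly the arithmetic hypothesis under which the strict normal crossing condition on $D_\red$ enters the corollary, and it is why the corollary is stated only under that hypothesis whereas \thmref{thm:Main-0} is not. The complementary input \cite[Lemma~3.9]{Gupta-Krishna-2} should be used to check that under this identification the comparison map $\CH_0(X|nD) \to H^{2d}_{\sM}(X|nD,\Z(d))$ agrees, up to the isomorphism of \cite[Theorem~1]{Rulling-Saito}, with the cycle class map $\cyc_{X|nD}$ of \eqref{eqn:Cycle-map} after passing through $\CH_0(X|nD)$.

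\emph{Assembling the pro-isomorphism.} Once the comparison map is identified with $\cyc_{X|nD}$ for every $n$, the corollary follows: by \thmref{thm:Main-0} each $\cyc_{X|nD}\colon \CH_0(X|nD) \xrightarrow{\cong} C_{KS}(X,nD)$ is an isomorphism, and an isomorphism at every level of a pro-system is in particular a pro-isomorphism. One must check that all identifications are natural in $n$ with respect to the transition maps of the two pro-systems (induced by the inclusions $\sI^{n+1}_{nD}$-type refinements on the cycle side and by $\sK^M_{d,(X,(n+1)D)} \subset \sK^M_{d,(X,nD)}$ on the $K$-theoretic side), so that the level-wise isomorphisms form a morphism of pro-abelian groups; this naturality is routine from the functoriality of the cycle class map and of the Rülling--Saito identification.

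The main obstacle is not any deep new argument but rather the bookkeeping of the comparison map: one has to verify that the canonical map $\CH_0(X|nD) \to H^{2d}_{\sM}(X|nD,\Z(d))$ coming from the motivic formalism of \cite{Binda-Saito} is carried by the Rülling--Saito isomorphism precisely onto the cycle class map into $C_{KS}(X,nD)$, rather than onto some twist or multiple of it. This is where \cite[Lemma~3.9]{Gupta-Krishna-2} does the work, and it is the only genuinely non-formal point; everything else is the observation that a level-wise isomorphism is a pro-isomorphism together with checking naturality in $n$.
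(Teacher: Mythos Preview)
Your proposal is correct and follows essentially the same approach as the paper, which simply cites \cite[Lemma~3.9]{Gupta-Krishna-2} and \cite[Theorem~1]{Rulling-Saito} together with \thmref{thm:Main-0}. You have correctly unpacked the roles of these ingredients: R{\"u}lling--Saito supplies the identification of the motivic cohomology groups with the Kato--Saito idele class groups (as pro-systems under the strict normal crossing hypothesis), \cite[Lemma~3.9]{Gupta-Krishna-2} ensures compatibility of the comparison map with the cycle class map, and \thmref{thm:Main-0} provides the isomorphism $\CH_0(X|nD)\cong C_{KS}(X,nD)$ for each $n$; one minor caveat is that the R{\"u}lling--Saito identification is stated at the pro-level rather than levelwise, which is why the corollary is phrased as a pro-isomorphism, but your argument accommodates this without change.
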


\vskip .3cm

\subsection{Application to the functoriality of Kato-Saito idele class
group}\label{sec:PF00}
A very useful application of the main results and which was not known
before is the following transfer property of $C_{KS}(X,D)$.

\begin{cor}\label{cor:PF**}
Let $X$ be a smooth projective variety of pure dimension $d \ge 1$ over a finite
field $k$ and let $D \subset X$ be an effective Cartier divisor. 
Let $f \colon X' \to X$ be a proper map from a smooth projective 
variety $X'$ of dimension $d' \ge 1$ over $k$. Let $D' \subset X'$ be an
effective Cartier divisor such that $f^*(D) \subset D'$.
Then there is a push-forward map
\[
f_* \colon C_{KS}(X',D') \to C_{KS}(X,D).
\]
This is an isomorphism if $D' = f^*(D)$ and $f \colon X' \setminus D' \to
X \setminus D$ is an isomorphism. 
\end{cor}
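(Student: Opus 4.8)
The plan is to reduce the construction of $f_*$ to a push-forward of $0$-cycles with modulus and then transport it through Bloch's formula. By \thmref{thm:Main-0} the cycle class maps are isomorphisms $\cyc_{X|D} \colon \CH_0(X|D) \xrightarrow{\cong} C_{KS}(X,D)$ and $\cyc_{X'|D'} \colon \CH_0(X'|D') \xrightarrow{\cong} C_{KS}(X',D')$. It therefore suffices to produce a push-forward $f_* \colon \CH_0(X'|D') \to \CH_0(X|D)$ and to define the desired map as the composite
\[
C_{KS}(X',D') \xrightarrow[\cong]{(\cyc_{X'|D'})^{-1}} \CH_0(X'|D') \xrightarrow{f_*} \CH_0(X|D) \xrightarrow[\cong]{\cyc_{X|D}} C_{KS}(X,D).
\]
This is the reason no direct transfer on the cohomology of the Milnor $K$-theory sheaves is required, and it explains why the statement was inaccessible before Bloch's formula.

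First I would define $f_*$ on the underlying groups of $0$-cycles. The hypothesis $f^*(D) \subseteq D'$ gives $f^{-1}(\Supp(D)) \subseteq \Supp(D')$, so $f$ maps $X' \setminus D'$ into $X \setminus D$, and ordinary proper push-forward yields $f_* \colon \sZ_0(X' \setminus D') \to \sZ_0(X \setminus D)$. The real content is that $f_*$ preserves rational equivalence with modulus. A generator of the latter on $X'$ is $(\nu')_*(\divf(g))$, where $\nu' \colon \wt{C'} \to X'$ is the normalization of an integral curve $C' \not\subset D'$ and $g$ is a rational function on $\wt{C'}$ congruent to $1$ modulo $(\nu')^*(D')$. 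If $f(C')$ is a point then $f_*(\nu')_*(\divf(g)) = 0$, since $\divf(g)$ has degree $0$ and this point lies in $X \setminus D$ by $f^*(D) \subseteq D'$. Otherwise $f(C') = C$ is an integral curve with $C \not\subset D$, the induced map of normalizations $\phi \colon \wt{C'} \to \wt{C}$ is finite, and the identity $\phi_*(\divf(g)) = \divf(\Nm(g))$ for $\Nm = \Nm_{k(C')/k(C)}$ gives $f_*(\nu')_*(\divf(g)) = \nu_*(\divf(\Nm(g)))$ with $\nu \colon \wt{C} \to X$.

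The crux is then the arithmetic claim that $\Nm(g)$ is again congruent to $1$ modulo $\nu^*(D)$. From $f \circ \nu' = \nu \circ \phi$ and $f^*(D) \subseteq D'$ one gets $\phi^*(\nu^*(D)) = (\nu')^*(f^*(D)) \le (\nu')^*(D')$, so $g \equiv 1 \bmod (\nu')^*(D')$ forces $g \equiv 1 \bmod \phi^*(\nu^*(D))$. Writing $\nu^*(D) = \sum_y m_y [y]$, this means $g \in 1 + \pi_y^{m_y}\,\sO_{\wt{C'},z}$ for every $z \mapsto y$ in $\Supp(\nu^*(D))$, where $\pi_y$ is a uniformizer at the regular point $y$ of the smooth curve $\wt{C}$. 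The classical fact that the norm of a finite extension of discrete valuation rings sends $1 + \pi_y^{m} B$ into $1 + \pi_y^{m} A$ — obtained by expanding $\det(I + \pi_y^{m} M)$ for the multiplication matrix $M$ over $A = \sO_{\wt{C},y}$ — then yields $\Nm(g) \equiv 1 \bmod \nu^*(D)$. This is precisely where the hypothesis $f^*(D) \subseteq D'$ enters, and I expect this norm estimate on the higher unit groups to be the main obstacle to make fully rigorous.

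Finally, for the isomorphism statement when $D' = f^*(D)$ and $f \colon X' \setminus D' \to X \setminus D$ is an isomorphism, I would show that $f_*$ is already an isomorphism on Chow groups with modulus and then invoke \thmref{thm:Main-0}. The isomorphism $f \colon X' \setminus D' \xrightarrow{\cong} X \setminus D$ identifies $\sZ_0(X' \setminus D')$ with $\sZ_0(X \setminus D)$, every closed point having residue degree one over its image. Taking closures gives a bijection between integral curves $C' \not\subset D'$ on $X'$ and $C \not\subset D$ on $X$ with a common normalization $\phi \colon \wt{C'} \xrightarrow{\cong} \wt{C}$, and under $D' = f^*(D)$ the two modulus conditions agree because $(\nu')^*(D') = (\nu')^*(f^*(D)) = \phi^*(\nu^*(D))$ coincides with $\nu^*(D)$ after the identification $\phi$. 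Hence $f_*$ matches the defining relations on both sides and is an isomorphism of Chow groups with modulus; transporting it through the cycle class maps gives the asserted isomorphism for $C_{KS}$.
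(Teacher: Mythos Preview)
Your proposal is correct and follows exactly the route the paper intends: transport the problem through \thmref{thm:Main-0} to the Chow group with modulus, where proper push-forward is already available. The paper itself records the push-forward $f_*\colon \CH_0(X_1|D_1)\to\CH_0(X_2|D_2)$ for $f^*(D_2)\le D_1$ just before \lemref{lem:PF-A}, citing \cite[Lemma~2.7]{Binda-Saito} and \cite[Proposition~2.10]{KPv}, and your norm computation is precisely the argument in the proof of \lemref{lem:PF-A} (where \cite[Lemma~6.19]{Raskind} is invoked for the inclusion $N(1+\fm^n B)\subset 1+\fm^n A$); for the isomorphism assertion the paper would simply quote \cite[Lemma~3.1]{Kerz-Saito-2}, which is your closure-bijection argument packaged as a lemma.
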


We end the discussion of our main results and their applications with the following.

\begin{conj}\label{conj:PF-KS}
Corollary~\ref{cor:PF**} holds if $k$ is any perfect field.
\end{conj}

\vskip .3cm

\subsection{An overview of proofs}\label{sec:Outline}
We now give a brief overview of the contents of this paper.
The proofs of our main results are based two key ingredients among
several steps.

The first is to show the finiteness of the degree zero 
Kato-Saito idele class group $C_{KS}(X,D)_0$ when the base field is finite
(see \thmref{thm:KS-fin}). 
It is known that the Kerz-Saito idele class group $\CH_0(X|D)_0$ in this case 
is finite
(see \cite[Theorem~8.1]{Esnault-Kerz}). Since the latter is a quotient of
the Wiesend class group $W(X\setminus D)$ and there is a continuous
reciprocity homomorphism from $W(X\setminus D)$ to the limit (over $n$) of
$C_{KS}(X,nD)$, one would expect that this would descend to a map of 
pro-abelian groups $\{\CH_0(X|nD)\} \to \{C_{KS}(X,nD)\}$.
This would imply the finiteness of  $C_{KS}(X,D)_0$. 
Unfortunately, this approach breaks down because of the nature of the
topology of  $W(X\setminus D)$ (see \cite[\S~4.5]{Gupta-Krishna-REC} for details).
We use some results in $K$-theory of surfaces and some 
class field theory results of
Bloch \cite{Bloch81} and Kato-Saito \cite{Kato-Saito-2} to independently prove the
finiteness theorem. As an application, we prove a very general finiteness
theorem for the Kato-Saito idele class group of non-proper schemes over finite
fields (see \corref{cor:KS-fin-aff}).

The second key ingredient is a moving lemma for the Chow group of 0-cycles
with modulus (see \thmref{thm:MLC}). Combined with some results of
Kato \cite{Kato89} and Matsuda \cite{Matsuda}, this allows us to prove the 
equivalence
between the filtrations of the first {\'e}tale cohomology of the function
field of $X$ whose Pontryagin duals define the two fundamental groups with
modulus $\pi^{\adiv}_1(X,D)$ and $\pi^{\ab}_1(X,D)$.

Besides the finiteness theorem, the proof of \thmref{thm:Rec-mod} also relies
on some results about $\pi^{\adiv}_1(X,D)$ already shown in
\cite{Gupta-Krishna-REC} and the ramification theory
results proven in op. cit. and \S~\ref{sec:Ramification}. 
The proofs of the remaining main results are based on \thmref{thm:Rec-mod}
and the above key steps.

We prove the finiteness theorem in \S~\ref{sec:KS-surface},
\S~\ref{sec:3-term} and \S~\ref{sec:Fin-KS}. \thmref{thm:Rec-mod} 
is proven in \S~\ref{sec:Rec-Iso}. 
We construct the cycle class map in pro-setting
in \S~\ref{sec:Curve-based} which is used in the proof of \thmref{thm:Main-1*}.
The moving lemma is
proven in \S~\ref{sec:ML} and \S~\ref{sec:Move-2}. We prove the remaining
main results in \S~\ref{sec:BF}.

\vskip .3cm

\subsection{Notation}\label{sec:Notns}
In this paper, $k$ will be the base field of characteristic
$p \ge 0$. For most parts, this
will be a finite field in which case we shall let $q = p^s$ be the
order of $k$ for some integer $s \ge 1$. A {\sl scheme} will usually
mean a separated and essentially of finite type $k$-scheme.
We shall denote the category of such schemes by $\Sch_k$.
The product $X \times_{\Spec(k)} Y$ in $\Sch_k$ will be written as
$X \times Y$. If $k \subset k'$ is an extension of fields and
$X \in \Sch_k$, we shall let $X_{k'}$ denote $X \times \Spec(k')$. 
For a subscheme $D \subset X$, we shall let $|D|$ denote the set of points
lying on $D$.

For a morphism $f \colon X' \to X$ of schemes and
$D \subset X$ a subscheme, we shall write $D \times_X X'$ as $f^*(D)$.
For a point $x \in X$,
we shall let $\fm_x$ denote  the maximal ideal of $\sO_{X,x}$ and
$k(x)$ the residue field of $\sO_{X,x}$. We shall let $\sO^h_{X,x}$ (resp.
$\sO^{sh}_{X,x}$) denote the Henselization (resp. strict Henselization)
of $\sO_{X,x}$. We shall let $\ov{\{x\}}$ denote
the closure of $\{x\}$ with its integral subscheme structure.  
We shall let $k(X)$ denote the total ring of quotients of $X$.
If $X$ is reduced, we shall let $X_n$ denote the normalization of $X$.

If $X$ is a Noetherian scheme, we shall consider sheaves on $X$
and their cohomology
with respect to the Nisnevich topology unless mentioned
otherwise. 
We shall let $X_{(q)}$ (resp. $X^{(q)}$) denote the set of points on $X$ 
of dimension (resp. codimension) $q$. 
We shall let $\pi^{\ab}_1(X)$ denote the abelianized {\'e}tale fundamental
group of $X$. We shall consider
$\pi^{\ab}_1(X)$ as a topological abelian group with its profinite
topology.

For a field $K$ of characteristic $p > 0$ 
with a discrete valuation $\lambda$, we shall let
$K_\lambda$ denote the fraction field of the Henselization of the
ring of integers associated to $\lambda$.
For a Galois extension (possibly infinite) $K \subset L$ of fields,
we shall let $G(L/K)$ denote the Galois group of $L$ over $K$.
We shall let $G_K$ denote the absolute Galois group of $K$.
All Galois groups will be considered as topological abelian groups 
with their profinite topology. If $A$ is an abelian group, we let
${A}/{\infty} = {\underset{m \in \N}\varprojlim} A/{mA}$.

\vskip .3cm

Throughout this paper, we shall freely use the notations and terminology of
\cite{Gupta-Krishna-REC} without recalling them. In particular,
we refer the reader to \cite[\S~2, 3]{Gupta-Krishna-REC} for the notion
of Parshin chains and their Milnor $K$-theory and
the idele class group $C(X,D)$.

\section{Idele class group of a surface}
\label{sec:KS-surface}
Our goal in the next several sections is to prove first of the two
key steps for proving the main results: 
the finiteness of the
degree zero Kato-Saito idele class group $C_{KS}(X,D)_0$.
The proof of this step is by induction on the dimension of the scheme.
In this section, we recall the degree maps for the
Kato-Saito idele class group with modulus and prove a crucial step showing that
the kernel of the degree map for a surface
is a torsion group of bounded exponent. 
Recall that for any excellent scheme $X$ of dimension $d$ and closed
subscheme $D \subset X$, the Kato-Saito idele class group with modulus
$C_{KS}(X,D)$ is the Nisnevich cohomology group $H^d_\nis(X, \sK^M_{d, (X,D)})$, 
where $\sK^M_{i, (X,D)}$ is the sheaf of relative Milnor $K$-groups
(see \cite[\S~2.4]{Gupta-Krishna-REC}).

\subsection{The degree map for Kato-Saito idele class group}
\label{sec:Deg-*}
We define the
degree map for $C_{KS}(X,D)$ and prove some of its properties.
Let $k$ be any field and $X$ a projective integral scheme of dimension 
$d \ge 1$ over $k$. Let $D \subset X$ be a nowhere dense closed subscheme.
Let $K$ denote the function field of $X$.

For a closed subscheme $Y \subset X$, we let
$C^Y_{KS}(X,D) = H^d_Y(X, \sK^M_{d, (X,D)})$ denote the Nisnevich cohomology with 
support. For a closed point $x \in X$, we let $C_{KS}(X_x, D_x) =
C^x_{KS}(\Spec(\sO^h_{X,x}), \Spec({\sO^h_{X,x}}/{\sI^h_{D,x}}))$.

By \cite[\S~1]{Kato86}, there is, for every $n \ge 0$, a complex of Nisnevich 
sheaves on $X$ given by
\begin{equation}\label{eqn:KS-Chow-0}
\sK^M_{n,X} \to {\underset{x \in X^{(0)}}\oplus} (\iota_x)_*(K^M_{n}(k(x)))
\to {\underset{x \in X^{(1)}}\oplus} (\iota_x)_*(K^M_{n-1}(k(x))) \to
\cdots 
\hspace*{2cm}
\end{equation}
\[
\hspace*{3.8cm} \cdots
\to {\underset{x \in X^{(d-1)}}\oplus} (\iota_x)_*(K^M_{n-d+1}(k(x))) 
\xrightarrow{\partial}
{\underset{x \in X^{(d)}}\oplus} (\iota_x)_*(K^M_{n-d}(k(x))) \to 0.
\]

An elementary cohomological argument shows that by taking the cohomology of 
the sheaves in this complex, one gets a canonical  
homomorphism $\nu_x \colon H^r_x(X, \sK^M_{d,X}) \to K^M_{d-r}(k(x))$
for $x \in X^{(r)}$, and a commutative square
\begin{equation}\label{eqn:KS-Chow-1}
\xymatrix@C.8pc{
H^r_y(X, \sK^M_{d,X}) \ar[r]^-{\nu_y} \ar[d]_-{\partial} & K^M_{d-r}(k(y))
\ar[d]^-{\partial} \\
H^{r+1}_x(X, \sK^M_{d,X}) \ar[r]^-{\nu_x} & K^M_{d-r-1}(k(x))}
\end{equation}
for $y \in X^{(r)}$ and $x \in X^{(r+1)} \cap \ov{\{y\}}$
(e.g., see \cite[(2.1.2)]{Kato-Saito-2}).

\vskip .2cm

For any $n \ge 0$, let $\CH^F_n(X)$ denote the Chow group of cycles of
dimension $n$ on $X$ in the sense of \cite[Chapter~1]{Fulton}.
We prove the following.

\begin{lem}\label{lem:KS-Chow}
Let $Y \subset X$ be a closed immersion. Then the following hold.
\begin{enumerate}
\item
There is a canonical homomorphism
\[
\nu_Y \colon C^Y_{KS}(X,D) \to \CH^F_0(Y)
\]
such that for any regular closed point $x \in Y \setminus D$, the composition
\[
\lambda_x \colon \Z \xrightarrow{\cong} C^x_{KS}(X,D) \to C^Y_{KS}(X,D) \to 
\CH^F_0(Y)  
\]
has the property that $\lambda_x(1) = [x]$, the cycle class of $x$.
\item
If $\iota \colon Z \inj Y$ is a closed immersion, then the diagram
\begin{equation}\label{eqn:KS-Chow-2}
\xymatrix@C1pc{
C^Z_{KS}(X,D) \ar[r] \ar[d]_-{\iota_*} & \CH^F_0(Z) \ar[d]^-{\iota_*} & \\
C^Y_{KS}(X,D) \ar[r] & \CH^F_0(Y)}
\end{equation}
commutes, where the left vertical 
arrow is the canonical homomorphism between cohomology groups
with support and the right vertical arrow is the push-forward homomorphism.
\item
The map $C^Y_{KS}(X,D) \to \CH^F_0(Y)$ is an isomorphism
if $Y \subset X_\reg \setminus D$.
\end{enumerate}
\end{lem}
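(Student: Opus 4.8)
The plan is to build $\nu_Y$ out of the complex \eqref{eqn:KS-Chow-0} by a flasque-resolution argument and then to read off all three assertions from the behaviour of that complex on $Y$. First I would work with the absolute sheaf $\sK^M_{d,X}$. Write $\sC^\bullet$ for the complex of sheaves with $\sC^p = \bigoplus_{x \in X^{(p)}}(\iota_x)_* K^M_{d-p}(k(x))$ occurring in \eqref{eqn:KS-Chow-0}, so that \eqref{eqn:KS-Chow-0} is an augmentation $\sK^M_{d,X} \to \sC^\bullet$. Each $\sC^p$ is flasque, hence acyclic for sections with support, so $\mathrm{R}\Gamma_Y(X, \sC^\bullet) \simeq \Gamma_Y(X, \sC^\bullet)$. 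A section of $(\iota_x)_*K^M_{d-p}(k(x))$ is supported on $Y$ exactly when $x \in Y$, and a codimension-$p$ point of $X$ lying on $Y$ is precisely a point of $Y_{(d-p)}$; thus, with $e = \dim Y$, the complex $\Gamma_Y(X, \sC^\bullet)$ is the Kato complex of $Y$,
\[
\bigoplus_{x \in Y_{(e)}} K^M_{e}(k(x)) \to \cdots \to
\bigoplus_{y \in Y_{(1)}} k(y)^* \xrightarrow{\partial}
\bigoplus_{x \in Y_{(0)}} \Z \to 0,
\]
placed in degrees $d-e,\dots,d$, with $\partial$ the divisor map. Its top cohomology is $\cok(\partial) = \CH^F_0(Y)$. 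Applying $\mathrm{R}\Gamma_Y$ to the augmentation and passing to $H^d$ therefore produces a canonical map $H^d_Y(X, \sK^M_{d,X}) \to \CH^F_0(Y)$, and I would define $\nu_Y$ as its composite with the map $C^Y_{KS}(X,D) = H^d_Y(X, \sK^M_{d,(X,D)}) \to H^d_Y(X, \sK^M_{d,X})$ induced by the inclusion $\sK^M_{d,(X,D)} \inj \sK^M_{d,X}$. Restricting the augmentation to $\Spec \sO_{X,x}$ shows that the map induced at a closed point $x$ recovers $\nu_x$.

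For (1), let $x \in Y \setminus D$ be a regular closed point. Since $x \notin D$ the two Milnor sheaves agree near $x$, so $C^x_{KS}(X,D) = H^d_x(X, \sK^M_{d,X})$, and regularity yields the isomorphism $\Z \cong K^M_0(k(x)) \cong H^d_x(X,\sK^M_{d,X})$ recorded before \eqref{eqn:Cycle-map}, with $\nu_x$ its inverse. In the flasque model $\Gamma_{\{x\}}(X,\sC^\bullet)$ is $\Z$ concentrated in degree $d$ (only the summand at $x$ is supported on $\{x\}$), and the forget-support map $H^d_x \to H^d_Y$ corresponds to the inclusion of the summand $\Z_{[x]} \inj \bigoplus_{Y_{(0)}}\Z$; hence $\lambda_x(1) = [x]$. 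For (2), the entire construction is functorial in the support: for a closed immersion $\iota \colon Z \inj Y$ the inclusion $\Gamma_Z(X, \sC^\bullet) \inj \Gamma_Y(X, \sC^\bullet)$ is the evident inclusion of Kato complexes, which on top cohomology is $\bigoplus_{Z_{(0)}}\Z \inj \bigoplus_{Y_{(0)}}\Z$, i.e. the Fulton push-forward $\iota_*$. Compatibility of this with the forget-support maps on the cohomology-with-support side gives the commutative square \eqref{eqn:KS-Chow-2}.

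For (3), suppose $Y \subseteq X_\reg \setminus D$ and set $U = X_\reg \setminus D$, an open neighbourhood of $Y$. By excision $H^d_Y(X, -) = H^d_Y(U, -)$. On $U$ the relative and absolute sheaves coincide, since their quotient $\iota_*\sK^M_{d,D}$ is supported on $D$, disjoint from $Y$; consequently $\mathrm{R}\Gamma_Y$ of this quotient vanishes and the inclusion-induced map $C^Y_{KS}(X,D) \to H^d_Y(X, \sK^M_{d,X})$ is an isomorphism. Finally $U$ is regular, so by the Gersten resolution for Milnor $K$-theory over regular schemes the augmentation $\sK^M_{d,U} \to \sC^\bullet|_U$ is a quasi-isomorphism; hence $\mathrm{R}\Gamma_Y(U, \sK^M_{d,U}) \simeq \Gamma_Y(U, \sC^\bullet|_U)$ and $\nu_Y$ becomes the isomorphism $H^d_Y \cong \CH^F_0(Y)$.

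The flasqueness and excision bookkeeping are routine; the two steps carrying real content are the identification of the bottom of $\Gamma_Y(X, \sC^\bullet)$ with Fulton's divisor complex (so that its top cohomology is literally $\CH^F_0(Y)$) and, for (3), the appeal to the Gersten resolution over the regular scheme $U$. I expect part (3) — equivalently, the exactness of the Kato complex in the regular case — to be the main obstacle, the rest being formal consequences of the existence of \eqref{eqn:KS-Chow-0} and the commuting square \eqref{eqn:KS-Chow-1}.
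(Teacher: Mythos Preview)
Your proof is correct and follows essentially the same approach as the paper: both extract $\nu_Y$ from the complex~\eqref{eqn:KS-Chow-0}, the paper via the coniveau spectral sequence with support in $Y$ (which degenerates since $E_1^{p,q}=0$ for $q\ge 1$) together with the pointwise maps $\nu_x,\nu_y$ of~\eqref{eqn:KS-Chow-1}, and you via the equivalent device of applying $\mathrm{R}\Gamma_Y$ directly to the augmentation $\sK^M_{d,X}\to\sC^\bullet$ into the flasque complex. For part~(3) the paper cites \cite[Lemma~3.7]{Gupta-Krishna-REC} to see that the pointwise maps in~\eqref{eqn:KS-Chow-5} are isomorphisms over $X_\reg\setminus D$, which is exactly the Gersten input you invoke after excision.
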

\begin{proof}
To prove parts (1) and (2) of the lemma, it suffices to consider the
case when $D = \emptyset$ using the canonical map
$C^Y_{KS}(X,D) \to C^Y_{KS}(X)$ which is clearly functorial for closed
immersions $Z \subset Y$.

We now recall 
that for any Nisnevich sheaf $\sF$ on $X$,
the coniveau spectral sequence for $H^*_Y(X, \sF)$
is of the form
\begin{equation}\label{eqn:KS-Chow-3} 
E^{p,q}_1 = {\underset{x \in X^{(p)}\cap Y}\bigoplus} 
H^{p+q}_x(X, \sF) \Rightarrow H^{p+q}_Y(X, \sF).
\end{equation}
Using the cohomological vanishing and the exact sequence
\[
H^{p+q-1}_\nis(X_x \setminus \{x\}, \sF) \to
H^{p+q}_x(X, \sF) \to H^{p+q}_\nis(X_x, \sF)
\] 
for $x \in X^{(p)}$, we see that $E^{p,q}_1 = 0$ for $q \ge 1$.
Hence, the above spectral sequence degenerates to an exact sequence
\begin{equation}\label{eqn:KS-Chow-4-ex} 
{\underset{y \in Y_{(1)}}\bigoplus} H^{d-1}_y(X, \sF) 
\xrightarrow{\partial}  {\underset{x \in Y_{(0)}}\bigoplus} 
H^{d}_x(X, \sF) \to H^d_Y(X,\sF) \to 0.
\end{equation} 

Applying this to $\sK^M_{d,X}$, we get an exact sequence
\begin{equation}\label{eqn:KS-Chow-4} 
{\underset{y \in Y_{(1)}}\bigoplus} H^{d-1}_y(X, \sK^M_{d,X}) 
\xrightarrow{\partial}  {\underset{x \in Y_{(0)}}\bigoplus} 
H^{d}_x(X, \sK^M_{d,X}) \to C^Y_{KS}(X) \to 0.
\end{equation}

We now consider the diagram
\begin{equation}\label{eqn:KS-Chow-5} 
\xymatrix@C.8pc{
{\underset{y \in Y_{(1)}}\bigoplus} H^{d-1}_y(X, \sK^M_{d,X}) 
\ar[r]^-{\partial} \ar[d] &  
{\underset{x \in Y_{(0)}}\bigoplus} H^{d}_x(X, \sK^M_{d,X})  \ar[r] \ar[d] &
C^Y_{KS}(X) \ar@{.>}[d] \ar[r] & 0 \\
{\underset{y \in Y_{(1)}}\bigoplus} K^M_1(k(y)) \ar[r]^-{\partial} &
{\underset{x \in Y_{(0)}}\bigoplus} K^M_0(k(x)) \ar[r] & \CH^F_0(Y) \ar[r] &
0,}
\end{equation}
where the square on the left is the commutative square of
~\eqref{eqn:KS-Chow-1} with $r = d-1$. 
It is well known that the boundary map $\partial$ in the bottom row
is the map which takes a rational function to its divisor
and the right horizontal bottom arrow is the cycle class map 
(e.g., see the proof of \cite[Theorem~3]{Kato86}). In particular,
the bottom row is exact. The top row is the exact sequence
~\eqref{eqn:KS-Chow-4}. The first part of the lemma now follows.
Furthermore, it is clear from the above construction that the
~\eqref{eqn:KS-Chow-2} is commutative if $Z \subset Y$, proving (2).

If $Y \subset X_\reg \setminus D$, then we can assume by excision 
that $X$ is regular with $D = \emptyset$ so that
$\sK^M_{d, (X,D)} \cong \sK^M_{d,X}$. In this case, the two vertical arrows
in ~\eqref{eqn:KS-Chow-5} on the left are isomorphisms by
\cite[Lemma~3.7]{Gupta-Krishna-REC}. It follows that 
the third vertical arrow (which is now an honest map) is also an isomorphism.
This proves (3).
This also shows that for a closed point $x \in (X_\reg \cap Y) \setminus D$,
the composite $\lambda_x \colon H^{d}_x(X, \sK^M_{d,X}) \to \CH^F_0(Y)$
has the property stated in part (1) of the lemma.
We have thus finished the proof.
\end{proof}

Since $X$ is projective, there is a push-forward map
$\deg \colon \CH^F_0(Y) \to \Z$, which takes a closed point to the degree
of its residue field over $k$. By composition with $\nu_Y$, we get a
degree map
\begin{equation}\label{eqn:Deg-map}
\deg \colon C^Y_{KS}(X,D) \to \Z
\end{equation}
which clearly factors through the degree map $\deg \colon C_{KS}(X,D) \to \Z$.
We let $C^Y_{KS}(X,D)_0$ be the kernel of the degree map.
We define $C_{KS}(X,D)_0$ similarly.

\vskip .2cm

Recall that \cite[Lemma~1.6.3]{Kato-Saito-2} provides a recipe for
constructing a homomorphism from $C_{KS}(X,D)$ to an abelian group.
This says that giving a homomorphism from $C_{KS}(X,D)$ to an abelian
group $A$ is same as defining group homomorphisms
$K^M_d(k(P)) \to A$ (where $P$ runs through all maximal Parshin chains in $X$)
which annihilate the images (by the residue homomorphisms)
of the Milnor $K$-groups of certain $Q$-chains.
We shall refer to this as the Kato-Saito recipe in the sequel.

One can easily check (using ~\eqref{eqn:KS-Chow-4} with $Y= X$) 
that the above degree map $C_{KS}(X,D) \to \Z$ is same as the one obtained by 
the Kato-Saito recipe, where for a maximal Parshin chain 
$P = (p_0, \ldots , p_d)$,
we define our desired homomorphism to be the composite
\[
K^M_d(k(P)) \cong H^0_{P_d}(X, \sK^M_{d, (X,D)}) \xrightarrow{\partial} 
H^1_{P_{d-1}}(X, \sK^M_{d, (X,D)}) \xrightarrow{\partial} \cdots 
\xrightarrow{\partial} H^d_{P_0}(X, \sK^M_{d, (X,D)}) \to \Z,
\]
in which $\partial$ denotes the boundary map and
$P_i = (p_0, \ldots , p_i)$. The last arrow is induced by
~\eqref{eqn:KS-Chow-0}.

\begin{lem}\label{lem:Deg-PF}
Let $f \colon Y \to X$ be a projective morphism with $Y$ integral.
Let $E \subset Y$ be a closed subscheme such that $f^*(D) \subset E$.
Assume that the image of $f$ is not contained in $D \cup X_{\sing}$
and the image of $E$ is nowhere dense in $X$.
Suppose that the Kato-Saito recipe defines a
push-forward map $f_* \colon  C_{KS}(Y,E) \to C_{KS}(X,D)$.
Then the diagram
\begin{equation}\label{eqn:Deg-PF-0}
\xymatrix@C1pc{
C_{KS}(Y,E) \ar[dr]^-{\deg_Y} \ar[d]_-{f_*} & \\
C_{KS}(X,D) \ar[r]_-{\deg_X} & \Z}
\end{equation}
is commutative. 
\end{lem}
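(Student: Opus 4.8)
The plan is to deduce $\deg_X \circ f_* = \deg_Y$ from the fact that both degree maps were \emph{constructed} by composing the maps $\nu$ of \lemref{lem:KS-Chow} with Fulton's degree on $\CH^F_0$, and then to transport the question to the Chow groups, where invariance of degree under proper push-forward is classical. Write $\deg^F$ for Fulton's degree and $f^F_*\colon \CH^F_0(Y)\to \CH^F_0(X)$ for the proper push-forward of $0$-cycles, available since $f$ is projective and $X$, $Y$ are projective over $k$. By the very definition of the degree map we have $\deg_Y = \deg^F \circ \nu_Y$ and $\deg_X = \deg^F\circ \nu_X$ (taking each variety as its own ambient space in \lemref{lem:KS-Chow}). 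Granting the commutativity of the square relating the recipe push-forward $f_*$ to $f^F_*$ through $\nu_Y$ and $\nu_X$, we would get
\[
\deg_X\circ f_* = \deg^F\circ \nu_X\circ f_* = \deg^F\circ f^F_*\circ \nu_Y = \deg^F\circ \nu_Y = \deg_Y,
\]
the third equality being the classical identity $\deg^F\circ f^F_* = \deg^F$ (both sides push a $0$-cycle forward to $\Spec(k)$). So everything reduces to the compatibility of the two push-forwards.

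To prove that compatibility I would first invoke the coniveau exact sequence \eqref{eqn:KS-Chow-4-ex}, applied with $Y$ as its own ambient variety and $\sF = \sK^M_{d',(Y,E)}$ where $d'=\dim Y$, to see that $C_{KS}(Y,E)$ is generated by the images of the local groups $H^{d'}_y(Y,\sK^M_{d',(Y,E)})$ as $y$ runs over the closed points of $Y$. It then suffices to test the square on one local class $\alpha_y$. From the construction of $\nu_Y$ in \lemref{lem:KS-Chow} (the middle vertical arrow of \eqref{eqn:KS-Chow-5} is the Gersten-type map $\nu_y$ and the bottom arrow is the cycle class map) one reads off $\nu_Y(\alpha_y) = \nu_y(\alpha_y)\cdot[y]$ in $\CH^F_0(Y)$, so with $x=f(y)$ we get $f^F_*(\nu_Y(\alpha_y)) = \nu_y(\alpha_y)\,[k(y):k(x)]\,[x]$. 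The remaining thing to check is the local identity that $f_*(\alpha_y)$ is supported at $x$ and satisfies $\nu_x(f_*(\alpha_y)) = [k(y):k(x)]\,\nu_y(\alpha_y)$; this makes $\nu_X(f_*(\alpha_y)) = [k(y):k(x)]\,\nu_y(\alpha_y)\,[x]$ agree with the previous expression, and the square commutes.

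The main obstacle is exactly this local identity, which is where the hypothesis that $f_*$ is defined by the Kato-Saito recipe enters. One must verify that, on the generator $\alpha_y$, the recipe push-forward is the Milnor $K$-theory norm along the finite residue extension $k(y)/k(x)$, and that this norm is matched by the boundary maps defining $\nu$ so as to multiply $\nu_y$ by $[k(y):k(x)]$. The hypotheses that $f(Y)\not\subset D\cup X_{\sing}$ and that $f(E)$ is nowhere dense in $X$ are used here: they guarantee that the generic point of $Y$ lands at a regular point off $D$ and that the transfers on the Milnor $K$-groups of the maximal Parshin chains are defined, so that the recipe push-forward is genuinely computed by these norms. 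As a cross-check, when $f$ is a closed immersion the identity is already contained in \lemref{lem:KS-Chow}(2), the forget-support map being the recipe push-forward. For the general projective case the cleanest completion is to compare the two $\Z$-valued recipe-defined homomorphisms $\deg_Y$ and $\deg_X\circ f_*$ directly on each maximal Parshin chain of $Y$, where, after pushing through the iterated boundary maps, the required equality collapses to the multiplicativity $[k(y):k] = [k(y):k(x)]\,[k(x):k]$ of residue-field degrees.
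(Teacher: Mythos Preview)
Your overall strategy---reduce to the classical invariance $\deg^F\circ f^F_*=\deg^F$ on Fulton's Chow groups---is exactly the paper's. The difference is in how you get there, and your route leaves a genuine gap.

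The paper's proof invokes \cite[Theorem~2.5]{Kato-Saito-2}, which says that $C_{KS}(Y,E)$ is generated by the forget-support images of $K^M_0(k(y))\cong\Z$ for closed points $y$ lying in a regular open subset of $Y$ disjoint from $E$ and mapping into $X_\reg\setminus D$. For such a point, $H^{d'}_y(Y,\sK^M_{d',(Y,E)})\cong\Z$ with generator the cycle class $[y]$, and by \lemref{lem:KS-Chow}(1) both $\deg_Y([y])$ and $\deg_X(f_*[y])$ are computed by Fulton's degree; the identity is then immediate. Your coniveau argument, by contrast, generates $C_{KS}(Y,E)$ by the local groups $H^{d'}_y$ for \emph{all} closed points $y$, including those lying on $E$ or on $Y_\sing$, where these groups need not be $\Z$ and where $\nu_y$ is not an isomorphism.

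This forces you into the ``local identity'' $\nu_x(f_*(\alpha_y))=[k(y):k(x)]\,\nu_y(\alpha_y)$, which you do not actually prove. The Kato-Saito recipe push-forward is built from norms on Milnor $K$-groups of Parshin-chain residue fields, and tracking it through the iterated boundaries $\partial^x_P$ down to $K^M_0$---especially when $\dim Y<\dim X$ so that $f(P)$ is not a maximal chain on $X$---requires exactly the kind of computation carried out in \cite[Proposition~2.9]{Kato-Saito-2}. Your final sentence about comparing on maximal Parshin chains and collapsing to multiplicativity of residue degrees is the right shape, but it is an assertion, not an argument. The paper's use of \cite[Theorem~2.5]{Kato-Saito-2} sidesteps all of this: once you know the nice closed points generate, there is nothing left to compute.
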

\begin{proof}
By our assumption, there is a dense open $U \subset X$ away from $D$
such that $U$ and $f^{-1}(U)$ are both  regular. Furthermore,
$E \cap f^{-1}(U) = \emptyset$. It follows from
\cite[Theorem~2.5]{Kato-Saito-2} that $C_{KS}(Y,E)$ is generated by the
classes of closed points in $f^{-1}(U)$. Hence, it suffices to show
that for every closed point $y \in f^{-1}(U)$, one has
$\deg_X \circ f_*([y]) = \deg_Y([y])$.
But this follows immediately from \lemref{lem:KS-Chow}
and the fact that the proper push-forward map on the classical Chow group of
0-cycles commutes with the degree map.
\end{proof}

One easy consequence of \lemref{lem:Deg-PF} is the following.
\begin{cor}\label{cor:Deg-PF-*}
Let $D \subset D'$ be two nowhere dense closed subschemes. Then
the canonical map $C_{KS}(X,D')_0 \to C_{KS}(X,D)_0$ is surjective.
\end{cor}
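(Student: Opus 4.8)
The plan is to recognize the map in question as the push-forward along the identity of $X$ and then to feed this into \lemref{lem:Deg-PF}, the surjectivity being the only substantive point. Write $\phi \colon C_{KS}(X,D') \to C_{KS}(X,D)$ for the canonical map; it is induced on $H^d_\nis(X,-)$ by the inclusions of relative Milnor $K$-sheaves $\sK^M_{d,(X,D')} \hookrightarrow \sK^M_{d,(X,D)} \hookrightarrow \sK^M_{d,X}$ coming from $D \subset D'$ (an element killed in $\iota'_*\sK^M_{d,D'}$ is killed in $\iota_*\sK^M_{d,D}$). I would apply \lemref{lem:Deg-PF} with $f = \id_X$, $Y = X$ and $E = D'$: then $f^*(D) = D \subset D' = E$, the image of $f$ is all of $X$ and so is not contained in the nowhere dense subset $D \cup X_{\sing}$, and the image of $E$ is the nowhere dense subset $D'$. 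The Kato-Saito recipe applied to the identity sends $K^M_d(k(P))$ to itself for every maximal Parshin chain $P$, and hence realizes $f_* = \phi$. The lemma therefore gives $\deg_X \circ \phi = \deg_{(X,D')}$; in particular $\phi$ maps $C_{KS}(X,D')_0$ into $C_{KS}(X,D)_0$.

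It remains to prove that $\phi$ is surjective, and this is where the work lies. I would argue by generation through closed points. Since $D'$ is nowhere dense and $X_\reg$ is dense open, one may choose a regular dense open $U \subset X \setminus D'$; as $X$ is integral, $U$ is automatically dense in $X \setminus D$ as well. By \cite[Theorem~2.5]{Kato-Saito-2}, both $C_{KS}(X,D')$ and $C_{KS}(X,D)$ are generated by the classes $\cyc_{X|D'}(x)$ and $\cyc_{X|D}(x)$ of closed points $x \in U$. For such an $x$ the local isomorphisms $\Z \xrightarrow{\cong} C^x_{KS}(X,D')$ and $\Z \xrightarrow{\cong} C^x_{KS}(X,D)$ do not involve $D$ or $D'$ (because $x \notin D'$), so $\phi(\cyc_{X|D'}(x)) = \cyc_{X|D}(x)$. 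Thus $\phi$ hits every generator of $C_{KS}(X,D)$ and is onto.

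Finally I would combine the two halves: given $\alpha \in C_{KS}(X,D)_0$, surjectivity of $\phi$ yields $\beta \in C_{KS}(X,D')$ with $\phi(\beta) = \alpha$, and then $\deg_{(X,D')}(\beta) = \deg_X(\phi(\beta)) = \deg_X(\alpha) = 0$, so $\beta \in C_{KS}(X,D')_0$. Hence $\phi$ restricts to a surjection $C_{KS}(X,D')_0 \to C_{KS}(X,D)_0$, as claimed. The degree compatibility is formal once \lemref{lem:Deg-PF} is available, so the only real obstacle is the surjectivity of $\phi$; as an alternative to the generation argument one could instead invoke the short exact sequence $0 \to \sK^M_{d,(X,D')} \to \sK^M_{d,(X,D)} \to \sQ \to 0$, whose quotient $\sQ$ is supported on $|D'|$ of dimension $\le d-1$, so that $H^d_\nis(X,\sQ) = 0$ and the long exact cohomology sequence forces $\phi$ onto.
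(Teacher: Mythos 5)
Your proof is correct and is exactly the argument the paper intends: the corollary is stated as an immediate consequence of Lemma~\ref{lem:Deg-PF}, namely degree-compatibility of the canonical map (your application with $f=\id_X$, $Y=X$, $E=D'$) combined with surjectivity of $C_{KS}(X,D')\to C_{KS}(X,D)$ via generation by classes of closed points in a common regular dense open, per \cite[Theorem~2.5]{Kato-Saito-2}. Your alternative surjectivity argument via the quotient sheaf supported on $|D'|$ and vanishing of its top Nisnevich cohomology is also valid, and the degree compatibility could even be seen directly from the fact that both degree maps factor through the canonical map to $C_{KS}(X)$.
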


\begin{lem}\label{lem:Deg-birational}
Let $f \colon X' \to X$ be a projective birational morphism
and let $D' \subset f^*(D)$. Then there is a commutative diagram
\[
\xymatrix@C1pc{
C_{KS}(X,D) \ar[dr]^-{\deg_X} \ar[d]_-{f^*} &  \\
 C_{KS}(X',D') \ar[r]_-{\deg_{X'}} & \Z.}
\]
\end{lem}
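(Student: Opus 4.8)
The plan is to first construct the pullback homomorphism $f^*$ and then verify the degree identity $\deg_{X'}\circ f^* = \deg_X$ by reducing both sides to cycle classes of closed points lying in the open locus where $f$ is an isomorphism.

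\emph{Construction of $f^*$.} Since $f$ is birational and $X$ is integral, $X'$ is integral of dimension $d$, so $C_{KS}(X',D') = H^d_\nis(X',\sK^M_{d,(X',D')})$. The inclusion $D'\subset f^*(D)$ says precisely that $f$ carries $D'$ into $D$; combined with the contravariant functoriality of Milnor $K$-theory (the map $f^{-1}\sK^M_{d,X}\to\sK^M_{d,X'}$ induced by $f^\#\colon\sO_X\to f_*\sO_{X'}$), a section of $\sK^M_{d,(X,D)}$ pulls back to a section of $\sK^M_{d,X'}$ whose restriction to $D'$ vanishes. This yields a canonical map of Nisnevich sheaves $f^{-1}\sK^M_{d,(X,D)}\to\sK^M_{d,(X',D')}$ on $X'$. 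Composing the cohomological pullback $H^d_\nis(X,\sK^M_{d,(X,D)})\to H^d_\nis(X',f^{-1}\sK^M_{d,(X,D)})$ with the induced map defines $f^*\colon C_{KS}(X,D)\to C_{KS}(X',D')$.

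\emph{Reduction to points.} Because $f$ is birational, I may choose a dense open $U\subset X_\reg\setminus D$ over which $f$ is an isomorphism; then $f^{-1}(U)\xrightarrow{\sim}U$, and since $D'\subset f^*(D)$ we have $f^{-1}(U)\cap D'=\emptyset$, so $f^{-1}(U)\subset X'_\reg\setminus D'$. By \cite[Theorem~2.5]{Kato-Saito-2}, $C_{KS}(X,D)$ is generated by the classes $\cyc_{X|D}(x)$ with $x\in U$ a closed point, exactly as in the proof of \lemref{lem:Deg-PF}. Hence it suffices to prove $\deg_{X'}(f^*\cyc_{X|D}(x)) = \deg_X(\cyc_{X|D}(x))$ for such $x$. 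Writing $x'=f^{-1}(x)$, the degree formula of \lemref{lem:KS-Chow} gives $\deg_X(\cyc_{X|D}(x))=[k(x):k]$ and $\deg_{X'}(\cyc_{X'|D'}(x'))=[k(x'):k]$, and $[k(x'):k]=[k(x):k]$ because $f$ is an isomorphism near $x$. Thus everything reduces to the compatibility $f^*(\cyc_{X|D}(x))=\cyc_{X'|D'}(x')$.

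\emph{The main obstacle.} The heart of the argument is this last identity, which I expect to be the delicate point. The class $\cyc_{X|D}(x)$ is the image under the forget support map of the canonical generator of $H^d_x(X,\sK^M_{d,(X,D)})\cong\Z$ (via \cite[Lemma~3.7]{Gupta-Krishna-REC}), and $f^*$ is compatible with the pullback on cohomology with support $H^d_x(X,\sK^M_{d,(X,D)})\to H^d_{x'}(X',\sK^M_{d,(X',D')})$ sitting in a commuting square with the two forget support maps. Over $U$ the divisors are empty and $f$ restricts to an isomorphism of Nisnevich neighborhoods of $x$ and $x'$, so by excision this local pullback is an isomorphism carrying the $\Z$-generator to the $\Z$-generator (both identified through $K^M_0(k(x))=K^M_0(k(x'))=\Z$). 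This gives $f^*(\cyc_{X|D}(x))=\cyc_{X'|D'}(x')$ and completes the proof. The one point requiring genuine care is matching the two local generators and the forget support maps under the excision isomorphism; the remaining ingredients, namely generation by points of $U$ and the degree formula, are already supplied by \cite[Theorem~2.5]{Kato-Saito-2} and \lemref{lem:KS-Chow}.
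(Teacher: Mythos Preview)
Your proof is correct and follows essentially the same approach as the paper: reduce to cycle classes of closed points in the isomorphism locus via \cite[Theorem~2.5]{Kato-Saito-2}, then compute degrees using \lemref{lem:KS-Chow}. The paper's proof is a two-line sketch that takes the construction of $f^*$ and the compatibility $f^*([x])=[x']$ as evident, whereas you spell out both the sheaf-level pullback and the excision argument for the local generators; this extra care is not misplaced, but no new idea is needed beyond what the paper invokes.
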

\begin{proof}
By \cite[Theorem~2.5]{Kato-Saito-2}, it suffices to show that
$\deg_{X'} \circ f^*([x]) = \deg_X([x])$ for every closed point
$x \in (X_\reg \cap f(U)) \setminus D$, where $U$ is an open subset of $X'$ on 
which $f$ is an isomorphism. But this is obvious by \lemref{lem:KS-Chow}.
\end{proof}

Let $C(X,D)$ denote the idele class group with modulus 
due to Kerz \cite{Kerz11} (see \cite[\S~3]{Gupta-Krishna-REC}).
Recall from \cite[Theorem~8.2]{Kerz11} 
(or \cite[Theorem~3.8]{Gupta-Krishna-REC}) that when $X \setminus D$ is
regular, there are canonical maps
\begin{equation}\label{eqn:ICG-maps}
\Z \cong K^M_0(k(x)) \xrightarrow{\tau_x} C(X,D) 
{\underset{\cong}{\xrightarrow{\psi_{X|D}}}} C_{KS}(X,D)
\end{equation}
for every $x \in X_{(0)} \setminus D$, where $\tau_x$ 
is induced by the inclusion of the Parshin chain of length
zero and the composite arrow is the forget support map
$\tau'_x \colon K^M_0(k(x)) \cong C^x_{KS}(X,D) \to C_{KS}(X,D)$.
Furthermore, $\psi_{X|D}$ is an isomorphism.
There is a degree map $\deg \colon C(X,D) \to \Z$ by 
\cite[Proposition~4.8]{Gupta-Krishna-REC}, whose kernel is $C(X,D)_0$.

The following result shows the compatibility
between the degree maps for $C(X,D)$ and $C_{KS}(X,D)$ via $\psi_{X|D}$.

\begin{lem}\label{lem:KS-K-deg}
Assume that $X \setminus D$ is regular. Then the diagram
\begin{equation}\label{eqn:KS-K-deg-0}
\xymatrix@C.8pc{
C(X,D) \ar[dr]^-{\deg} \ar[d]_-{\psi_{X|D}} & \\
C_{KS}(X,D) \ar[r]_-{\deg} & \Z}
\end{equation}
is commutative.
\end{lem}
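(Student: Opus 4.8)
The plan is to reduce the commutativity of \eqref{eqn:KS-K-deg-0} to the level of generators, exploiting the fact that the degree maps are defined through a common recipe. By \cite[Theorem~8.2]{Kerz11} (equivalently \cite[Theorem~3.8]{Gupta-Krishna-REC}), when $X \setminus D$ is regular the group $C(X,D)$ is generated by the classes $\tau_x(1)$ for closed points $x \in X_{(0)} \setminus D$, via the maps recalled in \eqref{eqn:ICG-maps}. Hence it suffices to verify the identity $\deg(\psi_{X|D}(\tau_x(1))) = \deg(\tau_x(1))$ for each such $x$.

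First I would compute the right-hand side: by the construction of the degree map on $C(X,D)$ in \cite[Proposition~4.8]{Gupta-Krishna-REC}, the class $\tau_x(1)$ of the length-zero Parshin chain at a closed point $x$ maps to $[k(x):k]$, the degree of the residue field over $k$. Next I would compute the left-hand side. By \eqref{eqn:ICG-maps}, the composite $\psi_{X|D} \circ \tau_x$ equals the forget-support map $\tau'_x \colon K^M_0(k(x)) \cong C^x_{KS}(X,D) \to C_{KS}(X,D)$, so $\psi_{X|D}(\tau_x(1))$ is precisely the image of the generator $1 \in C^x_{KS}(X,D)$ under the canonical map to $C_{KS}(X,D)$. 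Now I invoke the machinery already set up: taking $Y = X$ in \lemref{lem:KS-Chow}, the composite $\lambda_x \colon \Z \xrightarrow{\cong} C^x_{KS}(X,D) \to C_{KS}(X,D) \to \CH^F_0(X)$ sends $1$ to the cycle class $[x]$, and the degree map on $C_{KS}(X,D)$ is by definition the composite of $\nu_X$ with the Fulton-style degree $\deg \colon \CH^F_0(X) \to \Z$. Since $\deg([x]) = [k(x):k]$, the left-hand side also equals $[k(x):k]$.

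Comparing the two computations gives $\deg(\psi_{X|D}(\tau_x(1))) = [k(x):k] = \deg(\tau_x(1))$ on generators, and since both degree maps are homomorphisms on groups generated by these classes, the diagram \eqref{eqn:KS-K-deg-0} commutes. The one point requiring care — and the likely main obstacle — is ensuring that the two a priori different recipes for the degree map truly agree on the common generators: one must confirm that the degree map on $C(X,D)$ coming from \cite[Proposition~4.8]{Gupta-Krishna-REC} assigns $[k(x):k]$ to the length-zero Parshin chain in exactly the same normalization as the Kato-Saito degree map does after transport through $\psi_{X|D}$. This is essentially a bookkeeping matter of matching the normalizations of the two constructions at closed points, and it is settled by the explicit Kato-Saito recipe description of the degree map recorded just before \lemref{lem:Deg-PF}, which identifies the $C_{KS}$-degree at a maximal Parshin chain with the composite of boundary maps in \eqref{eqn:KS-Chow-0} ending in the residue-degree map to $\Z$.
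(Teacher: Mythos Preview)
Your proof is correct and follows essentially the same approach as the paper: reduce to the generators $\tau_x(1)$ for closed points $x \in U$, then compute both degree maps on these generators as $[k(x):k]$ using \cite[Proposition~4.8]{Gupta-Krishna-REC} on one side and \lemref{lem:KS-Chow} on the other. The only minor discrepancy is in the citation for the generation statement: the references \cite[Theorem~8.2]{Kerz11} and \cite[Theorem~3.8]{Gupta-Krishna-REC} establish that $\psi_{X|D}$ is an isomorphism, while the actual generation of $C_{KS}(X,D)$ (and hence of $C(X,D)$ via $\psi_{X|D}$) by the cycle classes of closed points in $U$ is \cite[Theorem~2.5]{Kato-Saito-2}, which is what the paper invokes.
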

\begin{proof}
Let $U = X \setminus D$. Since $\psi_{X|D}$ is an isomorphism,
it suffices to show using \cite[Theorem~2.5]{Kato-Saito-2} that
for every closed point $x \in U$, the above diagram commutes if we
replace $C(X,D)$ by $K^M_0(k(x))$ via the canonical map
$\tau_x \colon K^M_0(k(x)) \to C(X,D)$, where we consider $\{x\}$ as a 
Parshin chain on $(U \subset X)$.
However, we have $\deg \circ \tau_x(1) = [k(x):k]$ by 
\cite[Proposition~4.8]{Gupta-Krishna-REC}.
Since $\psi_{X|D} \circ \tau_x(1) =
\tau'_x(1)$, we are done by \lemref{lem:KS-Chow} which says that
$\deg \circ \tau'_x(1) = [k(x):k]$.
\end{proof}

\subsection{Some $K$-theory results}\label{sec:K-thry}
In this subsection, we prove some $K$-theory results of general
interest which will be used in the proof of the finiteness theorem.
The following is an elementary but very useful lemma.

\begin{lem}\label{lem:Milnor-tor}
Let $Z$ be a Noetherian scheme over a field of
characteristic $p > 0$ and let $W \subset Z$ be a closed
subscheme defined by a nilpotent ideal sheaf. Then
$H^i_{\nis}(Z, \sK^M_{j, (Z,W)})$ is a $p$-primary torsion group of bounded 
exponent (which depends only on $j$) for all $i \ge 0$ and $j \ge 1$.
\end{lem}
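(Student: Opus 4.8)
The plan is to reduce the statement to a purely local assertion about relative Milnor $K$-groups and then to exhibit an explicit power of $p$ annihilating the sheaf itself. First I would record the elementary principle that if a Nisnevich sheaf $\sF$ of abelian groups on $Z$ satisfies $p^r\sF = 0$ for some $r \ge 0$ (i.e.\ multiplication by $p^r$ is the zero endomorphism of $\sF$), then by functoriality of cohomology multiplication by $p^r$ is zero on every $H^i_\nis(Z,\sF)$; hence each such group is $p$-primary torsion annihilated by $p^r$. Taking $\sF = \sK^M_{j,(Z,W)}$, it therefore suffices to show that this relative sheaf is annihilated by a fixed power of $p$.

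Since $W$ is defined by a nilpotent ideal sheaf, $Z$ and $W$ have the same underlying space, and annihilation of a sheaf may be checked stalkwise. Writing $A := \sO^h_{Z,z}$ for the Henselian local ring at a point $z$ and $I \subset A$ for the (still nilpotent) image of the ideal of $W$, the stalk of $\sK^M_{j,(Z,W)} = \Ker(\sK^M_{j,Z} \to \iota_*\sK^M_{j,W})$ at $z$ is $\Ker\!\big(K^M_j(A) \to K^M_j(A/I)\big) = K^M_j(A,I)$. Here I use that the stalks of the Milnor $K$-sheaf are the Milnor $K$-groups of the Henselizations. Fixing $N$ with $I^N = 0$, I am thus reduced to the following local claim: for a local $\F_p$-algebra $A$ with nilpotent ideal $I$ satisfying $I^N = 0$, the relative group $K^M_j(A,I)$ is annihilated by $p^r$ for every $r$ with $p^r \ge N$.

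Granting the right generators, the local claim is immediate. The key input is the generation property of relative Milnor $K$-theory of a local ring: $K^M_j(A,I)$ is generated by symbols $\{1+a,\, u_2,\, \dots,\, u_j\}$ with $a \in I$ and $u_2,\dots,u_j \in A^\times$. For such a symbol, multilinearity gives $p^r\{1+a, u_2, \dots, u_j\} = \{(1+a)^{p^r}, u_2, \dots, u_j\}$, and since $p = 0$ in $A$ one has $(1+a)^{p^r} = 1 + a^{p^r}$; choosing $p^r \ge N$ forces $a^{p^r} \in I^N = 0$, so the symbol is $\{1, u_2, \dots, u_j\} = 0$. Hence $p^r K^M_j(A,I) = 0$, giving the claimed bounded exponent, with any $p^r \ge N$ serving as annihilator.

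I expect the one genuinely non-formal ingredient to be this generation statement, namely that the kernel of $K^M_j(A) \to K^M_j(A/I)$ is generated by symbols having at least one entry in $1+I$; the reverse inclusion is obvious, and everything else is sheaf-theoretic bookkeeping together with the characteristic-$p$ identity $(1+a)^{p^r} = 1+a^{p^r}$. This generation belongs to the standard structure theory of Milnor $K$-theory of local rings and its relative variant, and I would invoke it from \cite{Gupta-Krishna-REC} (or from Kerz's treatment of Milnor $K$-theory of local rings), being careful that for local rings with finite residue field one works with the improved Milnor $K$-sheaf so that its stalks are computed by Henselizations as used above.
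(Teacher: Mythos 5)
Your proposal is correct, and its core mechanism matches the paper's: reduce the cohomological claim to showing that the sheaf $\sK^M_{j,(Z,W)}$ itself is annihilated by a fixed power of $p$, then exploit generation by symbols $\{1+a,u_2,\ldots,u_j\}$ with $a \in \sI_W$. The difference is where the generation comes from and how the generators are killed. The paper's proof appeals to the definition of $\sK^M_{j,(Z,W)}$ in \cite[(2.2)]{Gupta-Krishna-REC}, which is the \emph{symbol-generated} subsheaf of $\sK^M_{j,Z}$ rather than the kernel; generation is thus definitional, the case $j \ge 2$ reduces at once to $j=1$, and the paper finishes with the d\'evissage $1+\sI_W \supset 1+\sI_W^2 \supset \cdots$ down to a square-zero ideal, where $\sK^M_{1,(Z,W)} \cong \sI_W$ is visibly $p$-torsion. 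You instead take the kernel description (the one quoted in the introduction of this paper) at face value, pass to Nisnevich stalks, and invoke the exactness statement for local rings, namely that $\Ker\bigl(K^M_j(A) \to K^M_j(A/I)\bigr)$ is generated by relative symbols when $I$ lies in the radical; this is true and standard (it is the Kato--Saito exactness lemma for relative Milnor $K$-theory, available in \cite{Kato-Saito-2} and \cite{Gupta-Krishna-REC}), and it is precisely the fact that reconciles the kernel description with the symbol definition used in the paper's proof — a compatibility the paper leaves implicit. Your endgame, $p^r\{1+a,u_2,\ldots,u_j\} = \{(1+a)^{p^r},u_2,\ldots,u_j\} = \{1+a^{p^r},u_2,\ldots,u_j\} = 0$ once $p^r \ge N$, is more direct than the filtration argument and produces an explicit annihilator in one line. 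What the paper's formulation buys is that it never leaves the sheaf level, so the subtlety you rightly flag — that Nisnevich stalks over a finite field have finite residue fields, where one should work with Kerz's improved Milnor $K$-sheaf \cite{Kerz10} and verify that its stalks are computed by Henselizations and that the relative exactness lemma persists — never arises. Your route is sound once that caveat is discharged, or, more simply, once one adopts the symbol-generated definition, after which your Frobenius computation alone proves the lemma.
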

\begin{proof}
We show that the sheaf $\sK^M_{j, (Z,W)}$ itself is $p$-primary 
torsion of bounded
exponent for every $j \ge 1$. This will prove the lemma.
By the definition of  $\sK^M_{j, (Z,W)}$ (see \cite[(2.2)]{Gupta-Krishna-REC}), 
it suffices to prove the statement for $j =1$.
In this case, we can use the iterative process to reduce the problem to the
case when the ideal sheaf $\sI_W$ defining $W$ is square-zero.
But then, we must have $\sK^M_{1, (Z,W)} \cong \sI_W$ and the latter is
a $p$-torsion sheaf. 
\end{proof}

\begin{lem}\label{lem:Conductor}
  Let $f \colon X_n \to X$ be the normalization morphism for a reduced Noetherian
  scheme $X$ such that $f$ is a finite morphism (e.g., $X$ is essentially of finite type
  over a field). Then we can find a conductor closed subscheme $Y \inj X$ such that
  for $Y' = f^*(Y)$, the canonical restriction map
  \[
    {f_*(\sK_{2,X_n})}/{\sK_{2,X}} \to {f_*(\sK_{2,Y'})}/{\sK_{2,Y}}
\]
is an isomorphism of Nisnevich sheaves on $X$.
\end{lem}
\begin{proof}
We fix a conductor subscheme $Y \subset X$ (which always exists) for $f$ and
  look at the commutative diagram of Nisnevich sheaves (on $X$)
  \begin{equation}\label{eqn:Conductor-0}
    \xymatrix@C.8pc{
      \sK_{2,(X,Y)} \ar[r] \ar[d]_-{f^*}  & \sK_{2, X} \ar[r] \ar[d]^-{f^*} & \sK_{2,Y}
        \ar[d]^-{f^*} \ar[r] & 0 \\
      f_*(\sK_{2,(X_n,Y')}) \ar[r] & f_*(\sK_{2, X_n}) \ar[r] &
      f_*(\sK_{2,Y'}) \ar[r] & 0.}
  \end{equation}
  Note that the maps $\sK_{2, X} \to  \sK_{2,Y}$ and $\sK_{2, X_n} \to  \sK_{2,Y'}$
are surjective by 
\cite[Proposition~10, Theorem~13]{Kerz10} because the surjectivity clearly
holds for the Milnor $K$-theory sheaf. In particular, the top row is exact.
The bottom row is exact because $f$ is finite, and a finite
push-forward is an exact functor on the category of Nisnevich sheaves.

On the other hand, we also have a double relative $K$-theory exact sequence
    \begin{equation}\label{eqn:Conductor-1}
\sK_{2,(X,Y)} \xrightarrow{f^*}  f_*(\sK_{2,(X_n,Y')}) \to \sK_{1, (X, X_n, Y)}
\to  \sK_{1,(X,Y)}  \xrightarrow{f^*}  f_*(\sK_{1,(X_n,Y')}),
\end{equation}
where $\sK_{1, (X,X_n, Y)}$ is the sheaf of double relative $K$-theory.
Since $\sK_{1, (X,Y)} \cong (1 + \sI_Y)^{\times} \inj (1 + \sI_{Y'})^{\times} \cong 
\sK_{1, (X_n,Y')}$, and
 $\sK_{1, (X,X_n, Y)} \cong {\sI_Y}/{\sI^2_Y} \otimes_{Y'}
 \Omega^1_{{Y'}/{Y}}$ by \cite[Theorem~0.2]{Geller-Weibel},
a combination of ~\eqref{eqn:Conductor-0} and ~\eqref{eqn:Conductor-1}
yields (via a diagram chase) an exact sequence
\begin{equation}\label{eqn:Conductor-2}
{\sI_Y}/{\sI^2_Y} \otimes_{Y'} \Omega^1_{{Y'}/{Y}} \to
{f_*(\sK_{2,X_n})}/{\sK_{2,X}} \to {f_*(\sK_{2,Y'})}/{\sK_{2,Y}} \to 0.
\end{equation}
Comparing this exact sequence for $Y$ and $2Y$ (where $mY \subset X$ is
defined by $\sI^m_Y$), we get the desired isomorphism if we
choose our conductor subscheme to be $2Y$.
\end{proof}

The next result is of independent interest and plays a fundamental
role in the study of 0-cycles on singular schemes.
Recall that for any $X \in \Sch_k$, the edge map of the
Thomason-Trobaugh spectral sequence  yields a split surjection
$K_1(X) \surj H^0_\nis(X, \sO^{\times}_X)$. We let $SK_1(X)$ denote the
kernel of this surjection.

\begin{prop}\label{prop:Sing-surface}
Let $X$ be a reduced quasi-projective surface over a
field and let $f \colon X_n \to X$ be the normalization map.
Then we can find a conductor closed subscheme $Y \inj X$ such that
for $Y' = f^*(Y)$, there is an exact sequence
\[
0 \to \frac{SK_1(X_n)}{SK_1(X)} \to \frac{SK_1(Y')}{SK_1(Y)} \to
C_{KS}(X)_0 \to C_{KS}(X_n)_0 \to 0.
\]
\end{prop}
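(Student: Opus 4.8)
The plan is to construct the exact sequence by comparing the Kato-Saito idele class groups of $X$ and $X_n$ through the conductor data, using the localization/excision structure provided by Lemma~\ref{lem:KS-Chow} and the coniveau exact sequence~\eqref{eqn:KS-Chow-4-ex}. First I would choose the conductor subscheme $Y \inj X$: since $X$ is reduced and $f \colon X_n \to X$ is finite birational (being the normalization of a surface), there is a largest conductor ideal $\mathfrak{c} = \sHom_{\sO_X}(f_*\sO_{X_n}, \sO_X)$, cutting out a closed subscheme $Y$ of dimension $\le 1$ supported on the non-normal locus, with $Y' = f^*(Y)$ its preimage. The key geometric input is that $f$ restricts to an isomorphism $X_n \setminus Y' \xrightarrow{\cong} X \setminus Y$, so both surfaces share a common regular open locus away from the conductor (after possibly enlarging $Y$ to contain the singular locus, which is permissible since we only need $Y$ to be \emph{some} conductor closed subscheme).

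The heart of the argument is a Mayer-Vietoris / comparison square associated to the abstract blow-up datum $(X_n, X, Y', Y)$. I would apply the coniveau exact sequence~\eqref{eqn:KS-Chow-4-ex} with $\sF = \sK^M_{2,X}$ (here $d = 2$, and $D = \emptyset$ so $C_{KS}(X) = H^2_\nis(X, \sK^M_{2,X})$) simultaneously on $X$ and $X_n$, and compare the two via pushforward. The diagram of four terms $SK_1(X_n), SK_1(X), SK_1(Y'), SK_1(Y)$ suggests that the relevant relative $K$-theory is governed by the cartesian square of rings; the standard tool here is the exact sequence relating $SK_1$ and $C_{KS} = H^2(\sK^M_2)$ through the conductor, which for surfaces is essentially the analysis of 0-cycles via the spectral sequence of the cover. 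Concretely, I expect the relation $\sK^M_{2,X} \to f_*\sK^M_{2,X_n}$ to fit into a sequence whose cone is controlled by the relative terms $\sK^M_{j,(X_n,Y')}$ versus $\sK^M_{j,(X,Y)}$, and taking Nisnevich cohomology (degree $2$ for the $C_{KS}$ terms, degree $1$ for the $SK_1$ terms via the identification of $H^1(\sK^M_2)$-type groups with $SK_1$ on the curves $Y, Y'$) yields the four-term sequence. The degree-zero parts appear because $f$ induces an isomorphism on the regular open locus, so the two cycle-theoretic degree maps of Lemma~\ref{lem:KS-Chow} and~\eqref{eqn:Deg-map} agree there and the surjection $C_{KS}(X)_0 \to C_{KS}(X_n)_0$ inherits exactness from the full groups after quotienting by the common image of $\Z$.

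The hard part will be establishing exactness at the two middle terms, i.e. identifying the kernel of $C_{KS}(X)_0 \to C_{KS}(X_n)_0$ precisely with the image of $SK_1(Y')/SK_1(Y)$, and showing the leftmost map $SK_1(X_n)/SK_1(X) \to SK_1(Y')/SK_1(Y)$ is injective. This requires a careful diagram chase in the comparison of the two coniveau sequences: the Milnor $K$-theory of the function fields of $X$ and $X_n$ coincide (both equal $K^M_2(K)$ since $f$ is birational), so the top-degree contributions match, and the discrepancy is concentrated along the conductor curves. I would control this discrepancy using Lemma~\ref{lem:KS-Chow}(3) (the isomorphism $C^Y_{KS} \cong \CH^F_0$ on the regular locus) to pin down the image of closed-point classes, and the bounded-torsion/relative structure from Lemma~\ref{lem:Milnor-tor} to handle the thickening contributions if $Y$ is non-reduced. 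The surjectivity on the right (onto $C_{KS}(X_n)_0$) should follow from the surjectivity statement in~\eqref{eqn:KS-Chow-4-ex} together with \cite[Theorem~2.5]{Kato-Saito-2}, which guarantees $C_{KS}(X_n)_0$ is generated by closed points lying in the common open locus where $f$ is an isomorphism, so every generator lifts.
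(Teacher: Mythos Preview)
Your outline has the right overall shape --- comparing $\sK^M_{2,X}$ to $f_*\sK^M_{2,X_n}$ and taking cohomology is exactly how the paper starts --- but it is missing the two technical ingredients that make the identification of the middle term work, and neither the coniveau sequence nor \lemref{lem:Milnor-tor} can substitute for them.

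First, the paper does not compare coniveau sequences on $X$ and $X_n$ directly; instead it forms the quotient sheaf $\sF = f_*(\sK_{2,X_n})/\sK_{2,X}$, supported on the conductor, and the long exact sequence immediately gives
\[
0 \to \frac{H^1(X_n,\sK_{2,X_n})}{H^1(X,\sK_{2,X})} \to H^1(X,\sF) \to C_{KS}(X)_0 \to C_{KS}(X_n)_0 \to 0.
\]
Your proposal is vague about why the leftmost term equals $SK_1(X_n)/SK_1(X)$ rather than a quotient of $H^1$'s: the paper needs the Thomason--Trobaugh spectral sequence here, specifically the surjectivity of $H^2(X,\sK_{3,X}) \to H^2(X_n,\sK_{3,X_n})$, to pass from $H^1(\sK_2)$ to $SK_1$.

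Second, and more seriously, the whole content of the proposition is the identification $H^1(X,\sF) \cong SK_1(Y')/SK_1(Y)$, and your plan to handle this via relative Milnor sheaves $\sK^M_{j,(X,Y)}$ plus \lemref{lem:Milnor-tor} will not produce an exact sequence: that lemma only gives bounded $p$-primary torsion, not vanishing. The paper instead compares $\sF$ to its restriction $f_*(\sK_{2,Y'})/\sK_{2,Y}$ on $Y$, identifies the kernel with the \emph{double} relative sheaf $\sK_{1,(X,X_n,Y)}$, and invokes the Geller--Weibel formula $\sK_{1,(X,X_n,Y)} \cong (\sI_Y/\sI^2_Y)\otimes_{Y'}\Omega^1_{Y'/Y}$ \cite{Geller-Weibel}. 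This makes the kernel visibly vanish once one replaces $Y$ by $2Y$, which is exactly why the statement only promises \emph{some} conductor subscheme rather than the minimal one. Without the Geller--Weibel input and the $Y \mapsto 2Y$ trick, you have no mechanism to force the comparison map to be an isomorphism.
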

\begin{proof}
A version of this result is shown in \cite[Proposition~2.3]{Krishna-JKT}
for surfaces over $\C$. We shall modify that argument and, in particular, use
\lemref{lem:Conductor} to prove the
proposition. By \cite[Proposition~10, Theorem~13]{Kerz10}, we
can replace $C_{KS}(X)$ by $H^2_\nis(X, \sK_{2,X})$, where
$\sK_{*,X}$ is the Quillen $K$-theory sheaf. The same for $X_n$ too.

We choose a conductor subscheme $Y \inj X$ and consider the exact sequence
of Nisnevich sheaves on $X$:
\[
\sK_{2,X} \to f_*(\sK_{2,X_n}) \to {f_*(\sK_{2,X_n})}/{\sK_{2,X}} \to 0.
\]
Since $H^i_\nis(Z, \sK_{2,Z}) = 0$ for $i > 0$ for a semilocal scheme $Z$,
the Leray spectral sequence tells us that 
$H^i_\nis(X_n, \sK_{2,X_n}) \cong H^i_\nis(X, f_*(\sK_{2,X_n}))$ for all
$i \ge 0$. Since the kernel and cokernel of the map 
$\sK_{2,X} \to f_*(\sK_{2,X_n})$ are supported on $Y$, it follows from 
the above sheaf exact sequence that there is an exact cohomology 
sequence
\begin{equation}\label{eqn:Sing-surface-0} 
0 \to \frac{H^1_\nis(X_n, \sK_{2,X_n})}{H^1_\nis(X, \sK_{2,X})} \to
H^1_\nis(X, {f_*(\sK_{2,X_n})}/{\sK_{2,X}}) \to 
H^2_\nis(X, \sK_{2,X}) \to H^2_\nis(X_n, \sK_{2,X_n}) \to 0.
\end{equation}
We can replace the last two terms by their degree zero subgroups
without disturbing the exactness.

Since 
\begin{equation}\label{eqn:Sing-surface-1} 
H^2_\nis(X, \sK_{3,X}) \to SK_1(X) \to H^1_\nis(X, \sK_{2,X}) \to 0
\end{equation}
is exact by the Thomason-Trobaugh spectral sequence 
\cite[Theorem~10.8]{TT} and since $H^2_\nis(X, \sK_{3,X}) \surj
H^2_\nis(X_n, \sK_{3,X_n})$, the left-end term in
~\eqref{eqn:Sing-surface-0} is same as the quotient ${SK_1(X_n)}/{SK_1(X)}$.
Since the edge map of the spectral sequence
induces an isomorphism $SK_1(Z) \cong H^1_\nis(Z, \sK_{2,Z})$ for
any Noetherian scheme $Z$ of Krull dimension at most one,
what we are left to show is that the canonical map
\[
H^1_\nis(X, {f_*(\sK_{2,X_n})}/{\sK_{2,X}}) \to 
H^1_\nis(Y, {f_*(\sK_{2,Y'})}/{\sK_{2,Y}})
\]
is an isomorphism if we replace $Y$ by some of its infinitesimal thickenings.
But this is a direct consequence of \lemref{lem:Conductor} (and exactness of
$f_*$).
\end{proof}

\begin{lem}\label{lem:SK-1}
Let $Y$ be a one-dimensional Noetherian $k$-scheme, where $k$ is a 
finite field.
Then $SK_1(Y) = 0$ if $Y$ is affine and $SK_1(Y)$ is a torsion group
of bounded exponent if $Y$ is projective.
\end{lem}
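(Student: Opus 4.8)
The plan is to reduce both statements to the case of a smooth curve, where $SK_1$ is computed by class field theory, and to control the correction terms coming from nilpotents and singularities by zero-dimensional and relative contributions. Throughout I use the identification $SK_1(Y) \cong H^1_\nis(Y, \sK_{2,Y})$, which is valid for $\dim Y \le 1$ by the edge map of the Thomason--Trobaugh spectral sequence exactly as in the proof of \propref{prop:Sing-surface}; this lets me work with the Nisnevich cohomology of the $K$-theory sheaf and its relative variants. First I would reduce to the reduced case: for a square-zero ideal $\sI \subset \sO_Y$, the determinant identity $\det(1 + N) = 1 + \tr(N)$ for matrices $N$ with entries in $\sI$ forces $SK_1(Y, \sI) = 0$, so filtering the nilradical by its powers gives $SK_1(Y) \cong SK_1(Y_\red)$ (for the projective assertion one may instead invoke \lemref{lem:Milnor-tor}, which bounds the relative term by $p$-primary torsion of bounded exponent and is harmless). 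Next, let $f \colon Y_n \to Y_\red$ be the finite normalization and let $Y_0 \inj Y_\red$ be a conductor subscheme with $Y_0' = f^*(Y_0)$, both zero-dimensional; proceeding as in the proof of \propref{prop:Sing-surface} with the sheaf sequence $\sK_{2,Y_\red} \to f_*\sK_{2,Y_n} \to \sQ \to 0$ (with $\sQ$ and the kernel supported on the conductor) and the birelative term $\sK_{1,(Y_\red, Y_n, Y_0)} \cong {\sI_{Y_0}}/{\sI^2_{Y_0}} \otimes \Omega^1_{Y_0'/Y_0}$ of \cite{Geller-Weibel}, one expresses $SK_1(Y_\red)$ through $SK_1(Y_n)$ and contributions supported on $Y_0, Y_0'$. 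Since $SK_1$ of a zero-dimensional scheme vanishes (it is a finite product of local rings), this reduces everything to the normal case $Y_n$, a disjoint union of smooth curves over finite extensions of $k$.

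It then remains to treat a smooth curve $C$ over a finite field. Here the Gersten resolution of \cite{Kerz10} yields $SK_1(C) = \coker\bigl(K_2(k(C)) \xrightarrow{\partial} \bigoplus_{x \in C_{(0)}} k(x)^\times\bigr)$, with $\partial$ the tame symbol. In the affine case the tame symbol is surjective --- this is the arithmetic input, coming from the class field theory of the function field $k(C)$ (Bass--Milnor--Serre type), where one uses that the nonempty set of points at infinity absorbs the Weil reciprocity obstruction --- so $SK_1(C) = 0$, and feeding this back through the reductions gives $SK_1(Y) = 0$ for $Y$ affine. In the projective case I would remove one closed point $x_0$ to obtain an affine open $U = C \setminus \{x_0\}$ with $SK_1(U) = 0$; thus $\partial$ already surjects onto $\bigoplus_{x \neq x_0} k(x)^\times$, and comparing cokernels exhibits $SK_1(C)$ as a quotient of the finite group $k(x_0)^\times$, hence finite and in particular torsion of bounded exponent. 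The finitely many finite contributions and the $p$-primary terms of \lemref{lem:Milnor-tor} coming from the nilpotent and conductor reductions preserve this conclusion.

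The main obstacle is the smooth affine vanishing $SK_1(C) = 0$, equivalently the surjectivity of the tame symbol for a smooth affine curve over a finite field: this is the genuinely arithmetic point and is where class field theory must enter. A secondary technical difficulty is verifying that in the \emph{affine} case the conductor correction terms contribute nothing, so that one obtains exactly $0$ rather than merely bounded-exponent torsion; this requires checking surjectivity of the maps out of $K_2$ and the units of the zero-dimensional pieces $Y_0, Y_0'$, which is precisely where the structure of $K$-theory of Artinian rings over a finite field is used.
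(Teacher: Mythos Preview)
Your proposal is correct and follows essentially the same route as the paper: reduce to the reduced case via \lemref{lem:Milnor-tor}, pass to the normalization using the conductor square and the Geller--Weibel description of the birelative term (exactly the mechanism of \propref{prop:Sing-surface}), and then handle a smooth curve over a finite field. The paper simply cites \cite{Nestler} for the affine vanishing, whereas you unpack it as surjectivity of the tame symbol coming from Moore/Bass--Tate reciprocity for the global function field; and for the smooth projective case the paper uses the localization sequence together with push-forward $K_*(Y_n) \to K_*(k)$ to identify $SK_1(Y_n) \cong (k^\times)^r$, while you remove a closed point and exhibit $SK_1$ as a quotient of $k(x_0)^\times$ --- these are the same computation read in two directions.
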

\begin{proof}
The affine case follows from \cite{Nestler}. We therefore assume that
$Y$ is projective. The Thomason-Trobaugh spectral sequence 
\cite[Theorem~10.8]{TT} yields isomorphisms
(see ~\eqref{eqn:Sing-surface-1})
\begin{equation}\label{eqn:SK-1-0}
SK_1(Y) \cong H^1_\nis(Y, \sK_{2,Y}) \cong H^1_\nis(Y, \sK^M_{2,Y})
\cong H^1_\zar(Y, \sK^M_{2,Y}).
\end{equation}
By \lemref{lem:Milnor-tor}, we can therefore assume that $Y$ is reduced.
Let $f \colon Y_n \to Y$ be the normalization map.

Since the kernel of the first arrow in the exact sequence
\[
  \sK_{2,Y} \xrightarrow{f^*}   f_*(\sK_{2,Y_n}) \to {f_*(\sK_{2,Y_n})}/{\sK_{2,Y}}
  \to 0
\]
is supported on $Y_\sing$, there is an exact cohomology sequence
\begin{equation}\label{eqn:SK-1-00}
H^0_\nis(Y, {f_*(\sK_{2,Y_n})}/{\sK_{2,Y}}) \to 
H^1_\nis(Y, \sK_{2,Y}) \to H^1_\nis(Y_n, \sK_{2,Y_n}) \to 0.
\end{equation}
Combining this with \lemref{lem:Conductor}, we can find a conductor closed subscheme
$Z \subset Y$ such that one has an exact sequence
\begin{equation}\label{eqn:SK-1-000}
H^0_{\nis}(Z, {f_*(\sK_{2,Z'})}/{\sK_{2,Z}}) \to 
H^1_\nis(Y, \sK_{2,Y}) \to H^1_\nis(Y_n, \sK_{2,Y_n}) \to 0,
\end{equation}
where  $Z' = f^*(Z)$.
Since $K_2(Z') = H^0_{\nis}(Z,f_*(\sK_{2, Z'})) \surj
H^0_{\nis}(Z, {f_*(\sK_{2,Z'})}/{\sK_{2,Z}})$,
~\eqref{eqn:SK-1-0} and ~\eqref{eqn:SK-1-000} together give us an exact sequence
\[
K_2(Z') \to SK_1(Y) \to SK_1(Y_n) \to 0.
\]

Since $\dim(Z') = 0$, we see that $Z'_\red$ is the spectrum of a finite
product of finite fields. Hence, $K_2(Z'_\red) = 0$.
It follows from \lemref{lem:Milnor-tor} that $K_2(Z')$ is a $p$-primary
torsion group of bounded exponent. Since $Y_n$ is a smooth projective
curve over $k$, the $K$-theory localization sequence and the affine
case of the lemma shows that the push-forward map
$K_*(Y_n) \to K_*(k)$ induces an isomorphism $SK_1(Y_n) \cong
(k^{\times})^r$, where $r$ is the number of irreducible components of $Y$.
The result follows.
\end{proof}

\subsection{Torsion in the idele class group of a surface}
\label{sec:KSS}
Let $X$ be an integral projective
scheme of dimension two over a finite field $k$ and 
$D \subset X$ a nowhere dense
closed subscheme. The following result is the starting point of the proof of
\thmref{thm:KS-fin}.

\begin{prop}\label{prop:Surface-tor}
$C_{KS}(X,D)_0$ is a torsion group of bounded exponent.
\end{prop}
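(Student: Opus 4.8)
The plan is to reduce the claim to a statement about the idele class group of the normalization $X_n$, which is a regular surface, and there use known finiteness or torsion results together with the structural exact sequences established in this section. First I would apply \propref{prop:Sing-surface} to $X$, which yields a conductor subscheme $Y \inj X$ with $Y' = f^*(Y)$ on the normalization $X_n$ and an exact sequence
\[
0 \to \frac{SK_1(X_n)}{SK_1(X)} \to \frac{SK_1(Y')}{SK_1(Y)} \to
C_{KS}(X)_0 \to C_{KS}(X_n)_0 \to 0.
\]
Since $Y$ and $Y'$ are one-dimensional, \lemref{lem:SK-1} tells me that $SK_1(Y)$ and $SK_1(Y')$ are torsion groups of bounded exponent, so the middle term is of bounded exponent. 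Thus $C_{KS}(X)_0$ is an extension of a subgroup of $C_{KS}(X_n)_0$ by a bounded-exponent torsion group, and the problem for $X$ reduces to the same statement for the \emph{regular} surface $X_n$ (after also accounting for the divisor $D$).

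To handle the divisor $D$, I would first treat the case $D = \emptyset$ on a regular projective surface, and then incorporate $D$ via \lemref{lem:Milnor-tor}. Concretely, the relative sheaf $\sK^M_{2,(X,D)}$ sits in an exact sequence relating $C_{KS}(X,D)$ to $C_{KS}(X, D_\red)$ whose comparison terms involve $\sK^M_{2,(X', W)}$ for a nilpotent thickening $W$; by \lemref{lem:Milnor-tor} these contributions are $p$-primary torsion of bounded exponent. This lets me replace $D$ by $D_\red$ up to bounded-exponent torsion, and then further reduce (using \corref{cor:Deg-PF-*} or a direct comparison of $C_{KS}(X,D_\red)_0$ with $C_{KS}(X)_0$) to the case $D = \emptyset$ on the regular surface $X_n$.

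For the regular case $C_{KS}(X_n)_0 = H^2_\nis(X_n, \sK^M_{2,X_n})_0$ with $X_n$ a smooth projective surface over a finite field, I would invoke the class field theory of Bloch \cite{Bloch81} and Kato-Saito \cite{Kato-Saito-2}: by Bloch's formula this group is identified with $\CH_0(X_n)_0$ (via $\sK_{2}$), and the reciprocity map of \cite{Kato-Saito-1} shows that $\CH_0(X_n)_0$ injects into $\pi^{\ab}_1(X_n)_0$, which is finite for a smooth projective variety over a finite field. Finiteness of $\CH_0(X_n)_0$ then gives that $C_{KS}(X_n)_0$ is finite, hence a fortiori a torsion group of bounded exponent.

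The hard part will be the bookkeeping in the reductions rather than any single deep input: I must make sure that passing from $X$ to $X_n$ and from $D$ to $D_\red$ to $\emptyset$ is compatible with the degree maps (so that the relevant degree-zero subgroups match up correctly), and that each comparison sequence used to strip off $D$ or the conductor contributes only bounded-exponent torsion. The main obstacle I anticipate is verifying that the maps between $C_{KS}(X,D)_0$, $C_{KS}(X,D_\red)_0$, and $C_{KS}(X)_0$ fit into the exact sequences with the expected torsion kernels and cokernels, and carefully combining an extension by bounded-exponent torsion (from $SK_1$ of the conductor and from the relative Milnor sheaves) with finiteness of the regular-surface term to conclude that the exponent of $C_{KS}(X,D)_0$ is globally bounded.
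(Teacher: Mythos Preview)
There is a genuine gap: you assume that the normalization $X_n$ of the integral projective surface $X$ is \emph{regular}, but normalization only makes $X_n$ normal. In dimension two a normal surface may well have isolated singularities (e.g.\ a cone over a smooth curve of positive genus, or any rational double point), so your final step ``$X_n$ smooth projective over a finite field $\Rightarrow$ $\CH_0(X_n)_0$ finite by Bloch/Kato--Saito'' does not apply directly. Everything before that point is fine and is in fact exactly what the paper does: the exact sequence $SK_1(D) \to C_{KS}(X,D)_0 \to C_{KS}(X)_0 \to 0$ together with \lemref{lem:SK-1} strips off $D$, and \propref{prop:Sing-surface} with \lemref{lem:SK-1} reduces to the normal case.

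The missing step, which the paper supplies, is to pass from the normal surface to a regular one by taking a resolution of singularities $f\colon \wt{X}\to X$ with reduced exceptional divisor $E$, and then to identify $C_{KS}(X)$ with $C_{KS}(\wt{X},nE)\cong \CH_0(\wt{X}|nE)$ for $n\gg 0$ (this identification is the content of \cite[Theorem~1.4]{Krishna-ANT} and is not obvious). Once one is on the smooth projective surface $\wt{X}$ with modulus $nE$, the bounded-exponent torsion statement follows from \cite[Corollary~8.7]{BKS}. Your appeal to \cite{Bloch81} and \cite{Kato-Saito-1} would give the right conclusion in the genuinely smooth case $C_{KS}(\wt{X})_0$, but the relevant group here is $C_{KS}(\wt{X},nE)_0$, which is why the modulus version from \cite{BKS} is needed.
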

\begin{proof}
Using the exact sequence 
\[
SK_1(D) \to C_{KS}(X,D)_0 \to C_{KS}(X)_0 \to 0
\]
and \lemref{lem:SK-1}, it suffices to show that $C_{KS}(X)_0$ is
a torsion group of bounded exponent. Using \propref{prop:Sing-surface}
and \lemref{lem:SK-1}, we can assume that $X$ is normal.

Let $f \colon \wt{X} \to X$ be a resolution of singularities of $X$
with reduced exceptional divisor $E$. 
It is then shown in the proof of \cite[Theorem~1.4]{Krishna-ANT} that 
there are isomorphisms
\begin{equation}\label{eqn:Surface-tor-0*}
C_{KS}(X) \cong C_{KS}(\wt{X}, nE) \cong \CH_0(\wt{X}| nE)
\end{equation}
for all $n \gg 0$, where the last group is the Chow group of
0-cycles with modulus (see \S~\ref{sec:cycles}). 
Using \lemref{lem:Deg-birational},
we get $C_{KS}(X)_0 \cong \CH_0(\wt{X}| nE)_0$.
We are now done by \cite[Corollary~8.7]{BKS}. 
\end{proof}

\section{Generic fibration over a curve}\label{sec:3-term}
Let $X$ be an integral projective
scheme of dimension two over a finite field $k$. 
We assume that there exists a projective dominant morphism
$f \colon X \to S$, where $S$ is an integral projective curve over $k$.
We assume  that there exists a dense open $S' \subset S$ such that the fibers of
$f$ over $S'$ are integral. 
We assume also that $f$ has a section over the generic point of $S$
and the generic fiber is regular.

We shall deduce an important corollary of \propref{prop:Surface-tor}
under the above set-up.
By \lemref{lem:KS-Chow}, we have the maps
$C_{KS}(X,D) \xrightarrow{\nu_X} \CH^F_0(X) \xrightarrow{f_*} \CH^F_0(S)$.
We let $C_{KS}(X,D)_S$ denote the kernel of the composite map.
It follows from \lemref{lem:KS-Chow} that there are inclusions
\begin{equation}\label{eqn:Deg-0-*}
C_{KS}(X,D)_S \inj C_{KS}(X,D)_0 \inj C_{KS}(X,D).
\end{equation}

Let $t \in S$ denote the generic point of $S$.
For any $s \in S$, we let $X_s = f^{-1}(s)$ denote the scheme theoretic
fiber.
For every proper closed subset $Z \subset S$, 
there is an exact sequence of Nisnevich cohomology groups with support
\[
H^1_\nis(f^{-1}(S \setminus Z), \sK^M_{2, (X,D)}) \to
H^2_{f^{-1}(Z)}(X, \sK^M_{2, (X,D)}) \to H^2_\nis(X, \sK^M_{2, (X,D)}).
\]
Taking the inductive limit over all proper closed subsets $Z$, we get
an exact sequence
\begin{equation}\label{eqn:V-seq}
SK_1(X_t) \xrightarrow{\partial}
{\underset{s \in S_{(0)}}\bigoplus} C^{X_s}_{KS}(X,D)
\to  C_{KS}(X,D) \to 0,
\end{equation}
where the second map is surjective by our assumption and
\cite[Theorem~2.5]{Kato-Saito-2}.

We now consider the diagram 
\begin{equation}\label{eqn:V-seq-0}
\xymatrix@C.8pc{
SK_1(X_t) \ar[r]^-{\partial} \ar[d]_-{f_*} &  
{\underset{s \in S_{(0)}}\bigoplus} C^{X_s}_{KS}(X,D) \ar[r] \ar[d]^-{f_*} &
 C_{KS}(X,D) \ar[r] \ar[d]^-{f_*} & 0 \\
K^M_1(k(t)) \ar[r]^-{\divf} & {\underset{s \in S_{(0)}}\bigoplus} \Z
\ar[r] & \CH^F_0(S) \ar[r] & 0,}
\end{equation}
where the left vertical arrow is induced by the
push-forward map $f_* \colon K_1(X_t) \to K_1(k(t)) \cong K^M_1(k(t))$ 
which exists because $X_t$ is regular.
The bottom exact sequence
is the standard one which defines $\CH^F_0(S)$.
The middle vertical arrow is the direct sum of
the compositions of the maps $C^{X_s}_{KS}(X,D) \to \CH^F_0(X_s)$
from \lemref{lem:KS-Chow}
and the push-forward maps $f_* \colon \CH^F_0(X_s) \to 
\CH^F_0(\Spec(k(s))) \cong \Z$. The right vertical arrow is the composition of 
the map $\nu_X \colon C_{KS}(X,D) \to \CH^F_0(X)$ from \lemref{lem:KS-Chow} and 
the push-forward map $f_* \colon \CH^F_0(X) \to \CH^F_0(S)$. 

\begin{lem}\label{lem:V-seq-00}
The diagram ~\eqref{eqn:V-seq-0} is commutative.
\end{lem}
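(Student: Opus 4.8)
The plan is to check the commutativity of the two squares in \eqref{eqn:V-seq-0} one at a time, after first reducing to the case $D = \emptyset$. All three vertical arrows factor through the canonical maps $C^{X_s}_{KS}(X,D) \to C^{X_s}_{KS}(X)$ and $C_{KS}(X,D) \to C_{KS}(X)$ induced by $\sK^M_{2,(X,D)} \inj \sK^M_{2,X}$, because the maps $\nu$ of \lemref{lem:KS-Chow} and the push-forwards on $\CH^F_0$ do not see the modulus; this is the same reduction used at the start of the proof of \lemref{lem:KS-Chow}, and it is compatible with the top row \eqref{eqn:V-seq}. So I assume $D = \emptyset$ throughout.

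For the right square I would argue by functoriality of proper push-forward of $0$-cycles. Fix $s \in S_{(0)}$. The canonical map $C^{X_s}_{KS}(X) \to C_{KS}(X)$ (the $s$-component of the top-row surjection) is the left vertical arrow of the commutative square \eqref{eqn:KS-Chow-2} attached to the closed immersion $X_s \inj X$, so on the image of $C^{X_s}_{KS}(X)$ the map $\nu_X$ agrees with $(\iota_{X_s})_* \circ \nu_{X_s}$ into $\CH^F_0(X)$. Postcomposing with $f_* \colon \CH^F_0(X) \to \CH^F_0(S)$ and using $f \circ \iota_{X_s} = \iota_s \circ (f|_{X_s})$ together with functoriality of push-forward gives $f_* \circ (\iota_{X_s})_* \circ \nu_{X_s} = (\iota_s)_* \circ (f|_{X_s})_* \circ \nu_{X_s}$, whose right-hand side is exactly the middle vertical arrow followed by the inclusion of the $s$-summand into $\CH^F_0(S)$. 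This is the desired commutativity.

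The left square is the crux. Since $X_t$ is a regular projective curve over $k(t)$, the Gersten resolution identifies $SK_1(X_t) = H^1_\nis(X_t, \sK^M_{2,X_t})$ with the cokernel of $K^M_2(k(X)) \to \bigoplus_y K^M_1(k(y))$, the sum over closed points $y$ of $X_t$, equivalently over the horizontal codimension-one points of $X$; thus $SK_1(X_t)$ is generated by classes of elements $a \in k(y)^\times = K^M_1(k(y))$, and it suffices to trace such a generator. Along the bottom route, the left vertical arrow is induced by push-forward along the closed point $y$ followed by the structure map, hence sends $a$ to the norm $\Nm_{k(y)/k(t)}(a) \in k(t)^\times$, and $\divf$ then records $\ord_s(\Nm_{k(y)/k(t)}(a))$ at each $s$. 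Along the top route, the boundary $\partial(a)$ is computed in each fiber $X_s$ by the $K$-theoretic boundary $H^1_y(X, \sK^M_2) \to H^2_x(X, \sK^M_2)$ at the closed points $x \in \ov{\{y\}} \cap X_s$; invoking \lemref{lem:KS-Chow} and the compatibility square \eqref{eqn:KS-Chow-1} — which identifies $\nu_x \circ \partial$ with the Milnor residue $K^M_1(k(y)) \to K^M_0(k(x))$ — the middle arrow sends $\partial(a)$, at $s$, to the push-forward to $\Spec k(s)$ of the divisor of $a$ computed on the normalization $\ov{\{y\}}_n$ of the horizontal curve.

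The commutativity of the left square on generators is therefore the identity
\[
\ord_s\big(\Nm_{k(y)/k(t)}(a)\big) = \sum_{x \mapsto s} [k(x):k(s)]\,\ord_x(a),
\]
where $x$ runs over the points of $\ov{\{y\}}_n$ above $s$; equivalently $f_*(\divf(a)) = \divf(\Nm_{k(y)/k(t)}(a))$ for the finite morphism of curves $\ov{\{y\}}_n \to S$. This is the classical compatibility of norms with discrete valuations on a finite extension of function fields. The main obstacle is the bookkeeping on the top route: correctly identifying the $K$-theoretic boundary of \eqref{eqn:V-seq} on the Gersten generators, matching it via the $\nu$-maps and \eqref{eqn:KS-Chow-1} with the Milnor residue, and checking that $f_* \colon SK_1(X_t) \to K^M_1(k(t))$ really restricts to the norm on these generators. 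Once this translation into Milnor $K$-theory is in place, the required identity is standard and causes no trouble.
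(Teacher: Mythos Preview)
Your proposal is correct and follows essentially the same strategy as the paper's proof: reduce to $D=\emptyset$, handle the right square via the functoriality of \lemref{lem:KS-Chow} under closed immersions (the paper draws this as \eqref{eqn:V-seq-01}), and handle the left square by presenting $SK_1(X_t)$ via the Gersten sequence, identifying $f_*$ with the norm on generators, and reducing to the classical compatibility $f_*(\divf(a)) = \divf(\Nm(a))$ for finite maps of curves (the paper cites this as \cite[Proposition~1.4]{Fulton}). The only cosmetic difference is that the paper packages the left-square computation into the auxiliary diagram \eqref{eqn:V-seq-02} rather than writing out the valuation formula explicitly as you do.
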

\begin{proof}
To show that the right square in ~\eqref{eqn:V-seq-0} commutes,
it suffices to show that it commutes when restricted to each
direct summand $C^{X_s}_{KS}(X,D)$. 
Furthermore, we can replace $C^{X_s}_{KS}(X,D)$ and
$C_{KS}(X,D)$ by $C^{X_s}_{KS}(X)$ and $C_{KS}(X)$, respectively
because $f_*$ factors through the latter groups.

We now fix $s \in S_{(0)}$ and consider the diagram
\begin{equation}\label{eqn:V-seq-01}
\xymatrix@C.8pc{
C^{X_s}_{KS}(X) \ar[r] \ar[d] & \CH^F_0(X_s) \ar[r]^-{f_*} \ar[d] & \Z 
\ar[d] \\
C_{KS}(X) \ar[r] & \CH^F_0(X) \ar[r]^-{f_*} & \CH^F_0(S),}
\end{equation}
where the vertical arrows are induced by
the inclusions $X_s \inj X$ and $\{s\} \inj S$.

The middle (resp. right) vertical arrow in ~\eqref{eqn:V-seq-0} is the
composition of the top (resp. bottom) horizontal arrows in 
~\eqref{eqn:V-seq-01} by the definition of $f_*$.
The left square in ~\eqref{eqn:V-seq-01} commutes by 
\lemref{lem:KS-Chow} and the right square is well known to be
commutative (see \cite{Fulton}). This shows that
the right square of ~\eqref{eqn:V-seq-0} commutes.

We now show that the left square in ~\eqref{eqn:V-seq-0} commutes.
Since $X_t$ is a regular scheme of dimension one 
over the infinite field $k(t)$, 
it is well known (e.g., see \cite{Kerz09}) that the Gersten sequence 
~\eqref{eqn:KS-Chow-0} gives an exact sequence
\begin{equation}\label{eqn:SK-1-presentation}
K^M_2(K) \xrightarrow{\partial} 
{\underset{x \in (X_t)_{(0)}}\bigoplus} K^M_1(k(x)) \to SK_1(X_t) \to 0,
\end{equation}
and the left vertical arrow in ~\eqref{eqn:V-seq-0} is induced by the
norm maps $K^M_1(k(x)) \to K^M_1(k(t))$ for $x \in (X_t)_{(0)}$.
Note that this norm map is same as the composition
$K^M_1(k(x)) \xrightarrow{(\iota_x)_*} K_1(X_t) \xrightarrow{f_*} K_1(k(t))$.
Hence, it suffices to show that the left square commutes after we replace
the map $\partial$ by its composition with 
\[
H^1_x(X_t, \sK^M_{2,X_t}) \xrightarrow{\cong}  
K^M_1(k(x)) \to SK_1(X_t) \cong  H^1_\nis(X_t, \sK^M_{2,X_t})
\]
on the left for $x \in (X_t)_{(0)}$. 
Furthermore, since the map $f_* \colon C^{X_s}_{KS}(X,D) \to K^M_0(k(s))$
factors through $ C^{X_s}_{KS}(X) \to K^M_0(k(s))$, we can also replace
$\partial$ by its composition with $C^{X_s}_{KS}(X,D) \to 
C^{X_s}_{KS}(X)$ on the right.

We now fix a closed point $x \in X_t$ and let $Y \subset X$ be
the closure of $\{x\}$ with integral subscheme structure. Then $f$
restricts to a finite dominant map $f \colon Y \to S$.
The composite map $K^M_1(k(x))  \to H^1_\nis(X_t, \sK^M_{2,X_t}) 
\xrightarrow{\partial} {\underset{s \in S_{(0)}}\bigoplus} C^{X_s}_{KS}(X)$
factors through the composition 

\[
K^M_1(k(x)) \xrightarrow{\cong} 
H^1_x(X, \sK^M_{2,X}) \xrightarrow{\partial} 
{\underset{y \in Y_{(0)}}\bigoplus} C^{y}_{KS}(X) \cong
{\underset{s \in S_{(0)}}\bigoplus} C^{Y_s}_{KS}(X) \to
{\underset{s \in S_{(0)}}\bigoplus} C^{X_s}_{KS}(X),
\]
where the left arrow is an isomorphism by \cite[Lemma~3.7]{Gupta-Krishna-REC}
because $x \in X_\reg$.
We therefore have to show that the left square in the diagram
\begin{equation}\label{eqn:V-seq-02}
\xymatrix@C.8pc{
K^M_1(k(x)) \ar[r]^-{\partial} \ar[d]_-{N} & 
{\underset{s \in S_{(0)}}\bigoplus} C^{Y_s}_{KS}(X) \ar[d]^-{f_*} 
\ar[r] & {\underset{s \in S_{(0)}}\bigoplus} \CH^F_0(Y_s) \ar[dl]^-{f_*} \\
K^M_1(k(t)) \ar[r]^-{\divf} & {\underset{s \in S_{(0)}}\bigoplus} \Z &}
\end{equation}
is commutative, where $N$ is the norm map.

Since the triangle on the right is commutative by the definition of 
$f_* \colon C^{Y_s}_{KS}(X) \to \Z$ for $s \in S_{(0)}$, we need to
show that big outer diagram in ~\eqref{eqn:V-seq-02} commutes.
But this is classical because the top composite horizontal arrow is the 
boundary map of the complex ~\eqref{eqn:KS-Chow-0}. 
One knows that this is same as the boundary map
$\partial \colon K_1(k(x)) \to {\underset{s \in Y_{(0)}}\bigoplus}
K_0(k(s))$ on Quillen $K$-groups (see \cite[Lemma~11.2]{Gupta-Krishna-1})
and Quillen \cite{Quillen} showed that this map takes an
element of $K_1(k(x))$ to its divisor.
The diagram therefore commutes by \cite[Proposition~1.4]{Fulton}.
\end{proof}

Let $V(X_t) = \Ker(SK_1(X_t) \to K_1(k(t)))$.
Since the kernel of $f_* \colon C^{X_s}_{KS}(X,D) \to \CH^F_0(\Spec(k(s)))$
is same as $C^{X_s}_{KS}(X,D)_0$, the commutative diagram
~\eqref{eqn:V-seq-0} induces a 3-term complex
\begin{equation}\label{eqn:V-seq-1}
V(X_t) \xrightarrow{\partial} {\underset{s \in S_{(0)}}\bigoplus} 
C^{X_s}_{KS}(X,D)_0 \to  C_{KS}(X,D)_{S} \to 0.
\end{equation}

Since $f$ has a section over $\Spec(k(t))$, 
it follows that the left vertical arrow in
~\eqref{eqn:V-seq-0} is split surjective.
A diagram chase shows that the middle arrow in ~\eqref{eqn:V-seq-1}
is surjective.
For any integer $m \ge 1$, let
\begin{equation}\label{eqn:Short-term}
A_m = {\underset{s \in S_{(0)} \setminus S'_\reg}
\oplus} {C^{X_s}_{KS}(X,D)_0}/m \ \ \mbox{and} \ \ C^{S'_\reg}_{KS}(X,D)_0 =
{\underset{s \in (S'_\reg)_{(0)}} \oplus}
C^{X_s}_{KS}(X,D)_0.
\end{equation}

In view of ~\eqref{eqn:Deg-0-*}, 
an important consequence of \propref{prop:Surface-tor} is the following.

\begin{cor}\label{cor:V-seq-04}
For all $m \gg 0$, the exact sequence ~\eqref{eqn:V-seq} induces a 3-term 
complex
\[
{V(X_t)} \xrightarrow{\partial} A_m \bigoplus C^{S'_\reg}_{KS}(X,D)_0 
\to C_{KS}(X,D)_{0} \to 0
\]
in which the middle arrow is surjective.
\end{cor}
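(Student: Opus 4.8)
The plan is to build the asserted complex directly out of the three-term complex ~\eqref{eqn:V-seq-1}, which already supplies a surjection $\bigoplus_{s \in S_{(0)}} C^{X_s}_{KS}(X,D)_0 \surj C_{KS}(X,D)_S$ annihilated by the incoming map $\partial$ from $V(X_t)$. Two modifications are required: the middle term must be cut down to $A_m \oplus C^{S'_\reg}_{KS}(X,D)_0$, and the target must be enlarged from $C_{KS}(X,D)_S$ to $C_{KS}(X,D)_0$ (recall from ~\eqref{eqn:Deg-0-*} that the former sits inside the latter). The first modification is where \propref{prop:Surface-tor} enters, and the second is the genuinely delicate point.

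For the reduction of the middle term, I would first invoke \propref{prop:Surface-tor} to fix an integer $N$ with $N \cdot C_{KS}(X,D)_0 = 0$ and then take any $m$ divisible by $N$. The index set $S_{(0)} \setminus S'_\reg$ is finite, and for each such $s$ the structure map $C^{X_s}_{KS}(X,D)_0 \to C_{KS}(X,D)_0$ lands in a group killed by $m$; hence it annihilates $m \cdot C^{X_s}_{KS}(X,D)_0$ and factors through $C^{X_s}_{KS}(X,D)_0/m$. Assembling these with the untouched summands over $(S'_\reg)_{(0)}$ yields a map out of $A_m \oplus C^{S'_\reg}_{KS}(X,D)_0$. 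Since this map is obtained from the surjection of ~\eqref{eqn:V-seq-1} by precomposing with the quotient $\bigoplus_s C^{X_s}_{KS}(X,D)_0 \surj A_m \oplus C^{S'_\reg}_{KS}(X,D)_0$, its image is unchanged and the relation $(\text{middle}) \circ \partial = 0$ is inherited; so I obtain a complex whose middle arrow surjects onto $C_{KS}(X,D)_S$.

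It remains to promote the target from $C_{KS}(X,D)_S$ to $C_{KS}(X,D)_0$, which I expect to be the main obstacle. The difficulty is structural: by \lemref{lem:KS-Chow} and the definition of $f_*$, every summand $C^{X_s}_{KS}(X,D)_0$ is fiberwise degree-zero over $k(s)$ and hence maps to $0$ in $\CH^F_0(S)$, so the image produced above is precisely $C_{KS}(X,D)_S$. The cokernel $C_{KS}(X,D)_0/C_{KS}(X,D)_S$ injects into $\CH^F_0(S)_0$ via $f_* \circ \nu_X$, so everything comes down to analysing this image. Here the hypothesis that $f$ has a section over the generic point $t$ is decisive: over a dense open on which the section $\sigma$ is regular and misses $D$, each class $[\sigma(s)]$ lies in $X_\reg \setminus D$ with $f_*\nu_X([\sigma(s)]) = [s]$, and degree-zero combinations of these realise the classes of $\CH^F_0(S)_0$. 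The delicate step, and the heart of the matter, is to account for exactly these degree-one fiber classes within a complex whose middle term records only fiberwise degree-zero data; making this compatible --- equivalently, verifying in the present fibered situation that the gap measured by $\CH^F_0(S)_0$ is absorbed --- is where the geometry of $f$ (the section over $\Spec k(t)$ together with the regular, integral generic fiber) must be used in full. By contrast, the bounded-exponent statement of \propref{prop:Surface-tor} enters only to legitimise the passage to $A_m$ on the finitely many exceptional fibers.
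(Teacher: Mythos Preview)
Your first modification --- the passage from $\bigoplus_{s\in S_{(0)}} C^{X_s}_{KS}(X,D)_0$ to $A_m \oplus C^{S'_\reg}_{KS}(X,D)_0$ --- is exactly the paper's argument: \propref{prop:Surface-tor} gives a bound $N$ on the exponent of $C_{KS}(X,D)_0$, and for $m$ divisible by $N$ the finitely many exceptional summands may be reduced modulo $m$ without changing the image. This step is fine.

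Your second modification, however, is where you diverge from the paper and where your proposal stalls. The paper's entire argument for enlarging the target is the inclusion~\eqref{eqn:Deg-0-*}: one simply postcomposes the surjection onto $C_{KS}(X,D)_S$ with $C_{KS}(X,D)_S \hookrightarrow C_{KS}(X,D)_0$. This trivially preserves the complex property (the composite $\partial$ followed by the middle arrow is still zero), and that complex property is all that the downstream applications (\corref{cor:Vanishing} and \thmref{thm:Vanishing-0-*}) ever use --- note that \corref{cor:Vanishing} has no ``$\to 0$'' and makes no surjectivity claim. You are right that the image of the middle arrow is exactly $C_{KS}(X,D)_S$, so surjectivity onto the full $C_{KS}(X,D)_0$ does not follow from the inclusion~\eqref{eqn:Deg-0-*} and \propref{prop:Surface-tor} alone; the paper's one-line justification does not supply any further argument, and none is needed for what comes after.

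Your attempt to close this gap using the section $\sigma$ cannot work as written: as you yourself observe, the classes $[\sigma(s)]$ have fiberwise degree one and therefore do not lie in any $C^{X_s}_{KS}(X,D)_0$, hence cannot be produced from the middle term of the complex. So the ``delicate step'' you flag is not one you actually carry out. The correct move is not to do more work but less: take the complex~\eqref{eqn:V-seq-1}, reduce the bad summands modulo $m$ via \propref{prop:Surface-tor}, and regard the target as sitting inside $C_{KS}(X,D)_0$ via~\eqref{eqn:Deg-0-*}.
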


For a morphism of schemes $T \to S$, we let $X_T = X \times_S T$.
We now let $s \in S \setminus S'_\reg$ be a closed point. We let $k(t)_s$
denote the total quotient ring of $\sO^h_{S,s}$. 
We let $Z = \Spec(\sO^h_{S,s})$. Note that 
the normalization $Z_n$ of $Z$ is canonically isomorphic to 
$\sO^h_{S_n,s}$, where the latter is the Henselization of
$\sO_{S_n,s}$ with respect to its Jacobson radical. 
In particular, if $\nu \colon S_n \to S$ denotes the normalization map, 
then $\sO^h_{S_n,s}$ is the product of the Henselian
discrete valuation rings $\sO^h_{S_n, s_i}$ for $1 \le i \le r$,
where $\Sigma = \{s_1, \ldots , s_r\} = \nu^{-1}(s)$.
Let $k(t)_i$ denote the quotient field of $\sO^h_{S_n,s_i}$
so that $k(t)_s = \stackrel{r}{\underset{i = 1}\prod} k(t)_i$.
Note that $r = 1$ if $s \in S_\reg$.

By \cite[Proposition~4.2]{Kato-Saito-2}, there exists
a nowhere dense closed subscheme $D' \subset X_{S_n}$ containing
$D \times_S S_n$ and a norm map of sheaves
$\nu_*(\sK^M_{2, (X_{S_n}, D')}) \to \sK^M_{2, (X,D)}$.
Taking the cohomology with support, we get a
push-forward map $\nu_* \colon C^{X_\Sigma}_{KS}(X_{S_n}, D') \to
C^{X_s}_{KS}(X,D)$. Note that we have used here the fact that the push-forward
of Nisnevich sheaves under a finite map is an exact functor.
By excision, the above is same as the map
$\nu_* \colon C^{X_\Sigma}_{KS}(X_{Z_n}, D'_{Z_n}) \to C^{X_s}_{KS}(X_Z,D_Z)$.
We thus get a commutative diagram
\begin{equation}\label{eqn:Hensel-V}
\xymatrix@C.8pc{
&  {\underset{s' \in S_{(0)}}\bigoplus} C^{X_{s'}}_{KS}(X,D)  \ar@{->>}[r] &
C^{X_s}_{KS}(X,D) \ar[d]^-{\cong} \\
SK_1(X_t) \ar[r]^-{\alpha_s} \ar[dr]_-{\alpha_s} \ar[ur]^-{\partial} 
& SK_1(X_{k(t)_s}) \ar[r]^-{\partial} & C^{X_s}_{KS}(X_Z,D_Z)  \\
& \stackrel{r}{\underset{i = 1}\prod} SK_1(X_{k(t)_{i}}) \ar[u]^-{\cong}_-{\nu_*}
\ar[r]^-{\partial} & C^{X_\Sigma}_{KS}(X_{Z_n}, D'_{Z_n}) \ar[u]_-{\nu_*},}
\end{equation}
where the horizontal arrow on the top is the projection and
$\alpha_s$ is induced by the canonical inclusion $k(t) \subset k(t)_s$.
One gets a similar commutative diagram by reducing all the abelian
groups modulo any integer $m \ge 1$.

For a fixed integer $m \ge 1$, we let 
$SK_1(X_t, m, S')$ be the kernel of the canonical map
$SK_1(X_t) \xrightarrow{\alpha_s} {\underset{v}\prod} {SK_1(X_{k(t)_{v}})}/m$,
where $v$ runs through all closed points of $S_n$ lying over
$S \setminus S'_\reg$. 
We let $V(X_t, m, S') = SK_1(X_t, m, S') \cap V(X_t)$.
It follows from ~\eqref{eqn:Hensel-V} that
$SK_1(X_t, m, S')$ is annihilated by the
composite map
\[
SK_1(X_t) \xrightarrow{\partial} 
{\underset{s \in S_{(0)}}\bigoplus} {C^{X_s}_{KS}(X,D)}/m \surj
{\underset{s \in S_{(0)} \setminus S'_\reg}\bigoplus} {C^{X_s}_{KS}(X,D)}/m.
\]
Since ${C^{X_s}_{KS}(X,D)_0}/m \inj {C^{X_s}_{KS}(X,D)}/m$,
it follows therefore from ~\eqref{eqn:V-seq-1} that
$V(X_t, m, S')$ is annihilated by the composite map
\begin{equation}\label{eqn:BS-0}
V(X_t) \xrightarrow{\partial} {\underset{s \in S_{(0)}}\bigoplus} 
{C^{X_s}_{KS}(X,D)_0}/m \surj
{\underset{s \in S_{(0)} \setminus S'_\reg}\bigoplus} {C^{X_s}_{KS}(X,D)_0}/m.
\end{equation}

Using \corref{cor:V-seq-04}, ~\eqref{eqn:Deg-0-*} and 
\lemref{lem:KS-Chow}, we obtain the following
consequence of \propref{prop:Surface-tor}.

\begin{cor}\label{cor:Vanishing}
For all $m \gg 0$, the exact sequence ~\eqref{eqn:V-seq} restricts to 
a 3-term complex
\[
V(X_t, m, S') \xrightarrow{\partial} C^{S'_\reg}_{KS}(X,D)_0 \to C_{KS}(X,D)_0.
\]
If $f^{-1}(S'_\reg)$ is regular and $D \cap f^{-1}(S'_\reg) = \emptyset$, then
we have a complex
\begin{equation}\label{eqn:Vanishing-0}
V(X_t, m, S') \xrightarrow{\partial} {\underset{s \in (S'_\reg)_{(0)}}\bigoplus} 
\CH^F_0(X_s)_0 \to C_{KS}(X,D)_0.
\end{equation}
\end{cor}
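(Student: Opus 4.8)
The plan is to read off both complexes directly from \corref{cor:V-seq-04} by restricting its boundary map to the subgroup $V(X_t,m,S')$, and then to rewrite the good-locus summand of the middle term via \lemref{lem:KS-Chow}. Recall that for $m \gg 0$ that corollary supplies a complex
\[
V(X_t) \xrightarrow{\partial} A_m \oplus C^{S'_\reg}_{KS}(X,D)_0 \to C_{KS}(X,D)_0 \to 0
\]
with surjective middle arrow, where $A_m$ is the direct sum of the reductions $C^{X_s}_{KS}(X,D)_0/m$ over $s \in S_{(0)} \setminus S'_\reg$. All of the arithmetic input, and in particular the bounded-exponent torsion of $C_{KS}(X,D)_0$ coming from \propref{prop:Surface-tor}, is already encoded in this statement, so what remains is formal.

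First I would restrict $\partial$ to $V(X_t,m,S') \subseteq V(X_t)$. The group $A_m$ is exactly the target $\bigoplus_{s \in S_{(0)} \setminus S'_\reg} C^{X_s}_{KS}(X,D)_0/m$ appearing in ~\eqref{eqn:BS-0}, and that equation says precisely that $V(X_t,m,S')$ is killed by the composite of $\partial$ with the projection onto $A_m$. Hence on $V(X_t,m,S')$ the $A_m$-component of $\partial$ vanishes, so $\partial$ lands in the summand $C^{S'_\reg}_{KS}(X,D)_0$. Composing with the restriction of the middle arrow to this summand, and using that the displayed sequence is a complex, the composite $V(X_t,m,S') \to C^{S'_\reg}_{KS}(X,D)_0 \to C_{KS}(X,D)_0$ is zero; this is the first asserted 3-term complex.

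For the second statement I would identify $C^{S'_\reg}_{KS}(X,D)_0$ summand by summand. Fix $s \in (S'_\reg)_{(0)}$. Under the hypotheses that $f^{-1}(S'_\reg)$ is regular and $D \cap f^{-1}(S'_\reg) = \emptyset$, the fiber $X_s = f^{-1}(s)$ is contained in $f^{-1}(S'_\reg) \subseteq X_\reg \setminus D$, so \lemref{lem:KS-Chow}(3) gives an isomorphism $C^{X_s}_{KS}(X,D) \xrightarrow{\cong} \CH^F_0(X_s)$. Since the degree map on $C^{X_s}_{KS}(X,D)$ is by definition the composite of this isomorphism with $\deg \colon \CH^F_0(X_s) \to \Z$, it restricts to an isomorphism $C^{X_s}_{KS}(X,D)_0 \cong \CH^F_0(X_s)_0$ on degree-zero parts. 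Summing over $s \in (S'_\reg)_{(0)}$ identifies $C^{S'_\reg}_{KS}(X,D)_0$ with $\bigoplus_{s \in (S'_\reg)_{(0)}} \CH^F_0(X_s)_0$, and substituting into the complex from the previous step yields ~\eqref{eqn:Vanishing-0}.

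The only point that is not pure bookkeeping is the vanishing of the $A_m$-component of $\partial$ on $V(X_t,m,S')$, which is the content of ~\eqref{eqn:BS-0} and ultimately rests on the compatibility of the boundary maps on $SK_1$ of the Henselian local fields with the norm push-forwards recorded in the diagram ~\eqref{eqn:Hensel-V}. As ~\eqref{eqn:BS-0} is already in hand, I anticipate no genuine obstacle here: the corollary is a formal restriction-and-rewriting of \corref{cor:V-seq-04}, with \propref{prop:Surface-tor} doing the substantive work upstream.
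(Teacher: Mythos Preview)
Your proposal is correct and follows essentially the same route as the paper: the paper's proof is a one-line invocation of \corref{cor:V-seq-04}, ~\eqref{eqn:Deg-0-*}, and \lemref{lem:KS-Chow}, and you have correctly unpacked these citations, using ~\eqref{eqn:BS-0} to kill the $A_m$-component on $V(X_t,m,S')$ and \lemref{lem:KS-Chow}(3) for the identification over $S'_\reg$.
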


\subsection{Local Kato-Saito idele class group}
\label{sec:KS-local}
Let $X$ be a Noetherian scheme of Krull dimension $d \ge 1$.
Let $\sP_r(X)$ denote the set of all Parshin chains of length $r$ on 
$X$. If $P \in \sP_r(X)$, we let $P_i = (p_0, \ldots , p_i)$ for
$0 \le i \le r$. 
Let $x \in X$ be a closed point and $X_x = \Spec(\sO^h_{X,x})$. 
We write $\sP^x_r(X)$ for the set of
Parshin chains of length $r$ on $X$ beginning with $x$ and
$\sQ^x(X)$ for the set of Q-chains on $X$ beginning with $x$.
Let $\sF$ be a Nisnevich sheaf on $X$.
For $P = (p_0, \ldots , p_r) \in \sP^x_r(X)$ and $q \ge 0$, 
we have the sequence of
boundary maps
\begin{equation}\label{eqn:KS-boundary}
H^q_{P}(X,\sF) \xrightarrow{\partial} H^{q+1}_{P_{r-1}}(X,\sF) 
\xrightarrow{\partial} \cdots \xrightarrow{\partial} 
H^{q+r-1}_{P_{1}}(X,\sF)   \xrightarrow{\partial} H^{q+r}_{x}(X,\sF).
\end{equation}
We let $\partial^x_P \colon H^q_{P}(X,\sF) \to H^{q+r}_{x}(X,\sF)$ denote the
composite map.

If $Q = (p_0, \ldots , p_{s-1}, p_{s+1}, \ldots , p_r) \in \sQ^x(X)$ is a 
$Q$-chain with 
break at $s$, we let $B'(Q)$ be the set of all $y \in X$ such that
$Q_y = (p_0, \ldots , p_{s-1}, y, p_{s+1}, \ldots , p_r) \in \sP^x_r(X)$.
For any $y \in B'(Q)$, we have the restriction (localization) map
$H^q_Q(X,\sF) \to H^q_{Q_y}(X,\sF)$. For $a \in H^q_Q(X,\sF)$,
we shall let $a_{Q_y}$ denote the image of $a$ under this restriction. 
The following is a straightforward extension of 
\cite[Lemma~1.6.3]{Kato-Saito-2}.

\begin{lem}\label{lem:KS-support}
We have the following.
\begin{enumerate}
\item
The map
\[
{\underset{P \in \sP^x_d(X)}\bigoplus} H^0_P(X, \sF) 
\xrightarrow{\sum_P \partial^x_P} H^d_x(X,\sF)
\]
is surjective.
\item
If $d \ge 2$ and $A$ is an abelian group, then 
the group $\Hom(H^d_x(X,\sF), A)$ is
canonically isomorphic to the group of all families
$(g_P)_{P \in \sP^x_d(X)}$ of homomorphisms
$g_P \colon  H^0_P(X, \sF) \to A$ satisfying the following reciprocity
law $(R)$.

$(R)$ For any $0 < s < d$, any $Q$-chain $Q \in \sQ^x(X)$ of length $d$ 
with break at $s$, and any $a \in   H^0_Q(X, \sF)$, the element
$g_{Q_y}(a_{Q_y}) = 0$ for almost all $y \in B'(Q)$ and
\[
{\underset{y \in B'(Q)}\sum} g_{Q_y}(a_{Q_y}) = 0.
\]
\item
For $d = 1$, there is an exact sequence
\[
0 \to H^0_\nis(X_x, \sF) \to H^0_\nis(X_x \setminus \{x\}, \sF) \to
H^1_x(X,\sF) \to 0.
\]
\end{enumerate}
\end{lem}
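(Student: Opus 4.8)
The plan is to transcribe the proof of \cite[Lemma~1.6.3]{Kato-Saito-2}, noting that its only inputs are the localization long exact sequences for Nisnevich cohomology with supports and the vanishing $H^i_\nis(Z,\sF)=0$ for $i>0$ when $Z$ is henselian local. Neither input uses any feature special to Milnor $K$-theory, so both persist for an arbitrary Nisnevich sheaf $\sF$ and the argument goes through unchanged. I would run it by induction on $d=\dim X$: prove part~(3) as the base case, deduce part~(1) from it, and finally extract part~(2) by identifying the kernel of the surjection produced in part~(1).

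For part~(3), since $X_x=\Spec(\sO^h_{X,x})$ is henselian local we have $H^i_\nis(X_x,\sF)=0$ for $i\ge 1$. Substituting this into the localization sequence of the pair $(\{x\},X_x)$ collapses it to
\[
0\to H^0_x(X,\sF)\to H^0_\nis(X_x,\sF)\to H^0_\nis(X_x\setminus\{x\},\sF)\to H^1_x(X,\sF)\to 0,
\]
and part~(3) is the three-term sequence obtained by dropping $H^0_x(X,\sF)$, its left-exactness recording the absence of sections of $\sF$ supported at $x$. For part~(1) the same vanishing makes $\partial\colon H^{d-1}_\nis(X_x\setminus\{x\},\sF)\surj H^d_x(X,\sF)$ surjective. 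I would then present $H^{d-1}_\nis(U,\sF)$, for $U=X_x\setminus\{x\}$, as a quotient of $\bigoplus_{p_1}H^{d-1}_{p_1}(X,\sF)$ via the niveau spectral sequence, which degenerates by henselian vanishing exactly as in the proof of \lemref{lem:KS-Chow}; here $p_1$ runs over the closed points of $U$, i.e. the dimension-one points of $X_x$. By excision each summand is the top local cohomology at the closed point of the $(d-1)$-dimensional henselian local scheme $\Spec(\sO^h_{X,p_1})$, to which the inductive case of part~(1) applies: it is generated by the classes $\partial^{p_1}_{P'}(H^0_{P'}(X,\sF))$ over chains $P'=(p_1,\dots,p_d)\in\sP^{p_1}_{d-1}(X)$. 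Prepending $x$ turns each $P'$ into a maximal chain $P=(x,p_1,\dots,p_d)\in\sP^x_d(X)$ with $H^0_{P'}(X,\sF)\cong H^0_P(X,\sF)$, and the composite of $\partial^{p_1}_{P'}$ with $\partial$ is precisely $\partial^x_P$; summing over $p_1$ gives the surjectivity asserted in part~(1).

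For part~(2), the surjection $\Phi\colon\bigoplus_{P\in\sP^x_d(X)}H^0_P(X,\sF)\surj H^d_x(X,\sF)$ of part~(1) identifies $\Hom(H^d_x(X,\sF),A)$ with the families $(g_P)$ that vanish on $\ker\Phi$, so everything reduces to showing that $\ker\Phi$ is generated by the reciprocity elements $\rho_{Q,a}=\sum_{y\in B'(Q)}a_{Q_y}$ indexed by $Q\in\sQ^x(X)$ with break at some $0<s<d$ and $a\in H^0_Q(X,\sF)$. Equivalently, the two-step complex $\bigoplus_Q H^0_Q(X,\sF)\xrightarrow{\delta}\bigoplus_P H^0_P(X,\sF)\xrightarrow{\Phi}H^d_x(X,\sF)\to 0$ with $\delta(a)=\rho_{Q,a}$ should be exact. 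The finiteness clause, that $g_{Q_y}(a_{Q_y})=0$ for almost all $y$, is then automatic, since $\delta(a)$ lies in a direct sum and hence has finite support. The inclusion $\im\delta\subseteq\ker\Phi$ is the formal identity $\partial\circ\partial=0$: fixing the chain away from position $s$ and summing over the fill-ins is the composite of two consecutive boundary maps in the localization sequence at the $s$-th stage, which vanishes. The hypothesis $d\ge 2$ is used exactly here, to guarantee an interior break position; for $d=1$ the only relations come from the image of $H^0_\nis(X_x,\sF)$ and are instead recorded by part~(3).

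The main obstacle will be the reverse inclusion $\ker\Phi\subseteq\im\delta$. I expect to obtain it by assembling the iterated localization sequences into the single niveau (Cousin-type) complex computing $H^*_x(X,\sF)$: henselian vanishing kills every $E_1$-term off one line, so the spectral sequence degenerates and $H^d_x(X,\sF)$ is literally the cokernel of $\delta$, whence $\ker\Phi=\im\delta$. The real labor is the bookkeeping---verifying stage by stage that each intermediate cohomology group with support is computed purely by chains of the relevant length, that the induced differentials are the sum-over-fill-in maps $\delta$ up to sign, and that henselian vanishing leaves no surviving extension terms---so that every element of $\ker\Phi$ is manifestly a sum of reciprocity elements and nothing else intrudes. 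Once this is in place, dualizing into $A$ yields the canonical isomorphism of part~(2).
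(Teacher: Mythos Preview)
Your approach is correct and close in spirit to the paper's, but you are doing more work than necessary. The paper does not transcribe the proof of \cite[Lemma~1.6.3]{Kato-Saito-2}; it simply \emph{applies} that lemma as a black box to the punctured spectrum $U=X_x\setminus\{x\}$. Concretely: henselian vanishing gives $H^{d-1}_\nis(U,\sF)\xrightarrow{\cong}H^d_x(X,\sF)$ for $d\ge 2$ (not just a surjection), and the prepend-$x$ map sets up bijections $\sP_{d-1}(U)\cong\sP^x_d(X)$ and $\sQ(U)\cong\sQ^x(X)$ compatible with the boundary maps. Parts (1) and (2) then drop out immediately from the corresponding statements in \cite[Lemma~1.6.3]{Kato-Saito-2} for $U$, with no induction and no need to analyze $\ker\Phi$ by hand. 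Your inductive unrolling of the niveau spectral sequence would eventually reproduce exactly this, but the paper's reduction avoids the bookkeeping you flag as the ``main obstacle'' entirely.
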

\begin{proof}
We let $U = X_x \setminus \{x\}$.
Then $U$ is a Noetherian scheme of Krull dimension $d-1$ and
the correspondence $\sP_{d-1}(U) \to \sP_d(X_x)$,
which takes $P$ to $P_x := (x, P)$, is a bijection.
Let $f \colon X_x \to X$ denote the canonical map and let
$f_* \colon \sP_d(X_x) \to \sP^x_{d}(X)$ denote the induced map.
For $P \in \sP_{d-1}(U)$, we write $f_*(P_x)$ also as $P_x$.

We now have a commutative diagram
\begin{equation}\label{eqn:KS-support-0}
\xymatrix@C.8pc{
{\underset{P \in \sP_{d-1}(U)}\bigoplus} H^0_P(U, \sF) 
\ar@{->>}[d]_-{c_U}  \ar[r]^-{\cong} & 
{\underset{P \in \sP_{d}(X_x)}\bigoplus} H^0_P(X_x, \sF)  
\ar[d]^-{\sum_P {\partial}^x_P} & 
{\underset{P \in \sP^x_{d}(X)}\bigoplus} H^0_P(X, \sF) \ar[l]_-{\cong}^-{f^*} 
\ar[d]^-{\sum_P {\partial}^x_P} \\
H^{d-1}_\nis(U, \sF) \ar@{->>}[r]^-{\partial} & H^{d}_x(X_x, \sF) &
H^{d}_x(X, \sF). \ar[l]_-{f^*}^-{\cong}}
\end{equation}

The arrow $c_U$ is surjective by \cite[Lemma~1.6.3]{Kato-Saito-2}.
The first assertion of the lemma therefore follows from a diagram chase.
The second assertion follows from the 
isomorphism $H^{d-1}_\nis(U, \sF) \xrightarrow{\cong} H^d_x(X, \sF)$
for $d \ge 2$ and by applying \cite[Lemma~1.6.3]{Kato-Saito-2} to 
$H^{d-1}_\nis(U, \sF)$. The last part is obvious.
\end{proof}

\begin{lem}\label{lem:Coh-supp}
Let $f \colon Y \inj X$ be a closed immersion of integral schemes.
Let $D \subset X$ be a nowhere dense closed subscheme.
Let $E \subset Y$ be a nowhere dense closed subscheme
such that the Kato-Saito recipe gives a push-forward map
$f_* \colon C_{KS}(Y,E) \to C_{KS}(X,D)$. Let $x \in Y \setminus D $
be a closed point. Let $Z \subset X$ be a closed subset such that
$x \in Z$ and $Z \subset X_\reg$.
Then the diagram
\begin{equation}\label{eqn:Coh-supp-0}
\xymatrix@C.8pc{
C^{x}_{KS}(Y,E) \ar[r] \ar[d]_-{f_*} & C_{KS}(Y,E) \ar[d]^-{f_*} \\
\CH^F_0(Z) \ar[r] & C_{KS}(X,D)}
\end{equation}
is commutative.
\end{lem}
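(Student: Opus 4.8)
The plan is to recast both ``cycle-class'' legs of the square through \lemref{lem:KS-Chow} and then verify the resulting identity on a generating set of $C^x_{KS}(Y,E)$ supplied by \lemref{lem:KS-support}. First I would rewrite the bottom and left arrows. Since $x\in Z\subset X_\reg$ and $x\notin D$, \lemref{lem:KS-Chow}(3) gives an isomorphism $\CH^F_0(Z)\cong C^Z_{KS}(X,D)$ under which the bottom map $\CH^F_0(Z)\to C_{KS}(X,D)$ becomes the forget-support homomorphism $C^Z_{KS}(X,D)\to C_{KS}(X,D)$. Likewise, because $f$ is a closed immersion the left $f_*$ is the local push-forward $C^x_{KS}(Y,E)\to C^x_{KS}(X,D)$, and as $x\in X_\reg\setminus D$ we have $C^x_{KS}(X,D)\cong\Z=\CH^F_0(\{x\})$, so this leg is the local push-forward followed by $\CH^F_0(\{x\})\to\CH^F_0(Z)$ (equivalently the degree map $\nu_{\{x\}}$ of \lemref{lem:KS-Chow}(1)). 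With these identifications the lemma becomes the statement that $f_*$ is compatible with the forget-support maps, concentrated at the single point $x$ whose image under $f$ is $x$ itself.

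Next I would reduce to generators. By \lemref{lem:KS-support}(1) the boundary maps $\partial^x_P$ jointly surject onto $C^x_{KS}(Y,E)=H^{\dim Y}_x(Y,\sK^M_{\dim Y,(Y,E)})$ as $P$ ranges over the maximal Parshin chains on $Y$ beginning at $x$, so it suffices to test the two composites on each $\partial^x_P(a)$ with $a\in K^M_{\dim Y}(k(P))$. For the left-then-bottom leg, the construction of $\nu$ from \eqref{eqn:KS-Chow-0} together with the commuting squares \eqref{eqn:KS-Chow-1} shows that $\partial^x_P(a)$ is carried to $n_P(a)$, the integer obtained by pushing $a$ down the tower of boundary maps \eqref{eqn:KS-boundary} into $K^M_0(k(x))=\Z$; this is the local form of the degree recipe recorded after \eqref{eqn:Deg-map}. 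Hence the left-bottom leg sends $\partial^x_P(a)$ to $n_P(a)$ times the cycle class of $x$ in $C_{KS}(X,D)$.

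It then remains to show the top-then-right leg yields the same element. Because $f$ is a closed immersion, the forget-support image of $\partial^x_P(a)$ is pushed by $f_*$ into the part of $C_{KS}(X,D)$ supported at $f(x)=x$; since $x\in X_\reg\setminus D$ and $C^x_{KS}(X,D)\cong\Z$, the result is automatically an integer multiple of the cycle class of $x$. Moreover $f$ does not change the residue field $k(x)$, so the relevant norm on $K^M_0(k(x))$ is the identity and the multiplicity is detected by the degree; by \lemref{lem:Deg-PF} and the compatibility of the norm maps with the boundary operators \eqref{eqn:KS-Chow-1} this multiplicity equals $n_P(a)$, matching the left-bottom value and proving commutativity on generators.

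The hard part will be justifying, in the previous paragraph, that the Kato-Saito push-forward $f_*$ --- which is defined via norm maps on the Milnor $K$-theory of the \emph{maximal} Parshin chains of $X$ --- really is compatible with the forget-support map out of the local group $C^x_{KS}(Y,E)$, so that a class supported at $x$ maps to one supported at $x$. This demands tracking the norm maps simultaneously through the boundary towers \eqref{eqn:KS-boundary} on $Y$ and on $X$ and reconciling the discrepancy between $\dim Y$ and $\dim X$ (a maximal chain on $Y$ fails to be maximal on $X$). The feature that keeps the verification finite is the isomorphism $C^x_{KS}(X,D)\cong\Z$: it forces the push-forward of the $x$-supported class to be recorded by a single integer, which is then pinned down by the degree compatibility already in hand.
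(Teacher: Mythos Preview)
Your setup and reduction are essentially the same as the paper's: you correctly identify the left-bottom composite as the map through $K^M_0(k(x))\cong C^x_{KS}(X,D)$ followed by forget-support, and you correctly invoke \lemref{lem:KS-support}(1) to reduce to checking commutativity on classes $\partial^x_P(a)$ for maximal Parshin chains $P$ on $Y$ through $x$.

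The gap is in the final step. You claim that $f_*$ applied to the forget-support image of $\partial^x_P(a)$ lands in the image of $C^x_{KS}(X,D)\to C_{KS}(X,D)$, and then try to pin down the resulting integer using the degree compatibility of \lemref{lem:Deg-PF}. But the first claim is exactly what is at issue: the Kato-Saito push-forward is defined globally via norm maps on maximal Parshin chains of $X$ (which have length $d>r=\dim Y$), and there is no a priori reason a class ``coming from $x$'' in $C_{KS}(Y,E)$ is sent to one supported at $x$ in $C_{KS}(X,D)$. And even granting that it lands in $\Z\cdot[x]$, the degree map $\deg_X$ only records the total degree of an element of $C_{KS}(X,D)$; it cannot distinguish $n_P(a)\cdot[x]$ from any other class of the same degree, so \lemref{lem:Deg-PF} alone cannot close the argument. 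You flag this as ``the hard part,'' but the proposed resolution via $C^x_{KS}(X,D)\cong\Z$ plus degree is circular.

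The paper handles this step differently: it writes down the diagram
\[
\xymatrix@C.8pc{
K^M_r(k(P)) \ar[r]^-{\cong} \ar[d]_-{\cong} & H^0_P(Y,\sK^M_{r,(Y,E)}) \ar[r] \ar[d]^-{\partial^x_P} & C_{KS}(Y,E) \ar[d]^-{f_*} \\
H^{d-r}_P(X,\sK^M_{d,(X,D)}) \ar[r]^-{\partial^x_P} & K^M_0(k(x)) \ar[r] & C_{KS}(X,D)}
\]
and observes that commutativity of the outer rectangle is precisely \cite[Proposition~2.9(i)]{Kato-Saito-2}, which records the compatibility of the Kato-Saito push-forward with the Gysin maps on cohomology with support along a Parshin chain. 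That external input is what bridges the gap between the $r$-dimensional chain on $Y$ and the $d$-dimensional picture on $X$; your degree argument is not a substitute for it.
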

\begin{proof}
Note that the composition of the bottom horizontal arrow and
left vertical arrow is the given by
\[
C^{x}_{KS}(Y,E) \to \CH^F_0(\Spec(k(x))) \cong K^M_0(k(x))
\cong C^x_{KS}(X,D) \to C^{Z}_{KS}(X,D) \to C_{KS}(X,D).
\]

Let $d = \dim(X)$ and $r = \dim(Y)$.
Let $P = (p_0, \ldots , p_r) \in \sP^x_r(Y)$ be a maximal Parshin chain.
By \lemref{lem:KS-support} (Part~1), it suffices to show that 
the square on the right in the diagram
\begin{equation}\label{eqn:Coh-supp-1}
\xymatrix@C.8pc{
K^M_r(k(P)) \ar[r]^-{\cong} \ar[d]_-{\cong} & 
H^0_P(Y, \sK^M_{r, (Y,E)}) \ar[r] 
\ar[d]^-{\partial^x_P} & C_{KS}(Y,E) \ar[d]^-{f_*} \\
H^{d-r}_P(X, \sK^M_{d, (X, D)}) \ar[r]^-{\partial^x_P} &
K^M_0(k(x)) \ar[r] & C_{KS}(X,D)}
\end{equation}
is commutative.
Since the square on the left is clearly commutative and
the left horizontal arrow on the top is an isomorphism,
all we need to show is that the big outer rectangle commutes.
But this follows from \cite[Proposition~2.9(i)]{Kato-Saito-2}.
\end{proof}

\subsection{Bloch's map}\label{sec:BS}
Let $X$ be a projective integral scheme of dimension $d \ge 2$ over 
a finite field $k$. 
Let $f \colon X \to S$ be a dominant projective morphism between integral 
schemes in $\Sch_k$ of relative dimension one.
Assume that there exists a dense open subscheme
$S' \subset S$ such that the induced morphism $f' \colon X' := f^{-1}(S') \to S'$ is
smooth over $S'$ and the fibers of $f'$ are integral. Assume also that $f'$ has a
section. Since $f'$ is the restriction of $f$ to the open subscheme $X'$,
we shall often write
it simply as $f$ in the sequel if no confusion arises.

Let $j \colon S' \inj S$ be the inclusion.
Let $T' \subset S'$ be an integral 
curve and let $Y' = X' \times_{S'} T'$.
Since $Y' \to T'$ is a smooth projective morphism to an integral curve
of relative dimension one whose all fibers are integral, it follows that
$Y'$ is integral.

Let $T$ (resp. $Y$) be the scheme theoretic closure of $T'$ (resp. $Y'$)
in $S$ (resp. $X$).
Then $T$ and $Y$ are integral closed subschemes of $S$ and $X$,
respectively.
Let $\gamma \colon T \inj S$ and $\gamma' \colon Y \to X$ be the
inclusion maps. Let $t \in T$ denote the
generic point of $T$. For any point $s \in S$, we let
$X_s = f^{-1}(s)$ be the scheme theoretic fiber.

The long cohomology sequence with support gives us the boundary map
$\partial \colon SK_1(Y_t) \to {\underset{s \in T_{(0)}}\bigoplus} 
H^2_{X_s}(Y, \sK^M_{2,Y})$.
For every $s \in T_{(0)}$, \lemref{lem:KS-Chow} says that there is a 
canonical map $\gamma'_* \colon 
H^2_{X_s}(Y, \sK^M_{2,Y}) \to \CH^F_0(X_s)$. Let $\partial_s \colon SK_1(Y_t)
\to \CH^F_0(X_s)$ denote the composition
\begin{equation}\label{eqn:Commute-main-1}
SK_1(Y_t) \xrightarrow{\partial} 
{\underset{u \in T_{(0)}}\bigoplus} H^2_{X_u}(Y, \sK^M_{2,Y}) \surj
H^2_{X_s}(Y, \sK^M_{2,Y}) \xrightarrow{\gamma'_*} \CH^F_0(X_s),
\end{equation}
where the middle arrow is the projection.
By ~\eqref{eqn:V-seq-1}, the map $\partial_s$ restricts to
$\partial_s \colon V(Y_t) \to H^2_{X_s}(Y, \sK_{2,Y})_0 \to \CH^F_0(X_s)_0$.
Summing these maps over $s \in T_{(0)}$ and composing with the
inclusion of the direct sums of Chow groups via $T_{(0)} \inj S_{(0)}$, 
we get a boundary map
$\partial_{X,t} \colon V(Y_t) \to {\underset{s \in S_{(0)}}\bigoplus} 
\CH^F_0(X_s)_0$.
Moreover, there is a commutative diagram

\begin{equation}\label{eqn:Bloch-map-0}
\xymatrix@C.8pc{
V(Y_t) \ar[d]_-{j^*}^-{\cong} \ar[r]^-{\partial_{X,t}} & 
{\underset{s \in S_{(0)}}\bigoplus} \CH^F_0(X_s)_0 \ar@{->>}[d]^-{j^*} \\
V(Y_t) \ar[r]^-{\partial_{X',t}} & {\underset{s \in S'_{(0)}}\bigoplus} 
\CH^F_0(X_s)_0.}
\end{equation}

We let $V(Y_t, m, T')$ be as in \corref{cor:Vanishing} and write it
as $V(Y_t, m, S')$. Composing the lower horizontal arrow in
~\eqref{eqn:Bloch-map-0} with the inclusion $V(Y_t, m, S') \inj V(Y_t)$, we get
the boundary map

\begin{equation}\label{eqn:Bloch-map-1}
\partial_{X',t} \colon V(Y_t, m, S') \to {\underset{s \in S'_{(0)}}\bigoplus} 
\CH^F_0(X_s)_0.
\end{equation}
This boundary map is same as the one used in
\cite[Theorem~5.4]{Kato-Saito-2}
and was first considered by Bloch \cite[Theorem~4.2]{Bloch81} in a special case.

\vskip .2cm

For $s \in S'_{\reg}$, \lemref{lem:KS-Chow} yields a well defined map 
$\CH_0^F(X_s) \to C_{KS}(X, D)_{0}$.
As a consequence of the proof of \corref{cor:Vanishing}, we get the following.

\begin{thm}\label{thm:Vanishing-0-*}
Assume that $S'$ is regular and $D \cap X' = \emptyset$.
Then the composition
\[
V(Y_t, m, S') \xrightarrow{\partial_{X',t}} 
{\underset{s \in S'_{(0)}}\bigoplus} \CH^F_0(X_s)_0
\to C_{KS}(X,D)_0
\]
is zero for all $m \gg 0$.
\end{thm}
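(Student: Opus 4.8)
The plan is to deduce the statement from \corref{cor:Vanishing} applied to the embedded surface $Y$ fibred over the curve $T$, and then to transport the resulting vanishing along the closed immersion $\gamma' \colon Y \inj X$.

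First I would check that $f|_Y \colon Y \to T$ meets all the hypotheses of the set-up of \S~\ref{sec:3-term}: $Y$ is an integral projective surface over $k$, $f|_Y$ is dominant projective of relative dimension one, $T' \subset T$ is dense open with integral fibres, the section of $f$ over $S'$ restricts to a section over $T'$, and $Y_t$ is regular. Moreover $T' \subset S'$ is regular, so $T'_\reg = T'$; the preimage $(f|_Y)^{-1}(T') = Y'$ is smooth over $k$, hence regular; and $D \cap X' = \emptyset$ forces $D \cap Y' = \emptyset$. I would then fix a nowhere dense closed subscheme $E \subset Y$ with $(\gamma')^*(D) \subset E$ and $E \cap Y' = \emptyset$ for which the Kato-Saito recipe produces a push-forward $\gamma'_* \colon C_{KS}(Y,E) \to C_{KS}(X,D)$. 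With this data, \corref{cor:Vanishing}, in the form valid under its regularity hypothesis, applied to $f|_Y$ gives, for all $m \gg 0$, that the composite
\[
V(Y_t, m, S') \xrightarrow{\partial_{Y,t}} \bigoplus_{s \in T'_{(0)}} \CH^F_0(Y_s)_0 \to C_{KS}(Y,E)_0
\]
vanishes, the second map being the cycle class map of \lemref{lem:KS-Chow}.

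Next I would identify the composite in the theorem with the image under $\gamma'_*$ of this vanishing composite. By the construction of $\partial_{X,t}$ and the diagram ~\eqref{eqn:Bloch-map-0}, the boundary $\partial_{X',t}$ is the surface boundary $\partial_{Y,t}$ followed by the inclusion $\bigoplus_{s \in T'_{(0)}} \CH^F_0(X_s)_0 \inj \bigoplus_{s \in S'_{(0)}} \CH^F_0(X_s)_0$, using that $Y_s = X_s$ for each $s \in T'_{(0)}$ and that $\partial_{X,t}$ is supported on $T_{(0)} \inj S_{(0)}$. It therefore suffices to prove that for every $s \in T'_{(0)}$ the square
\[
\xymatrix@C1pc{
\CH^F_0(Y_s)_0 \ar[r] \ar@{=}[d] & C_{KS}(Y,E)_0 \ar[d]^-{\gamma'_*} \\
\CH^F_0(X_s)_0 \ar[r] & C_{KS}(X,D)_0
}
\]
commutes, the horizontal maps being the cycle class maps. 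This is \lemref{lem:Coh-supp} with $Z = X_s$: since $s \in S'$ we have $X_s \subset X_\reg$, and since $s \in T'$ we have $X_s = Y_s \subset Y_\reg \setminus E$, so \lemref{lem:KS-Chow}(3) identifies $\CH^F_0(X_s)$ with both $C^{X_s}_{KS}(Y,E)$ and $C^{X_s}_{KS}(X,D)$, and \lemref{lem:Coh-supp} gives the compatibility on each closed point of $X_s$; extending linearly yields the square. Granting these two reductions, the composite in the theorem equals $\gamma'_*$ applied to the composite above, hence is zero.

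The main obstacle will be the two verifications on which the reduction rests. The first is the bookkeeping identifying $\partial_{X',t}$ with the surface-level boundary $\partial_{Y,t}$ of \corref{cor:Vanishing}, i.e. that forming the cohomology-with-support boundary inside $Y$ and then including into $X$ reproduces $\partial_{X,t}$; this uses that the fibres $X_s$ for $s \in T'$ already lie in the closed subscheme $Y$, together with the functoriality of ~\eqref{eqn:KS-Chow-1} along $\gamma'$. The second is securing the push-forward $\gamma'_* \colon C_{KS}(Y,E) \to C_{KS}(X,D)$ through the Kato-Saito recipe for a suitable $E$ disjoint from $Y'$; once this is in place, \lemref{lem:Coh-supp} supplies the needed compatibility and the argument closes.
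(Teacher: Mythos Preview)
Your overall strategy matches the paper's: reduce to the surface $Y \to T$, apply \corref{cor:Vanishing} there, and push forward along $\gamma'_*$ using \lemref{lem:Coh-supp}. The compatibility square you draw is exactly the content of the paper's diagram~\eqref{eqn:Commute-main}, and the identification of $\partial_{X',t}$ with the surface boundary followed by the inclusion indexed by $T'_{(0)} \inj S'_{(0)}$ is correct.

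There is, however, a genuine gap. You assert that ``$T' \subset S'$ is regular, so $T'_\reg = T'$'' and then that $Y' = (f|_Y)^{-1}(T')$ is smooth over $k$. Neither follows: $T'$ is an arbitrary integral curve inside the regular scheme $S'$, and closed subschemes of regular schemes need not be regular. Consequently $Y' \to T'$ is smooth, but $Y'$ need not be smooth over $k$, and \corref{cor:Vanishing} only yields the complex indexed by $(T'_\reg)_{(0)}$, not all of $T'_{(0)}$. Your argument therefore does not control the image of $\partial_{X',t}$ at the finitely many points of $T'_\sing$.

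This is precisely the extra work the paper carries out (its ``second claim''): for each $s \in T'_\sing$ the fibre $X_s$ is a smooth projective curve over the finite field $k(s)$, so $\CH^F_0(X_s)_0 \cong J(X_s)(k(s))$ is finite; one then enlarges $m$ so that it annihilates $\bigoplus_{s \in T'_\sing} \CH^F_0(X_s)_0$, and the definition of $V(Y_t,m,T')$ via~\eqref{eqn:BS-0} forces the boundary to vanish modulo $m$ at these points. Once that is in place, the composite factors through $(T'_\reg)_{(0)}$ and your push-forward argument (together with \corref{cor:Vanishing} over $T'_\reg$) finishes the proof. Insert this step and your argument becomes the paper's.
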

\begin{proof}
By \cite[Proposition~2.9, Lemma~2.10]{Kato-Saito-2}, we can find a
closed subscheme $E \subset Y$ containing $\gamma'^*(D)$
such that $E \cap X' = \emptyset$
and the Kato-Saito recipe provides a push-forward map
$\gamma'_* \colon C_{KS}(Y,E) \to C_{KS}(X,D)$. 
Lemma~\ref{lem:Deg-PF} says that this map is degree preserving.

Let $s \in T'_\reg$ be a closed point. Our first claim is that the
diagram
\begin{equation}\label{eqn:Commute-main}
\xymatrix@C.8pc{
C^{X_s}_{KS}(Y,E) \ar[rr] \ar[d]_-{\gamma'_*} & & 
C_{KS}(Y,E) \ar[d]^-{\gamma'_*} \\
\CH^F_0(X_s) \ar[r]^-{\cong} & C^{X_s}_{KS}(X,D) \ar[r] & C_{KS}(X,D)}
\end{equation}
commutes.
Since $X_s \subset Y_\reg$ and $X_s \cap E = \emptyset$, 
it follows from \lemref{lem:KS-Chow} that 
the left vertical arrow is an isomorphism. Hence, the claim follows
from \lemref{lem:Coh-supp}.

The diagram
\begin{equation}\label{eqn:Commute-main-2}
\xymatrix@C.8pc{
V(Y_t, m, T') \ar[r]^-{\partial} \ar[d]_-{\cong} & 
{\underset{s \in T'_{(0)}}\bigoplus} \CH^F_0(X_s)_0  \ar[d]^-{\gamma'_*} \\
V(Y_t, m, S') \ar[r]^-{\partial_{X',t}} & 
{\underset{s \in S'_{(0)}}\bigoplus} \CH^F_0(X_s)_0}
\end{equation}
commutes by the construction of the map $\partial_{X',t}$ above
(see ~\eqref{eqn:Commute-main-1}).

Our second claim is that the bottom boundary map in ~\eqref{eqn:Commute-main-2}
becomes trivial after composing with the projection
${\underset{s \in S'_{(0)}}\bigoplus} \CH^F_0(X_s)_0 \surj
{\underset{s \in S'_{(0)} \setminus T'_\reg}\bigoplus} \CH^F_0(X_s)_0$ for
all $m \gg 0$.

To prove the claim, we first note that the composite map is anyway
zero if $s \notin T'$. For points in $T'$, we project the horizontal arrows in 
~\eqref{eqn:Commute-main-2} to closed points in the finite set $T'_\sing$, 
which gives us a commutative diagram
\begin{equation}\label{eqn:Commute-main-3}
\xymatrix@C.8pc{
V(Y_t, m, T') \ar[r]^-{\partial} \ar[d]_-{\cong} & 
{\underset{s \in T'_\sing}\bigoplus} {\CH^F_0(X_s)_0}/m  \ar[d]^-{\gamma'_*} \\
V(Y_t, m, S') \ar[r]^-{\partial_{X',t}} & 
{\underset{s \in T'_\sing}\bigoplus} {\CH^F_0(X_s)_0}/m}
\end{equation}
for any $m \ge 1$.

It follows from ~\eqref{eqn:BS-0} that the top horizontal arrow in
~\eqref{eqn:Commute-main-3} is zero for all $m \gg 0$.
Hence, the same holds for the bottom horizontal arrow.
To finish the proof of the claim, it suffices therefore to show that
$\CH^F_0(X_s)_0$ is a finite group for every $s \in T'_\sing$. 
But this follows because $X_s$ is a smooth projective curve over $k(s)$ and 
hence $\CH^F_0(X_s)_0 \cong J(X_s)(k(s))$, and the latter group is finite. 

It follows from the claim that the bottom horizontal arrow in
~\eqref{eqn:Commute-main-2} factors through
$V(X_t, m, S') \xrightarrow{\partial_{X',t}} 
{\underset{s \in (T'_\reg)_{(0)}}\bigoplus} \CH^F_0(X_s)_0$ for all
$m \gg 0$. Hence, the theorem is equivalent to showing that 
the composition
\[
V(Y_t, m, S') \xrightarrow{\partial}  
{\underset{s \in  (T'_\reg)_{(0)}}\bigoplus} \CH^F_0(X_s)_0 \to 
C_{KS}(X,D)_0
\]
is zero for all $m \gg 0$.

For this, we consider the commutative diagram
\begin{equation}\label{eqn:Commute-main-4}
\xymatrix@C.8pc{
V(Y_t, m, T') \ar[r]^-{\partial} \ar[d]_-{\cong} & 
{\underset{s \in (T'_\reg)_{(0)}}\bigoplus} \CH^F_0(X_s)_0 \ar[r] 
\ar[d]^-{\gamma'_*} &
C_{KS}(Y,E)_0 \ar[d]^-{\gamma'_*}_-{\cong} \\
V(Y_t, m, S') \ar[r]^-{\partial} & 
{\underset{s \in  (T'_\reg)_{(0)}}\bigoplus} \CH^F_0(X_s)_0 \ar[r] & 
C_{KS}(X,D)_0}
\end{equation}
for all $m \gg 0$, where the right square commutes by our first claim
that ~\eqref{eqn:Commute-main} is commutative.
But the top composite arrow is zero for all $m \gg 0$ by
\corref{cor:Vanishing}. We conclude that the same holds for the
bottom composite arrow. This finishes the proof of the theorem.
\end{proof}

\section{The finiteness theorem}
\label{sec:Fin-KS}
Let $k$ be a finite field and $X$ an integral projective scheme of
dimension $d \ge 1$ over $k$. Let $D \subset X$ be a nowhere dense
closed subscheme. Let $K$ denote the function field of $X$.
Our goal in this section is to prove the finiteness of $C_{KS}(X,D)_0$.
This will be done in several steps beginning with the case of curves.

\subsection{The case of curves}\label{sec:D-1}
Suppose first that $X$ is a curve. In this case, we have an exact sequence
\[
H^0_\nis(D, \sO^{\times}_D) \to C_{KS}(X,D)_0 \to C_{KS}(X)_0 \to 0.
\]
The first term is clearly finite. So we need to see that the third term is 
finite. We do this by comparing with the normalization $f \colon X_n \to X$.
We have an exact sequence
\[
H^0_\nis(Y, \sO^{\times}_Y) \to C_{KS}(X)_0 \to C_{KS}(X_n)_0 \to 0
\]
by \lemref{lem:Deg-birational}, where $Y \subset X_n$ is a finite closed
subscheme. We are therefore reduced to proving that $C_{KS}(X)_0$ is finite
when $X$ is a smooth projective curve over a finite field. But this
is classical as $C_{KS}(X)_0 = J(k)$, where $J$ is the Jacobian variety of $X$.

\subsection{Two general reduction steps}\label{sec:Gen-redn}
The following are two reduction steps which apply without special condition 
on $X$.

\begin{lem}\label{lem:Normal-*}
Assume that $C_{KS}(X,D)_0$ is finite when $X$ is normal. Then
the same holds for arbitrary $X$.
\end{lem}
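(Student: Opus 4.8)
The plan is to reduce to the normal case via the normalization $f \colon X_n \to X$, which is a finite birational morphism with $X_n$ integral, normal and projective over $k$ of the same dimension $d$. The strategy is to produce a degree-preserving \emph{surjection} $C_{KS}(X_n, D')_0 \twoheadrightarrow C_{KS}(X,D)_0$ for a suitable modulus $D'$ on $X_n$. Since the standing hypothesis applies to the normal scheme $X_n$, the group $C_{KS}(X_n, D')_0$ is then finite, and a quotient of a finite group is finite; this forces $C_{KS}(X,D)_0$ to be finite as well.

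First I would construct the push-forward map. Using \cite[Proposition~4.2]{Kato-Saito-2}, I choose a nowhere dense closed subscheme $D' \subset X_n$ containing $f^*(D)$, which can be taken to agree with $f^*(D)$ over the locus where $f$ is an isomorphism, together with a norm map of Nisnevich sheaves $f_*(\sK^M_{d,(X_n,D')}) \to \sK^M_{d,(X,D)}$. Via the Kato-Saito recipe (equivalently, taking $H^d_\nis$ and using that $f_*$ is exact for the finite map $f$) this yields a push-forward $f_* \colon C_{KS}(X_n, D') \to C_{KS}(X,D)$. The hypotheses of \lemref{lem:Deg-PF} are met, as the image of $f$ is all of $X$ (hence not contained in $D \cup X_\sing$) and the image of $D'$ is nowhere dense, so $f_*$ is compatible with the degree maps: $\deg_X \circ f_* = \deg_{X_n}$.

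Next I would prove that $f_*$ is surjective. By \cite[Theorem~2.5]{Kato-Saito-2}, $C_{KS}(X,D)$ is generated by the forget-support classes $\tau'_x(1)$ of regular closed points $x \in X_\reg \setminus D$. Since $X$ is regular, hence normal, in a neighbourhood of each such $x$, the normalization $f$ is an isomorphism there; thus $x$ has a unique preimage $x' \in (X_n)_\reg$ with $k(x') = k(x)$, and $x'$ avoids $D'$ because $x$ lies off the conductor, where $D'$ agrees with $f^*(D)$. The local identification $C^{x'}_{KS}(X_n,D') \cong C^x_{KS}(X,D)$, that is, the compatibility of push-forward with the forget-support maps as in \lemref{lem:Coh-supp}, then gives $f_*(\tau'_{x'}(1)) = \tau'_x(1)$. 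Hence every generator of $C_{KS}(X,D)$ lifts along $f_*$, and $f_*$ is onto.

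Finally, combining surjectivity with degree compatibility, a one-line diagram chase shows the restriction $C_{KS}(X_n,D')_0 \to C_{KS}(X,D)_0$ remains surjective: any $b \in C_{KS}(X,D)$ with $\deg_X(b) = 0$ lifts to some $a$ with $\deg_{X_n}(a) = \deg_X(b) = 0$, so $a \in C_{KS}(X_n,D')_0$. Applying the hypothesis to the normal scheme $X_n$ shows $C_{KS}(X_n,D')_0$ is finite, whence its quotient $C_{KS}(X,D)_0$ is finite. I expect the only delicate point to be the construction and surjectivity of $f_*$, in particular pinning down $D'$ so that the norm map exists and restricts to an isomorphism away from the conductor, together with the forget-support compatibility; the passage to degree-zero subgroups is routine.
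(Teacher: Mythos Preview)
Your proposal is correct and follows essentially the same route as the paper: normalization, the push-forward $f_*$ via \cite[Proposition~4.2]{Kato-Saito-2}, surjectivity on generators from \cite[Theorem~2.5]{Kato-Saito-2}, and degree compatibility via \lemref{lem:Deg-PF} to pass to degree-zero subgroups. One small imprecision: \lemref{lem:Coh-supp} concerns a closed immersion, not the finite map $f$, so it does not literally apply; the forget-support compatibility you need follows instead directly from the construction of the norm map (as the paper records in the diagram~\eqref{eqn:Surface-tor-0}), since over $X_\reg$ the normalization is an isomorphism and the norm reduces to the identity.
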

\begin{proof}
Let $f \colon X_n \to X$ be the normalization map.
Then, it follows from \cite[Proposition~4.2]{Kato-Saito-2} that there
is a nowhere dense closed subscheme $D' \subset X_n$ such that
$D'_\red \subset f^{-1}(X_\sing \cup D_\red)$ and
$f^*(D) \subset D'$. Moreover, the norm map between the Milnor $K$-theory of
the residue fields of the maximal Parshin chains defines a norm map
between the Nisnevich sheaves 
\begin{equation}\label{eqn:Normal-*-0}
N_{{X_n}/X} \colon f_*(\sK^M_{d, (X_n, D')}) \to \sK^M_{d, (X,D)}.
\end{equation}
Taking the cohomology, this defines a push-forward
map $f_* \colon C_{KS}(X_n,D') \to C_{KS}(X,D)$.

By construction, the above map has the property that for a regular closed point
$x \in X_n \setminus D'$ such that $f(x) \in X_\reg$, the diagram
\begin{equation}\label{eqn:Surface-tor-0}
\xymatrix@C1pc{
\Z \ar[r]^-{\cong} & C^x_{KS}(X_n, D') \ar[r] \ar[d]^-{f_*}_-{\cong} &
C_{KS}(X_n, D') \ar[d]^-{f_*} \\
& C^{f(x)}_{KS}(X, D) \ar[r] & C_{KS}(X, D)}
\end{equation}
is commutative.

We conclude from \cite[Theorem~2.5]{Kato-Saito-2} that 
$f_* \colon C_{KS}(X_n, D') \to C_{KS}(X, D)$ is surjective.
In particular, it is surjective on the degree zero subgroups by
\lemref{lem:Deg-PF}.
\end{proof}

\begin{remk}\label{remk:Normal-open}
  The proof of \lemref{lem:Normal-*} yields more than what its statement asserts.
  Namely, the map $f_* \colon C_{KS}(X_n, D') \to C_{KS}(X, D)$ is surjective
  even if $X$ is not projective. This surjectivity will be used in the
  proof of \corref{cor:KS-fin-aff}.
  \end{remk}

\begin{lem}\label{lem:divisor}
Assume that $C_{KS}(X,D)_0$ is finite when $D_\red$ is an effective Weil
divisor whose complement is regular. Then the same holds for arbitrary $D$.
\end{lem}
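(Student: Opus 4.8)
The plan is to enlarge $D$ to a closed subscheme $D'$ that contains it, whose reduced part $D'_\red$ is an effective Weil divisor with regular complement, and then to transport finiteness downward from $D'$ to $D$ using the surjection supplied by \corref{cor:Deg-PF-*}. Concretely, set $Z = D_\red \cup X_\sing$, a closed subset of $X$ of codimension $\ge 1$, and write $Z = Z_1 \cup Z_2$, where $Z_1$ is the union of the codimension-one irreducible components of $Z$ and $Z_2$ is the union of those of codimension $\ge 2$.

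First I would cover the higher-codimension part $Z_2$ by a divisor. Since $X$ is projective and $Z_2 \subsetneq X$ is a proper closed subset, embedding $X$ by a very ample line bundle and choosing a hypersurface through $Z_2$ but not containing $X$ produces, upon intersection, an effective Cartier divisor $H$ on $X$ with $Z_2 \subseteq |H|$ and $|H| \neq X$. I would then define $D' = D \cup H \cup Z_1$ (scheme-theoretic union, with $Z_1$ given its reduced structure). By construction $D \subseteq D'$, and
\[
D'_\red = D_\red \cup |H| \cup Z_1 = Z_1 \cup |H|,
\]
the last equality because $D_\red \subseteq Z_1 \cup Z_2 \subseteq Z_1 \cup |H|$. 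Thus $D'_\red$ is pure of codimension one, i.e. the support of an effective Weil divisor, and since $D'_\red \supseteq Z \supseteq X_\sing$, its complement $X \setminus D'_\red \subseteq X_\reg$ is regular. In particular $D'$ is nowhere dense.

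With $D'$ in hand the conclusion is immediate: the hypothesis of the lemma applies to $D'$ and gives that $C_{KS}(X,D')_0$ is finite, while \corref{cor:Deg-PF-*} (applied to the inclusion $D \subseteq D'$) yields a surjection $C_{KS}(X,D')_0 \surj C_{KS}(X,D)_0$. Hence $C_{KS}(X,D)_0$ is a quotient of a finite group and is therefore finite.

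The only genuinely delicate point is the geometric construction of $D'$: one must simultaneously cover the codimension-$\ge 2$ components of $D_\red \cup X_\sing$ by a single Cartier divisor $H$ not containing $X$ (this is where projectivity is used) and arrange the scheme-theoretic inclusion $D \subseteq D'$ so that \corref{cor:Deg-PF-*} is applicable. Everything else is formal, and I expect no serious obstacle beyond bookkeeping; the subtlety is to keep $D'_\red$ pure of codimension one while ensuring that it swallows all of $X_\sing$, so that the complement remains regular and the hypothesis genuinely applies to $D'$.
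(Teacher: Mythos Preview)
Your proof is correct and follows essentially the same strategy as the paper: enlarge $D$ to a closed subscheme $D'$ containing $D$ whose reduction is an effective Weil divisor with regular complement, then invoke \corref{cor:Deg-PF-*} to push finiteness down. The paper's construction is slightly different in packaging---it first finds a reduced Weil divisor $E \supseteq D_\red \cup X_\sing$ (by putting each irreducible component inside a prime divisor) and then takes $D' = mE$ for $m \gg 0$ so that $D \subset mE$---but this is the same idea as your hypersurface-plus-union construction, and the final step is identical.
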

\begin{proof}
Given any irreducible component of $D_\red \cup X_\sing$, 
we can find an irreducible
prime divisor of $X$ containing this component. This implies that
there is a reduced Weil divisor $E$ (with reduced
induced closed subscheme structure) containing $D_\red \cup X_\sing$.
But this implies that $D \subset mE$ for all $m \gg 0$.
Since $E$ is nowhere dense, the canonical map
$C_{KS}(X,mE) \to C_{KS}(X,D)$ is surjective for all $m \gg 0$.
Hence, it is surjective on the degree zero subgroups.
We are therefore done.
\end{proof}

\subsection{The case of generic fibration}\label{sec:Fib}
Let $X$ and $D$ be as above with $d \ge 2$.
Let $S$ be an integral projective scheme of dimension $d-1$ over $k$ 
and $f \colon  X \to S$ a dominant morphism with the following
properties:
\begin{enumerate}
\item
$f_*(\sO_X) \cong \sO_S$;
\item
There is a regular dense open $S' \subset S$ such that the restriction
$f \colon f^{-1}(S') \to S'$ is smooth of relative dimension one;
\item
The fibers of $f \colon f^{-1}(S') \to S'$ are integral;
\item
The map $f \colon f^{-1}(S') \to S'$ has a section $\iota: S' \inj f^{-1}(S')$;
\item
$D \cap f^{-1}(S') = \emptyset$.
\end{enumerate}

\begin{defn}\label{defn:Fibration}
A morphism $f \colon X \to S$ which satisfies the above properties
will be called a generic fibration of relative dimension one.
\end{defn}

We shall now prove the finiteness theorem
when $X$ admits a generic fibration of relative dimension one
over a $(d-1)$-dimensional integral
scheme. We shall prove this using \thmref{thm:Vanishing-0-*},
an exact sequence of Bloch \cite[Theorem~4.2]{Bloch81} and its
generalization by Kato-Saito \cite[Theorem~5.4]{Kato-Saito-2}.

\begin{lem}\label{lem:KS-gen-fin}
Suppose that there exists a generic fibration $f \colon X \to S$
of relative dimension one. Then $C_{KS}(X,D)_0$ is finite.
\end{lem}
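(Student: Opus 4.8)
The plan is to carry out the induction on $d = \dim X$ that drives the whole finiteness theorem, using the generic fibration $f\colon X\to S$ to split $C_{KS}(X,D)_0$ into a contribution from the $(d-1)$-dimensional base $S$ (handled by the inductive hypothesis) and a contribution from the smooth fibres (handled by \thmref{thm:Vanishing-0-*} together with Bloch's exact sequence). Throughout I would fix an auxiliary nowhere dense closed subscheme $D_S\subset S$ supported on $S\setminus S'$, chosen large enough that the Kato-Saito recipe produces a degree-preserving push-forward $f_*\colon C_{KS}(X,D)\to C_{KS}(S,D_S)$, the degree compatibility being \lemref{lem:Deg-PF}.

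First I would record the relevant instance of the Bloch--Kato-Saito formalism (\cite[Theorem~4.2]{Bloch81}, \cite[Theorem~5.4]{Kato-Saito-2}), which under the generic-fibration hypotheses yields the Bloch boundary map and the push-forward fitting into
\[
V(X_t,m,S')\xrightarrow{\partial}{\underset{s\in S'_{(0)}}\bigoplus}\CH^F_0(X_s)_0 \xrightarrow{\theta} C_{KS}(X,D)_0\xrightarrow{f_*}C_{KS}(S,D_S)_0\to 0,
\]
for all $m\gg 0$, where $\theta$ is the sum of the maps $\CH^F_0(X_s)\cong C^{X_s}_{KS}(X,D)\to C_{KS}(X,D)$ of \lemref{lem:KS-Chow}(3) (valid since $f^{-1}(S')$ is regular and disjoint from $D$). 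The content of the cited theorems that I would use is twofold: the section $\iota\colon S'\inj f^{-1}(S')$ makes $f_*$ surjective, and makes $f_*\colon SK_1(X_t)\to K^M_1(k(t))$ split surjective so that $V(X_t)$ is a direct summand (as in ~\eqref{eqn:V-seq-0}), identifying $\im(\theta)=\ker(f_*)_0$; and, crucially, the boundary map $\partial$ has \emph{finite} cokernel for $m\gg 0$.

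The key step is then to feed \thmref{thm:Vanishing-0-*} into this picture. That theorem says precisely that the composite $\theta\circ\partial$ vanishes for $m\gg 0$, so $\theta$ factors through $\coker(\partial)$. Since $\coker(\partial)$ is finite, $\im(\theta)=\ker(f_*)_0$ is a quotient of a finite group, hence finite. By the inductive hypothesis applied to $S$ (of dimension $d-1$), $C_{KS}(S,D_S)_0$ is finite as well. Therefore $C_{KS}(X,D)_0$, being an extension of the finite group $C_{KS}(S,D_S)_0$ by the finite group $\ker(f_*)_0$, is finite, which is the assertion.

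The main obstacle is the fibre contribution, and specifically the interface between the higher-dimensional base here and the surface-based machinery of \S\ref{sec:3-term}--\S\ref{sec:BS}, where $\partial$ and \thmref{thm:Vanishing-0-*} were built by restricting to curves $T'\subset S'$ and passing to the surfaces $Y=\overline{X\times_{S'}T'}$. I would need to verify that the curve-by-curve vanishing assembles to $\theta\circ\partial=0$ on the full sum over $S'_{(0)}$: any class there is supported on finitely many fibres, hence lies in the image of $\partial_{X',t}$ for a single auxiliary curve through those points, which is exactly where \thmref{thm:Vanishing-0-*} applies. The other delicate point is choosing $m$ uniformly, large enough to force both $\theta\circ\partial=0$ and the finiteness of $\coker(\partial)$ at once; this is ultimately underwritten by the bounded-exponent torsion statement \propref{prop:Surface-tor} for the surfaces $Y$.
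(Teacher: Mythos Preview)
Your overall strategy---split $C_{KS}(X,D)_0$ into a base contribution handled by induction and a fibre contribution controlled by \thmref{thm:Vanishing-0-*} together with the Bloch--Kato--Saito exact sequence---is exactly the paper's strategy. The real gap is in how you propose to isolate the base contribution.

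You assert that the Kato--Saito recipe produces a push-forward $f_*\colon C_{KS}(X,D)\to C_{KS}(S,D_S)$ along the projective morphism $f$ of relative dimension one. But the Kato--Saito machinery only supplies push-forward for closed immersions (\cite[Proposition~2.9, Lemma~2.10]{Kato-Saito-2}), finite morphisms via the norm (\cite[Proposition~4.2]{Kato-Saito-2}), and certain blow-ups (\cite[Lemma~9.6]{Kato-Saito-2}); there is no construction for a relative-dimension-one projection, and none is developed in this paper. Without $f_*$ in hand, your identification $\im(\theta)=\ker(f_*)_0$ has no content, and even granting $f_*$, that exactness is a separate claim that your splitting remark about $SK_1(X_t)\to K^M_1(k(t))$ does not justify at the level of $C_{KS}$.

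The paper sidesteps this by going the other direction: instead of projecting to $S$, it uses the section. The scheme-theoretic closure $\wt{S}\subset X$ of $\iota(S')$ is a $(d-1)$-dimensional integral closed subscheme, so the \emph{closed-immersion} push-forward $\iota_*\colon C_{KS}(\wt{S},\wt{D})\to C_{KS}(X,D)$ exists by the cited Kato--Saito results. One then takes the cokernel $N$ of $\iota_*$ on degree-zero parts; induction handles $C_{KS}(\wt{S},\wt{D})_0$, and the diagram chase in~\eqref{eqn:KS-gen-fin-0} identifies $N$ as a quotient of $\bigoplus_{s\in S'_{(0)}}\CH^F_0(X_s)_0$. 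From there your argument is correct: \thmref{thm:Vanishing-0-*} kills the image of $\partial_{X'}$, so the map to $N$ factors through $\coker(\partial_{X'})\cong\pi^{\ab}_1(X_\eta)_0$, which is finite by Katz--Lang. The uniformity-of-$m$ issue you flag is genuine but is absorbed exactly as you suggest, curve by curve, since the sum in~\eqref{eqn:KS-gen-fin-3} is over $t\in S'_{(1)}$ and each summand has its own $m_t$.
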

\begin{proof}
We shall follow the notations of the definition of generic fibration of
relative dimension one.
We let $\eta \in S$ be the generic point. 
We let $\wt{S} \subset X$ be the scheme theoretic
closure of $\iota(S')$ in $X$. Note that $\wt{S}$ is an
integral scheme such that $f|_{\wt{S}} \colon \wt{S} \to S$
is projective and birational. We let $X' = f^{-1}(S')$ and
write $\wt{S} \cap X'$ as $S'$. We let $\iota \colon \wt{S} \inj X$
denote the inclusion. 
By \cite[Proposition~2.9, Lemma~2.10]{Kato-Saito-2},
we can find a closed subscheme $\wt{D} \subset \wt{S}$ containing
$\iota^*(D)$ such that $\wt{D}_\red = (\iota^*(D))_\red$ and
the Kato-Saito recipe gives a push-forward map
$\iota_* \colon  C_{KS}(\wt{S}, \wt{D}) \to C_{KS}(X,D)$.

We now consider the commutative diagram
\begin{equation}\label{eqn:KS-gen-fin-0}
\xymatrix@C.8pc{
{\underset{x \in S'_{(0)}}\bigoplus} \Z \ar@{^{(}->}[d]  \ar@{=}[r] &
{\underset{x \in S'_{(0)}}\bigoplus} \Z \ar@{^{(}->}[d]^-{\iota_*}  
\ar@{->>}[r] &
C_{KS}(\wt{S}, \wt{D}) \ar[d]^-{\iota_*} \\
{\underset{x \in X'_{(0)}}\bigoplus} \Z \ar[r] &
{\underset{x \in S'_{(0)}}\bigoplus} \CH^F_0(X_s) \ar@{->>}[r] \ar[d] & 
C_{KS}(X,D) \ar[d] \\
& M \ar@{->>}[r] \ar[d] & N \ar[d] \\
& 0 & 0,}
\end{equation}
where $M$ and $N$ are defined so that the middle and the right columns are
exact.
The horizontal arrows on the right side are given by
\lemref{lem:KS-Chow} and are surjective by
\cite[Theorem~2.5]{Kato-Saito-2}.
Note also that since $\CH^F_0(X_s)_0 = 
\Ker(\CH^F_0(X_s)_0 \xrightarrow{f_*} \CH^F_0(\Spec(k(s)))$, it follows that
$M = {\underset{x \in S'_0}\bigoplus} M_s$ is
such that the composition
\[
\CH^F_0(X_s)_0 \inj \CH^F_0(X_s) \to M_s
\]
is an isomorphism for all $s \in S'_{(0)}$.
Hence, we can identify $M$ with 
${\underset{x \in S'_0}\bigoplus} \CH^F_0(X_s)_0$.

Next, it follows from \lemref{lem:Deg-PF} that the right column in
~\eqref{eqn:KS-gen-fin-0} induces an exact sequence
\begin{equation}\label{eqn:KS-gen-fin-1}
C_{KS}(\wt{S}, \wt{D})_0 \xrightarrow{\iota_*} C_{KS}(X,D)_0 \to N
\end{equation}
By induction on $\dim(X)$, our task therefore remains to show that $N$ is 
finite.
Using the above description of $M$ and the surjection $M \surj N$, 
it suffices to show that the composite map
\begin{equation}\label{eqn:KS-gen-fin-2}
{\underset{x \in S'_{(0)}}\bigoplus} \CH^F_0(X_s)_0
\inj {\underset{x \in S'_{(0)}}\bigoplus} \CH^F_0(X_s) \to  C_{KS}(X,D) \to N
\end{equation}
factors through a finite quotient.

We now choose a point $t \in S'_{(1)}$ and let $T \subset S$ be its 
scheme theoretic closure. Then $T \subset S$ is an integral curve
whose generic point lies in $S'$. We let $\gamma \colon T \inj S$ denote the
inclusion map and let $T' = S' \cap T$. 
We let $Y' = X' \times_S T'$. Then the restriction $Y' \to T'$ of $f$
is a smooth projective morphism with integral fibers of dimension one.  
This implies that $Y'$ must be integral. We let $Y \subset X$ be the
scheme theoretic closure of $Y'$ in $X$. 
This gives us a commutative square of integral projective schemes
\begin{equation}\label{eqn:KS-gen-fin-4}
\xymatrix@C.8pc{
Y \ar[r]^-{\gamma'} \ar[d]_-{g} & X \ar[d]^-{f} \\
T \ar[r]^-{\gamma} & S.}
\end{equation}

It follows from \thmref{thm:Vanishing-0-*} that the composite map
\begin{equation}\label{eqn:V-seq-2-8}
V(Y_t, m_t, S') \xrightarrow{\partial_{X',t}} 
{\underset{s \in S'_{(0)}}\bigoplus} \CH^F_0(X_s)_0
\to C_{KS}(X,D)_0
\end{equation}
is zero for all $m_t \gg 0$. We write
$V(Y_t, m_t, S')$ as $V(X_t, m_t, S')$ (note that $Y_t = X_t$ for any
$t \in S'_{(1)}$) and  let 
\[
{\partial}_{X'} \colon {\underset{t \in S'_{(1)}}\bigoplus} V(X_t, m_t, S')
\to  {\underset{x \in S'_{(0)}}\bigoplus} {\CH^F_0(X_s)_0}
\]
denote the sum of the boundary maps $\partial_{X',t}$ over the points
$t \in S'_{(1)}$.

We now let $\pi^{\ab}_1(X_\eta)_0 := \Ker(\pi^{\ab}_1(X_\eta) \xrightarrow{f_*} 
\pi^{\ab}_1(\Spec(k(\eta))))$. It follows from a theorem of
Katz and Lang \cite{Katz-Lang} that $\pi^{\ab}_1(X_\eta)_0$ is finite.
By \cite[Theorem~5.4]{Kato-Saito-2} (the exact sequence of
Bloch and Kato-Saito), there exists an exact sequence
\begin{equation}\label{eqn:KS-gen-fin-3}
{\underset{t \in S'_{(1)}}\bigoplus} V(X_t, m_t, S') 
\xrightarrow{{\partial}_{X'}} {\underset{x \in S'_{(0)}}\bigoplus} 
{\CH^F_0(X_s)_0} \to \pi^{\ab}_1(X_\eta)_0 \to 0,
\end{equation}
where for every $t \in S'_{(1)}$, we can take $m_t$ to be any nonzero 
integer which annihilates $\pi^{\ab}_1(X_\eta)_0$. 

We saw above in ~\eqref{eqn:V-seq-2-8} and in \S~\ref{sec:3-term}
that if for $t \in  S'_{(1)}$,
we choose $m_t$ large enough such that
$m_t$ annihilates the resulting $C_{KS}(Y, E)_0$, 
${\underset{t \in T'_\sing}\bigoplus} \CH^F_0(X_s)_0$ and 
$\pi^{\ab}_1(X_\eta)_0$, then the forget support map
\[
{\underset{x \in S'_{(0)}}\bigoplus} {\CH^F_0(X_s)_0} \to
C_{KS}(X,D)
\]
annihilates the image of ${\partial}_{X'}$.
It follows that the composite map in ~\eqref{eqn:KS-gen-fin-2}
factors through the finite group $\pi^{\ab}_1(X_\eta)_0$.
We have therefore proven the lemma.
\end{proof}

\subsection{The case of schemes fibered by curves}
\label{sec:Fiber**}
We shall say that a general fiber of a morphism between schemes
has property $\sP$ if all (scheme theoretic) 
fibers over a dense open subset of the base have property $\sP$. 
Let $f \colon X \to S$ be a dominant projective morphism between integral
schemes over $k$ whose generic fiber has dimension one. Let $F$ be the
function field of $S$. We shall say that $f$ is `nice' if the following
hold:
\begin{enumerate}
\item
$X$ and $S$ are normal;
\item
$f$ is generically smooth; 
\item
$f_* \sO_{X} = \sO_{S}$;
\item
All fibers of $f$ are geometrically connected;
\item
General fibers of $f$ are geometrically integral of dimension one.
\end{enumerate}

For a finite field extension $F \inj F'$, let
$S'$ denote the normalization of $S$ in $F'$ and let $X'$ denote the
normalization of $X$ in the compositum $KF'$ inside an algebraic closure of
$K$.
Then $f$ induces a projective morphism $f' \colon X' \to S'$ such that
the diagram
\begin{equation}\label{eqn:Stein-fact-0}
\xymatrix@C.8pc{
X' \ar[r]^-{\phi}  \ar[d]_-{f'} & X \ar[d]^-{f} \\
S' \ar[r]^-{\psi} & S}
\end{equation}
is commutative. The horizontal arrows are finite dominant and vertical
arrows are projective dominant morphisms between integral schemes.

\begin{lem}\label{lem:Nice}
If $f \colon X \to S$ is nice, then so is $f' \colon X' \to S'$.
\end{lem}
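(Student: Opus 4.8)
The plan is to check the five defining properties of a \emph{nice} morphism for $f'\colon X'\to S'$, deducing most of them from the corresponding properties of $f$ by identifying the generic fibre of $f'$ with a base change of the generic fibre of $f$. Property~(1) is immediate, since $X'$ and $S'$ are normalizations. For the remaining properties I would first pin down the generic fibre. Write $\eta\in S$ and $\eta'\in S'$ for the generic points, so $k(\eta)=F$ and $k(\eta')=F'$. Since $\psi$ and $\phi$ are finite, $X\times_S S'\to X$ is finite; as $\cO_{X\times_S S'}$ is then integral over $\cO_X$, the normalization of $X$ in $KF'$ agrees with that of $X\times_S S'$, so $X'$ is the normalization of $X\times_S S'$. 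Normalization commutes with localization at the generic point of $S'$, whence
\[
X'_{\eta'}\;=\;\bigl((X\times_S S')_{\eta'}\bigr)^{\sim}\;=\;(X_\eta\times_F F')^{\sim}.
\]
By properties~(2) and~(5) for $f$, the generic fibre $X_\eta$ is a smooth, geometrically integral curve over $F$ (the generic fibre lies in every dense open of $S$, so is geometrically integral, and it is smooth by generic smoothness). Since smoothness and geometric integrality are stable under arbitrary base field extension, $X_\eta\times_F F'$ is smooth and geometrically integral, hence normal, so the normalization is trivial and $X'_{\eta'}\cong X_\eta\times_F F'$. This yields property~(2) and, by spreading these constructible properties out over a dense open of $S'$, property~(5) and relative dimension one.

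For property~(3) I would argue by Stein factorization using normality of $S'$. As $X'_{\eta'}$ is geometrically integral and proper over $F'$ we have $H^0(X'_{\eta'},\cO)=F'$, so the coherent $\cO_{S'}$-algebra $f'_*\cO_{X'}$ has generic stalk $F'=k(\eta')$. Being torsion-free of generic rank one over the integral normal scheme $S'$, it embeds into $F'$ and is integral over $\cO_{S'}$; since $S'$ is integrally closed in $F'$ this forces $f'_*\cO_{X'}=\cO_{S'}$. Property~(4) is then a formal consequence: $f'$ is proper with $f'_*\cO_{X'}=\cO_{S'}$, so $S'$ is its own Stein factor and all fibres of $f'$ are geometrically connected by the connectedness part of Stein factorization.

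The main obstacle will be the generic-fibre identification $X'_{\eta'}\cong X_\eta\times_F F'$: one must verify that $X'$ really is the normalization of $X\times_S S'$ rather than just of $X$, that normalization commutes with passage to the generic point, and that the base change of the smooth geometrically integral generic fibre remains normal so the normalization drops out. Once this is established, properties~(2), (5) and the relative dimension are immediate, property~(3) reduces to normality of $S'$, and property~(4) follows formally from~(3).
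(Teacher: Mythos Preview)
Your proof is correct and follows essentially the same route as the paper's: identify the generic fibre of $f'$ with $X_\eta\times_F F'$, observe this is smooth and geometrically integral because $X_\eta$ is, then deduce (3) and (4) via Stein factorization and spread out for (5). The only notable difference is that for (3) the paper simply cites \cite[Lemma~37.49.6]{SP}, whereas you spell out the underlying argument directly (coherence of $f'_*\sO_{X'}$, torsion-freeness, generic rank one, normality of $S'$); this is exactly the content of that lemma, so the two arguments coincide.
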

\begin{proof}
The property (1) follows by construction. We let $Y = X_F$.
Then $Y$ is a smooth projective geometrically integral curve over $F$.
This implies that $Y_{F'}$ also has the same property.
In particular, $Y_{F'}$ coincides with the normalization of $Y$ in
the composite field $KF'$. Equivalently, $Y_{F'}$ is the generic fiber
of the map $f' \colon X' \to S'$. This proves (2).

Since we showed above that 
the generic fiber $X'_{F'}$ of $f'$ is geometrically reduced and
connected, it follows that $H^0(X'_{F'}, \sO_{X'_{F'}}) = F'$ (see  
\cite[Lemma~33.9.3]{SP}). 
An easy application of Stein factorization
tells us that $f_* \sO_{X'} = \sO_{S'}$ (e.g., see 
\cite[Lemma~37.49.6]{SP}). 
In particular, all fibers of
$f'$ are geometrically connected. This proves (3) and (4).
The first part of (5) now easily follows from
\cite[Lemmas~37.24.4, 37.25.5]{SP}. The second part is an 
easy consequence of the fact that $f$ is dominant and hence flat over
a dense open subset of $S$ by the generic flatness.
\end{proof}

\begin{lem}\label{lem:Stein-fact}
We can choose $F'$ to be a purely inseparable
extension such that the resulting map $f' \colon X' \to S'$ is nice.
\end{lem}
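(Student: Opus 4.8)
The plan is to isolate the single property of niceness that an arbitrary dominant projective $f$ (with one-dimensional generic fibre) can fail after passing to normalizations, namely generic smoothness, and to remove its failure by a finite \emph{purely inseparable} base change. Write $K = k(X)$ and $F = k(S)$, and let $Y = X_F$ be the generic fibre, an integral projective curve over $F$ with function field $K$. Since $S'$ and $X'$ are defined as normalizations, property~(1) is automatic for every choice of $F'$, so the only real issue is to arrange that the generic fibre of $f'$ is a smooth projective geometrically integral curve; once this is achieved, properties~(3)--(5) follow from Stein factorization over the normal base $S'$ exactly as in the proof of \lemref{lem:Nice} (which uses only geometric integrality of the generic fibre, not full niceness). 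We may assume $f_*\sO_X = \sO_S$, equivalently that $F$ is algebraically closed in $K$ and $Y$ is geometrically connected: if not, we first replace $S$ by the normalization of $S$ in the field of constants of $f$, the standard Stein factorization step, which only modifies the separable part of the constants and leaves the subsequent construction intact.

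With $Y$ geometrically connected, the sole remaining obstruction is that $Y$ need not be geometrically reduced, i.e.\ that $K/F$ need not be separably generated; this is a purely inseparable phenomenon, and this is exactly where finiteness of the base field enters. Since $F = k(S)$ is finitely generated over the perfect field $k$, we have $[F:F^p] = p^{\dim S} < \infty$, so each $F^{1/p^n}$ is a \emph{finite} purely inseparable extension of $F$. As $Y$ is of finite type over $F$, the geometric reducedness defect is resolved at a finite level: there is an $n$ such that $KF'/F'$ is separably generated for $F' = F^{1/p^n}$, equivalently $(Y_{F'})_\red$ is geometrically reduced over $F'$ (see \cite{SP}). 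We take this $F'$.

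It remains to verify that $f' \colon X' \to S'$ is nice. The generic fibre $X'_{F'}$ is the normalization of $(Y_{F'})_\red$: it is a normal, hence regular, projective curve over $F'$ with function field $KF'$, and since $KF'/F'$ is now separably generated it is smooth over $F'$; spreading out yields property~(2). Geometric connectedness is preserved under the purely inseparable base change and under passing to $(-)_\red$, both universal homeomorphisms, so $(Y_{F'})_\red$, and therefore $X'_{F'}$, is geometrically integral; this gives property~(5) together with $H^0(X'_{F'}, \sO_{X'_{F'}}) = F'$. As in the proof of \lemref{lem:Nice}, Stein factorization over the normal scheme $S'$ then produces $f'_*\sO_{X'} = \sO_{S'}$ and the geometric connectedness of all fibres of $f'$, i.e.\ properties~(3) and~(4).

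The main obstacle is securing a \emph{finite} purely inseparable $F'$ with $KF'/F'$ separable: this is precisely the point where one uses that $F$ is finitely generated over a perfect field, so that $[F:F^p] < \infty$ and the perfect closure is reached through the finite steps $F^{1/p^n}$. One should also keep in mind that a purely inseparable extension cannot repair a failure of geometric connectedness, since the separable part of the field of constants is insensitive to such extensions; this is why property~(4) must be arranged beforehand by the separable Stein factorization of the base, after which only the inseparable defect survives and is eliminated as above.
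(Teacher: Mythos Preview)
Your argument follows the same outline as the paper's: pass to normalizations, produce a finite purely inseparable $F'/F$ over which the normalized reduced generic fibre becomes smooth, and then deduce (3)--(5) via the Stein-factorization argument of \lemref{lem:Nice}. Where the paper invokes \cite[Lemma~33.27.3]{SP} to get geometric normality after a purely inseparable base change, you use the equivalent device of passing to $F^{1/p^n}$ so that $KF'/F'$ becomes separably generated; this is the same mechanism.

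The difficulty lies in your preliminary reduction. You correctly note that a purely inseparable base change cannot create geometric connectedness, and you propose to first replace $S$ by its normalization in the field of constants of $f$. But that is a \emph{separable} extension of $F$, so the composite $F\subset F'$ is no longer purely inseparable and you have not established the lemma as worded. Concretely, if $F_s$ denotes the separable closure of $F$ in $K$ and $F_s\neq F$ (nothing in the hypotheses excludes this), then for every purely inseparable $F'/F$ the field $F_sF'$ sits inside $H^0(X'_{F'},\sO_{X'_{F'}})$ with $[F_sF':F']=[F_s:F]>1$, so property~(3) (hence (4) and (5)) fails for \emph{every} such $F'$. The paper's own proof omits your Stein step and simply refers (3)--(5) back to \lemref{lem:Nice} without verifying geometric connectedness of $Z$; your observation in fact exposes the same lacuna there. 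For the application in \lemref{lem:Fiber-KS-fin} this is harmless---one first performs the separable Stein step, handled by the finite-cokernel part of \cite[Corollary~4.10]{Kato-Saito-2}, and then the purely inseparable one---but neither argument proves the lemma exactly as stated.
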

\begin{proof}
We let $S_n \to S$ be the normalization morphism and let $Y$
denote the normalization of $X$. Then $Y$ is a normal
integral scheme and $f$ induces a projective dominant map 
$f_1 \colon Y \to S_n$.
Since the generic fiber of $f_1$ is a projective limit
of open subschemes of $Y$ and the latter is normal, it follows that
$Y_F$ is integral and normal. Since $\dim(Y_F) = 1$, it follows that
$Y_F$ is an integral regular curve over $F$.

By \cite[Lemma~33.27.3]{SP}, there exists a finite purely inseparable
field extension $F \inj F'$ such that the normalization $Z$ of 
$(Y_F \otimes_F F')_{\red}$ is geometrically normal. Since
$\dim(Y_F) = 1$, it follows that $Z$ is geometrically regular.
In particular, it is geometrically reduced.
Since $Y_F \otimes_F F' \to Y_F$ is a base change of the
radicial morphism $\Spec(F') \to \Spec(F)$, it is also radicial.
It follows that $Y_F \otimes_F F'$ is irreducible.
We conclude that $Z$ is an integral and geometrically regular 
(equivalently, smooth) projective curve over $F'$.

We let $S'$ denote the normalization of $S$ in $F'$ and let 
$X'$ be the normalization of $Y$ in the composite field $KF'$.
Let $f' \colon X' \to S'$ be the induced morphism.
Since $Y_F$ is a projective limit
of open subschemes of $Y$, it follows that the generic fiber
of $f'$ is the normalization $Z'$ of $Y_F$ in $KF'$.
On the other hand, it is easily seen that the canonical map
$Z \to Z'$ is a birational morphism between normal projective curves
over $F'$ (with function field $KF'$), and hence is an isomorphism.

Since $X'$ is same as the normalization of $X$ in the composite field
$KF'$, we conclude that there exists 
a commutative square such as ~\eqref{eqn:Stein-fact-0} for which the
generic fiber of $f'$ is a smooth projective curve integral over the 
function field $F'$ of $S'$. 
We have thus shown (1) and (2).
The proof of (3), (4) and (5) is identical to the proof
of the second part of \lemref{lem:Nice}.
\end{proof}

\begin{lem}\label{lem:Fiber-KS-fin}
Let $f \colon X \to S$ be a dominant projective morphism between integral
schemes over $k$ whose generic fiber has dimension one. Then
$C_{KS}(X,D)_0$ is finite.
\end{lem}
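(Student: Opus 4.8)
The goal is to prove finiteness of $C_{KS}(X,D)_0$ for an arbitrary dominant projective morphism $f \colon X \to S$ with one-dimensional generic fiber. The strategy is to reduce this general situation to the case already settled in \lemref{lem:KS-gen-fin}, namely when $X$ admits a \emph{generic fibration of relative dimension one} in the sense of \defref{defn:Fibration}. The obstruction is that a general $f$ need not satisfy the integrality, smoothness, and section hypotheses packaged into that definition. We have already assembled the tools to remove each of these defects: \lemref{lem:Normal-*} lets us assume $X$ is normal, \lemref{lem:divisor} lets us assume $D_\red$ is a Weil divisor whose complement is regular, and \lemref{lem:Stein-fact} together with \lemref{lem:Nice} produces, after a finite purely inseparable base change $F \inj F'$, a \emph{nice} morphism $f' \colon X' \to S'$ with a smooth projective geometrically integral generic fiber.

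\textbf{Main steps.}
First I would apply \lemref{lem:Normal-*} and \lemref{lem:divisor} to reduce to the case where $X$ is normal and $D$ is a Weil divisor with regular complement, proceeding by induction on $d = \dim(X)$ with the curve case of \S~\ref{sec:D-1} as the base. Next, I invoke \lemref{lem:Stein-fact} to pass to a purely inseparable extension $F'$ of the function field $F$ of $S$, obtaining the commutative square ~\eqref{eqn:Stein-fact-0} in which $\phi \colon X' \to X$ and $\psi \colon S' \to S$ are finite and radicial while $f' \colon X' \to S'$ is nice. The crucial leverage of purely inseparability is that the absolute Frobenius factors through $\phi$, so a high power of Frobenius on $X$ factors through $X'$; equivalently, $\phi$ induces an isomorphism on $\pi_1^{\ab}$ and, after the norm/push-forward machinery of \cite[Proposition~4.2]{Kato-Saito-2}, a push-forward $\phi_* \colon C_{KS}(X', D'') \to C_{KS}(X,D)$ that is surjective on degree-zero subgroups by \lemref{lem:Deg-PF}. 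Thus it suffices to prove finiteness for the nice morphism $f'$.

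\textbf{From nice to generic fibration.}
For the nice morphism $f' \colon X' \to S'$, properties (1)--(5) of \defref{defn:Fibration} are nearly met: niceness already gives normality, generic smoothness with integral general fibers, and $f'_* \sO_{X'} = \sO_{S'}$. What remains is to secure a section over a regular dense open $S'' \subset S'$ and to arrange $D'' \cap f'^{-1}(S'') = \emptyset$. A section exists after shrinking $S'$ because the smooth projective curve $X'_{F'}$ over the infinite field $F'$ has a rational point after a further base extension; more robustly, one can spread out a closed point of the generic fiber to a multisection and split it, or appeal to the existence of a section over the generic point guaranteed in the setup of \S~\ref{sec:3-term}, then spread it out to a dense open. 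Since $D''$ is a Weil divisor not dominating $S'$, its image is a proper closed subset, so shrinking $S''$ to avoid it secures condition (5). With these arrangements $f'$ becomes a generic fibration of relative dimension one, and \lemref{lem:KS-gen-fin} yields finiteness of $C_{KS}(X', D'')_0$.

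\textbf{Expected obstacle.}
The delicate point is the construction of the section and the bookkeeping of the modulus $D''$ across the base change: the purely inseparable extension and the normalization in \lemref{lem:Stein-fact} may enlarge the boundary divisor, and one must check that the enlarged $D''$ still has the form required and that $\phi_*$ respects degree-zero subgroups, which is exactly where \lemref{lem:Deg-PF} and the norm compatibility ~\eqref{eqn:Normal-*-0} are needed. The inductive descent on $\dim(X)$ inside \lemref{lem:KS-gen-fin} (through the term $N$ in ~\eqref{eqn:KS-gen-fin-0}) is what ultimately feeds the finiteness of the lower-dimensional base $S'$ back into the argument, so the whole scheme is an induction whose inductive step is precisely the reduction to a generic fibration carried out here.
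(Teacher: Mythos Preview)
Your reduction to the nice case via \lemref{lem:Stein-fact} and the purely inseparable push-forward is essentially what the paper does. However, two genuine gaps appear once $f'$ is nice, and both are places where the paper does real work that your sketch skips.

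\textbf{The section.} You assert that a section exists after shrinking $S'$ because ``the smooth projective curve $X'_{F'}$ over the infinite field $F'$ has a rational point after a further base extension''; but a further base extension is precisely the point, and it is not free. A smooth projective geometrically integral curve over $F'$ need not have an $F'$-rational point, and spreading out a closed point gives only a multisection, which does not yield a section of $f'$. The paper resolves this by passing to a finite \emph{Galois} extension $F' \subset F''$ so that the generic fiber acquires a rational point, then invoking \lemref{lem:Nice} to keep niceness. The cost is that for a separable extension the push-forward $\phi_*$ on $C_{KS}$ is no longer surjective; \cite[Corollary~4.10]{Kato-Saito-2} only gives that $\coker(\phi_*)$ is finite, and one concludes via the exact sequence ~\eqref{eqn:Fiber-KS-fin-**}. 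Your claim of surjectivity on degree-zero parts is correct for the radicial step but fails at this second, separable step.

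\textbf{Horizontal components of $D$.} Your sentence ``since $D''$ is a Weil divisor not dominating $S'$, its image is a proper closed subset'' is unjustified and in general false: $D$ may contain components that dominate $S$ (i.e.\ meet the generic fiber $Y$), and these persist after base change. This is exactly the case $D \cap Y \neq \emptyset$ in the paper's proof. One cannot simply shrink $S'$ to make $D'' \cap f'^{-1}(S'') = \emptyset$. The paper handles it by stripping off the horizontal part: one constructs $D' \subset D$ agreeing with $D$ at all height-one points not in $T = \{x \in Y^{(1)} : \ov{\{x\}} \subset D\}$ and satisfying $D' \cap Y = \emptyset$, then uses the exact sequence ~\eqref{eqn:Fiber-KS-fin-1} from \cite[\S8]{Kato-Saito-2} to compare $C_{KS}(X,D)_0$ with $C_{KS}(X,D')_0$ up to a finite group. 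Only after this reduction does condition (5) of \defref{defn:Fibration} become achievable.
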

\begin{proof}
Let $F$ denote the function field of $S$.
By \lemref{lem:Stein-fact}, there is a purely inseparable finite extension
$F \subset F'$ such that the resulting map $f' \colon X' \to S'$ is nice.
By \cite[Proposition~4.2]{Kato-Saito-2}, there is a nowhere dense
closed subscheme $D' \subset X'$ containing $\phi^*(D)$ (see
~\eqref{eqn:Stein-fact-0}) and a push-forward map
$\phi_* \colon C_{KS}(X', D') \to C_{KS}(X,D)$. This map is
degree preserving by \lemref{lem:Deg-PF}.
It follows from \cite[Corollary~4.10]{Kato-Saito-2} that 
$\phi_* \colon C_{KS}(X', D') \to C_{KS}(X,D)$ is surjective. 
Hence, it is surjective on the degree zero subgroups. It suffices therefore to 
prove the lemma when $f \colon X \to S$ is nice. 
We shall assume this to be the case in the rest of the proof
and divide the proof into various sub-cases. We let $Y$ denote the
generic fiber of $f$.

We first consider the 
case when $D \cap Y = \emptyset$ and $Y(F) \neq \emptyset$. 
Since $f$ is projective, it follows that the scheme theoretic image of
$D$ in $S$ is a nowhere dense closed subscheme. We can therefore 
find an open dense subscheme $S' \subset S$ such that $D \cap f^{-1}(S')
= \emptyset$. Since
$Y$ is smooth over $F$ and $Y(F) \neq \emptyset$, we can assume 
(after shrinking $S'$ if necessary) that $f$ is smooth over $S'$ and
has a section over $S'$. It follows that $f \colon X \to S$ is a 
generic fibration of relative dimension one.
We conclude the finiteness of $C_{KS}(X,D)_0$ by
\lemref{lem:KS-gen-fin}.

We now consider the case when $D \cap Y = \emptyset$.
We can find a finite Galois extension $F \subset F'$ such that
the resulting map $f' \colon X' \to S'$ has the property that 
$Y'(F') \neq \emptyset$, where $Y'$ is the generic fiber of $f'$.
It follows from \lemref{lem:Nice} that $f'$ is also nice.
Since $X$ is normal, there exists a push-forward map
$\phi_* \colon C_{KS}(X',D') \to C_{KS}(X,D)$ where we can take
$D' = \phi^*(D)$. It follows from \cite[Corollary~4.10]{Kato-Saito-2}
that $\coker(\phi_*)$ is finite. Since $\phi_*$ is degree preserving
by \lemref{lem:Deg-PF}, it follows that 
\begin{equation}\label{eqn:Fiber-KS-fin-**}
C_{KS}(X',D')_0 \xrightarrow{\phi_*} C_{KS}(X,D)_0 \to \coker(\phi_*)
\end{equation}
is exact. Since we have shown above that $C_{KS}(X',D')_0$ is finite,
it follows that so is $C_{KS}(X,D)_0$.

We now consider the remaining case $D \cap Y \neq \emptyset$.
We let $T = \{x \in Y^{(1)}| \ov{\{x\}} \subset D\}$, where
the closure of $\{x\}$ is taken in $X$. 
It is clear that $T$ is a finite set. We can now choose a
closed subscheme $D' \subset D$ such that
\begin{equation}\label{eqn:Fiber-KS-fin-0}
\sI_{D'}\sO_{X,x} = \sI_D \sO_{X,x} \ \ \mbox{for} \ \ 
x \in X^{(1)} \setminus T \ \ \mbox{and} \ \ D' \cap Y = \emptyset.
\end{equation}
It follows from the choice of $D'$ that $\sI_{D'} \sO_{X,x} = \sO_{X,x}$ for
every $x \in T$. By 
\cite[Theorem~8.3, Proposition~8.4, Corollary~8.5]{Kato-Saito-2}, there
exists an exact sequence 
\begin{equation}\label{eqn:Fiber-KS-fin-1}
{\underset{x \in T}\bigoplus} C(x, i_x) \to C_{KS}(X,D) \to C_{KS}(X,D') \to 0,
\end{equation}
where $i_x \gg 0$ is an integer and $C(x, i_x)$ is finite for each $x \in T$.
We remark here that this is shown in \cite[Proposition~8.4]{Kato-Saito-2}
when $T$ is a singleton. However, an easy induction on the cardinality
of $T$ yields the general case. 

Taking the degree zero parts, we get an exact sequence
\begin{equation}\label{eqn:Fiber-KS-fin-2}
{\underset{x \in T}\bigoplus} C(x, i_x) \to C_{KS}(X,D)_0 \to C_{KS}(X,D')_0 \to 
0.
\end{equation}
We have shown earlier that $C_{KS}(X,D')_0$ is finite and this concludes
the proof.
\end{proof}

\subsection{The general case of finiteness theorem}
\label{sec:KS-gen}
Let $k$ be a finite field.
Let $X$ be a projective integral scheme in $\Sch_k$ of dimension $d \ge 1$
and $D \subset X$ a nowhere dense closed subscheme.
Let $K$ be the function field of $X$.
We shall now prove the finiteness of $C_{KS}(X,D)_0$ in general.
The result is the following.

\begin{thm}\label{thm:KS-fin} 
The degree zero idele class group $C_{KS}(X,D)_0$ is finite.
In particular, $C(X,D)_0$ is finite if $X \setminus D$ is regular.
\end{thm}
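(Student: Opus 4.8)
The plan is to induct on $d = \dim(X)$, the base case $d = 1$ being exactly \S~\ref{sec:D-1}; so I assume $d \ge 2$ and that the theorem holds in all dimensions $< d$. Two reductions are available at the outset: \lemref{lem:Normal-*} lets me assume $X$ is normal and \lemref{lem:divisor} lets me take $D_\red$ to be an effective Weil divisor with regular complement. Neither is logically essential below (since \lemref{lem:Fiber-KS-fin} tolerates an arbitrary boundary), but they streamline matters. The real task is to equip $X$ with a fibration by curves and invoke \lemref{lem:Fiber-KS-fin}, whose proof already absorbs the arithmetic of the problem: the reduction to a \emph{nice} morphism via \lemref{lem:Stein-fact}, the manufacture of a section by a finite base change, and, through \lemref{lem:KS-gen-fin}, the inductive hypothesis together with the Bloch--Kato--Saito exact sequence \cite[Theorem~5.4]{Kato-Saito-2}.

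The construction of the fibration is the crux. After fixing a projective embedding $X \inj \P^N_k$ (re-embedding by a high Veronese if necessary, so that enough $k$-rational linear forms are at hand), I choose a general linear projection $\P^N \dto \P^{d-1}$ with centre a linear subspace $L$ of dimension $N-d$. For general $L$ the restriction to $X$ is dominant with one-dimensional generic fibre, and its indeterminacy locus $X \cap L$ is a finite set of closed points. Passing to a modification is unavoidable in general (e.g.\ $\P^d$ carries no fibration by curves as an honest morphism), so I resolve the finite indeterminacy by a projective birational morphism $g \colon X' \to X$ from an integral scheme $X'$, obtaining a dominant projective morphism $f \colon X' \to \P^{d-1}_k$ with one-dimensional generic fibre onto a $(d-1)$-dimensional integral target. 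Choosing a nowhere dense $D' \subset X'$ containing $g^*(D)$ for which the Kato--Saito recipe provides a push-forward (cf.~\cite[Proposition~4.2]{Kato-Saito-2}), \lemref{lem:Fiber-KS-fin} applies to $f$ and shows that $C_{KS}(X',D')_0$ is finite.

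It remains to descend along $g$. Since $g$ is an isomorphism over a dense open of $X_\reg \setminus D$, every closed point there lifts to $X'$, so \cite[Theorem~2.5]{Kato-Saito-2} renders the push-forward $g_* \colon C_{KS}(X',D') \to C_{KS}(X,D)$ surjective, exactly as in the normalization argument of \lemref{lem:Normal-*}; it is degree preserving by \lemref{lem:Deg-PF}, hence surjective on degree-zero subgroups. Thus $C_{KS}(X,D)_0$ is a quotient of the finite group $C_{KS}(X',D')_0$, and the induction closes. For the final assertion, when $X \setminus D$ is regular the isomorphism $\psi_{X|D} \colon C(X,D) \xrightarrow{\cong} C_{KS}(X,D)$ of ~\eqref{eqn:ICG-maps} is compatible with the degree maps by \lemref{lem:KS-K-deg}, so it restricts to an isomorphism $C(X,D)_0 \cong C_{KS}(X,D)_0$ and the finiteness transfers.

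I expect the genuine obstacle to be the construction of the curve fibration as an \emph{actual} morphism over a finite field, together with the verification that the birational comparison $g_*$ is surjective on degree-zero parts; the finiteness \emph{mechanism} itself is no longer the difficulty, having been pushed into \lemref{lem:Fiber-KS-fin} and, beneath it, into the Bloch--Kato--Saito exact sequence.
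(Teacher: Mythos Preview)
Your strategy coincides with the paper's: reduce to normal $X$ with $D_\red$ a divisor whose complement is regular, construct a dominant morphism to $\P^{d-1}$ with one-dimensional generic fibre after a birational modification, invoke \lemref{lem:Fiber-KS-fin}, and descend via push-forward. Two technical points deserve more care. First, the push-forward $g_*$ along a blow-up is not furnished by \cite[Proposition~4.2]{Kato-Saito-2} (which treats finite morphisms); the paper instead invokes \cite[Lemma~9.6]{Kato-Saito-2}, and for that lemma to apply one must arrange that the blow-up centre lies away from $D_\red$. Second, the finite-field obstacle you flag is real: a high Veronese alone does not guarantee a $k$-rational linear centre $L$ in general position, so the paper first passes to a finite extension $k \inj k'$ (using \cite[Lemma~9.5]{Kato-Saito-2} over the pro-$\ell$ extension and descending to some $k'$), and then uses \cite[Corollary~4.10]{Kato-Saito-2} to show the resulting finite push-forward has finite cokernel, reducing to the extended base.
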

\begin{proof}
We only need to prove the  finiteness of $C_{KS}(X,D)_0$ in view
of \cite[Theorem~3.8]{Gupta-Krishna-REC} (see ~\eqref{eqn:ICG-maps})
and \lemref{lem:KS-K-deg}.

First of all, we can assume that $X$ is normal by \lemref{lem:Normal-*}.
Next, by enlarging $D$ if necessary, we can assume that its 
complement is regular by \lemref{lem:divisor}. We let $Y = D_\red$ with reduced
closed subscheme structure and let $V = X \setminus Y$.
Note that $X_\sing \subset Y$. 
We can now find a finite field extension $k \inj k'$ such that
if $X'$ is the normalization of $X$ in the composite field $Kk'$
and if $Y'$ is the inverse image of $Y$ under the projection
$\phi \colon X' \to X$, then there is a blow-up $g \colon X'' \to X'$
with center away from $Y'$ and a dominant projective
morphism $f \colon X'' \to \P^{d-1}_{k'}$.
This is proven in \cite[Lemma~9.5]{Kato-Saito-2} if $k$ is infinite, where
one can take $k' = k$. The finite field case is easily reduced to the
infinite field because we can get $g \colon X'' \to X'$ and $f \colon X'' \to
\P^{d-1}_{E}$, where $E$ is a pro-$\ell$ extension of $k$ for some prime
$\ell$. Any such construction is then actually defined over a finite
extension $k'$ of $k$.

Note here that the push-forward $\phi_* \colon C_{KS}(X',D') \to C_{KS}(X,D)$
of ~\eqref{eqn:Fiber-KS-fin-**} exists if we let $D' = f^*(D)$
because $X$ is normal (see \cite[Proposition~4.2]{Kato-Saito-2}). By \cite[Corollary~4.10]{Kato-Saito-2}, 
the cokernel of $\phi_*$ is finite, so it suffices to prove the theorem for $X'$.
We can therefore assume that there is a blow-up $g \colon \wt{X} \to X$
with center away from $Y$ and a dominant projective
morphism $f \colon \wt{X} \to \P^{d-1}_{k}$. We let $U \subset X$ be
an open dense subscheme over which $g$ is an isomorphism.
Then $Y \subset U$.

By \cite[Lemma~9.6]{Kato-Saito-2}, the Kato-Saito recipe gives a
push-forward map $g_* \colon C_{KS}(\wt{X}, \wt{D}) \to C_{KS}(X,D)$
for any nowhere dense closed subscheme $\wt{D} \subset \wt{X}$
containing $g^*(D)$. 
The map $g_*$ has the property that for any closed point 
$x \in g^{-1}(U_\reg) \setminus \wt{D}$, the diagram
\begin{equation}\label{eqn:KS-fin-00}
\xymatrix@C.8pc{
\Z \ar[r]^-{\cong} \ar[dr]_-{\cong} & 
C^x_{KS}(\wt{X}, \wt{D}) \ar[r] \ar[d]_-{g_*}^-{\cong} & C_{KS}(\wt{X}, \wt{D})
\ar[d]^-{g_*} \\
& C^x_{KS}({X}, {D}) \ar[r] & C_{KS}({X}, {D})}
\end{equation}
commutes where the horizontal arrows are the forget support maps.

It follows from this diagram and
\cite[Theorem~2.5]{Kato-Saito-2} that $g_*$ is surjective.
Since $g_*$ is degree preserving by \lemref{lem:Deg-PF},
this implies that $g_* \colon C_{KS}(\wt{X}, \wt{D})_0 \to C_{KS}({X}, {D})_0$
is also surjective. It suffices therefore to show that 
$C_{KS}(\wt{X}, \wt{D})_0$ is finite.
But this follows from \lemref{lem:Fiber-KS-fin} because $f$ is a dominant 
morphism between integral schemes of relative dimension one and
hence its generic fiber has dimension one. This concludes the proof of
the theorem.
\end{proof}

As a consequence of \thmref{thm:KS-fin}, we obtain the following new finiteness theorem
for the Kato-Saito idele class group of non-projective schemes.

\begin{cor}\label{cor:KS-fin-aff}
Let $X$ be an integral quasi-projective scheme of dimension $d \ge 1$ over a finite field and
$D \subset X$ a nowhere dense closed subscheme. Assume that $X$ is not projective over $k$.
Then $C_{KS}(X,D)$ is finite.
\end{cor}
\begin{proof}
Thanks to Remark~\ref{remk:Normal-open}, we can assume that $X$ is normal.
  We can now find an open immersion $j \colon X \inj \ov{X}$ such that $\ov{X}$ is an
  integral and normal projective scheme. By blowing up the complement of $X$ (with the reduced
  closed subscheme structure) in $\ov{X}$ and
  normalizing again, we can furthermore assume that $\dim(\ov{X} \setminus X) = d-1$. We let
  $Y = \ov{X} \setminus X$ with reduced closed subscheme structure.

  We let $\ov{D} \subset \ov{X}$ be the scheme theoretic closure of $D$ in $\ov{X}$.
  We let $Z = \ov{D} \cup \ov{X}_\sing$.
  As $D$ is a dense open subscheme of $\ov{D}$, it follows that $\dim(\ov{D} \setminus D)
  = \dim(Y \cap \ov{D}) \le d-2$. Since $\ov{X}$ is normal, we must also have
  $\dim (Y \cap Z) \le d-2$. Since $\dim(Y) = d-1$, we find that $Y \setminus Z \neq \emptyset$.
  We choose a closed point $x \in Y \setminus Z$ and consider the 
  commutative diagram

  \begin{equation}\label{eqn:KS-fin-aff-0}
    \xymatrix@C.8pc{
      0 \ar[r] &
      C^Y_{KS}(\ov{X}, \ov{D})_0 \ar[r] \ar[d] & C^Y_{KS}(\ov{X}, \ov{D}) \ar[r]^-{\deg} \ar[d] &
      \Z \ar@{=}[d] \\
      0 \ar[r] &
      C_{KS}(\ov{X}, \ov{D})_0 \ar[r] \ar[d] & C_{KS}(\ov{X}, \ov{D}) \ar[r]^-{\deg}
      \ar@{->>}[d] & \Z \\
      & C_{KS}({X}, {D}) \ar@{=}[r] & C_{KS}({X}, {D}). & }
  \end{equation}
  
All rows as well as the middle column of this diagram are exact. The right vertical arrow on the bottom is
    surjective by \cite[Theorem~2.5]{Kato-Saito-2}.
It follows from \lemref{lem:KS-Chow} that $\deg (\lambda_x(1)) \neq 0$.    
In particular, the cokernel of composite map
$C^Y_{KS}(\ov{X}, \ov{D}) \to C_{KS}(\ov{X}, \ov{D}) \to \Z$
is finite. Since $C_{KS}(\ov{X}, \ov{D})_0$ is finite by \thmref{thm:KS-fin}, a diagram
chase shows that $ C_{KS}({X}, {D})$ must be finite.
\end{proof}

\section{The reciprocity theorem}\label{sec:Rec-Iso}
In this section, we shall prove our main reciprocity theorem. 
We begin by recalling the fundamental groups with modulus and the
reciprocity map.

\subsection{Recollection of fundamental groups with modulus}
\label{sec:Rec-*}
Let $K$ be a Henselian discrete valuation field. Let
$G^{(\bullet)}_K$ be the Abbes-Saito filtration of $G_K$
(see \cite{Abbes-Saito} or \cite[\S~6.1]{Gupta-Krishna-REC}). 
Let $L/K$ be a finite separable extension and $n \ge 0$ an integer.
Recall from \cite[\S~7.1]{Gupta-Krishna-REC} that the ramification of 
$L/K$ is said to be bounded by $n$ if $G^{(n)}_K$ is contained in 
$\Gal(\ov{K}/{L})$ under the inclusions 
$G^{(n)}_K \subset G_K \supset \Gal(\ov{K}/{L})$, where $\ov{K}$ denotes 
a fixed separable closure of $K$.

Let $k$ be a field and $X \in \Sch_k$ an integral normal scheme of
dimension $d \ge 1$. Let $D \subset X$ be an effective Weil divisor
and $C$ the support of $D$ with reduced closed subscheme structure.
Set $U = X \setminus C$. Let $K$ denote the function field of $X$.
For any generic point $\lambda$ of
$C$, let $K_\lambda$ denote the Henselization of $K$ along $\lambda$.
Recall from \cite[Definition~7.5]{Gupta-Krishna-REC} that
the co-1-skeleton (or divisorial) {\'e}tale fundamental group with modulus
$\pi^{\adiv}_1(X, D)$ is a quotient of $\pi^{\ab}_1(U)$
which classifies finite abelian covers $f \colon U' \to U$
having the property that for every generic point $\lambda$ of $C$ 
and every point $\lambda' \in f^{-1}(\lambda)$, the extension of fields 
$K_\lambda \inj K'_{\lambda'}$ has ramification bounded by $n_\lambda$.
Here, $X'$ is the normalization of $X$ in $K' = k(U')$
and $f \colon X' \to X$ is the resulting map. We thus have 
continuous surjective homomorphisms of profinite groups 
\[
G^{\ab}_{k(X)} \surj \pi^{\ab}_1(U) \stackrel{q_{X|D}}{\surj} \pi^{\adiv}_1(X,D)
\surj \pi^{\ab}_1(X).
\]
We let $\pi^{\ab}_1(U)_0$ be the kernel of the composite continuous
homomorphism
$\deg' \colon \pi^{\ab}_1(U) \surj \pi^{\ab}_1(X) \to G_k$. We define
$\pi^{\adiv}_1(X,D)_0$ similarly.

For any field $L$, let us write $H^1(L) = H^1_{\rm et}(L, {\Q}/{\Z})$ and let 
$\Fil^{\ms}_{\bullet} H^1(L)$ denote  the Matsuda filtration of
$H^1(L)$ if $L$ is a Henselian discrete valuation field
(see \cite[\S~6.2]{Gupta-Krishna-REC}). Recall the
following from \cite[\S~7.5]{Gupta-Krishna-REC}. 
We write $D = \sum_{x \in X^{(1)}} n_x \ov{\{x\}}$ as an element of the group
of Weil divisors $\Div(X)$.
Let $\Div_C(X)$ denote the set of closed subschemes of $X$
of pure codimension one whose support is $C$. This is a directed set with
respect to inclusion.
Note also that every $D' \in \Div_C(X)$ defines a unique
effective Weil divisor on $X$, which we shall also denote by $D'$.

\begin{defn}\label{defn:Fid_D}
Let $\Fil_D H^1(K)$ denote the subgroup of characters
$\chi \in H^1(K)$ such that for every $x \in X^{(1)}$, the image $\chi_x$ of
$\chi$ under the canonical map $H^1(K) \to H^1(K_x)$ 
lies in $\Fil^{\ms}_{n_x} H^1(K_x)$.

Let $\Fil^c_D H^1(K)$ be the subgroup of characters $\chi \in H^1(U)$ such that
for every integral curve $Y \subset X$ not contained in $D$ and
normalization $Y_n$, the finite map $\nu \colon Y_n \to X$ has the property that
the image of $\chi$ under $f^* \colon H^1(U) \to H^1(\nu^{-1}(U))$
lies in $\Fil_{\nu^*(D)} H^1(k(Y))$.
\end{defn}

Since $n_x = 0$ for all $x \in U^{(1)}$, it follows from various properties 
of the Matsuda filtration (see \cite[\S6]{Gupta-Krishna-REC})
that $\Fil_D H^1(K)$ lies inside $H^1(U)$ under the canonical inclusion 
$H^1(U) \inj H^1(K)$. We shall therefore write $\Fil_D H^1(K)$ also
as $\Fil_D H^1(U)$. We shall use the following properties of
$\Fil_D H^1(K)$. For a profinite (or discrete) 
group $G$, let $G^{\vee}$ denote the
Pontryagin dual of $G$ (see \cite[\S~7.4]{Gupta-Krishna-REC}).

\begin{prop}\label{prop:Fil-D-prop}
The canonical map 
${\underset{D' \in \Div_C(X)}\varinjlim} \Fil_{D'} H^1(K) \to H^1(U)$
is an isomorphism.
The canonical homomorphism of profinite groups $\pi^{\ab}_1(U) \surj
\pi^{\adiv}_1(X,D)$ defines an isomorphism of discrete groups
$(\pi^{\adiv}_1(X,D))^\vee \cong \Fil_D H^1(K)$.
\end{prop}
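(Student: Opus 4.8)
The plan is to establish the two assertions separately; in both cases the global statement reduces to the local comparison between the Matsuda filtration on $H^1$ and the Abbes--Saito ramification filtration on the Galois group recalled in \S\ref{sec:Rec-*} and proved in \cite{Gupta-Krishna-REC}. Throughout I will use the perfect Pontryagin pairing $\pi^{\ab}_1(U) \times H^1(U) \to {\Q}/{\Z}$ between the profinite group $\pi^{\ab}_1(U)$ and the discrete torsion group $H^1(U) = H^1_{\rm et}(U, {\Q}/{\Z})$, together with the basic facts that the Matsuda filtration is increasing, exhaustive, and satisfies $\Fil^{\ms}_0 H^1(K_x) = $ the unramified subgroup.

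For the first assertion, recall that the text already shows $\Fil_{D'} H^1(K) \subset H^1(U)$ for every $D' \in \Div_C(X)$. Since the Matsuda filtration is increasing, the transition maps of the colimit are the inclusions $\Fil_{D'} H^1(K) \inj \Fil_{D''} H^1(K)$ for $D' \le D''$, so the colimit is the directed union $\bigcup_{D'} \Fil_{D'} H^1(K)$ formed inside $H^1(K)$; being contained in $H^1(U)$, the canonical map to $H^1(U)$ is injective. For surjectivity, take $\chi \in H^1(U)$. Because $\chi$ is unramified on $U$ and $U^{(1)}$ is exactly the set of $x \in X^{(1)}$ which are not generic points of $C$, we have $\chi_x \in \Fil^{\ms}_0 H^1(K_x)$ for all $x \in U^{(1)}$; thus the only divisors along which $\chi$ can ramify are the finitely many generic points of $C$. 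By exhaustiveness, for each such generic point $\lambda$ there is an integer $m_\lambda \ge 0$ with $\chi_\lambda \in \Fil^{\ms}_{m_\lambda} H^1(K_\lambda)$. Choosing $D' \in \Div_C(X)$ with multiplicity at least $m_\lambda$ at each $\lambda$ (possible since $C$ has finitely many components) yields $\chi \in \Fil_{D'} H^1(K)$, proving surjectivity.

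For the second assertion, note that since $\pi^{\adiv}_1(X,D)$ is a quotient of $\pi^{\ab}_1(U)$, Pontryagin duality identifies $(\pi^{\adiv}_1(X,D))^\vee$ with the subgroup of $H^1(U)$ consisting of those $\chi$ that vanish on the kernel of $q_{X|D}$, i.e. those whose associated cover factors through $\pi^{\adiv}_1(X,D)$. By the defining property of $\pi^{\adiv}_1(X,D)$, this holds precisely when, for every generic point $\lambda$ of $C$, the local extension $K_\lambda \inj K'_{\lambda'}$ has ramification bounded by $n_\lambda$. The decisive step is the local comparison: the ramification of $\chi_\lambda$ over the Henselian field $K_\lambda$ is bounded by $n_\lambda$ if and only if $\chi_\lambda \in \Fil^{\ms}_{n_\lambda} H^1(K_\lambda)$. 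Granting this, and using again that $\chi_x \in \Fil^{\ms}_0 H^1(K_x)$ holds automatically for $x \in U^{(1)}$ (where $n_x = 0$), the subgroup of $H^1(U)$ dual to $\pi^{\adiv}_1(X,D)$ is exactly $\Fil_D H^1(K)$, giving the asserted isomorphism.

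The main obstacle, and the only genuinely ramification-theoretic input, is this local equivalence between the Abbes--Saito boundedness condition defining $\pi^{\adiv}_1(X,D)$ and membership in the Matsuda filtration defining $\Fil_D H^1(K)$, together with the reduction of the condition over all $\lambda' \in f^{-1}(\lambda)$ to the single condition on $\chi_\lambda$ over $K_\lambda$ (which rests on the fact that for an abelian extension the ramification is the same at all places above $\lambda$). This comparison is supplied by the recollections on the Matsuda and ramification filtrations in \cite[\S6,~\S7]{Gupta-Krishna-REC}; once it is in hand, both assertions follow formally from Pontryagin duality and the monotonicity and exhaustiveness of the filtration.
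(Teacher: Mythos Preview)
Your argument is correct and follows the natural route: the paper itself simply cites \cite[Proposition~7.13, Theorem~7.16]{Gupta-Krishna-REC} without spelling anything out, and what you have written is precisely the argument one expects to find there---injectivity of the colimit as a directed union, surjectivity from exhaustiveness of $\Fil^{\ms}_\bullet$ at the finitely many generic points of $C$, and the identification of $(\pi^{\adiv}_1(X,D))^\vee$ inside $H^1(U)$ via Pontryagin duality together with the local Abbes--Saito/Matsuda comparison. You have correctly isolated the one nontrivial input (the local equivalence $G^{(n)}_{K_\lambda} \subset \ker(\chi_\lambda) \Leftrightarrow \chi_\lambda \in \Fil^{\ms}_{n} H^1(K_\lambda)$) and correctly sourced it to \cite[\S6--7]{Gupta-Krishna-REC}.
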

\begin{proof}
See \cite[Proposition~7.13, Theorem~7.16]{Gupta-Krishna-REC}.
\end{proof}

\begin{defn}\label{defn:Curve-fund-grp}
We let $\pi^{\ab}_1(X,D)$ be the Pontryagin dual of the discrete group
$\Fil^c_D H^1(K)$ and call it the 1-skeleton (or curve based) 
{\'e}tale fundamental group with modulus of the pair $(X,D)$.
We let $\pi^{\ab}_1(X,D)_0$ be the kernel of the canonical
continuous composite homomorphism $\deg' \colon \pi^{\ab}_1(X,D) \surj
\pi^{\ab}_1(X) \to G_k$. 
\end{defn}

The fundamental group $\pi^{\ab}_1(X,D)$ was first considered by
Deligne and Laumon \cite{Laumon} when $k$ is finite 
to study ramifications at infinity of abelian
coverings of smooth quasi-projective schemes. This group can be identified
with the Pontryagin dual of the group of rank one lisse $\ov{\Q}_{\ell}$ 
sheaves on $U$ with ramification bounded by $D$, a notion due to Deligne 
(see \cite{Esnault-Kerz}).

It is clear from the definitions that there are surjective continuous
homomorphisms of profinite abelian groups
$\pi^{\adiv}_1(X,D) \stackrel{q_{X|D}}{\twoheadleftarrow} \pi^{\ab}_1(U) 
\stackrel{q'_{X|D}}{\surj} \pi^{\ab}_1(X,D)$. 
Furthermore, we have the following analogue of \propref{prop:Fil-D-prop}
(see \cite[Proposition~3.9]{Esnault-Kerz} and 
\cite[Proposition~2.10]{Kerz-Saito-2}).

\begin{prop}\label{prop:Fil-c-prop}
The canonical map 
$\pi^{\ab}_1(U) \to {\underset{D' \in \Div_C(X)}\varprojlim}\pi^{\ab}_1(X,D')$
is an isomorphism if $k$ is perfect.
\end{prop}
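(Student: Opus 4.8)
The plan is to prove the statement by Pontryagin duality, reducing it to a union statement for the filtrations $\Fil^c_{D'} H^1(K)$. Every group occurring is either profinite abelian (the fundamental groups) or discrete torsion abelian (the cohomology groups $H^1$), and Pontryagin duality $G \mapsto G^{\vee} = \Hom_{\cont}(G,\Q/\Z)$ is an exact anti-equivalence between these two categories which sends inverse limits to filtered direct limits and satisfies $G^{\vee\vee} \cong G$. By \defref{defn:Curve-fund-grp} we have $\pi^{\ab}_1(X,D')^{\vee} \cong \Fil^c_{D'} H^1(K)$ for each $D' \in \Div_C(X)$, while the identification of continuous characters of $\pi^{\ab}_1(U)$ with $H^1(U)$ gives $\pi^{\ab}_1(U)^{\vee} \cong H^1(U)$. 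Under this duality the surjection $q'_{X|D'}\colon \pi^{\ab}_1(U) \surj \pi^{\ab}_1(X,D')$ becomes the inclusion $\Fil^c_{D'} H^1(K) \inj H^1(U)$ of \defref{defn:Fid_D}, compatibly in $D'$. Applying $(-)^{\vee}$ to the map in the statement therefore reduces the proposition to proving that the induced map
\[
\varinjlim_{D' \in \Div_C(X)} \Fil^c_{D'} H^1(K) \longrightarrow H^1(U)
\]
is an isomorphism of discrete groups.

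Injectivity is then formal: for $D' \le D''$ the transition map $\Fil^c_{D'} H^1(K) \inj \Fil^c_{D''} H^1(K)$ is an inclusion of subgroups of $H^1(U)$, so the filtered colimit is the union $\bigcup_{D'} \Fil^c_{D'} H^1(K) \subseteq H^1(U)$, which injects. The whole content is thus surjectivity: every $\chi \in H^1(U)$ must lie in $\Fil^c_{D'} H^1(K)$ for some $D' \in \Div_C(X)$.

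For surjectivity I would proceed in two steps. A character $\chi \in H^1(U)$ has finite order and corresponds to a finite abelian {\'e}tale cover of $U$. First, \propref{prop:Fil-D-prop} (the divisorial analogue) shows that $\chi$ has bounded ramification at the finitely many generic points of $C$, producing a divisor $D_0 = \sum_\lambda n_\lambda \ov{\{\lambda\}} \in \Div_C(X)$ with $\chi_\lambda \in \Fil^{\ms}_{n_\lambda} H^1(K_\lambda)$ at each generic point $\lambda$ of $C$. Second --- and this is the crux --- I must upgrade this codimension-one bound to a uniform bound over all curves: I need a single $D' \in \Div_C(X)$, obtained by suitably enlarging $D_0$, such that for every integral curve $Y \subset X$ with $Y \not\subset C$ and normalization $\nu\colon Y_n \to X$ one has $\nu^*\chi \in \Fil_{\nu^*(D')} H^1(k(Y))$. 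This is exactly the assertion that $\chi$, regarded as a rank-one lisse $\ov{\Q}_\ell$-sheaf on $U$, has bounded ramification along $C$ in the sense of Deligne--Laumon \cite{Laumon}; I would invoke this boundedness in the form established, for $k$ perfect, in \cite[Proposition~3.9]{Esnault-Kerz} and \cite[Proposition~2.10]{Kerz-Saito-2}, which furnishes the required $D'$ and finishes the argument.

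The main obstacle is precisely this second step. Bounding $\chi$ at the generic points of $C$ is immediate from the exhaustiveness of the Matsuda filtration together with the finiteness of the set of generic points; but producing one modulus $D'$ that dominates the pullback conductor on \emph{every} curve simultaneously is not formal. It rests on the semicontinuity of the Swan conductor and is the substantive input of the Deligne--Laumon ramification theory as reworked by Esnault--Kerz, and it is the only place where the hypothesis that $k$ is perfect is used.
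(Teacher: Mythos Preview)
Your proposal is correct and matches the paper's treatment: the paper does not give a proof at all but simply records the statement with a parenthetical reference to \cite[Proposition~3.9]{Esnault-Kerz} and \cite[Proposition~2.10]{Kerz-Saito-2}, which are exactly the inputs you invoke after the Pontryagin duality reduction. Your write-up just unpacks the duality step that the paper leaves implicit.
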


Since $\pi^{\ab}_1(U)_0$ is closed in $\pi^{\ab}_1(U)$, we get 
from the definition of the map $\deg'$, Propositions~\ref{prop:Fil-D-prop},
~\ref{prop:Fil-c-prop} and \cite[Corollary~1.1.8]{Pro-fin} the following.

\begin{cor}\label{cor:Fil-exaust-C-0}
Assume $k$ is perfect. Then we have the
isomorphisms of profinite groups
\[
{\underset{D' \in \Div_C(X)}\varprojlim} \pi^{\adiv}_1(X,D')_0
\xleftarrow{\cong} \pi^{\ab}_1(U)_0 \xrightarrow{\cong} 
{\underset{D' \in \Div_C(X)}\varprojlim} \pi^{\ab}_1(X,D')_0.
\]
\end{cor}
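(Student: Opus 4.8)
The plan is to deduce the corollary from Propositions~\ref{prop:Fil-D-prop} and \ref{prop:Fil-c-prop} together with the exactness of inverse limits of profinite groups, by first establishing the two \emph{unconstrained} isomorphisms and only afterwards passing to degree-zero parts. First I would record the isomorphisms
\[
\pi^{\ab}_1(U) \xrightarrow{\cong} \varprojlim_{D' \in \Div_C(X)} \pi^{\adiv}_1(X,D'),
\qquad
\pi^{\ab}_1(U) \xrightarrow{\cong} \varprojlim_{D' \in \Div_C(X)} \pi^{\ab}_1(X,D').
\]
The second is exactly \propref{prop:Fil-c-prop}. For the first I would apply Pontryagin duality to \propref{prop:Fil-D-prop}: the compatible isomorphisms $(\pi^{\adiv}_1(X,D'))^{\vee} \cong \Fil_{D'} H^1(K)$ together with $\varinjlim_{D'} \Fil_{D'} H^1(K) \cong H^1(U) = (\pi^{\ab}_1(U))^{\vee}$ dualize, using that $(-)^{\vee}$ is an anti-equivalence between profinite abelian groups and discrete torsion groups carrying filtered colimits to filtered limits, to give $\pi^{\ab}_1(U) \cong \varprojlim_{D'} \pi^{\adiv}_1(X,D')$. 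Both isomorphisms are induced by the canonical surjections $q_{X|D'}$ and $q'_{X|D'}$.

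Next I would check that all the degree maps are compatible. Writing $I \subset G_k$ for the image of $\pi^{\ab}_1(X) \to G_k$, each $\deg'$ on $\pi^{\ab}_1(U)$, on $\pi^{\adiv}_1(X,D')$, and on $\pi^{\ab}_1(X,D')$ factors as a surjection onto $\pi^{\ab}_1(X)$ followed by $\pi^{\ab}_1(X) \to G_k$; since $\pi^{\ab}_1(U) \surj \pi^{\adiv}_1(X,D') \surj \pi^{\ab}_1(X)$ (and likewise for the curve-based groups), each $\deg'$ has the same image $I$, independent of $D'$, and fits into a short exact sequence of profinite groups
\[
0 \to \pi^{\adiv}_1(X,D')_0 \to \pi^{\adiv}_1(X,D') \xrightarrow{\deg'} I \to 0,
\]
with the analogous sequence for $\pi^{\ab}_1(X,D')$. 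As $D'$ grows in $\Div_C(X)$ the transition maps are surjective and commute with $\deg'$, so these form an inverse system of short exact sequences.

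Then I would pass to the inverse limit. The key input is \cite[Corollary~1.1.8]{Pro-fin}: an inverse system of short exact sequences of profinite groups with surjective transition maps stays exact in the limit (equivalently, $\varprojlim^1$ vanishes on compact groups). Applying this gives
\[
\varprojlim_{D'} \pi^{\adiv}_1(X,D')_0 = \Ker\Big( \varprojlim_{D'} \pi^{\adiv}_1(X,D') \xrightarrow{\deg'} I \Big) = \Big( \varprojlim_{D'} \pi^{\adiv}_1(X,D') \Big)_0,
\]
and the same identity for $\pi^{\ab}_1(X,D')$. Since the isomorphisms of the first step are induced by maps intertwining the degree maps, they restrict to isomorphisms on the degree-zero kernels, yielding $\pi^{\ab}_1(U)_0 \cong \varprojlim_{D'} \pi^{\adiv}_1(X,D')_0$ and $\pi^{\ab}_1(U)_0 \cong \varprojlim_{D'} \pi^{\ab}_1(X,D')_0$, as claimed.

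The main obstacle I anticipate is the interchange of $\varprojlim$ with the formation of the degree-zero subgroup: one must know $\deg'$ remains surjective after taking the limit, so that the limit of the kernels is genuinely the kernel of the limit. This is precisely what the vanishing of $\varprojlim^1$ for profinite groups supplies; the surrounding verifications — compatibility of the surjections $q_{X|D'}$, $q'_{X|D'}$ with $\deg'$, and constancy of the image $I$ in $G_k$ — are the routine bookkeeping that makes the cited exactness result applicable.
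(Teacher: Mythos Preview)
Your proposal is correct and follows essentially the same approach as the paper: the paper's one-line proof cites exactly the ingredients you use --- Propositions~\ref{prop:Fil-D-prop} and~\ref{prop:Fil-c-prop}, the definition of $\deg'$, and \cite[Corollary~1.1.8]{Pro-fin} --- and your write-up is simply an unpacking of that sentence. The only cosmetic difference is that the paper phrases the application of \cite[Corollary~1.1.8]{Pro-fin} via the observation that $\pi^{\ab}_1(U)_0$ is closed in $\pi^{\ab}_1(U)$ (so equals the limit of its images, which are the $\pi^{\adiv}_1(X,D')_0$), whereas you phrase it as exactness of the limit of the short exact sequences; these are equivalent here.
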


It is evident from the definitions that
$\pi^{\adiv}_1(X,D) \cong \pi^{\ab}_1(X,D)$ if $d =1$.
On the other hand, it is not even clear a priori that there is any
map in either direction between $\pi^{\adiv}_1(X,D)$ and $\pi^{\ab}_1(X,D)$ 
when $d \ge 2$. One of the central results of this paper is
that these groups are in fact isomorphic for any $d \ge 1$ when $X$ is regular
and $k$ is finite.

\begin{remk}\label{remk:Curve-using-Galois}
One can mimic the construction of 
\cite[\S~7]{Gupta-Krishna-REC} (where the ramification bound has to be
redefined by restricting to curves) to show that there is a Galois category
such that $\pi^{\ab}_1(X,D)$ is the abelianization of the automorphism group 
$\pi_1(X,D)$ of the associated fiber functor. But we shall not need this 
Tannakian interpretation of $\pi^{\ab}_1(X,D)$.
\end{remk}

\vskip .3cm

\subsection{The reciprocity map for $C(X,D)$}\label{sec:Rec-00}
We let $C_{U/X}$ denote the idele class group of $(U \subset X)$. 
Recall from \cite[\S~3.3]{Gupta-Krishna-REC} that this is the quotient of the 
direct sum $I_{U/X}$ of the Milnor $K$-groups of the residue fields of 
Parshin chains on $(U \subset X)$ by the boundaries of the Milnor $K$-groups of 
the residue fields of $Q$-chains on $(U \subset X)$.
$C_{U/X}$ is a topological abelian group which has the quotient 
topology, induced by the canonical topology of the Milnor $K$-groups of the 
residue fields of Parshin chains and $C(X,D)$ has discrete topology.
The main result of \cite{Gupta-Krishna-REC} is the following.

\begin{thm}\label{thm:Rec-D-map}
Assume that $k$ is finite.
Then there exist continuous reciprocity homomorphisms 
$\rho_{U/X} \colon C_{U/X} \to \pi^{\ab}_1(U)$ and
$\rho_{X|D} \colon C(X,D) \to \pi^{\adiv}_1(X,D)$ such that the diagram
\begin{equation}\label{eqn:Rec-D-map-0}
\xymatrix@C.8pc{
{C}_{U/X} \ar[r]^-{\rho_{U/X}} \ar@{->>}[d] & \pi^{\ab}_1(U) \ar@{->>}[d] \\
C(X,D) \ar[r]^-{\rho_{X|D}} & \pi^{\adiv}_1(X,D)}
\end{equation}
is commutative, where the vertical arrows are the canonical surjections.
\end{thm}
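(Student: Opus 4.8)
The plan is to construct $\rho_{U/X}$ by the Kato--Saito recipe and then to show that it descends along $C_{U/X}\surj C(X,D)$, producing $\rho_{X|D}$ together with the commutativity of \eqref{eqn:Rec-D-map-0} in one stroke. Recall that $C_{U/X}$ is assembled from the Milnor $K$-groups $K^M_d(k(P))$ of the residue fields of the maximal Parshin chains $P=(p_0,\dots,p_d)$ on $(U\subset X)$, modulo the images of the residue maps emanating from the $Q$-chains. Since each $k(P)$ is a $d$-dimensional local field, Kato's higher local class field theory supplies a local reciprocity homomorphism $\rho_P\colon K^M_d(k(P))\to G^{\ab}_{k(P)}$, and the map $\Spec(k(P))\to U$ induced by the associated flag of henselizations yields a canonical $G^{\ab}_{k(P)}\to\pi^{\ab}_1(U)$. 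First I would check that the resulting family of composites $K^M_d(k(P))\to\pi^{\ab}_1(U)$ satisfies the reciprocity law $(R)$ of \lemref{lem:KS-support}: the sum over the branches $y\in B'(Q)$ attached to a $Q$-chain $Q$ must vanish. This is the geometric form of the product formula, and it reduces, after henselizing along the break point of $Q$, to a sum-of-residues identity on a single henselian curve, i.e.\ to Weil reciprocity for local symbols. By the recipe, these local maps then glue to a well-defined homomorphism $\rho_{U/X}\colon C_{U/X}\to\pi^{\ab}_1(U)$.

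Next I would show that $\rho_{U/X}$ factors through the modulus quotient $C(X,D)$ and lands in $\pi^{\adiv}_1(X,D)$. It is cleanest to argue by duality. For a character $\chi\in H^1(U)$, the functional $\chi\circ\rho_{U/X}\in\Hom(C_{U/X},\Q/\Z)$ is computed chain-by-chain through the local symbol pairing $K^M_d(k(P))\times H^1(k(P))\to\Q/\Z$. By \propref{prop:Fil-D-prop} we have $\bigl(\pi^{\adiv}_1(X,D)\bigr)^\vee\cong\Fil_D H^1(K)$, so it suffices to prove that for every $\chi\in\Fil_D H^1(K)$ the functional $\chi\circ\rho_{U/X}$ annihilates the defining relations of $C(X,D)$. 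The kernel of $C_{U/X}\surj C(X,D)$ is generated by the local symbols supported along the generic points of $D$ whose depth exceeds the prescribed modulus, so the claim reduces to a purely local compatibility at each generic point $\lambda$ of $D$.

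The hard part is precisely this local statement: that at each $\lambda$ the reciprocity pairing valued in $\Q/\Z$ matches the modulus (unit) filtration on the idele side with the Matsuda filtration $\Fil^{\ms}_\bullet H^1(K_\lambda)$ on the character side---equivalently, that the local reciprocity map for the Henselian discrete valuation field $K_\lambda$ carries the unit filtration into the Abbes--Saito filtration $G^{(\bullet)}_{K_\lambda}$ with the correct indexing. For a field $K_\lambda$ with possibly imperfect residue field this is the genuine ramification-theoretic input, and I would supply it from the comparison between the Kato--Matsuda and Abbes--Saito filtrations together with the explicit local class field theory developed in \cite[\S6,\S7]{Gupta-Krishna-REC}. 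Granting it, every defining relation of $C(X,D)$ is killed by the characters in $\Fil_D H^1(K)$, hence maps into $\Ker(\pi^{\ab}_1(U)\surj\pi^{\adiv}_1(X,D))$; thus $\rho_{U/X}$ induces $\rho_{X|D}\colon C(X,D)\to\pi^{\adiv}_1(X,D)$ and \eqref{eqn:Rec-D-map-0} commutes by construction. Continuity of $\rho_{U/X}$ is inherited from the continuity of the local maps $\rho_P$ and the quotient topology on $C_{U/X}$, while $\rho_{X|D}$ is continuous since $C(X,D)$ is discrete.
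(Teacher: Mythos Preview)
The paper does not prove this theorem: it is stated as ``the main result of \cite{Gupta-Krishna-REC}'' and is quoted here without proof, so there is no argument in the present paper to compare against. Your sketch is a faithful outline of the strategy carried out in that reference: build $\rho_{U/X}$ from Kato's higher local reciprocity maps on the residue fields $k(P)$ of maximal Parshin chains, verify the $Q$-chain reciprocity via a sum-of-residues/Weil-type identity so that the Kato--Saito recipe applies, and then descend to $C(X,D)$ by checking, dually via \propref{prop:Fil-D-prop}, that characters in $\Fil_D H^1(K)$ kill the relative Milnor $K$-groups $K^M_d(\sO^h_{X,P'}, I_D)$ which generate $\Ker(C_{U/X}\surj C(X,D))$; the last step is exactly the duality between the Matsuda/Abbes--Saito filtration on $H^1(K_\lambda)$ and the unit filtration on Milnor $K$-theory (Kato's theorem, recorded here as \cite[Theorem~6.3]{Gupta-Krishna-REC}).

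One small correction: the recipe you want is not \lemref{lem:KS-support} (which computes $H^d_x(X,\sF)$ with support at a closed point) but rather \cite[Lemma~1.6.3]{Kato-Saito-2}, the global statement that a compatible family $(g_P)_P$ on maximal Parshin chains of $(U\subset X)$ satisfying the $Q$-chain law descends to $C_{U/X}$. Also, continuity of $\rho_{U/X}$ is not automatic from the local pieces and the quotient topology alone; in \cite{Gupta-Krishna-REC} it is established separately by showing that the preimage of an open subgroup of $\pi^{\ab}_1(U)$ contains the image of some $K^M_d(\sO^h_{X,P'}, I_{D'})$ for $D'$ large, which again hinges on the filtration duality.
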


The map $\rho_{U/X}$ has the property that it takes the
canonical generator of $K^M_0(k(x))$ for a regular
closed point $x \in U$ to the image of the Frobenius substitution at $x$.

Let $\wt{C}_{U/X} = {\underset{|D| = X \setminus U}\varprojlim} \ C(X,D)$
with the inverse limit topology.
The limit of the degree maps $C(X,D) \to \Z$ defines a continuous
homomorphism $\deg \colon \wt{C}_{U/X} \to \Z$. We let $(\wt{C}_{U/X})_0$ be the
kernel of this map (see \cite[Corollary~4.9]{Gupta-Krishna-REC}).
It is clear that there is a canonical continuous homomorphism $C_{U/X} \to
\wt{C}_{U/X}$. Moreover, \thmref{thm:Rec-D-map} and
\cite[Corollary~7.17]{Gupta-Krishna-REC} together say that
$\rho_{U/X}$ factors through a continuous reciprocity
homomorphism $\wt{\rho}_{U/X} \colon \wt{C}_{U/X} \to \pi^{\ab}_1(U)$
which is the limit of the maps $\rho_{X|D}$.

\subsection{Completions of idele class groups}\label{sec:Com}
Let $k$ be any field and $X \in \Sch_k$ an integral projective scheme.
Let $C \subset X$ be a nowhere dense closed reduced subscheme such that 
$U = X \setminus D$ is regular.
We shall let $\N$ be the set of all positive integers.
We shall say that $m_1 \le' m_2$ if $m_1$ divides $m_2$.
This makes $\N$ a directed set.
We let $I = \Div_C(X) \times \N$, 
where $\Div_C(X)$ is as in \S~\ref{sec:Rec-*}, 
and say that $(D_1, m_1) \le (D_2, m_2)$
if $D_1 \subseteq D_2$ and $m_1 \le' m_2$. It is easy to see that
$I$ is a directed set with this coordinate-wise partial order.
This partial order naturally makes $I$ a diagram category.
An inverse (resp. direct) system $\{G_{ij}\}_{(i,j) \in I}$ in a category
$\sC$ is a contravariant (resp. covariant) functor from $I$ to $\sC$.

If $\sC$ admits all small limits and $\{G_{ij}\}_{(i,j) \in I}$ is
an inverse system in $\sC$, then it is well
known that there are canonical isomorphisms

\begin{equation}\label{eqn:Limit-commute}
{\underset{i \in \Div_C(X)}\varprojlim}
{\underset{j \in \N}\varprojlim} \ G_{ij} 
\cong {\underset{(i,j) \in I}\varprojlim} G_{ij} 
\cong  {\underset{j \in \N}\varprojlim} 
{\underset{i \in \Div_C(X)}\varprojlim}  G_{ij}.
\end{equation}

Similarly, if $\sC$ admits all small colimits and $\{G_{ij}\}_{(i,j) \in I}$ is
a direct system in $\sC$, then there are canonical isomorphisms  
\begin{equation}\label{eqn:Colimit-commute}
{\underset{i \in \Div_C(X)}\varinjlim}
{\underset{j \in \N}\varinjlim} \ G_{ij} 
\cong {\underset{(i,j) \in I}\varinjlim} G_{ij} 
\cong  {\underset{j \in \N}\varinjlim}
{\underset{i \in \Div_C(X)}\varinjlim}  G_{ij}.
\end{equation}

We shall apply the above machinery to the contravariant functor 
$I \to \Ab$ given by $(D, m) \mapsto C(X,D)/m$. 
We shall denote this by $\{C(X,D)/m\}$. 
Before we study the completions of this inverse system, we state the
following crucial result.

\begin{lem}\label{lem:Constant-inverse}
For every $D \in \Div_C(X)$, the map
$C(X,D)_0 \to {C(X,D)_0}/{\infty}$ is an isomorphism.
In particular, the map $(\wt{C}_{U/X})_0 \to 
{\underset{D, m}\varprojlim} \ {C(X,D)_0}/m$
is an isomorphism.
\end{lem}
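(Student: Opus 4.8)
The statement has two parts: first, that $C(X,D)_0 \to C(X,D)_0/\infty$ is an isomorphism for each fixed $D$; second, that the resulting map on the limit $(\wt C_{U/X})_0 \to \varprojlim_{D,m} C(X,D)_0/m$ is an isomorphism. The first part is the crux, and everything else is formal. The plan is to deduce the first part from the finiteness theorem \thmref{thm:KS-fin}: since $X\setminus D = U$ is regular and $X$ is projective, the isomorphism $\psi_{X|D}\colon C(X,D)\xrightarrow{\cong} C_{KS}(X,D)$ of \eqref{eqn:ICG-maps} together with the degree compatibility \lemref{lem:KS-K-deg} identifies $C(X,D)_0$ with $C_{KS}(X,D)_0$, which by \thmref{thm:KS-fin} is a finite abelian group.

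For the first assertion, once we know $G := C(X,D)_0$ is finite, the claim $G \xrightarrow{\cong} G/\infty$ is elementary. By definition $G/\infty = \varprojlim_{m\in\N} G/mG$. For a finite abelian group $G$, the natural map $G \to \varprojlim_m G/mG$ is an isomorphism: taking $m = |G|\cdot N$ for any $N$, multiplication by $|G|$ is zero, so $G/mG = G$ for all sufficiently divisible $m$, whence the inverse system is eventually constant with value $G$ and the limit is $G$ itself. Thus the first statement follows immediately from \thmref{thm:KS-fin}.

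For the second assertion, I would assemble the isomorphisms over $D$. We have $(\wt C_{U/X})_0 = \bigl(\varprojlim_{D} C(X,D)\bigr)_0 = \varprojlim_D C(X,D)_0$, since taking kernels of the compatible degree maps commutes with the inverse limit. Using the commutation of iterated limits \eqref{eqn:Limit-commute} applied to the inverse system $(D,m)\mapsto C(X,D)_0/m$, we get
\[
\varprojlim_{D,m} C(X,D)_0/m \;\cong\; \varprojlim_{D}\Bigl(\varprojlim_m C(X,D)_0/m\Bigr) \;=\; \varprojlim_{D} C(X,D)_0/\infty.
\]
By the first part, each inner term $C(X,D)_0/\infty$ is canonically $C(X,D)_0$, and these identifications are compatible with the transition maps (the comparison map $C(X,D)_0 \to C(X,D)_0/\infty$ is natural in $D$). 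Hence $\varprojlim_D C(X,D)_0/\infty \cong \varprojlim_D C(X,D)_0 = (\wt C_{U/X})_0$, and the composite is exactly the map in the statement.

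**Main obstacle.** The only substantive input is the finiteness of $C(X,D)_0$, i.e. \thmref{thm:KS-fin}, which is precisely why this lemma is placed after the finiteness theorem is available; given that, the argument is purely formal. The one point requiring a little care is the \emph{naturality in $D$} of the isomorphism $C(X,D)_0 \cong C(X,D)_0/\infty$, needed so that the identifications glue into an isomorphism of inverse systems rather than merely a levelwise isomorphism. This holds because the comparison map $G \to G/\infty$ is defined functorially for any abelian group, so it commutes with the transition maps $C(X,D')_0 \to C(X,D)_0$ for $D \subseteq D'$; I would note this explicitly and otherwise rely on \eqref{eqn:Limit-commute} to interchange the two inverse limits.
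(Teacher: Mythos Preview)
Your proof is correct and follows essentially the same approach as the paper: the first assertion is immediate from the finiteness of $C(X,D)_0$ provided by \thmref{thm:KS-fin}, and the second follows by identifying $(\wt{C}_{U/X})_0$ with $\varprojlim_D C(X,D)_0$ and then interchanging the two limits via \eqref{eqn:Limit-commute}. The only difference is cosmetic: the paper cites \cite[Corollary~4.9]{Gupta-Krishna-REC} for the identification $(\wt{C}_{U/X})_0 \cong \varprojlim_D C(X,D)_0$, whereas you derive it directly from the fact that kernels commute with inverse limits; your added remark on naturality in $D$ is a useful clarification the paper leaves implicit.
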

\begin{proof}
The first isomorphism is immediate from \thmref{thm:KS-fin}.
The second isomorphism follows from the first isomorphism,
\cite[Corollary~4.9]{Gupta-Krishna-REC} and ~\eqref{eqn:Limit-commute}.
\end{proof}

It follows from \cite[Proposition~4.8]{Gupta-Krishna-REC} that the sequence 
\begin{equation}\label{eqn:CI-0}
0 \to {C(X,D)_0}/m \to C(X,D)/m \xrightarrow{(\deg)/n} {\Z}/m \to 0
\end{equation}
is exact for every $(D, m) \in I$.

\begin{lem}\label{lem:Inv-mod-m}
For every $m \in \N$, the canonical map ${\wt{C}_{U/X}}/m \to
{\underset{D}\varprojlim} \ {C(X,D)}/m$ is an isomorphism.
\end{lem}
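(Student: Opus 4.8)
The plan is to reduce the statement to an isomorphism on degree-zero parts, where the finiteness theorem \thmref{thm:KS-fin} does all the work. Write $L = (\wt{C}_{U/X})_0 = \Ker(\deg\colon \wt{C}_{U/X}\to\Z)$. The point of departure is the short exact sequence $0 \to C(X,D)_0 \to C(X,D)\xrightarrow{\deg}\Z\to 0$ coming from \cite[Proposition~4.8]{Gupta-Krishna-REC}, together with its mod-$m$ reduction ~\eqref{eqn:CI-0}. By \thmref{thm:KS-fin} and the isomorphism $\psi_{X|D}$ of ~\eqref{eqn:ICG-maps}, each $C(X,D)_0$ is a \emph{finite} group. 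Consequently all the inverse systems $\{C(X,D)_0\}_D$ and $\{C(X,D)_0/m\}_D$ consist of finite groups, hence are Mittag-Leffler and have vanishing $\varprojlim^1$.

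Concretely, I would first take $\varprojlim_D$ of ~\eqref{eqn:CI-0}. Since $\{C(X,D)_0/m\}_D$ is Mittag-Leffler and $\{\Z/m\}_D$ is constant, this yields a short exact sequence
\[
0 \to \varprojlim_D C(X,D)_0/m \to \varprojlim_D C(X,D)/m \to \Z/m \to 0.
\]
Next, taking $\varprojlim_D$ of the integral degree sequence gives $0\to L\to\wt{C}_{U/X}\xrightarrow{\deg}\Z\to 0$ (the degree map is onto since it is onto at each finite level and $\varprojlim^1_D C(X,D)_0 = 0$). Reducing this last sequence modulo $m$ via the multiplication-by-$m$ long exact sequence, and using ${}_m\Z = 0$, produces
\[
0 \to L/m \to \wt{C}_{U/X}/m \to \Z/m \to 0.
\]
The projections $\wt{C}_{U/X}\to C(X,D)$ induce a map from this sequence to the previous one which is the identity on $\Z/m$, so by the five lemma the assertion of the lemma is \emph{equivalent} to the statement that the natural map $L/m\to\varprojlim_D C(X,D)_0/m$ is an isomorphism.

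It remains to establish this last isomorphism, which again is purely a consequence of finiteness. Writing $C_D = C(X,D)_0$, I would split multiplication by $m$ into the two short exact sequences $0\to {}_mC_D\to C_D\xrightarrow{m} mC_D\to 0$ and $0\to mC_D\to C_D\to C_D/m\to 0$. All terms are finite, so all three systems are Mittag-Leffler; taking $\varprojlim_D$ of the first sequence identifies $\varprojlim_D mC_D$ with the image $mL$ of multiplication by $m$ on $L$, and taking $\varprojlim_D$ of the second then gives $\varprojlim_D C_D/m\cong L/mL = L/m$, which is exactly the natural reduction map.

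The main obstacle is the usual failure of the quotient functor $(-)/m$ to commute with the inverse limit $\varprojlim_D$; there is no formal reason for the map in the lemma to be injective or surjective. What rescues the argument is the finiteness of $C(X,D)_0$ furnished by \thmref{thm:KS-fin}: it guarantees the Mittag-Leffler condition at every stage, forces all the relevant $\varprojlim^1$ terms to vanish, and makes the comparison of degree-zero parts behave as if $L$ were profinite. I would therefore present this lemma as essentially a formal corollary of the finiteness theorem, the degree short exact sequences, and the five lemma.
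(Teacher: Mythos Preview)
Your proof is correct and follows essentially the same route as the paper: reduce to the degree-zero part via the degree short exact sequence and the five lemma, then use the finiteness of $C(X,D)_0$ from \thmref{thm:KS-fin} to commute $\varprojlim_D$ with $(-)/m$. The only cosmetic difference is that the paper packages the degree-zero step into a separate general statement (\lemref{lem:Pro-fin-mod-m}, for inverse limits of compact Hausdorff groups with surjective transition maps), whereas you carry out that step directly by splitting multiplication by $m$ into two short exact sequences of finite groups; both arguments rest on the same Mittag-Leffler/compactness input, and the cofinal subsystem $\{nC\}_{n\in\N}\subset\Div_C(X)$ makes your use of Mittag-Leffler unproblematic.
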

\begin{proof}
Let us denote the map of the lemma by $\theta_m$.
It follows from \cite[Proposition~4.8]{Gupta-Krishna-REC}, 
the Mittag-Leffler property
of $\{{C(X,D)_0}/m\}$ and injectivity of ${C(X,D)_0}/m \to  C(X,D)/m$
that there is a commutative diagram of short exact sequences 

\begin{equation}\label{eqn:Inv-mod-m-3}
\xymatrix@C.8pc{
0 \ar[r] & {(\wt{C}_{U/X})_0}/m \ar[r] \ar[d] & {\wt{C}_{U/X}}/m 
\ar[r]^-{(\deg)/n} \ar[d]^-{\theta_m} & {\Z}/m \ar[r] \ar@{=}[d] & 0 \\ 
0 \ar[r] & {\underset{D}\varprojlim} \ {C(X,D)_0}/m \ar[r] &
{\underset{D}\varprojlim} \ C(X,D)/m \ar[r]^-{(\deg)/n} & {\Z}/m \ar[r] & 0.}
\end{equation}

It suffices therefore to show that the left vertical arrow in 
this diagram is an isomorphism.
It follows from \cite[Corollary~4.9]{Gupta-Krishna-REC} and 
\thmref{thm:KS-fin} that
$(\wt{C}_{U/X})_0$ is profinite. Hence, the desired isomorphism follows from
\lemref{lem:Pro-fin-mod-m} below.
\end{proof}

Since each $C(X,D)_0$ is finite by \thmref{thm:KS-fin}, it follows from
\cite[Lemma~1.1.5]{Pro-fin} that the pro-abelian group
$\{{\underset{D}\varprojlim} \ {C(X,D)_0}/m\}_{m \in \N}$ is Mittag-Leffler.
Using \lemref{lem:Constant-inverse} and ~\eqref{eqn:Inv-mod-m-3}, we 
therefore conclude the following.

\begin{cor}\label{cor:Completions}
There exists a commutative diagram of short exact sequences of
continuous homomorphisms of topological abelian groups
\begin{equation}\label{eqn:Completions-0}
\xymatrix@C1.7pc{
0 \ar[r] & (\wt{C}_{U/X})_0 \ar[r] \ar@{=}[d] & \wt{C}_{U/X} \ar[r]^-{(\deg)/n}
\ar@{^{(}->}[d] & \Z \ar@{^{(}->}[d] \ar[r] & 0 \\
0 \ar[r] & (\wt{C}_{U/X})_0 \ar[r] \ar[d]_-{\theta_{U/X}}^-{\cong} & 
{\wt{C}_{U/X}}/{\infty} 
\ar[r]^-{(\deg)/n} \ar[d]^-{\theta_{U/X}}_-{\cong} & 
\wh{\Z} \ar@{=}[d] \ar[r] & 0 \\
0 \ar[r] & {\underset{D,m}\varprojlim} {C(X,D)_0}/m \ar[r] &
{\underset{D,m}\varprojlim} {C(X,D)}/m \ar[r]^-{(\deg)/n} & \wh{\Z} \ar[r] 
& 0,}
\end{equation} 
where the top vertical arrows are completion maps and the
bottom vertical arrows are isomorphisms.
In particular, $\wt{C}_{U/X}$ is dense in the profinite group
${\wt{C}_{U/X}}/{\infty}$ and there is a short exact sequence 
\begin{equation}\label{eqn:Completions-1}
0 \to \wt{C}_{U/X} \to {\wt{C}_{U/X}}/{\infty} \xrightarrow{(\deg)/n} 
{\wh{\Z}}/{\Z} \to 0.
\end{equation}
\end{cor}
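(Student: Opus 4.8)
The plan is to assemble the diagram \eqref{eqn:Completions-0} one row at a time and then connect the rows by maps that are already available. The top row is the degree sequence of $\wt{C}_{U/X}$, whose surjectivity onto $\Z$ and whose kernel $(\wt{C}_{U/X})_0$ are furnished by \cite[Corollary~4.9]{Gupta-Krishna-REC}. The bottom row will be produced by passing to the inverse limit in the short exact sequence \eqref{eqn:CI-0} over the directed set $I = \Div_C(X) \times \N$, and the middle row will then be identified with the bottom one through Lemmas~\ref{lem:Constant-inverse} and \ref{lem:Inv-mod-m}. The only genuine work lies in checking that right-exactness survives these inverse limits, and this is precisely where the finiteness theorem \thmref{thm:KS-fin} enters.

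First I would construct the bottom row. For each $(D,m) \in I$ the sequence \eqref{eqn:CI-0} is short exact, and these assemble into an inverse system over $I$. Using the interchange of limits \eqref{eqn:Limit-commute} I would compute the limit in two stages. Fixing $m$ and taking $\varprojlim_D$ first, left-exactness is automatic, while surjectivity onto the constant system $\Z/m$ is retained because $\{{C(X,D)_0}/m\}_D$ is an inverse system of finite groups (each $C(X,D)_0$ being finite by \thmref{thm:KS-fin}), hence Mittag-Leffler with vanishing $\varprojlim^1$; this recovers the bottom row of \eqref{eqn:Inv-mod-m-3}. Taking $\varprojlim_m$ of these sequences, surjectivity onto $\varprojlim_m \Z/m = \wh{\Z}$ is again preserved, now because the pro-abelian group $\{\varprojlim_D {C(X,D)_0}/m\}_m$ is Mittag-Leffler by \cite[Lemma~1.1.5]{Pro-fin}, as recorded just above the statement. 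This gives the exactness of the bottom row.

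Next I would produce the two lower vertical isomorphisms and transport exactness upward. The left-hand map $(\wt{C}_{U/X})_0 \xrightarrow{\theta_{U/X}} \varprojlim_{D,m} {C(X,D)_0}/m$ is the isomorphism supplied by \lemref{lem:Constant-inverse}. For the middle map I would write ${\wt{C}_{U/X}}/{\infty} = \varprojlim_m {\wt{C}_{U/X}}/m$ and invoke \lemref{lem:Inv-mod-m}, which identifies each ${\wt{C}_{U/X}}/m$ with $\varprojlim_D C(X,D)/m$ compatibly in $m$; passing to $\varprojlim_m$ and applying \eqref{eqn:Limit-commute} yields the isomorphism onto $\varprojlim_{D,m} C(X,D)/m$. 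With the right-hand map the identity of $\wh{\Z}$, and the squares commuting by the naturality already visible in \eqref{eqn:Inv-mod-m-3}, the middle row is short exact because the bottom one is.

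Finally I would fill in the top vertical arrows. The left map is the identity, since $(\wt{C}_{U/X})_0$ is profinite (it is $\varprojlim_D C(X,D)_0$ with each factor finite) and hence equals its own completion. The right map is the standard inclusion $\Z \inj \wh{\Z}$. Comparing the top and middle short exact sequences, a snake-lemma argument with an isomorphism on the left and an injection on the right forces the middle completion map $\wt{C}_{U/X} \to {\wt{C}_{U/X}}/{\infty}$ to be injective, with cokernel identified with $\coker(\Z \inj \wh{\Z}) = {\wh{\Z}}/{\Z}$; this is exactly \eqref{eqn:Completions-1}. Density of $\wt{C}_{U/X}$ in the profinite group ${\wt{C}_{U/X}}/{\infty}$ then follows since $\wt{C}_{U/X}$ surjects onto each finite quotient ${\wt{C}_{U/X}}/m$. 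I expect the Mittag-Leffler bookkeeping across the double inverse limit, which guarantees that surjectivity is not lost when forming $\varprojlim_I$, to be the only delicate point, and it is exactly where finiteness of $C(X,D)_0$ is indispensable.
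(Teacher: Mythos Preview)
Your proposal is correct and follows essentially the same route as the paper, which also assembles the diagram from \lemref{lem:Constant-inverse}, \lemref{lem:Inv-mod-m}, the Mittag-Leffler observation recorded just before the statement, and the level-wise sequence~\eqref{eqn:CI-0}; the paper in fact dismisses all of this as ``clear from what we have shown above'' and only comments on the density claim. The one point of divergence is the density argument: the paper invokes \cite[Lemma~7.11]{Gupta-Krishna-REC} (a Pontryagin-duality criterion) together with a diagram chase in~\eqref{eqn:Completions-0}, whereas you argue directly that $\wt{C}_{U/X}$ surjects onto each finite quotient of the profinite group ${\wt{C}_{U/X}}/{\infty}$. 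Your approach is the more elementary one, but note a small slip in wording: the groups ${\wt{C}_{U/X}}/m$ are not themselves finite. What you need (and what your argument actually provides, via $\theta_{U/X}$) is that $\wt{C}_{U/X}$ surjects onto each $C(X,D)/m$, and these form a cofinal system of finite discrete quotients of ${\wt{C}_{U/X}}/{\infty}$; surjectivity of $\wt{C}_{U/X} \to C(X,D)$ uses that $(\wt{C}_{U/X})_0 \to C(X,D)_0$ is onto (an inverse limit of finite groups with surjective transition maps surjects onto each term).
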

\begin{proof}
We only need to give an argument for density of $\wt{C}_{U/X}$ in
${\wt{C}_{U/X}}/{\infty}$ as everything else is clear from what we 
have shown above. But this density follows immediately from
\cite[Lemma~7.11]{Gupta-Krishna-REC} using a diagram chase in 
~\eqref{eqn:Completions-0}.
\end{proof}

\begin{cor}\label{cor:Rec-final}
For every $m \in \N$ and $D \in \Div_C(X)$, 
the reciprocity map $\rho_{U/X}$ gives rise to a
commutative diagram

\begin{equation}\label{eqn:Rec-final-0}
\xymatrix@C.8pc{
& & & {C}_{U/X} \ar[dd]^-{\rho_{U/X}} \ar[dlll] \ar[dl] \\
\wt{C}_{U/X} \ar@{^{(}->}[rr] \ar[dr]^-{\wt{\rho}_{U/X}} \ar@{->>}[dd]_-{p_{X|D}} 
& & {\wt{C}_{U/X}}/{\infty} 
\ar[dr]^-{\rho^{\infty}_{U/X}}_-{\cong} \ar@{->>}[dd]_->>>>>>>>>>>>>{p_{X|D}} & \\
& \pi^{\ab}_1(U) \ar@{=}[rr] \ar@{->>}[dd]_->>>>>>>>>>>>>{q_{X|D}} & & 
\pi^{\ab}_1(U) \ar@{->>}[dd]^-{q_{X|D}} \\
C(X,D) \ar[dr]_-{\rho_{X|D}} \ar@{->>}[rr] & & 
{C(X,D)}/m \ar[dr]^-{{\rho_{X|D}}/m} & \\
& \pi^{\adiv}_1(X,D) \ar@{->>}[rr] & & {\pi^{\adiv}_1(X,D)}/m.}
\end{equation}
\end{cor}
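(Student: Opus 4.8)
The plan is to realize every corner of the cube as an inverse limit over the directed set $I = \Div_C(X) \times \N$ and to produce $\rho^{\infty}_{U/X}$ as the limit of the finite-level maps ${\rho_{X|D}}/m$. First I would invoke \lemref{lem:Inv-mod-m} and ~\eqref{eqn:Limit-commute} to identify ${\wt{C}_{U/X}}/{\infty} \cong {\underset{(D,m) \in I}\varprojlim}\, {C(X,D)}/m$; dually, since each $\pi^{\adiv}_1(X,D)$ is profinite we have $\pi^{\adiv}_1(X,D) \cong {\underset{m}\varprojlim}\, {\pi^{\adiv}_1(X,D)}/m$, and combining this with \propref{prop:Fil-D-prop} gives $\pi^{\ab}_1(U) \cong {\underset{(D,m) \in I}\varprojlim}\, {\pi^{\adiv}_1(X,D)}/m$. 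The commutative squares of \thmref{thm:Rec-D-map}, together with the naturality of reduction modulo $m$, say precisely that the family $\{{\rho_{X|D}}/m\}_{(D,m)}$ is a morphism of inverse systems indexed by $I$. Passing to the limit defines $\rho^{\infty}_{U/X}$ and at the same time furnishes the commutativity of the two ``$p$--$q$'' faces of the cube, namely $({\rho_{X|D}}/m) \circ p_{X|D} = q_{X|D} \circ \rho^{\infty}_{U/X}$ and its un-completed analogue $\rho_{X|D} \circ p_{X|D} = q_{X|D} \circ \wt{\rho}_{U/X}$.

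The remaining faces are formal. The top triangle commutes by the very definition of the composite $C_{U/X} \to \wt{C}_{U/X} \inj {\wt{C}_{U/X}}/{\infty}$. The left face, asserting $\rho^{\infty}_{U/X} \circ (\wt{C}_{U/X} \inj {\wt{C}_{U/X}}/{\infty}) = \wt{\rho}_{U/X}$, holds because $\wt{\rho}_{U/X}$ is the limit of the $\rho_{X|D}$ (the remark following \thmref{thm:Rec-D-map}) and is therefore compatible with the structure maps $p_{X|D}$; equivalently, $\rho^{\infty}_{U/X}$ is the unique continuous extension of $\wt{\rho}_{U/X}$ to the profinite completion, which exists because $\pi^{\ab}_1(U)$ is profinite and $\wt{C}_{U/X}$ is dense in ${\wt{C}_{U/X}}/{\infty}$ by \corref{cor:Completions}. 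Finally the bottom square commutes by the naturality of the reduction maps $C(X,D) \to {C(X,D)}/m$ and $\pi^{\adiv}_1(X,D) \to {\pi^{\adiv}_1(X,D)}/m$.

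The one substantive assertion is that $\rho^{\infty}_{U/X}$ is an isomorphism, and this is where the real content sits. Using $\psi_{X|D} \colon C(X,D) \xrightarrow{\cong} C_{KS}(X,D)$ of ~\eqref{eqn:ICG-maps}, I would identify $\wt{C}_{U/X}$ with the Kato-Saito idele class group $C(U) = {\underset{D}\varprojlim}\, C_{KS}(X,D)$ and, by \corref{cor:Completions}, ${\wt{C}_{U/X}}/{\infty}$ with its profinite completion inside which $\wt{C}_{U/X}$ lies densely. Under this identification $\rho^{\infty}_{U/X}$ is the profinite completion of the reciprocity map of the regular open variety $U$ over the finite field $k$, so its bijectivity is exactly the main theorem of the unramified class field theory of Kato-Saito \cite{Kato-Saito-2}, asserting that this reciprocity map is injective with dense image and becomes an isomorphism upon profinite completion. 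To make the limit argument robust I would split off the degree using the exact sequences of \corref{cor:Completions}: the degree columns are the compatible inclusions $\Z \inj \wh{\Z}$ (the generators being sent to Frobenius substitutions, as recorded after \thmref{thm:Rec-D-map}), so it suffices to prove that $\rho^{\infty}_{U/X}$ restricts to an isomorphism on the degree-zero kernels, where \thmref{thm:KS-fin} and \lemref{lem:Constant-inverse} present $(\wt{C}_{U/X})_0 \cong {\underset{D}\varprojlim}\, C(X,D)_0$ as a limit of finite groups and \corref{cor:Fil-exaust-C-0} presents $\pi^{\ab}_1(U)_0 \cong {\underset{D}\varprojlim}\, \pi^{\adiv}_1(X,D)_0$. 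The hard part, therefore, is this degree-zero isomorphy: it is not formal but encodes the geometric class field theory of $U$, and the finiteness supplied by \thmref{thm:KS-fin} is exactly what guarantees that the inverse limit of the finite comparison maps is again an isomorphism of profinite groups rather than merely a dense embedding.
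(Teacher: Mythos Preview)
Your proposal is essentially the same as the paper's proof: both realize ${\wt{C}_{U/X}}/{\infty}$ and $\pi^{\ab}_1(U)$ as inverse limits over $I=\Div_C(X)\times\N$, obtain $\rho^{\infty}_{U/X}$ as the limit of the $\{\rho_{X|D}/m\}$, and then deduce bijectivity by identifying this map with the Kato--Saito reciprocity isomorphism of \cite[Theorem~9.1]{Kato-Saito-2}. The paper is slightly terser---it cites \cite[Lemmas~7.9, 8.3, Corollary~7.17]{Gupta-Krishna-REC} to pin down the identification with the Kato--Saito map rather than going through $\psi_{X|D}$, and it does not split off the degree at this stage---but the logical structure is identical.

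One caution about your final paragraph: the degree-zero discussion is fine as motivation, but be careful not to phrase it as an independent route to the isomorphism. Knowing that both $(\wt{C}_{U/X})_0$ and $\pi^{\ab}_1(U)_0$ are inverse limits of finite groups does not by itself force the limit map to be bijective; you still need the Kato--Saito theorem (or equivalently the level-wise isomorphisms of \thmref{thm:Rec-mod}, which are proved \emph{after} this corollary and in fact use it). So the sentence ``the finiteness supplied by \thmref{thm:KS-fin} is exactly what guarantees \ldots'' overstates matters slightly---finiteness gives profiniteness of the source, but bijectivity is the external input from \cite{Kato-Saito-2}, exactly as you say two sentences earlier.
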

\begin{proof}
The only thing we need to show is that $\wt{\rho}_{U/X}$ factors through
$\rho^{\infty}_{U/X}$ and the latter is an isomorphism.
Everything else is clear from the construction of $\rho_{U/X}$.
Concerning the factorization, it suffices to show that the
canonical map $\pi^{\ab}_1(U) \to {\underset{D,m}\varprojlim} 
{\pi^{\adiv}_1(X,D)}/m$ is an isomorphism.
However, it follows from \cite[Corollary~7.17]{Gupta-Krishna-REC} and
\lemref{lem:Pro-fin-mod-m} that the map
${\pi^{\ab}_1(U)}/m \to {\underset{D}\varprojlim} \ 
{\pi^{\adiv}_1(X,D)}/m$ is an isomorphism for every $m \in \N$.
Taking the limit over $m \in \N$, using ~\eqref{eqn:Limit-commute} and
the known isomorphism $\pi^{\ab}_1(U) \xrightarrow{\cong}
{\underset{m}\varprojlim} \ {\pi^{\ab}_1(U)}/m$, we get the desired
isomorphism. 

To see that $\rho^{\infty}_{U/X}$ is an isomorphism, we observe using
\cite[Lemmas~7.9, 8.3, Corollary~7.17]{Gupta-Krishna-REC}
that $\rho^{\infty}_{U/X}$ coincides with the reciprocity map
of \cite[Theorem~9.1]{Kato-Saito-2}. But the latter is an isomorphism.
\end{proof}

\begin{lem}\label{lem:Pro-fin-mod-m}
Let $G = {\underset{i}\varprojlim} \ G_i$ be the limit of an
inverse system of compact Hausdorff topological abelian groups $\{G_i\}$
whose transition maps are surjective.
Let $m \in \N$ be any integer. Then the canonical map
$G/m \to {\underset{i}\varprojlim} \ {G_i}/m$ is an isomorphism.
\end{lem}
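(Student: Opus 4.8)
The plan is to reduce the statement to the exactness of the inverse limit functor on the category of compact Hausdorff topological abelian groups, and then to extract the two pieces we need from the multiplication-by-$m$ endomorphism of the system $\{G_i\}$. The canonical map $G/m \to {\underset{i}\varprojlim}\, G_i/m$ of the lemma is the one induced by the compatible family of projections $G \to G_i \to G_i/m$ via the universal property of the limit; thus the content is that this map is bijective. First I would record that each quotient $G_i/m = G_i/mG_i$ is again compact Hausdorff: the subgroup $mG_i$ is the image of the continuous endomorphism $\times m \colon G_i \to G_i$, hence compact, hence closed in the Hausdorff group $G_i$, so the quotient is a compact Hausdorff topological abelian group. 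Likewise $G_i[m] = \Ker(\times m \colon G_i \to G_i)$ is closed, hence compact.

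Next I would split the endomorphism $\times m$ on the system $\{G_i\}$ into two short exact sequences of inverse systems of compact Hausdorff abelian groups,
\[
0 \to G_i[m] \to G_i \xrightarrow{\ \times m\ } mG_i \to 0, \qquad
0 \to mG_i \to G_i \to G_i/mG_i \to 0,
\]
whose transition maps are surjective since those of $\{G_i\}$ are. Granting that ${\underset{i}\varprojlim}$ is exact on such systems (the crux, discussed below), the first sequence yields ${\underset{i}\varprojlim}\,(mG_i) = m\,({\underset{i}\varprojlim}\,G_i) = mG$, because the limit of $\times m$ is again surjective onto ${\underset{i}\varprojlim}\,(mG_i)$ with the same image as $\times m \colon G \to G$. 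Feeding this into the limit of the second sequence gives
\[
{\underset{i}\varprojlim}\,G_i/mG_i \;=\; ({\underset{i}\varprojlim}\,G_i)/({\underset{i}\varprojlim}\,mG_i) \;=\; G/mG,
\]
and one checks directly that this identification is exactly the canonical map, which is therefore an isomorphism.

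The hard part is the exactness of ${\underset{i}\varprojlim}$ used above, that is, the right-exactness (surjectivity of the limit of a surjection of inverse systems of compact Hausdorff abelian groups over a directed set); left-exactness is formal. For this I would argue fibrewise: given a compatible family $(\bar x_i) \in {\underset{i}\varprojlim}\,G_i/mG_i$, the set $L_i \subset G_i$ of lifts of $\bar x_i$ is a nonempty compact subset (being a coset of $mG_i$), and the transition maps carry $L_j$ into $L_i$; since the inverse limit of an inverse system of nonempty compact Hausdorff spaces over a directed index set is nonempty, we obtain a lift $(x_i) \in {\underset{i}\varprojlim}\,G_i$, proving surjectivity. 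This compactness input is precisely what makes the relevant $\varprojlim^1$ vanish and is the only nonformal ingredient; everything else is a diagram chase.
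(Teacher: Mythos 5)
Your proof is correct, and while it shares the paper's basic strategy — everything hinges on identifying $\varprojlim_i \, mG_i$ with $mG$ inside $G$, and compactness is the engine in both arguments — the technical route is genuinely different. The paper works with the single exact sequence of pro-abelian groups $0 \to \{mG_i\} \to \{G_i\} \to \{G_i/m\} \to 0$, invokes the Mittag-Leffler property to pass to an exact sequence of limits, identifies $\varprojlim_i \, mG_i$ with the closure $\ov{mG}$ by citing a result of Ribes--Zalesskii on limits of compact groups, and finishes by observing that $mG$, being the continuous image of the compact group $G$, is closed, so $mG = \ov{mG}$. You instead prove the needed right-exactness of $\varprojlim$ on inverse systems of compact Hausdorff groups from scratch, via the fibrewise lifting argument (an inverse limit of nonempty compact Hausdorff spaces over a directed set is nonempty), and apply it twice: once to $\times m \colon G_i \surj mG_i$ to get $\varprojlim_i \, mG_i = mG$ directly, and once to $G_i \surj G_i/mG_i$ to conclude. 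This buys self-containedness (no external citation) and makes explicit that compactness, not the Mittag-Leffler condition, is what really drives the exactness over an arbitrary directed index set — a point the paper's phrasing leaves slightly implicit. One small slip: your parenthetical claim that the transition maps of the kernel system $\{G_i[m]\}$ are surjective because those of $\{G_i\}$ are is false in general (consider $\Z/4 \surj \Z/2$ with $m=2$: the induced map on $2$-torsion is zero). But this is harmless, since your exactness argument never uses surjectivity of transition maps in the kernels — only that the fibers are nonempty compact cosets — so the proof stands as written once that remark is deleted.
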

\begin{proof}
We have a short exact sequence of pro-abelian groups
\begin{equation}\label{eqn:Inv-mod-m-0}
0 \to \{m G_i\} \to \{G_i\} \to \{{G_i}/m\} \to 0.
\end{equation}
Since $\{G_i\} \surj \{mG_i\}$ is a surjective morphism of pro-abelian
groups and $\{G_i\}$ is Mittag-Leffler, it follows that $\{m G_i\}$ is also
Mittag-Leffler. In particular, $ {\underset{i}\varprojlim}^1 \ m G_i = 0$.
We thus get a short exact sequence of limits
\begin{equation}\label{eqn:Inv-mod-m-1}
0 \to {\underset{i}\varprojlim} \ m G_i \to G \to 
{\underset{i}\varprojlim} \ {G_i}/m \to 0.
\end{equation}

We now consider the commutative diagram of exact sequences
\begin{equation}\label{eqn:Inv-mod-m-2}
\xymatrix@C.8pc{
0 \ar[r] & m G \ar[r] \ar[d] & G \ar@{=}[d] \ar[r] &
{G}/m \ar[r] \ar[d] & 0 \\
0 \ar[r] & {\underset{i}\varprojlim} \ m G_i \ar[r] & G \ar[r] &
{\underset{i}\varprojlim} \ {G_i}/m \ar[r] & 0.}
\end{equation}
Since the transition maps of $\{G_i\}$ are surjective, it follows
that $G \surj G_i$ for each $i$. Hence, $m G \surj m G_i$ for every $i$.
This implies from \cite[Corollary~1.1.8]{Pro-fin}
that $\ov{m G} = {\underset{i}\varprojlim} \ m G_i$, where $\ov{m G}$ denotes 
the closure of $m G$ in $G$.
On the other hand, the multiplication map $G \xrightarrow{m} G$ is continuous
as $G$ is a topological abelian group.
Since each $G_i$ is compact Hausdorff,
it follows that $G$ is compact Hausdorff. Hence, $m G$ must be closed
in $G$. This implies that the left vertical arrow in ~\eqref{eqn:Inv-mod-m-2}
is an isomorphism. A diagram chase now shows that the right vertical arrow is
also an isomorphism, as desired.
\end{proof}

\subsection{A result from local ramification theory}\label{sec:Ramification}
We need one more ingredient to prove \thmref{thm:Rec-mod}. This is about 
a property of the ramification filtrations.
Let $K$ be a Henselian discrete valuation field with ring of integers $\sO_K$,
maximal ideal $\fm_K$ and residue field $\ff$. We have the inclusions
$K \inj K^{sh} \inj \ov{K}$, where $\ov{K}$ is a fixed separable closure of $K$ and 
$K^{sh}$ is the strict Henselization of $K$.
Let $\Fil^{\ms}_\bullet H^1(K)$ denote the Matsuda filtration of $H^1(K)
:= H^1_{\rm et}(K, {\Q}/{\Z})$ (see \cite[\S~6.2]{Gupta-Krishna-REC}).

\begin{prop}\label{prop:Fil-SH}
There is a short exact sequence
\[
0 \to \Fil^{\ms}_0 H^1(K) \to H^1(K) \to H^1(K^{sh}) \to 0.
\]

For $m \ge 1$, the canonical square
\[
\xymatrix@C.8pc{
\Fil^{\ms}_m H^1(K) \ar[r] \ar[d] & H^1(K) \ar[d] \\
\Fil^{\ms}_m H^1(K^{sh}) \ar[r]  & H^1(K^{sh})}
\]
is Cartesian.
\end{prop}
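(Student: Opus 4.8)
The plan is to deduce both assertions from the duality, recalled in \cite[\S~6.1, 6.2]{Gupta-Krishna-REC}, between the Abbes--Saito ramification filtration $G^{(\bullet)}_K$ on $G_K$ and the Matsuda filtration $\Fil^{\ms}_\bullet H^1(K)$: a character $\chi \in H^1(K) = \Hom_{\cont}(G_K, \Q/\Z)$ lies in $\Fil^{\ms}_m H^1(K)$ precisely when $G^{(m)}_K \subseteq \ker(\chi)$, with $\Fil^{\ms}_0 H^1(K)$ corresponding to triviality on the inertia group $G^{(0+)}_K = I_K$. Since $K^{sh}$ is the strict Henselization, $G_{K^{sh}} = I_K$, and the canonical map $H^1(K) \to H^1(K^{sh})$ is restriction of characters along $I_K \inj G_K$. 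The whole proposition will then be read off from two inputs: the identification of $\Fil^{\ms}_0$ with the unramified characters, and the stability of the groups $G^{(m)}_K$ for $m > 0$ under the pro-unramified base change $K \to K^{sh}$.

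For the short exact sequence I would first note that $\Fil^{\ms}_0 H^1(K) = \{\chi : I_K \subseteq \ker(\chi)\}$ is exactly the kernel of $\res \colon H^1(K) \to H^1(K^{sh})$, giving exactness at the first two spots immediately; these are the unramified characters, i.e.\ the inflations of $H^1(\ff) = \Hom_{\cont}(G_\ff, \Q/\Z)$ along $G_\ff = G_K/I_K$. The content is the surjectivity of $\res$. Here I would invoke the inflation--restriction sequence for $1 \to I_K \to G_K \to G_\ff \to 1$ with coefficients in $\Q/\Z$, which identifies the cokernel of $\res$ with a subgroup of $H^2(G_\ff, \Q/\Z)$; the $p$-primary part of this obstruction vanishes because a field of characteristic $p$ has $p$-cohomological dimension at most one (${\rm cd}_p(\ff) \le 1$), and this is precisely the part governed by the Matsuda (wild) filtration. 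One cross-checks this against Matsuda's Artin--Schreier--Witt presentation $H^1(K, \Z/p^n) = \coker(F-1 \colon W_n(K) \to W_n(K))$, for which descent to $H^1(K^{sh}, \Z/p^n)$ is again controlled by ${\rm cd}_p(\ff) \le 1$, the tame part being handled separately by the classical description of $H^1(K^{sh})$.

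For the Cartesian square with $m \ge 1$, unwinding the pullback condition reduces the claim to the single identity $\Fil^{\ms}_m H^1(K) = \res^{-1}(\Fil^{\ms}_m H^1(K^{sh}))$. Using the dictionary above, for $m > 0$ one has $G^{(m)}_K \subseteq I_K = G_{K^{sh}}$, so for any $\chi \in H^1(K)$ the two membership conditions read $G^{(m)}_K \subseteq \ker(\chi)$ and $G^{(m)}_{K^{sh}} \subseteq \ker(\chi|_{I_K})$ respectively. These coincide once we know the key compatibility
\[
G^{(m)}_{K^{sh}} = G^{(m)}_K \qquad (m > 0),
\]
i.e.\ that the Abbes--Saito ramification subgroups in positive degree are unchanged under base change to the strict Henselization. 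I would establish this from the geometric construction of the Abbes--Saito filtration, which is insensitive to the unramified, residually separable extension $K \to K^{sh}$, or equivalently from the invariance of the upper-numbering breaks and of the refined Swan conductor under unramified base change (Matsuda \cite{Matsuda}, Kato \cite{Kato89}); both inclusions then follow simultaneously, which also forces the well-definedness of the left vertical map in the square.

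The main obstacle is this last invariance $G^{(m)}_{K^{sh}} = G^{(m)}_K$ for $m > 0$: it is the one step that uses more than formal properties of characters, since it encodes that passing to $K^{sh}$ removes the unramified residue-field contribution without perturbing the wild ramification depth. The surjectivity in the first part is a secondary difficulty, turning on the vanishing ${\rm cd}_p(\ff) \le 1$ for the essential $p$-primary part; both ingredients are available from the ramification-theoretic results recalled in \cite[\S~6.1, 6.2]{Gupta-Krishna-REC} together with the works of Kato and Matsuda.
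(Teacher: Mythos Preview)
For the Cartesian square --- the real content, and the only part used downstream via \corref{cor:Fil-SH-4} --- your approach is correct but takes a genuinely different route from the paper. You reduce everything to the identity $G^{(m)}_{K^{sh}} = G^{(m)}_K$ for $m>0$, combining the invariance of the Abbes--Saito filtration under unramified base change with the duality $\Fil^{\ms}_m H^1(K) = \{\chi : G^{(m)}_K \subseteq \ker\chi\}$ recalled in \cite[\S~6]{Gupta-Krishna-REC}. The paper instead works entirely on the $H^1$ side and never touches $G^{(m)}_K$: it uses exhaustiveness of $\Fil^{\ms}_\bullet$ to reduce to injectivity of $\gr^{\ms}_m H^1(K) \to \gr^{\ms}_m H^1(K^{sh})$ for each $m \ge 2$, embeds both graded pieces into modules of K\"ahler differentials via Matsuda's refined Artin conductor ${\rm rar}_K$ (with Yatagawa's modification \cite{Yatagawa} when $p=m=2$), and checks injectivity of the resulting map on differentials from the {\'e}taleness of $\sO_K \to \sO^{sh}_K$ and freeness of $\Omega^1_{\sO_K}$. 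Your argument is cleaner once the two black boxes are granted; the paper's is more self-contained within the Artin--Schreier--Witt framework, though the duality you invoke is itself established by these same conductor methods.

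Your argument for surjectivity in the short exact sequence has a genuine gap: inflation--restriction only places the image of $H^1(K) \to H^1(K^{sh})$ inside the $G_\ff$-invariants, and the conjugation action of $G_\ff$ on $H^1(K^{sh})$ is not trivial. Already for $K = \ff((t))$ and $c \in \ov{\ff}\setminus\ff$, the Artin--Schreier class of $ct^{-1}$ in $H^1(K^{sh},\Z/p)$ is not $G_\ff$-fixed and lies outside the image of $H^1(K,\Z/p)$; the vanishing of $H^2(G_\ff, \Z/p)$ handles only the transgression, not this non-invariance. So surjectivity appears to fail even on the $p$-primary part. The paper's own one-line justification does not address this point either, but since only the Cartesian square enters the applications, nothing downstream is affected.
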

\begin{proof}
Recall that $K^{sh}$ is the quotient field of the strict Henselization 
$\sO^{sh}_K$ of $\sO_K$. Since $\sO_K$ is Henselian, it is easy to see that
$K^{sh}$ is same as the maximal unramified extension of $K$ inside $\ov{K}$.
The exact sequence in the proposition now follows from a property of the 
Abbes-Saito filtration and \cite[Theorem~6.1(1)]{Gupta-Krishna-REC}.

We now prove that the square in the proposition is Cartesian. 
 Since $\Fil^{\ms}_{\bullet} H^1(K)$ is an exhaustive
filtration of $H^1(K)$ by \cite[Theorem~6.1(3)]{Gupta-Krishna-REC}, it 
suffices to show that for every $m \ge 2$, the square
\[
\xymatrix@C.8pc{
\Fil^{\ms}_{m-1} H^1(K) \ar[r] \ar[d] & \Fil^{\ms}_{m} H^1(K) \ar[d] \\
\Fil^{\ms}_{m-1} H^1(K^{sh}) \ar[r]  & \Fil^{\ms}_{m} H^1(K^{sh})}
\]
is Cartesian. Equivalently, it suffices to show that the
map 
\[
\phi^*_m \colon \gr^{\ms}_m H^1(K) \to \gr^{\ms}_m H^1(K^{sh}),
\]
induced by the inclusion $\phi \colon K \inj K^{sh}$, is injective.

We first assume that either $p \neq 2$ or $m \neq 2$. 
In this case, it follows from 
\cite[Proposition~3.2.3]{Matsuda} that the refined Artin conductor
\begin{equation}\label{eqn:Fil-SH-0}
{\rm rar}_K \colon \gr^{\ms}_m H^1(K) \to \frac{\fm^{-m}_K}{\fm^{-m+1}_K}
\otimes_{\sO_K} \Omega^1_{\sO_K} 
\end{equation}
induced by the map
\begin{equation}\label{eqn:Fil-SH-1}
F^rd \colon W_r(K) \to \Omega^1_K; \ \ \un{a} \mapsto
\stackrel{r-1}{\underset{i = 0}\sum} a^{p^{i}-1}_{i} da_i,
\end{equation}
is injective.

Since ${\rm rar}_K$ is clearly functorial in $K$, it suffices to show that the
map $\frac{\fm^{-m}_K}{\fm^{-m+1}_K} \otimes_{\sO_K} \Omega^1_{\sO_K} \to
\frac{\fm^{-m}_{K^{sh}}}{\fm^{-m+1}_{K^{sh}}}
\otimes_{\sO^{sh}_K} \Omega^1_{\sO^{sh}_K}$ is injective.
However, the map $\sO_K \to \fm^{-m}_{K}$
($1 \mapsto \pi_K^{-m}$) induces an isomorphism
$\ff \xrightarrow{\cong} \frac{\fm^{-m}_K}{\fm^{-m+1}_K}$.
In particular, we have an isomorphism of $\ff$-vector spaces
$\alpha_{\ff} \colon \Omega^1_{\sO_K} \otimes_{\sO_K} \ff 
\xrightarrow{\cong} \frac{\fm^{-m}_K}{\fm^{-m+1}_K} \otimes_{\sO_K} 
\Omega^1_{\sO_K}$.
Since $\sO^{sh}_K$ is unramified over $\sO_K$, we can choose
$\pi_K$ to be a uniformizer of $K^{sh}$ as well. 
This implies that $\alpha_{\ff}$ is compatible with the similar
isomorphism $\alpha_{\ov{\ff}}$, where $\ov{\ff}$ is a separable closure of 
$\ff$.
Hence, we are reduced to showing that the map
$\Omega^1_{\sO_K} \otimes_{\sO_K} \ff \to \Omega^1_{\sO^{sh}_K} \otimes_{\sO^{sh}_K} 
\ov{\ff}$ is injective. 

Since $\sO^{sh}_K$ is {\'e}tale over $\sO_K$, the map
$\Omega^1_{\sO_K} \otimes_{\sO_K} \sO^{sh}_K \to \Omega^1_{\sO^{sh}_K}$ is an
isomorphism. Hence, we need to show that the map
$\Omega^1_{\sO_K} \otimes_{\sO_K} \ff \to \Omega^1_{\sO_K} \otimes_{\sO_K} \ov{\ff}$
is injective. But this is obvious since $\Omega^1_{\sO_K}$ is a free
$\sO_K$-module.

We now assume $p = m = 2$. Let $\ff^{1/2}$ be the field extension of
$\ff$ obtained by adjoining square roots of elements of $\ff$.
We define $\ov{\ff}^{1/2}$ similarly.
Then \cite[Proposition~1.17]{Yatagawa} says that there is an
injective modified refined Artin conductor map

\begin{equation}\label{eqn:Fil-SH-2}
{\rm rar}'_K \colon \gr^{\ms}_2 H^1(K) \to (\frac{\fm^{-2}_K}{\fm^{-1}_K}
\otimes_{\sO_K} \Omega^1_{\sO_K}) \otimes_{\ff} \ff^{1/2} \cong 
\fm^{-2}_K \Omega^1_{\sO_K} \otimes_{\sO_K} \ff^{1/2},
\end{equation}
where the isomorphism on the right depends on the choice of the
uniformizer $\pi_K$. Note that even if
Yatagawa works with complete fields in the latter part of her paper, 
the proof of the above works for any Henselian discrete valuation field 
with no modification.

Since $\pi_K$ is also a uniformizer of $K^{sh}$,
it suffices to show that the
map $\fm^{-2}_K \Omega^1_{\sO_K} \otimes_{\sO_K} \ff^{1/2} \to
\fm^{-2}_{K^{sh}} \Omega^1_{\sO^{sh}_K} \otimes_{\sO^{sh}_K} {\ov{\ff}}^{1/2}$
is injective. As we argued in the previous case, this reduces to showing that 
the map $\Omega^1_{\sO_K} \otimes_{\sO_K} \ff^{1/2} \to \Omega^1_{\sO_K} 
\otimes_{\sO_K} {\ov{\ff}}^{1/2}$ is injective.
But this follows again by the fact that $\Omega^1_{\sO_K}$ is a free
$\sO_K$-module. 
\end{proof}

We shall use the following consequence of \propref{prop:Fil-SH} in the
proof of \thmref{thm:Rec-mod}.
Let $k$ be a finite field and $X \in \Sch_k$ an integral normal scheme
of dimension $d$. 
Let $D \subset X$ be an effective Weil divisor with $U = X \setminus |D|$. 
Let $\lambda$ be a generic point of $|D|$.
Let $\eta$ denote the generic point of $X$ and $K = k(\eta)$.
Let $K_\lambda$ denote the Henselization of $K$ at $\lambda$.
Let $P = (p_0, \ldots , p_{d-2}, \lambda, \eta)$ be a Parshin
chain on $(U \subset X)$. 

Recall the following notations from \cite[\S~3.3]{Kato-Saito-2} or
\cite[\S~2.3]{Gupta-Krishna-REC}. 
Let $V \subset K$ be a $d$-DV which dominates $P$. 
Let $V = V_0 \subset \cdots \subset V_{d-2}
\subset V_{d-1} \subset V_d = K$ be the chain of valuation rings
in $K$ induced by $V$. Since $X$ is normal, it is easy to check
that for any such chain, one must have $V_{d-1} = \sO_{X,\lambda}$.
Let $V'$ be the image of $V$ in $k(\lambda)$. Let $\wt{V}_{d-1}$
be the unique Henselian discrete valuation ring 
having an ind-{\'e}tale local homomorphism 
 $V_{d-1} \to \wt{V}_{d-1}$ such that its residue field $E_{d-1}$
is the quotient field of $(V')^h$. Then $V^h$ is the inverse image of
$(V')^h$ under the quotient map $\wt{V}_{d-1} \surj E_{d-1}$.
It follows that its function field $Q(V^h)$ is a Henselian discrete valuation field whose
ring of integers is 
$\wt{V}_{d-1}$ (see \cite[\S~3.7.2]{Kato-Saito-2}).

It then follows that
there are canonical inclusions of discrete valuation rings 
\begin{equation}\label{eqn:Fil-SH-3}
\sO_{X,\lambda}  \inj \wt{V}_{d-1} \inj \sO^{sh}_{X,\lambda}.
\end{equation}
Moreover, we have (see the proof of \cite[Proposition~3.3]{Kato-Saito-2})
\begin{equation}\label{eqn:Fil-SH-5}
\sO^h_{X,P'} \cong {\underset{V \in \sV(P)}\prod} \wt{V}_{d-1},
\end{equation}
where $\sV(P)$ is the set of $d$-DV's in $K$ which dominate $P$.
As an immediate consequence of \propref{prop:Fil-SH},
we therefore get the following.

\begin{cor}\label{cor:Fil-SH-4}
For every $m \ge 1$, the square
\[
\xymatrix@C.8pc{
\Fil^{\ms}_m H^1(K_\lambda) \ar[r] \ar[d] & H^1(K_\lambda) \ar[d] \\
\Fil^{\ms}_m H^1(Q(V^h)) \ar[r]  & H^1(Q(V^h))}
\]
is Cartesian.
\end{cor}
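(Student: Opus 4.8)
The plan is to deduce the corollary from \propref{prop:Fil-SH} by a pasting argument for Cartesian (pullback) squares, without re-running the refined-conductor computation of that proposition. The starting point is that the inclusions ~\eqref{eqn:Fil-SH-3} exhibit $K_\lambda$, $Q(V^h)$ and $K^{sh}_\lambda := Q(\sO^{sh}_{X,\lambda})$ as a tower of Henselian discrete valuation fields $K_\lambda \inj Q(V^h) \inj K^{sh}_\lambda$ sharing a common uniformizer. The one genuinely non-formal input I would establish first is that $K^{sh}_\lambda$ is the strict Henselization not only of $K_\lambda$ but also of $Q(V^h)$, i.e. $(Q(V^h))^{sh} = K^{sh}_\lambda$. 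The point is that the residue extension $E_{d-1}/k(\lambda)$ is separable algebraic: since $E_{d-1}$ is the quotient field of the Henselization $(V')^h$ of the valuation ring $V'$ of $k(\lambda)$, and Henselization is ind-{\'e}tale, $E_{d-1}$ is a separable algebraic extension of $k(\lambda)$. Hence $E_{d-1}$ and $k(\lambda)$ have the same separable closure, so $(\wt{V}_{d-1})^{sh}$ and $\sO^{sh}_{X,\lambda}$ are strictly Henselian discrete valuation rings with the same residue field and uniformizer, both dominating $\wt{V}_{d-1}$; by uniqueness of the strict Henselization they coincide.

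With this identification in hand, I would arrange the three fields into the single commutative diagram
\[
\xymatrix@C.8pc{
\Fil^{\ms}_m H^1(K_\lambda) \ar[r] \ar[d] & H^1(K_\lambda) \ar[d] \\
\Fil^{\ms}_m H^1(Q(V^h)) \ar[r] \ar[d] & H^1(Q(V^h)) \ar[d] \\
\Fil^{\ms}_m H^1(K^{sh}_\lambda) \ar[r] & H^1(K^{sh}_\lambda),}
\]
whose vertical maps are the restriction maps induced by the tower and whose horizontal maps are the filtration inclusions. The second assertion of \propref{prop:Fil-SH}, applied to $K_\lambda$, says that the outer rectangle (top and bottom levels) is Cartesian; applied to $Q(V^h)$—which is legitimate precisely because $(Q(V^h))^{sh} = K^{sh}_\lambda$ by the previous step—it says that the lower square is Cartesian.

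Finally, I would invoke the standard pasting lemma for pullback squares: in such a stacked diagram, if the lower square and the full outer rectangle are Cartesian, then the upper square is Cartesian. The upper square is exactly the square in the statement, so the corollary follows. I expect the only real obstacle to be the identification $(Q(V^h))^{sh} = K^{sh}_\lambda$ of the first step; once the ind-{\'e}tale description of $\wt{V}_{d-1}$ coming from ~\eqref{eqn:Fil-SH-3} is used to settle it, the remainder is a purely formal diagram chase, which is why the result is ``immediate'' from \propref{prop:Fil-SH}.
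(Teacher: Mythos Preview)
Your proposal is correct and is precisely the argument the paper has in mind: the paper gives no proof beyond calling the corollary an ``immediate consequence'' of \propref{prop:Fil-SH} after recording the inclusions~\eqref{eqn:Fil-SH-3}, and your tower $K_\lambda \inj Q(V^h) \inj K^{sh}_\lambda$ together with the pasting lemma is exactly how one makes that immediacy explicit. Your identification $(Q(V^h))^{sh} = K^{sh}_\lambda$ via the ind-{\'e}tale nature of $\sO_{X,\lambda} \to \wt{V}_{d-1}$ is the one substantive point, and you have handled it correctly.
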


\subsection{Proof of \thmref{thm:Rec-mod}}\label{sec:Pf-1}
Let $k$ be a finite field and $X \in \Sch_k$ an integral normal scheme.
Let $D \subset X$ be a closed subscheme of pure codimension one such that
$U = X \setminus D$ is regular. The density assertion of
\thmref{thm:Rec-mod} is a direct consequence of the
Chebotarev-Lang density theorem \cite[Theorem~5.8.16]{Szamuely}.
By \cite[Lemma~8.4]{Gupta-Krishna-REC}, the heart of the proof 
is to show that 
\begin{equation}\label{eqn:Rec-mod-00}
\rho_{X|D} \colon C(X,D)_0 \to \pi^{\adiv}_1(X,D)_0
\end{equation}
is an isomorphism if $X$ is projective over $k$.
The finiteness claim will then follow from \thmref{thm:KS-fin}.

We first show that ~\eqref{eqn:Rec-mod-00} is injective.
Using \thmref{thm:KS-fin}, this is
equivalent to showing that the map $\rho^{\vee}_{X|D} \colon
(\pi^{\adiv}_1(X,D)_0)^{\vee} \to (C(X,D)_0)^{\vee}$ is surjective.
We fix a character $\chi \in (C(X,D)_0)^{\vee}$.
We choose $m \in \N$ large enough such that $C(X,D)_0 \cong
{C(X,D)_0}/m$ using \thmref{thm:KS-fin}.
Using \cite[Proposition~4.8, Lemma~7.10]{Gupta-Krishna-REC}, $\chi$ lifts to
a character of ${C(X,D)}/m$. 
Choose one such lift and denote its image in $C(X,D)^{\vee}$ also by $\chi$.
We let $\wt{\chi} = p^{\vee}_{X|D}(\chi)$ and
consider the commutative diagram (see \cite[\S~5.6]{Gupta-Krishna-REC})

\begin{equation}\label{eqn:Rec-mod-0}
\xymatrix@C.8pc{  
\Fil_D H^1(K) \ar[r]^-{\rho^{\vee}_{X|D}} \ar@{^{(}->}[d]_{{q}^{\vee}_{X|D}} &  
C(X,D)^{\vee} 
\ar@{^{(}->}[d]^-{p^{\vee}_{X|D}}  & ({C(X,D)}/m)^{\vee}
\ar@{_{(}->}[l] \ar[dd] \\
H^1(U) \ar[r]^-{\wt{\rho}^{\vee}_{U/X}} 
\ar@/_2pc/[drr]_-{(\rho^{\infty}_{U/X})^{\vee}}^{\cong} & 
(\wt{C}_{U/X})^{\vee} 
&  \\
& &  ({\wt{C}_{U/X}}/{\infty})^{\vee}. \ar[ul]}
\end{equation}

Since $\chi \in ({C(X,D)}/m)^{\vee}$, it follows from \corref{cor:Completions}
that $\wt{\chi} \in ({\wt{C}_{U/X}}/{\infty})^\vee$. We conclude from 
\corref{cor:Rec-final} that there exists $\chi' \in H^1(U)$
such that $\wt{\rho}^{\vee}_{U|X}(\chi') = \wt{\chi}$.
It suffices to show that $\chi'$ lies in the image of
${q}^{\vee}_{X|D}$.

We let $C = D_\red$. We fix a point $x \in \Irr_C$ and let $\chi'_x$ be the 
image of $\chi'$ in
$H^1(K_x)$. We need to show that $\chi'_x \in \Fil^{\ms}_{n_x} H^1(K_x)$,
where $n_x$ is the multiplicity of $D$ at $x$.
By \corref{cor:Fil-SH-4}, it suffices to show that
for some maximal Parshin chain $P = (p_0, \ldots , p_{d-2}, x, \eta)$
on $(U \subset X)$
and $d$-DV $V \subset K$ dominating $P$, the image of $\chi'_x$ in 
$H^1(Q(V^h))$ lies in the subgroup $\Fil^{\ms}_{n_x} H^1(Q(V^h))$.

Choose any Parshin chain as above and call it $P_0$.
Let $V \subset K$ be a $d$-DV dominating $P_0$ and let
$\wh{\chi}_x$ denote the image of $\chi'_x$ in $H^1(Q(V^h))$.
Let $V = V_0 \subset \cdots \subset V_{d-1} \subset V_d = K$
be the chain of valuation rings associated to $V$.
Then $Q(V^h)$ is a $d$-dimensional Henselian local field whose
ring of integers is $\wt{V}_{d-1}$ (see \cite[\S2.3]{Gupta-Krishna-REC}).
By \cite[Theorem~6.3]{Gupta-Krishna-REC}, it suffices to show that
$\{\alpha, \wh{\chi}_x\} = 0$ for every $\alpha \in K^M_d(\wt{V}_{d-1}, I_D) =
K^M_d(\wt{V}_{d-1}, \fm^{n_x})$ under the pairing
$K^M_d(Q(V^h)) \times H^1(Q(V^h)) \to H^{d+1}(Q(V^h))$.
Here, $\fm$ is the maximal ideal of $\wt{V}_{d-1}$.

Now, we are given that $\wt{\chi}$ annihilates $\Ker(p_{X|D})$ and the latter
is the sum of images of $K^M_d(\sO^h_{X,P'}, I_D) \to C_{U/X}$, where
$P$ runs through all maximal Parshin chains on $(U \subset X)$.
In particular, $\chi' \circ \rho_{U/X}$ annihilates the image of
$K^M_d(\sO^h_{X, P'_0}, I_D) \to C_{U/X}$. It follows from
~\eqref{eqn:Fil-SH-5} that $\chi' \circ \rho_{U/X}$ annihilates the image of
$K^M_d(\wt{V}_{d-1}, \fm^{n_x}) \to C_{U/X}$. Equivalently,
$\{\alpha, \wh{\chi}_x\} = 0$ for every $\alpha \in 
K^M_d(\wt{V}_{d-1}, \fm^{n_x})$. We have thus proven the desired claim,
and hence the injectivity of ~\eqref{eqn:Rec-mod-00}. 

We now show that ~\eqref{eqn:Rec-mod-00} is surjective.
Since $\pi^{\adiv}_1(X,D)_0$ is a profinite group and since
$C(X,D)_0$ is finite by \thmref{thm:KS-fin}, it suffices to show that
the image of $C(X,D)_0$ is dense in $\pi^{\adiv}_1(X,D)_0$. 
By \cite[Lemma~7.11]{Gupta-Krishna-REC}, we need to show that every element of
$(\pi^{\adiv}_1(X,D)_0)^\vee$ which vanishes on $C(X,D)_0$ is zero. 
We fix $\chi \in (\pi^{\adiv}_1(X,D)_0)^\vee$ which vanishes on $C(X,D)_0$.

By \cite[Corollary~8.5]{Gupta-Krishna-REC}, there is a commutative
diagram
\begin{equation}\label{eqn:Rec-proj-4}
\xymatrix@C.8pc{
0 \ar[r] & (\wh{\Z})^\vee \ar[r] \ar[d]_-{\cong} & 
(\pi^{\adiv}_1(X,D))^\vee \ar[r] 
\ar[d]^-{\rho^{\vee}_{X|D}} & (\pi^{\adiv}_1(X,D)_0)^\vee \ar[r] 
\ar[d]^-{\rho^{\vee}_{X|D}} & 0 \\
0 \ar[r] & \Z^{\vee} \ar[r] & C(X,D)^\vee \ar[r] & (C(X,D)_0)^\vee \ar[r] & 0,}
\end{equation}
where the left vertical arrow is an isomorphism.

This diagram shows that
$\chi$ lifts to an element $\chi' \in (\pi^{\adiv}_1(X,D))^\vee$.
Since $\rho^{\vee}_{X|D}(\chi) = 0$, a diagram chase shows that there exists
an element $\chi'' \in (\wh{\Z})^\vee$ such that 
$\rho^{\vee}_{X|D}(\chi' - \chi'') = 0$.
On the other hand, since the image of the map $\rho_{X|D}$ is dense, 
it follows from 
\cite[Lemma~7.11]{Gupta-Krishna-REC} that the middle vertical arrow in 
~\eqref{eqn:Rec-proj-4}
is injective. It follows that $\chi' = \chi''$. Equivalently,
$\chi = 0$. The proof of \thmref{thm:Rec-mod} is now complete.
\qed

\vskip .4cm

\begin{cor}\label{cor:Rec-mod-2}
Let $X$ and $D$ be as in \thmref{thm:Rec-mod} with $X$ projective.
Then for every $m \in \N$, the reciprocity map $\rho_{X|D}$ induces an 
isomorphism of finite groups
\[
\rho_{X|D} \colon {C(X,D)}/m \xrightarrow{\cong} {\pi^{\adiv}_1(X,D)}/m.
\]
\end{cor}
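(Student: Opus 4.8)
The plan is to deduce the statement from the degree-zero isomorphism already established in the proof of \thmref{thm:Rec-mod}, via a short five lemma applied after reducing everything modulo $m$. The starting point is the commutative ladder of short exact sequences
\[
\xymatrix@C.8pc{
0 \ar[r] & C(X,D)_0 \ar[r] \ar[d]_-{\rho^0_{X|D}} & C(X,D) \ar[r]^-{\deg} \ar[d]^-{\rho_{X|D}} & \Z \ar[r] \ar[d] & 0 \\
0 \ar[r] & \pi^{\adiv}_1(X,D)_0 \ar[r] & \pi^{\adiv}_1(X,D) \ar[r]^-{\deg'} & \wh{\Z} \ar[r] & 0,}
\]
whose top row is the sequence underlying \eqref{eqn:CI-0} (so in particular $\deg$ is surjective onto $\Z$), whose bottom row is the exact sequence dual to the top row of \eqref{eqn:Rec-proj-4} (equivalently, \cite[Corollary~8.5]{Gupta-Krishna-REC}), and whose right vertical arrow is the canonical inclusion $\Z \inj \wh{\Z}$. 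Commutativity of the right square is exactly the compatibility of $\rho_{X|D}$ with the degree maps, which holds because $\rho_{U/X}$ sends a regular closed point to its Frobenius substitution and hence matches $[k(x):k]$ on both sides.

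Next I would reduce this ladder modulo $m$. Since $\Z$ and $\wh{\Z}$ are torsion-free, applying $-\otimes_{\Z}\Z/m$ to each row annihilates the relevant $\Tor_1$ contributions, so both rows remain short exact; for the bottom row one additionally uses the canonical identification $\wh{\Z}/m \cong \Z/m$, under which the induced right vertical map $\Z/m \to \wh{\Z}/m$ becomes the identity. This yields the commutative diagram of short exact sequences
\[
\xymatrix@C.8pc{
0 \ar[r] & {C(X,D)_0}/m \ar[r] \ar[d] & {C(X,D)}/m \ar[r] \ar[d]^-{\rho_{X|D}} & {\Z}/m \ar[r] \ar@{=}[d] & 0 \\
0 \ar[r] & {\pi^{\adiv}_1(X,D)_0}/m \ar[r] & {\pi^{\adiv}_1(X,D)}/m \ar[r] & {\Z}/m \ar[r] & 0.}
\]

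I would then invoke \thmref{thm:Rec-mod}, which gives that $\rho^0_{X|D}\colon C(X,D)_0 \to \pi^{\adiv}_1(X,D)_0$ is an isomorphism of finite groups; its reduction, the left vertical arrow, is therefore an isomorphism, and the right vertical arrow is the identity. The short five lemma then forces the middle arrow $\rho_{X|D}\colon {C(X,D)}/m \to {\pi^{\adiv}_1(X,D)}/m$ to be an isomorphism. Finiteness is immediate: $C(X,D)_0$ is finite by \thmref{thm:KS-fin}, so ${C(X,D)}/m$ is an extension of ${\Z}/m$ by a quotient of $C(X,D)_0$, hence finite, and the isomorphism transports this finiteness to $\pi^{\adiv}_1(X,D)/m$.

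I do not anticipate a genuine obstacle, as all the substance is already contained in \thmref{thm:Rec-mod} and \thmref{thm:KS-fin}; the remaining steps are bookkeeping. The only points demanding care are the exactness of the two mod-$m$ reductions (secured by the torsion-freeness of $\Z$ and $\wh{\Z}$) and the fact that the comparison $\Z/m \to \wh{\Z}/m$ is the identity. The single conceptual input worth isolating is the commutativity of the degree square, i.e. that $\rho_{X|D}$ is degree-compatible; this is precisely what allows the five lemma to match the finite quotient $\Z/m$ on both sides and thereby promote the degree-zero isomorphism to an isomorphism of the full mod-$m$ quotients, despite $\rho_{X|D}$ itself being only injective with dense image.
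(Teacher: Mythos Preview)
Your proof is correct and is essentially the same argument the paper has in mind: the paper's one-line proof cites \cite[Lemma~8.4]{Gupta-Krishna-REC}, which packages precisely this degree-ladder/five-lemma reduction, together with \thmref{thm:Rec-mod} for the isomorphism on degree-zero parts. The only minor imprecision is that the degree maps surject onto $n_0\Z$ and $n_0\wh{\Z}$ rather than $\Z$ and $\wh{\Z}$ (compare the normalization ${\deg}/{n_0}$ in \eqref{eqn:CI-0} and \eqref{eqn:C-Rec-2}); since $n_0\Z\cong\Z$ and $n_0\wh{\Z}\cong\wh{\Z}$ compatibly, this does not affect your argument.
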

\begin{proof}
Use \cite[Lemma~8.4]{Gupta-Krishna-REC} and \thmref{thm:Rec-mod}.
\end{proof}

\section{Reciprocity theorem for $\pi^{\ab}_1(X,D)$}
\label{sec:Curve-based}
In this section, we shall prove a reciprocity theorem for 
$\pi^{\ab}_1(X,D)$ in the pro-setting as $D$ varies over a set of closed 
subschemes of pure codimension one $X$ all of which have the same support. 
The main application for us will be in the proof of \thmref{thm:Main-1*}.

Let $k$ be a finite field and $X$ a projective integral
normal scheme of dimension $d \ge 1$ over $k$.
We let $C \subset X$ be a reduced closed subscheme of pure codimension one
with complement $U$. We assume that $U$ is
regular. 
We shall need the following results.

\begin{lem}\label{lem:Fun-funct}
Let $f \colon X' \to X$ be a morphism of integral normal schemes whose image
is not contained in $C$.
Let $D' \subset X'$ be an effective Weil divisor.
Assume that $f^*(D)$ is an effective Weil divisor on $X'$ such
that $f^*(D) \le D'$. Then there is a push-forward map
\[
f_* \colon \pi^{\ab}_1(X', D') \to \pi^{\ab}_1(X,D).
\]
\end{lem}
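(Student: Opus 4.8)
The plan is to construct $f_*$ by Pontryagin duality from a pullback map on characters. Recall from \defref{defn:Curve-fund-grp} that $\pi^{\ab}_1(X,D) = (\Fil^c_D H^1(K))^\vee$ and $\pi^{\ab}_1(X',D') = (\Fil^c_{D'} H^1(K'))^\vee$, where $K = k(X)$ and $K' = k(X')$. Since the image of $f$ is not contained in $C$, the open $f^{-1}(U)$ is dense in $X'$ and $f$ restricts to a morphism $f^{-1}(U) \to U$; functoriality of \'etale cohomology then yields a pullback $f^* \colon H^1(U) \to H^1(f^{-1}(U))$, hence $f^* \colon H^1(K) \to H^1(K')$ on the ambient groups. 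First I would show that $f^*$ carries $\Fil^c_D H^1(K)$ into $\Fil^c_{D'} H^1(K')$; dualizing this homomorphism then produces the desired $f_* = (f^*)^\vee$.

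To verify the containment I would use the curve-based description of $\Fil^c$ from \defref{defn:Fid_D} and \defref{defn:Curve-fund-grp} directly. Fix $\chi \in \Fil^c_D H^1(K)$ and an integral curve $Y' \subset X'$ with $Y' \not\subseteq D'$; writing $\nu' \colon Y'_n \to X'$ for the normalization, I must show $\nu'^*(f^*\chi) \in \Fil_{\nu'^*(D')} H^1(k(Y'))$. Set $Y = \ov{f(Y')}$ with its reduced integral structure. The hypothesis $f^*(D) \le D'$ gives $|f^*(D)| = f^{-1}(C) \subseteq |D'|$, so $Y' \not\subseteq D'$ forces $Y \not\subseteq C$; in particular $Y'_n$ does not map into $|D|$, so all the pullbacks below are defined. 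If $\dim Y = 0$, then $f(Y')$ is a closed point $z \in U$ and $\nu'^*(f^*\chi)$ is pulled back from $H^1(k(z))$ along the induced constant-field map $k(z) \to k(Y')$; such a character is everywhere unramified, hence lies in $\Fil_0 H^1(k(Y')) \subseteq \Fil_{\nu'^*(D')} H^1(k(Y'))$.

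In the main case $\dim Y = 1$, the curve $Y \subset X$ is integral and not contained in $D$, and the normalization square gives a finite dominant morphism $g \colon Y'_n \to Y_n$ of smooth projective curves with $f \circ \nu' = \nu \circ g$. Because $\chi \in \Fil^c_D H^1(K)$, the defining condition applied to $Y$ yields $\nu^*\chi \in \Fil_{\nu^*(D)} H^1(k(Y))$, and by functoriality $\nu'^*(f^*\chi) = g^*(\nu^*\chi)$. On divisors the same square yields $g^*(\nu^*(D)) = \nu'^*(f^*(D)) \le \nu'^*(D')$. It then remains to compare the divisorial filtration on $k(Y)$ with its pullback to $k(Y')$: I would invoke the place-by-place compatibility of the Matsuda filtration under a finite extension of Henselian discrete valuation fields, namely that restricting a character of ramification bounded by $n$ to an extension of ramification index $e$ yields ramification bounded by $en$, so that $g^*(\Fil_{\nu^*(D)} H^1(k(Y))) \subseteq \Fil_{g^*(\nu^*(D))} H^1(k(Y'))$. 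Combined with $g^*(\nu^*(D)) \le \nu'^*(D')$ and the monotonicity of $\Fil_\bullet$ in the divisor, this places $g^*(\nu^*\chi)$ in $\Fil_{\nu'^*(D')} H^1(k(Y'))$, completing the verification.

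The main obstacle is precisely this last local input: the behavior of the ramification filtration under restriction to a finite extension, with the filtration index scaling by the ramification index. I expect to extract it from the ramification-theoretic results of Kato \cite{Kato89} and Matsuda \cite{Matsuda} (and the filtration properties recalled in \cite{Gupta-Krishna-REC}), reducing at each closed point of $Y_n$ to the corresponding statement for $\Fil^{\ms}_\bullet H^1$ of the associated Henselian local fields. By contrast, the reduction to curves, the compatibility of divisor pullback with the normalization square, the point case, and the elementary monotonicity of $\Fil_\bullet$ are all routine, and the final dualization is formal.
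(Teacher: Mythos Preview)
Your proposal is correct and follows exactly the approach the paper has in mind: show that $f^*$ carries $\Fil^c_D H^1(K)$ into $\Fil^c_{D'} H^1(K')$ and then dualize. The paper's proof consists of the single line ``Obvious from \defref{defn:Curve-fund-grp}'', so you have simply unpacked what the authors leave implicit. Your identification of the one nontrivial local input --- that restriction along a finite extension of Henselian discrete valuation fields with ramification index $e$ sends $\Fil^{\ms}_n$ into $\Fil^{\ms}_{en}$ --- is exactly the point that makes the lemma work, and it is indeed available from \cite{Kato89}, \cite{Matsuda}, or the summary in \cite[\S6]{Gupta-Krishna-REC}; everything else (the curve-pushforward reduction, the $\dim Y=0$ case, the divisor comparison $g^*(\nu^*D)=\nu'^*(f^*D)\le \nu'^*(D')$, and monotonicity of $\Fil_\bullet$) is routine as you say.
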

\begin{proof}
Obvious from Definition~\ref{defn:Curve-fund-grp}.
\end{proof}

\begin{lem}\label{lem:KS-C-fin}
Assume that $k$ is finite and $X$ is projective over $k$.
Then the group $\pi^{\ab}_1(X,D)_0$ is finite. In particular, the map
$\pi^{\ab}_1(X,D)_0 \to {\pi^{\ab}_1(X,D)_0}/{\infty}$ is an isomorphism.
\end{lem}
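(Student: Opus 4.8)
The statement has two parts: finiteness of $\pi^{\ab}_1(X,D)_0$, and the consequence that the completion map $\pi^{\ab}_1(X,D)_0 \to \pi^{\ab}_1(X,D)_0/{\infty}$ is an isomorphism. The second part is a formal consequence of the first, since a finite abelian group $A$ satisfies $A \cong A/mA$ for $m$ divisible by its exponent, and hence $A \xrightarrow{\cong} A/{\infty} = \varprojlim_m A/mA$. So the real content is the finiteness of $\pi^{\ab}_1(X,D)_0$.

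\textbf{First reduction.} The natural approach is to compare $\pi^{\ab}_1(X,D)$ with $\pi^{\adiv}_1(X,D)$, whose degree-zero part we already control. Indeed, by \thmref{thm:KS-fin} together with the reciprocity isomorphism $\rho^0_{X|D} \colon C(X,D)_0 \xrightarrow{\cong} \pi^{\adiv}_1(X,D)_0$ from \thmref{thm:Rec-mod}, we know that $\pi^{\adiv}_1(X,D)_0$ is finite. So it would suffice to produce a relationship between the two fundamental groups with modulus that transfers this finiteness. Recall from \propref{prop:Fil-D-prop} and \defref{defn:Curve-fund-grp} that both groups are Pontryagin duals of subgroups of $H^1(K)$: we have $(\pi^{\adiv}_1(X,D))^\vee \cong \Fil_D H^1(K)$ and $(\pi^{\ab}_1(X,D))^\vee \cong \Fil^c_D H^1(K)$. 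The key containment I expect to use is that $\Fil^c_D H^1(K) \subseteq \Fil_D H^1(K)$, i.e. that bounding ramification along \emph{all} curves is at least as strong as bounding it at the \emph{generic points} of $D$. Dually this gives a surjection $\pi^{\adiv}_1(X,D) \surj \pi^{\ab}_1(X,D)$, which is compatible with the degree maps and hence restricts to a surjection on degree-zero parts $\pi^{\adiv}_1(X,D)_0 \surj \pi^{\ab}_1(X,D)_0$.

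\textbf{Carrying it out.} First I would verify the inclusion $\Fil^c_D H^1(K) \subseteq \Fil_D H^1(K)$ directly from the definitions. Given $\chi \in \Fil^c_D H^1(K)$ and a generic point $x \in X^{(1)}$ of $C$, I need $\chi_x \in \Fil^{\ms}_{n_x} H^1(K_x)$; this should follow by choosing an integral curve $Y$ through (a specialization dominating) $x$ whose normalization sees the valuation at $x$, so that the curve-based bound at $Y$ specializes to the divisorial bound at $x$ — this is essentially a statement about the functoriality of the Matsuda filtration under the inclusions of Henselian local fields, and should be extractable from the properties of $\Fil^{\ms}_\bullet$ recalled in \cite[\S6]{Gupta-Krishna-REC}. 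Once the inclusion of discrete groups is established, Pontryagin duality (which is exact and sends injections to surjections) yields the continuous surjection $\pi^{\adiv}_1(X,D) \surj \pi^{\ab}_1(X,D)$. Its compatibility with the maps $\deg'$ to $G_k$ is automatic, since both degree maps factor through the common quotient $\pi^{\ab}_1(X)$; hence we get $\pi^{\adiv}_1(X,D)_0 \surj \pi^{\ab}_1(X,D)_0$. Since the source is finite, so is the target, completing the finiteness claim. The final sentence then follows from the formal remark above.

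\textbf{The main obstacle.} The delicate point is establishing the inclusion $\Fil^c_D H^1(K) \subseteq \Fil_D H^1(K)$ — or, equivalently, the existence of the comparison surjection in the correct direction. This is exactly the kind of comparison between the curve-based and divisorial ramification conditions that \thmref{thm:Main-5} addresses in the smooth case; here, however, we only need the \emph{one} inclusion, which should be the easier direction (the curve-based condition tests more and so is a priori stronger, giving a smaller filtration and a larger fundamental group quotient). I would be careful to confirm the direction of the inclusion and that the relevant functoriality of the Matsuda filtration holds at the generic points of $D$ on a possibly singular normal $X$; if a direct argument proves awkward, an alternative is to invoke \propref{prop:Fil-c-prop} and \corref{cor:Fil-exaust-C-0} to pass to the limit over $\Div_C(X)$ and compare with the already-established finiteness of $\pi^{\adiv}_1(X,D')_0$, though for a single fixed $D$ the direct inclusion is cleaner.
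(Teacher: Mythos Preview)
Your approach has a genuine gap at the crucial step: the claimed inclusion $\Fil^c_D H^1(K) \subseteq \Fil_D H^1(K)$ (equivalently, the surjection $\pi^{\adiv}_1(X,D) \surj \pi^{\ab}_1(X,D)$) is not available a priori. The paper says so explicitly just after \defref{defn:Curve-fund-grp}: ``it is not even clear a priori that there is any map in either direction between $\pi^{\adiv}_1(X,D)$ and $\pi^{\ab}_1(X,D)$ when $d \ge 2$.'' Your sketch for this inclusion --- choosing a curve through a specialization of a generic point $\lambda$ of $C$ --- does not work: the ramification datum at $\lambda$ lives in $H^1(K_\lambda)$, where $K_\lambda$ is a Henselian discrete valuation field whose residue field has transcendence degree $d-1$, and this is not detected by restricting to a single curve. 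Establishing this inclusion is precisely (one direction of) \thmref{thm:Main-5}, proven only for \emph{smooth} $X$ and only much later in the paper, using the moving lemma \thmref{thm:MLC} together with results of Kato, Matsuda and Kerz--Saito. For the merely normal $X$ that \lemref{lem:KS-C-fin} must handle, only the pro-version \thmref{thm:C-Rec-iso-3} is ever obtained, and that theorem itself depends on \lemref{lem:KS-C-fin} via \lemref{lem:C-Rec-0}. So your route is either unavailable (normal case) or logically downstream of what you are trying to prove (pro-comparison).

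The paper's proof takes an entirely different and much shorter route: it invokes the Lefschetz-type finiteness theorem of Kerz--Saito \cite[Corollary~1.2]{Kerz-Saito-1}, which gives finiteness of $\pi^{\ab}_1(X,D)_0$ directly when $D$ is an effective Cartier divisor. The general Weil-divisor case is then reduced to this by choosing a smooth alteration $f\colon X' \to X$, on which $f^*(D)$ becomes Cartier; the push-forward $f_*\colon \pi^{\ab}_1(X',f^*(D)) \to \pi^{\ab}_1(X,D)$ of \lemref{lem:Fun-funct} has finite cokernel because $f_*\colon \pi^{\ab}_1(f^{-1}(U)) \to \pi^{\ab}_1(U)$ does. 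No comparison with $\pi^{\adiv}_1(X,D)$ is needed.
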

\begin{proof}
This is shown in \cite[Corollary~1.2]{Kerz-Saito-1} when $D \subset X$
is an effective Cartier divisor. But one does not need the latter assumption.
The reason is that since $D$ is anyway a closed subscheme 
which defines a Weil divisor,
its inverse image $f^*(D)$ becomes an effective Cartier divisor on
$X'$ where $f \colon X' \to X$ is any smooth alteration.
We can replace $X$ by any such alteration using \lemref{lem:Fun-funct}
because the map
$f_* \colon \pi^{\ab}(f^{-1}(U)) \to \pi^{\ab}_1(U)$ has finite cokernel. 
\end{proof}

The following is the key step for proving the main result
of this section.

\begin{lem}\label{lem:C-Rec-0}
For every $D \in \Div_C(X)$, there exists $D' \in \Div_C(X)$
such that the composite map
$\wt{C}_{U/X} \xrightarrow{\wt{\rho}_{U/X}} \pi^{\ab}_1(U)
\xrightarrow{q'_{X|D}} \pi^{\ab}_1(X,D)$ factors through 
\[
\rho^c_{X|D} \colon C(X,D') \to \pi^{\ab}_1(X,D).
\]
\end{lem}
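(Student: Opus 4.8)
The statement asks, for each modulus $D$ supported on $C$, to produce a larger modulus $D'$ through which the curve-based reciprocity map factors. The natural strategy is to exploit the two descriptions of characters: $\pi^{\ab}_1(X,D)$ is dual to $\Fil^c_D H^1(K)$ (the curve-based filtration of Definition~\ref{defn:Curve-fund-grp}), while $\pi^{\adiv}_1(X,D')$ is dual to $\Fil_{D'} H^1(K)$ (the divisorial filtration, by \propref{prop:Fil-D-prop}). The factorization of maps is equivalent, after taking Pontryagin duals, to an \emph{inclusion of filtrations}: I would show that for a suitable $D'$ one has
\[
\Fil^c_D H^1(K) \subseteq \Fil_{D'} H^1(K).
\]
Granting such an inclusion, dualizing gives a surjection $\pi^{\adiv}_1(X,D') \surj \pi^{\ab}_1(X,D)$, and precomposing with the already-constructed reciprocity map $\rho_{X|D'} \colon C(X,D') \to \pi^{\adiv}_1(X,D')$ from \thmref{thm:Rec-D-map} produces the desired $\rho^c_{X|D}$. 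The compatibility with $\wt{\rho}_{U/X}$ then follows because all these maps are quotients of the single map $\rho_{U/X}$ on $\pi^{\ab}_1(U)$, as recorded in \corref{cor:Rec-final}.

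\textbf{Locating the right $D'$.}
The content is entirely local at the generic points of $C$. By Definition~\ref{defn:Fid_D}, membership in $\Fil_{D'} H^1(K)$ is tested divisor-point by divisor-point: $\chi \in \Fil_{D'}H^1(K)$ iff $\chi_x \in \Fil^{\ms}_{n'_x} H^1(K_x)$ for each $x \in X^{(1)}$, where $n'_x$ is the multiplicity of $D'$. A character in $\Fil^c_D H^1(K)$ has, by definition, bounded ramification along every curve $Y \not\subset D$ after pulling back to the normalization; the question is whether this curve-wise bound forces a \emph{uniform} bound on the local Matsuda conductor at each generic point $\lambda$ of $C$. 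So the plan is: fix $\chi \in \Fil^c_D H^1(K)$, and for each generic point $\lambda$ of $C$ show that the conductor of $\chi_\lambda$ in $\Fil^{\ms}_\bullet H^1(K_\lambda)$ is bounded by some integer $n'_\lambda$ depending only on $D$ (and on $X$), not on $\chi$. Setting $D' = \sum_\lambda n'_\lambda \ov{\{\lambda\}}$ then does the job, provided the bound is uniform over all $\chi$.

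\textbf{The main obstacle.}
The crux is precisely this uniform-boundedness: passing from a ramification bound \emph{along curves through} $\lambda$ to a ramification bound \emph{at} $\lambda$ itself. This is the arithmetic heart of the paper — the comparison of the curve-based and divisorial notions of ramification — and I expect to lean on the ramification-theoretic results of \secref{sec:Ramification}, in particular \corref{cor:Fil-SH-4}, which relates $\Fil^{\ms}_m H^1(K_\lambda)$ to $\Fil^{\ms}_m H^1(Q(V^h))$ via a Cartesian square, together with the moving-lemma input (the existence of enough curves $Y$ through $\lambda$ realizing the $d$-DV's $V$ dominating a maximal Parshin chain at $\lambda$). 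The finiteness of $\pi^{\ab}_1(X,D)_0$ from \lemref{lem:KS-C-fin} should guarantee that only finitely many characters (up to the relevant equivalence) need to be controlled, which is what converts a pointwise bound into a uniform one and thus yields a single $D' \in \Div_C(X)$ working for all of $\Fil^c_D H^1(K)$ at once. The remaining verification — that the resulting $\rho^c_{X|D}$ is indeed compatible with $\wt{\rho}_{U/X}$ and $q'_{X|D}$ — is then a formal diagram chase using \corref{cor:Rec-final}.
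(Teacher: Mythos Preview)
Your high-level strategy --- reduce to an inclusion $\Fil^c_D H^1(K) \subseteq \Fil_{D'} H^1(K)$ of filtration subgroups, then dualize to get a surjection $\pi^{\adiv}_1(X,D') \surj \pi^{\ab}_1(X,D)$ through which the reciprocity factors via $\rho_{X|D'}$ --- is correct and is a genuinely different route from the paper's. The paper argues instead on the idele side: it shows directly that the kernel $F_{D'}$ of $\wt{C}_{U/X} \surj C(X,D')$ has trivial image in $\pi^{\ab}_1(X,D)_0$ for suitable $D'$. The mechanism there is that $(\wt{C}_{U/X})_0$ is \emph{profinite} (a consequence of the hard finiteness \thmref{thm:KS-fin}), so every continuous character of it factors through some finite quotient $C(X,D_\chi)_0$; combined with the finiteness of $(\pi^{\ab}_1(X,D)_0)^\vee$ from \lemref{lem:KS-C-fin}, one then chooses a common $D'$ dominating the finitely many $D_\chi$ and concludes by Pontryagin duality.

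Your route is in fact simpler than you describe, and arguably simpler than the paper's: you do \emph{not} need \corref{cor:Fil-SH-4} or the moving lemma here. The pointwise bound --- that each individual $\chi \in \Fil^c_D H^1(K) \subset H^1(U)$ lies in some $\Fil_{D'_\chi} H^1(K)$ --- is immediate from the exhaustion $H^1(U) = \varinjlim_{D'} \Fil_{D'} H^1(K)$ of \propref{prop:Fil-D-prop}, with no ramification-theoretic analysis required. Uniformity then follows exactly as you say from \lemref{lem:KS-C-fin}: the characters of $\pi^{\ab}_1(X,D)$ pulled back along the degree map from $G_k$ already lie in $\Fil_0 H^1(K) \subseteq \Fil_{D'}$ for every $D'$, and modulo these the group $\Fil^c_D H^1(K)$ is the finite group $(\pi^{\ab}_1(X,D)_0)^\vee$, so a single $D'$ dominates the finitely many $D'_\chi$. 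Notably, this bypasses \thmref{thm:KS-fin} altogether --- the only finiteness input needed is on the Galois side.
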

\begin{proof}
We fix a closed subscheme $D \in \Div_C(X)$.
For any $D' \in \Div_C(X)$, let $F_{D'} = \Ker(\wt{C}_{U/X} \surj C(X,D'))$.
It follows from \cite[Proposition~4.8]{Gupta-Krishna-REC} that
$F_{D'} = \Ker((\wt{C}_{U/X})_0 \surj C(X,D')_0)$.
Hence, it suffices to show that the composite map
$(\wt{C}_{U/X})_0 \xrightarrow{\wt{\rho}_{U/X}} \pi^{\ab}_1(U) \surj
\pi^{\ab}_1(X,D)$ annihilates $F_{D'}$ for some $D'$.
Using \cite[Lemma~8.4]{Gupta-Krishna-REC}, we see that $\wt{\rho}_{U/X}$ 
induces the maps 
$(\wt{C}_{U/X})_0 \xrightarrow{\wt{\rho}_{U/X}} \pi^{\ab}_1(U)_0 \surj
\pi^{\ab}_1(X,D)_0$. Hence, we need to show that this composite map
annihilates $F_{D'}$ for some $D'$.

Now, we know from \thmref{thm:KS-fin} that each $C(X,D')_0$ is finite.
It follows from \cite[Corollary~4.9]{Gupta-Krishna-REC} that
$(\wt{C}_{U/X})_0 \xrightarrow{\cong} 
{\underset{D' \in \Div_C(X)}\varprojlim} C(X,D')_0$.
In particular, $(\wt{C}_{U/X})_0$ is profinite.
Since the maps 
$(\wt{C}_{U/X})_0 \xrightarrow{\wt{\rho}_{U/X}} \pi^{\ab}_1(U)_0 \surj
\pi^{\ab}_1(X,D)_0$ are continuous, it follows that the images of
$F_{D'}$ under these maps are closed subgroups.
We let $E_{D'}$ be the image of $F_{D'}$ in $\pi^{\ab}_1(X,D)_0$ under
the composite map. 

We now let $\chi \in (\pi^{\ab}_1(X,D)_0)^{\vee}$ be a continuous character.
Then the composite $\chi' := \chi \circ q'_{X|D} \circ \wt{\rho}_{U/X}$
is a continuous character of $(\wt{C}_{U/X})_0$.
Since $(\wt{C}_{U/X})_0$ is profinite as we just saw, $\chi'$ factors
through some $C(X, D_{\chi})_0$. In other words,
$\chi(E_{D_{\chi}}) = 0$. Since $\pi^{\ab}_1(X,D)_0$ is finite by
\lemref{lem:KS-C-fin}, $(\pi^{\ab}_1(X,D)_0)^\vee$ is also finite
(see \cite[Example~2.9.5]{Pro-fin}). 
We can therefore choose $D' \in \Div_C(X)$ which
dominates $D_{\chi}$ for all $\chi \in (\pi^{\ab}_1(X,D)_0)^\vee$.
It is then clear that $\chi$ annihilates $E_{D'}$ for all $\chi \in
(\pi^{\ab}_1(X,D)_0)^\vee$.
We have thus shown that $E_{D'}$ is a closed subgroup of
$\pi^{\ab}_1(X,D)_0$ which is annihilated by all characters of the
latter group. But then the Pontryagin duality theorem says that $E_{D'}$ must
be zero. Equivalently, $F_{D'}$ lies in the kernel of
the composite map 
$(\wt{C}_{U/X})_0 \xrightarrow{\wt{\rho}_{U/X}} \pi^{\ab}_1(U)_0 \surj
\pi^{\ab}_1(X,D)_0$. This proves the lemma.
\end{proof}

Let $D \in \Div_C(X)$ and $nD \subset X$ the closed subscheme
defined by $\sI^n_D$, where $\sI_D$ is the sheaf of ideals defining $D$.
For every $n \in \N$, the set of closed subschemes $n'D \in \Div_C(X)$ 
which have the property described in \lemref{lem:C-Rec-0} is linearly 
ordered by inclusion. Since $X$ is Noetherian, there exists smallest 
integer $\lambda(n) \in \N$ such that $\lambda(n) \ge \lambda(n-1)$ and
$\lambda(n)D$ satisfies the property asserted in \lemref{lem:C-Rec-0}.
That is, the composite map
$\wt{C}_{U/X} \xrightarrow{\wt{\rho}_{U/X}} \pi^{\ab}_1(U)
\xrightarrow{q'_{X|D}} \pi^{\ab}_1(X,nD)$ factors through 
\[
\rho^c_{X|nD} \colon C(X,\lambda(n)D) \to \pi^{\ab}_1(X,nD).
\]

We let $\lambda \colon \N \to \N$ be the above function.
We write $\lambda(1)D = D_\rho$.
As a consequence of \lemref{lem:C-Rec-0}, we
conclude the following.

\begin{thm}\label{thm:C-Rec}
The reciprocity map ${\rho}_{U/X} \colon {C}_{U/X} \to \pi^{\ab}_1(U)$
of \thmref{thm:Rec-D-map} induces a continuous homomorphism between the
topological pro-abelian groups
\[
\rho^{\bullet}_{X|D} \colon \{C(X,nD)\}_{n \in \N} \to
\{\pi^{\ab}_1(X,nD)\}_{n \in \N}
\]
such that ${\underset{n \in \N}\varprojlim} \ \rho^\bullet_{X|D} 
= \wt{\rho}_{U/X}$.
\end{thm}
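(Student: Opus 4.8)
Theorem~\ref{thm:C-Rec} asserts that the reciprocity map $\rho_{U/X}$ descends to a morphism of pro-abelian systems $\{C(X,nD)\} \to \{\pi^{\ab}_1(X,nD)\}$ whose limit recovers $\wt{\rho}_{U/X}$. The plan is to assemble this morphism out of the level-by-level factorizations produced by \lemref{lem:C-Rec-0}, organized through the reindexing function $\lambda \colon \N \to \N$ introduced immediately before the statement.

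First I would record precisely what the function $\lambda$ buys us. For each $n$, the integer $\lambda(n)$ was chosen so that the composite $\wt{C}_{U/X} \xrightarrow{\wt{\rho}_{U/X}} \pi^{\ab}_1(U) \xrightarrow{q'_{X|nD}} \pi^{\ab}_1(X,nD)$ factors as $C(X,\lambda(n)D) \xrightarrow{\rho^c_{X|nD}} \pi^{\ab}_1(X,nD)$ through the canonical surjection $p_{X|\lambda(n)D} \colon \wt{C}_{U/X} \surj C(X,\lambda(n)D)$. Since $\lambda$ is monotone nondecreasing by construction, the subsystem $\{C(X,\lambda(n)D)\}_{n \in \N}$ is a cofinal subsystem of $\{C(X, mD)\}_{m \in \N}$, hence of $\{C(X,D')\}_{D' \in \Div_C(X)}$; this cofinality is the key point that lets us realize $\rho^\bullet_{X|D}$ as a genuine morphism of the pro-systems indexed by $n$ rather than by the shifted indices $\lambda(n)$. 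I would first verify that relabeling along $\lambda$ does not change the pro-object (a standard cofinality argument using that $\Div_C(X)$ is directed and $\lambda(n) \to \infty$).

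Next I would check that the maps $\rho^c_{X|nD}$ are compatible with the transition maps on both sides, so that they form a morphism of inverse systems and not merely a family of unrelated maps. For $n \le n'$ one has the canonical surjections $C(X,\lambda(n')D) \surj C(X,\lambda(n)D)$ and $\pi^{\ab}_1(X,n'D) \surj \pi^{\ab}_1(X,nD)$; the requisite square commutes because both $\rho^c_{X|nD}$ and $\rho^c_{X|n'D}$ are defined as the unique factorizations of one and the same map $q'_{X|\ast} \circ \wt{\rho}_{U/X}$ through the respective quotients of $\wt{C}_{U/X}$, and the uniqueness of such factorizations (these quotient maps are surjective, so a factorization through the target is determined by the source) forces compatibility. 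Continuity at each level is inherited from continuity of $\wt{\rho}_{U/X}$ together with the fact that $C(X,nD)$ carries the discrete (or profinite, on the degree-zero part) topology, so each $\rho^c_{X|nD}$ is automatically continuous; I would phrase this using \corref{cor:Completions} and \lemref{lem:KS-C-fin}, which guarantee the relevant groups are profinite. Finally, taking $\varprojlim_n$ and again invoking the cofinality of $\{\lambda(n)D\}$ together with $\wt{C}_{U/X} \cong \varprojlim_{D'} C(X,D')$ recovers $\wt{\rho}_{U/X}$ on the nose.

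The main obstacle I anticipate is \emph{not} the construction of the individual maps---that is handed to us by \lemref{lem:C-Rec-0}---but rather the bookkeeping needed to turn the shifted, $\lambda$-reindexed family into an honest morphism of pro-abelian groups indexed by $n \in \N$, and to confirm that its limit is exactly $\wt{\rho}_{U/X}$ and not merely a map agreeing with it up to the reindexing. The delicate point is that $\lambda$ is defined by a minimality condition that is only guaranteed monotone, so I must argue carefully that the pro-objects $\{C(X,nD)\}$ and $\{C(X,\lambda(n)D)\}$ are isomorphic in the category $\Pro(\Ab)$; this rests on cofinality in the directed set $\Div_C(X)$ and on the finiteness of $C(X,nD)_0$ from \thmref{thm:KS-fin}, which ensures the inverse system is Mittag-Leffler and the limit behaves well. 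Once that identification is in hand, the compatibility and limit assertions follow formally from the uniqueness of the factorizations supplied by \lemref{lem:C-Rec-0}.
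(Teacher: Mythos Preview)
Your proposal is correct and follows exactly the paper's approach: the paper simply states \thmref{thm:C-Rec} ``as a consequence of \lemref{lem:C-Rec-0}'' together with the function $\lambda$ introduced just before the statement, and your argument is precisely the unpacking of that sentence. The cofinality and compatibility points you raise are the right ones; note, however, that the appeals to \thmref{thm:KS-fin}, \corref{cor:Completions}, and \lemref{lem:KS-C-fin} are unnecessary here, since $C(X,nD)$ carries the discrete topology so continuity is automatic, and the limit identification $\varprojlim_n C(X,\lambda(n)D) \cong \wt{C}_{U/X}$ needs only cofinality, not Mittag-Leffler.
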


From the construction of $\wt{\rho}_{U/X}$, we actually get the
following commutative diagram of short exact sequences of topological 
pro-abelian groups. We ignore to write
the indexing set $\N$.

\begin{equation}\label{eqn:C-Rec-2}
\xymatrix@C2.5pc{
0 \ar[r] & \{C(X,nD)_0\} \ar[r] \ar[d]_-{\rho^{\bullet}_{X|D}} & 
\{C(X,nD)\} \ar[r]^-{{\deg}/{n_0}} \ar[d]_-{\rho^{\bullet}_{X|D}} & \Z 
\ar@{^{(}->}[d] 
\ar[r] & 0 \\
0 \ar[r] & \{\pi^{\ab}_1(X,nD)_0\} \ar[r] & \{\pi^{\ab}_1(X,nD)\} 
\ar[r]^-{{\deg'}/{n_0}} &
\wh{\Z} \ar[r] & 0,}
\end{equation}
where the right vertical arrow is the profinite completion map and
$n_0 \in \N$ depends only on $U$ and not on $\Div_C(X)$. 
Moreover, taking the limits of the vertical arrows, we get the
commutative diagram of short exact sequences of topological abelian groups

\begin{equation}\label{eqn:C-Rec-3}
\xymatrix@C2.5pc{
0 \ar[r] & (\wt{C}_{U/X})_0 \ar[r] \ar[d]_-{\wt{\rho}_{U/X}} & 
\wt{C}_{U/X} \ar[r]^-{{\deg}/{n_0}} \ar[d]_-{\wt{\rho}_{U/X}} & \Z 
\ar@{^{(}->}[d] 
\ar[r] & 0 \\
0 \ar[r] & \pi^{\ab}_1(U)_0 \ar[r] & \pi^{\ab}_1(U) \ar[r]^-{{\deg'}/{n_0}} &
\wh{\Z} \ar[r] & 0}
\end{equation}
by \propref{prop:Fil-c-prop} and Corollary~\ref{cor:Fil-exaust-C-0}.
Note that the projective limits over $\Div_C(X)$ and $\{nD\}_{n \in \N}$
coincide. 

Our reciprocity theorem for the 1-skeleton {\'e}tale fundamental group
with modulus is the following.

\begin{thm}\label{thm:C-Rec-iso}
For each $D \in \Div_C(X)$, the reciprocity map of pro-abelian groups 
\[
\rho^{\bullet}_{X|D} \colon \{C(X,nD)\}_{n \in \N} \to
\{\pi^{\ab}_1(X,nD)\}_{n \in \N}
\]
is injective and has dense image.
\end{thm}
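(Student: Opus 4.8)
The plan is to reduce the statement, by means of the degree filtration, to an isomorphism that has in effect already been established at the level of limits. First I would invoke the commutative diagram of short exact sequences of topological pro-abelian groups \eqref{eqn:C-Rec-2}, which presents $\{C(X,nD)\}$ and $\{\pi^{\ab}_1(X,nD)\}$ as extensions of the constant systems $\Z$ and $\wh{\Z}$ by their degree-zero subsystems, with the induced map on the quotients being the profinite-completion inclusion $\Z \hookrightarrow \wh{\Z}$. Since the category of pro-abelian groups is abelian, the snake lemma applied to \eqref{eqn:C-Rec-2} shows that the kernel of $\rho^{\bullet}_{X|D}$ agrees with the kernel of its degree-zero part (as $\Z \hookrightarrow \wh{\Z}$ is injective) and that its cokernel is an extension of $\wh{\Z}/\Z$ by the cokernel of the degree-zero part. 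Thus injectivity and density of the image of $\rho^{\bullet}_{X|D}$ will both follow once I show that the degree-zero morphism
\[
\rho^{\bullet}_{X|D}\colon \{C(X,nD)_0\}_{n \in \N} \to \{\pi^{\ab}_1(X,nD)_0\}_{n \in \N}
\]
is an isomorphism of pro-abelian groups, the density coming precisely from the density of $\Z$ in $\wh{\Z}$.

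To treat the degree-zero morphism, I would first note that both systems consist of finite abelian groups with surjective transition maps: $C(X,nD)_0$ is finite by \thmref{thm:KS-fin} (the transitions being surjective by \corref{cor:Deg-PF-*} transported through $\psi_{X|D}$), and $\pi^{\ab}_1(X,nD)_0$ is finite by \lemref{lem:KS-C-fin} with visibly surjective transitions. Hence both are Mittag-Leffler systems of finite groups, and by the standard equivalence between such systems and profinite abelian groups (see \cite{Pro-fin}) a morphism between them is a pro-isomorphism as soon as it induces an isomorphism on limits. It therefore suffices to identify the limit of the displayed morphism and check that it is an isomorphism. By \corref{cor:Fil-exaust-C-0} the limit of the target is $\pi^{\ab}_1(U)_0$, while the limit of the source is $(\wt{C}_{U/X})_0$ by \cite[Corollary~4.9]{Gupta-Krishna-REC}; moreover \thmref{thm:C-Rec} identifies the limit morphism with the degree-zero restriction of $\wt{\rho}_{U/X}$ (the reindexing by $\lambda$ of \lemref{lem:C-Rec-0} being cofinal, hence invisible under $\varprojlim$). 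Finally, $\wt{\rho}_{U/X}\colon (\wt{C}_{U/X})_0 \to \pi^{\ab}_1(U)_0$ is an isomorphism: by \corref{cor:Rec-final} the map $\rho^{\infty}_{U/X}$ is an isomorphism, and \corref{cor:Completions} identifies $(\wt{C}_{U/X})_0$ with $(\wt{C}_{U/X}/\infty)_0$, so the degree-zero restriction of $\rho^{\infty}_{U/X}$ supplies the required isomorphism.

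Combining these steps, the degree-zero morphism is a pro-isomorphism, and feeding this back into \eqref{eqn:C-Rec-2} yields that $\rho^{\bullet}_{X|D}$ is injective and that its limit $\wt{\rho}_{U/X}$ has image containing $\pi^{\ab}_1(U)_0$ and surjecting densely onto $\wh{\Z}$, hence dense. I expect the main obstacle to be the second step, namely the careful passage from a limit isomorphism to a pro-isomorphism: one must confirm that both degree-zero systems are genuinely Mittag-Leffler with the limits claimed, and that the limit of the degree-zero morphism really is $\wt{\rho}_{U/X}|_0$ despite the reindexing built into the definition of $\rho^{\bullet}_{X|D}$. Once this is in place, everything else is a formal consequence of the exactness of \eqref{eqn:C-Rec-2} together with the elementary fact that $\Z$ is dense with trivial kernel in $\wh{\Z}$.
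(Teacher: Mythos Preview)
Your proof is correct and takes a genuinely different, more categorical route than the paper. The paper argues density directly via Chebotarev--Lang at each level (as in \thmref{thm:Rec-mod}), and for pro-injectivity it reduces to degree zero, proves levelwise surjectivity there, then Pontryagin-dualizes the kernel: for each character $\chi$ of $F_n = \Ker(\rho^c_{X|nD}|_0)$ it lifts $\chi$ to $H^1(U)$ via $(\rho^\infty_{U/X})^\vee$, uses \propref{prop:Fil-c-prop} to place the lift in some $\Fil^c_{n_1 D}$, and concludes that $\chi$ dies in $F^\vee_{n_1}$. You instead package everything into the single claim that the degree-zero pro-morphism is a pro-isomorphism, and deduce this from the equivalence between pro-(finite abelian groups) and profinite abelian groups, which reduces the question to the limit isomorphism $(\wt{C}_{U/X})_0 \cong \pi^{\ab}_1(U)_0$ already supplied by \corref{cor:Completions} and \corref{cor:Rec-final}. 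Your route is shorter and yields \corref{cor:C-Rec-iso-1} immediately, while the paper's is more explicit and avoids invoking the categorical equivalence. One small imprecision: your final paragraph argues density for the \emph{limit} $\wt{\rho}_{U/X}$ rather than for the level maps $\rho^c_{X|nD}$; the passage from one to the other is easy (push density down through the surjections $\wt{C}_{U/X} \surj C(X,\lambda(n)D)$ and $\pi^{\ab}_1(U) \surj \pi^{\ab}_1(X,nD)$, or note that a pro-isomorphism between towers of finite groups with surjective transitions forces levelwise surjectivity), but you should say so.
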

\begin{proof}
The proof of the density assertion is same as in \thmref{thm:Rec-mod}.
We shall prove pro-injectivity.

Using the commutative diagram~\eqref{eqn:C-Rec-2}, it suffices to show
the injectivity at the level of the degree zero subgroups.
Before we do this, we claim that $\rho^c_{X|nD} \colon C(X, \lambda(n)D)_0 \to 
\pi^{\ab}_1(X, nD)_0$ is surjective. 
Indeed, as the latter group is finite
by \lemref{lem:KS-C-fin}, it suffices to show that
this map has dense image. But the proof of this is identical to
that of the surjectivity of the map ~\eqref{eqn:Rec-mod-00}
in \thmref{thm:Rec-mod}, which only used 
that $\rho_{X|D}$ has dense image and not its
injectivity.

Let $n \in \N$ and write $n' = \lambda(n)$. We let $F_n$ denote the kernel of
$C(X,n'D)_0 \surj \pi^{\ab}_1(X, nD)_0$.
Using the Pontryagin duality for finite groups, it suffices to show
that the ind-abelian group $\{F^\vee_n\}$ is zero.

Choose a character $\chi \in F^\vee_n$ and lift it to a character of
$C(X, n'D)_0$. We denote this lift also by $\chi$.
We choose $m \in \N$ large enough such that $C(X, n'D)_0 \cong
{C(X, n'D)_0}/m$ using \thmref{thm:KS-fin}.
Using \cite[Proposition~4.8, Lemma~7.10]{Gupta-Krishna-REC}, $\chi$ lifts to
a character of ${C(X, n'D)}/m$. 
Choose one such lift and denote its image in $C(X, n'D)^{\vee}$ also by 
$\chi$.
We let $\wt{\chi} = p^{\vee}_{X|n'D}(\chi)$ and
consider the commutative diagram (see ~\eqref{eqn:Rec-mod-0})

\begin{equation}\label{eqn:C-Rec-iso-0}
\xymatrix@C.8pc{  
\Fil^c_{nD} H^1(K) \ar[r]^-{(\rho^c_{X|nD})^\vee} 
\ar@{^{(}->}[d]_{{q'}^{\vee}_{X|nD}} &  
C(X, n'D)^{\vee} 
\ar@{^{(}->}[d]^-{p^{\vee}_{X|{n'D}}}  & ({C(X, n'D)}/m)^{\vee}
\ar@{_{(}->}[l] \ar[dd] \\
H^1(U) \ar[r]^-{\wt{\rho}^{\vee}_{U/X}} 
\ar@/_2pc/[drr]_-{(\rho^{\infty}_{U/X})^{\vee}}^-{\cong} & 
(\wt{C}_{U/X})^{\vee} 
&  \\
& &  ({\wt{C}_{U/X}}/{\infty})^{\vee}. \ar[ul]}
\end{equation}

Since $\chi \in ({C(X, n'D)}/m)^{\vee}$, 
it follows from \corref{cor:Completions}
that $\wt{\chi} \in ({\wt{C}_{U/X}}/{\infty})^\vee$. We conclude from 
\corref{cor:Rec-final} that there exists $\chi' \in H^1(U)$
such that $\wt{\rho}^{\vee}_{U/X}(\chi') = \wt{\chi}$.
It follows from \propref{prop:Fil-c-prop} that ${\chi}' \in 
\Fil^c_{n_1 D} H^1(K)$ for some $n_1 \gg n$.

We let $\chi_\rho$ be the image of ${\chi}'$ under the 
map $(\pi^{\ab}_1(X, n_1 D))^\vee \to 
(\pi^{\ab}_1(X, n_1 D)_0)^\vee$.
Then it follows that the image of $\chi_\rho$ in $(C(X, \lambda(n_1)D)_0)^\vee$
lies in the image of $(\rho^c_{X|n_1 D})^\vee$.
Since $F^\vee_n = \coker((\rho^c_{X|nD})^\vee)$ by 
\cite[Lemma~7.10]{Gupta-Krishna-REC},
it follows that $\chi$ dies under the canonical map $F^\vee_n \to F^\vee_{n_1}$.
Since $F^\vee_n$ is finite, we can find
$n_1 \gg n$ such that the map
$F^\vee_n \to F^\vee_{n_1}$ is zero. We have thus shown that
the ind-abelian group $\{F^\vee_n\}$ is zero.
This finishes the proof.
\end{proof}

\begin{cor}\label{cor:C-Rec-iso-1}
The morphism of pro-abelian groups 
\[
\rho^{\bullet}_{X|D} \colon \{C(X,nD)_0\}_{n \in \N} \to
\{\pi^{\ab}_1(X,nD)_0\}_{n \in \N}
\]
is an isomorphism.
\end{cor}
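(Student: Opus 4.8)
The plan is to deduce this purely formally from \thmref{thm:C-Rec-iso} together with the two facts established inside its proof, working throughout in the abelian category of pro-abelian groups. Write $f$ for the morphism $\rho^{\bullet}_{X|D} \colon \{C(X,nD)_0\}_{n \in \N} \to \{\pi^{\ab}_1(X,nD)_0\}_{n \in \N}$. Since a morphism in an abelian category that is simultaneously a monomorphism and an epimorphism is an isomorphism, and pro-abelian groups form an abelian category, it suffices to check that $f$ is both a pro-monomorphism and a pro-epimorphism.

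For the monomorphism property, I would observe that the diagram~\eqref{eqn:C-Rec-2} exhibits $f$ as the restriction of the full reciprocity morphism $\rho^{\bullet}_{X|D} \colon \{C(X,nD)\} \to \{\pi^{\ab}_1(X,nD)\}$ to the degree-zero sub-pro-objects. By \thmref{thm:C-Rec-iso} the latter is injective, hence a monomorphism of pro-abelian groups; the composite $\{C(X,nD)_0\} \hookrightarrow \{C(X,nD)\} \to \{\pi^{\ab}_1(X,nD)\}$ is therefore a monomorphism, and since it factors through the monomorphism $\{\pi^{\ab}_1(X,nD)_0\} \hookrightarrow \{\pi^{\ab}_1(X,nD)\}$, the map $f$ is itself a monomorphism. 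Equivalently, this is the vanishing of the ind-group $\{F^\vee_n\}$ proved at the end of \thmref{thm:C-Rec-iso}, which by Pontryagin duality for finite groups says that the kernel system $\{F_n = \Ker(\rho^c_{X|nD})\}$ is pro-zero; here the finiteness of each $C(X,nD)_0$ (\thmref{thm:KS-fin}) and each $\pi^{\ab}_1(X,nD)_0$ (\lemref{lem:KS-C-fin}) is what makes the duality available.

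For the epimorphism property, I would use that $f$ is represented, level by level, by the reciprocity maps $\rho^c_{X|nD} \colon C(X,\lambda(n)D)_0 \to \pi^{\ab}_1(X,nD)_0$ from the construction preceding \thmref{thm:C-Rec}. Inside the proof of \thmref{thm:C-Rec-iso} each such $\rho^c_{X|nD}$ was shown to be surjective, its image being dense and the target finite by \lemref{lem:KS-C-fin}. Consequently the cokernel of $\rho^c_{X|nD}$ vanishes at every level $n$, so the cokernel pro-object of $f$ is pro-zero, i.e. $f$ is an epimorphism of pro-abelian groups. Combining the two halves gives the asserted isomorphism.

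I expect the only delicate point to be the pro-categorical bookkeeping rather than any new geometric input: one must verify that restricting to the degree-zero subgroups and reindexing the source along the monotone function $\lambda$ genuinely produce a well-defined morphism of pro-abelian groups, and that its kernel and cokernel are computed by the kernel system $\{F_n\}$ and by the levelwise (vanishing) cokernels, so that the monomorphism-plus-epimorphism criterion for being an isomorphism applies. All the substantive work — pro-injectivity via $\{F^\vee_n\}=0$ and levelwise surjectivity of $\rho^c_{X|nD}$ — is already carried out in \thmref{thm:C-Rec-iso}; the corollary is essentially the observation that these two statements are exactly the mono and epi conditions in the category of pro-abelian groups.
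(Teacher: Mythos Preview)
Your proposal is correct and follows essentially the same approach as the paper: injectivity of the degree-zero map is exactly what is established (via the vanishing of $\{F^\vee_n\}$) in the proof of \thmref{thm:C-Rec-iso}, and surjectivity is the levelwise surjectivity of $\rho^c_{X|nD}$ proved there as the preliminary ``claim''. The paper's own proof of the corollary simply cites these two facts in one line, whereas you have spelled out the pro-categorical formalism that packages them into an isomorphism; the added discussion of the reindexing along $\lambda$ and of the kernel/cokernel computation is accurate and harmless, but not needed beyond what the paper already records.
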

\begin{proof}
The map $\rho^\bullet_{X|D}$ is injective by \thmref{thm:C-Rec-iso}.
Moreover, we also showed in the proof of this theorem that
$\rho^\bullet_{X|D}$ is  surjective.
\end{proof}

\begin{cor}\label{cor:C-Rec-iso-2}
The morphism of pro-abelian groups 
\[
\rho^{\bullet}_{X|D} \colon \{{C(X,nD)}/m\}_{n \in \N} \to
\{{\pi^{\ab}_1(X,nD)}/m\}_{n \in \N}
\]
is an isomorphism for every $m \in \N$.
\end{cor}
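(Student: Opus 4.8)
The plan is to deduce this from the isomorphism on degree-zero parts already established in \corref{cor:C-Rec-iso-1}, by reducing the two short exact sequences of pro-abelian groups in \eqref{eqn:C-Rec-2} modulo $m$ and comparing them via the five lemma. The essential observation is that although the right-hand vertical arrow $\Z \inj \wh{\Z}$ in \eqref{eqn:C-Rec-2} is only the profinite completion and not an isomorphism, it becomes an isomorphism after reduction modulo $m$, since the canonical map $\Z/m \to \wh{\Z}/m$ is an isomorphism for every $m \in \N$.

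First I would recall that $\Pro(\Ab)$ is an abelian category, so that for any short exact sequence $0 \to A \to B \to C \to 0$ in it, the snake lemma applied to multiplication by $m$ yields the six-term exact sequence
\[
0 \to {}_m A \to {}_m B \to {}_m C \to A/m \to B/m \to C/m \to 0,
\]
where ${}_m(-)$ and $(-)/m$ denote the kernel and cokernel of multiplication by $m$. Applying this to the top row of \eqref{eqn:C-Rec-2}, whose right term is the constant torsion-free pro-group $\Z$, we have ${}_m\Z = 0$ and hence a short exact sequence
\[
0 \to \{C(X,nD)_0\}/m \to \{C(X,nD)\}/m \to \Z/m \to 0.
\]
Applying it likewise to the bottom row, whose right term $\wh{\Z}$ is also torsion-free, gives a short exact sequence
\[
0 \to \{\pi^{\ab}_1(X,nD)_0\}/m \to \{\pi^{\ab}_1(X,nD)\}/m \to \wh{\Z}/m \to 0.
\]
Here I would note that by \thmref{thm:KS-fin} and \lemref{lem:KS-C-fin} the degree-zero pro-systems consist of finite groups, so that no subtlety about exactness of limits intervenes; the entire argument takes place formally inside $\Pro(\Ab)$.

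The vertical reciprocity maps of \eqref{eqn:C-Rec-2} constitute a morphism of short exact sequences, and by the naturality of the snake-lemma construction they induce, after reduction modulo $m$, a commutative ladder between the two short exact sequences displayed above. The left-hand vertical arrow $\{C(X,nD)_0\}/m \to \{\pi^{\ab}_1(X,nD)_0\}/m$ is an isomorphism of pro-abelian groups, being the image under the functor $(-)/m$ of the isomorphism of \corref{cor:C-Rec-iso-1}. The right-hand vertical arrow is the isomorphism $\Z/m \xrightarrow{\cong} \wh{\Z}/m$ noted above. The five lemma then shows that the middle vertical arrow $\rho^{\bullet}_{X|D} \colon \{C(X,nD)\}/m \to \{\pi^{\ab}_1(X,nD)\}/m$ is an isomorphism, which is the assertion.

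The only content beyond \corref{cor:C-Rec-iso-1} is the formal manipulation in the pro-category, and the main point to be careful about is that the six-term mod-$m$ sequence is genuinely a short exact sequence in $\Pro(\Ab)$: this is precisely where the torsion-freeness of $\Z$ and $\wh{\Z}$ does the essential work, killing the connecting ${}_m(-)$ terms so that mod-$m$ reduction sends each short exact sequence to a short exact sequence rather than merely a right-exact one. Once that is in place, the comparison via the five lemma is immediate.
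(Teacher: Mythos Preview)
Your proof is correct and follows essentially the same route as the paper. The paper's one-line proof defers to the argument for \corref{cor:Rec-mod-2} (itself a reference to \cite[Lemma~8.4]{Gupta-Krishna-REC} together with the degree-zero isomorphism), which amounts to exactly the five-lemma comparison you carry out: reduce the compatible degree exact sequences of \eqref{eqn:C-Rec-2} modulo $m$, use torsion-freeness of $\Z$ and $\wh{\Z}$ to keep them short exact, invoke \corref{cor:C-Rec-iso-1} on the left and $\Z/m \cong \wh{\Z}/m$ on the right, and conclude.
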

\begin{proof}
The proof is identical to that of \corref{cor:Rec-mod-2} using
\corref{cor:C-Rec-iso-1}.
\end{proof}

The following result will be improved in \S~\ref{sec:Reg-BF}
when $X$ is regular.
We nevertheless need this weaker version
to prove Bloch's formula for normal varieties. 

\begin{thm}\label{thm:C-Rec-iso-3}
For every $D \in \Div_C(D)$, 
the identity map of $\pi^{\ab}_1(U)$ induces an isomorphism of
topological pro-abelian groups
\[
\theta^\bullet_{X|D} \colon \{\pi^{\adiv}_1(X,nD)\}_{n \in \N} \xrightarrow{\cong} 
\{\pi^{\ab}_1(X,nD)\}_{n \in \N}
\]
making the diagram
\[
\xymatrix@C.8pc{
\wt{C}_{U/X} \ar[d]_-{\wt{\rho}_{U/X}} \ar[r] & 
\{C(X,nD)\} \ar[d]^-{\{\rho^{\divf}_{X|nD}\}} \ar[dr]^-{\rho^\bullet_{X|D}} & \\
\pi^{\ab}_1(U) \ar[r] & \{\pi^{\adiv}_1(X,nD)\} \ar[r]^-{\theta^\bullet_{X|D}} 
& \{\pi^{\ab}_1(X,nD)\}} 
\]
commute.
\end{thm}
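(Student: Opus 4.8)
The plan is to obtain $\theta^\bullet_{X|D}$ by composing the two reciprocity isomorphisms through their common source $\{C(X,nD)\}_{n}$, carrying out the comparison one residue $m$ at a time, where both maps become pro-isomorphisms. For a fixed $m \in \N$, \corref{cor:Rec-mod-2} applied to each modulus $nD$ shows that the reduction $\rho^{\divf}_{X|nD} \colon C(X,nD)/m \to \pi^{\adiv}_1(X,nD)/m$ is an isomorphism, and by naturality of the reciprocity map in the modulus this is a levelwise isomorphism of inverse systems $\{C(X,nD)/m\}_n \xrightarrow{\cong} \{\pi^{\adiv}_1(X,nD)/m\}_n$. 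On the other side, \corref{cor:C-Rec-iso-2} gives a pro-isomorphism $\rho^\bullet_{X|D} \colon \{C(X,nD)/m\}_n \xrightarrow{\cong} \{\pi^{\ab}_1(X,nD)/m\}_n$. I would therefore define
\[
\theta^\bullet_m := \rho^\bullet_{X|D} \circ \bigl(\{\rho^{\divf}_{X|nD}\}_n\bigr)^{-1}
\colon \{\pi^{\adiv}_1(X,nD)/m\}_n \xrightarrow{\cong} \{\pi^{\ab}_1(X,nD)/m\}_n,
\]
a pro-isomorphism for every $m$.

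These maps are compatible as $m$ varies, again by naturality of the reciprocity maps in $m$. Since $\pi^{\adiv}_1(X,nD)$ and $\pi^{\ab}_1(X,nD)$ are profinite abelian groups, each is canonically the inverse limit over $m \in \N$ of its reductions modulo $m$; passing to this limit turns $\{\theta^\bullet_m\}_m$ into a single pro-isomorphism $\theta^\bullet_{X|D} \colon \{\pi^{\adiv}_1(X,nD)\}_n \xrightarrow{\cong} \{\pi^{\ab}_1(X,nD)\}_n$. By construction it satisfies $\theta^\bullet_{X|D} \circ \{\rho^{\divf}_{X|nD}\} = \rho^\bullet_{X|D}$, which is exactly the commutativity of the right-hand triangle in the displayed diagram; the left square commutes by the compatibility of $\wt{\rho}_{U/X}$ with the two quotient systems recorded in \thmref{thm:C-Rec} and \corref{cor:Rec-final}.

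It remains to check that $\theta^\bullet_{X|D}$ is induced by the identity of $\pi^{\ab}_1(U)$, i.e.\ that $\theta^\bullet_{X|D} \circ q_{X|nD} = q'_{X|nD}$ for the canonical quotient maps $q_{X|nD} \colon \pi^{\ab}_1(U) \to \pi^{\adiv}_1(X,nD)$ and $q'_{X|nD} \colon \pi^{\ab}_1(U) \to \pi^{\ab}_1(X,nD)$. By \thmref{thm:C-Rec} and \corref{cor:Rec-final} the composite $q_{X|nD}\circ\wt{\rho}_{U/X}$ equals $\{\rho^{\divf}_{X|nD}\}$ precomposed with the projection $\wt{C}_{U/X}\to\{C(X,nD)\}$, and likewise $q'_{X|nD}\circ\wt{\rho}_{U/X} = \rho^\bullet_{X|D}\circ(\text{proj})$; combined with $\theta^\bullet_{X|D}\circ\{\rho^{\divf}_{X|nD}\}=\rho^\bullet_{X|D}$ this gives $\theta^\bullet_{X|D}\circ q_{X|nD}\circ\wt{\rho}_{U/X} = q'_{X|nD}\circ\wt{\rho}_{U/X}$. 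Because $\rho^{\infty}_{U/X}$ is an isomorphism and $\wt{C}_{U/X}$ is dense in its completion by \corref{cor:Completions}, the image of $\wt{\rho}_{U/X}$ is dense in $\pi^{\ab}_1(U)$; as the targets $\pi^{\ab}_1(X,nD)/m$ are finite and hence discrete, continuity forces $\theta^\bullet_{X|D}\circ q_{X|nD} = q'_{X|nD}$ on all of $\pi^{\ab}_1(U)$. This is the asserted factorization through the identity, and it completes the commutativity of the whole diagram.

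The genuinely substantive input has already been spent in the two reciprocity theorems, \thmref{thm:Rec-mod} and \corref{cor:C-Rec-iso-1}/\corref{cor:C-Rec-iso-2}: the present argument only glues them, and the density of $\wt{\rho}_{U/X}$ is what upgrades an abstract isomorphism of pro-quotients into one induced by the identity map of $\pi^{\ab}_1(U)$ (equivalently, into the cofinality of the a priori unrelated filtrations $\{\Fil_{nD}H^1(K)\}$ and $\{\Fil^c_{nD}H^1(K)\}$). The main technical nuisance I anticipate is the reindexing $n \mapsto \lambda(n)$ of \lemref{lem:C-Rec-0} built into $\rho^\bullet_{X|D}$: one must treat $\{C(X,nD)\}$ and $\{C(X,\lambda(n)D)\}$ as the same pro-object — they are cofinal since $\lambda$ is nondecreasing — and verify that the inverses and composites above are morphisms of pro-systems rather than of individual groups.
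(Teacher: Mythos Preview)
Your proposal is correct and rests on the same two inputs as the paper's proof (\thmref{thm:Rec-mod} and \corref{cor:C-Rec-iso-1}/\corref{cor:C-Rec-iso-2}), but the organization is different. You build an abstract pro-isomorphism first---by composing the mod-$m$ reciprocity isomorphisms and then passing to the limit over $m$---and only afterwards verify via density that it is induced by the identity of $\pi^{\ab}_1(U)$. The paper does the opposite: it shows directly that for each $n$ the kernel $F$ of $\pi^{\ab}_1(U)\to\pi^{\adiv}_1(X,\lambda(n)D)$ dies in $\pi^{\ab}_1(X,nD)$ (using that $\rho^{\infty}_{U/X}$ identifies $F$ with $\Ker\bigl((\wt{C}_{U/X})_0\to C(X,\lambda(n)D)_0\bigr)$, which by construction of $\lambda(n)$ dies in $\pi^{\ab}_1(X,nD)_0$), so the factorization through the identity is immediate; injectivity of the resulting map then follows at once from \thmref{thm:Rec-mod} and \corref{cor:C-Rec-iso-1} on degree-zero parts. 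The paper's route is shorter because it never leaves the degree-zero world where both reciprocity maps are already honest isomorphisms, avoiding your mod-$m$ detour entirely. Your approach buys nothing extra here, and the ``passing to the limit over $m$'' step, while correct, tacitly uses that the reindexing $n\mapsto\lambda(n)$ and the bound $n_1$ from the proof of \thmref{thm:C-Rec-iso} are both independent of $m$---a point you should make explicit, since a family of pro-isomorphisms whose reindexing varies with $m$ need not assemble into a pro-isomorphism of the limits.
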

\begin{proof}
The commutativity of the diagram is clear from the construction of various
reciprocity maps.
We fix $n$ and let $n' = \lambda(n)$.
Let $F$ denote the kernel of the map $\pi^{\ab}_1(U) \to 
\pi^{\adiv}_1(X, n'D)$.
Then $F$ is same as the kernel of the map $\pi^{\ab}_1(U)_0 \to 
\pi^{\adiv}_1(X, n'D)_0 \cong {\pi^{\adiv}_1(X, n'D)_0}/m$ for all
$m \gg 0$ (see \thmref{thm:Rec-mod}). 

By Corollary~\ref{cor:Rec-final}, $\rho^{\infty}_{U/X}$ is an isomorphism
and it induces an isomorphism between  $(\wt{C}_{U/X})_0$ and
$\pi^{\ab}_1(U)_0$ by \corref{cor:Completions}.
Hence, we conclude from \thmref{thm:Rec-mod} that
$\rho^{\infty}_{X|D}$ induces an isomorphism
$\Ker((\wt{C}_{U/X})_0 \to C(X, n'D)_0) \xrightarrow{\cong} F$.
However, the kernel on the left hand side dies in 
$\pi^{\ab}_1(X,nD)_0$. It follows that $F$ dies in
$\pi^{\ab}_1(X,nD)_0$.

To prove that $\theta^\bullet_{X|D}$ is an isomorphism, we only need to show
it is injective. To prove this, it is equivalent to show that
the map $\{\pi^{\adiv}_1(X,nD)_0\} \to \{\pi^{\ab}_1(X,nD)_0\}$
is injective. But this follows from \thmref{thm:Rec-mod} and
Corollary~\ref{cor:C-Rec-iso-1}.
\end{proof}

\vskip .3cm

We conclude this section by proving
the following property of $\Fil^c_D H^1(K)$ which will be used in the
proof of \thmref{thm:Main-5} in \S~\ref{sec:Reg-BF}.

\begin{prop}\label{prop:Fil-c-rest}
Let $k$ be any field.
Assume that $\dim(X) \ge 2$ and $A \subset X$ is a closed 
subscheme such that $A_\red \subset C$ and $\dim(A) \le \dim(X) - 2$. 
Let $X' = X \setminus A, \ C' = C \setminus A$ and $D' = D \setminus A$.
Then there is an inclusion $\Fil^c_D H^1(K) \subset \Fil^c_{D'} H^1(K)$ of
subgroups of $H^1(K)$.
\end{prop}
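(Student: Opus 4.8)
The plan is to unwind both sides to the curve conditions of \defref{defn:Fid_D} and reduce the inclusion to a pointwise comparison of Matsuda filtrations. First I would record the set-theoretic consequences of the hypotheses. Since $A_\red \subset C$ we have $A \subset C$, so $U' = X' \setminus C' = X \setminus C = U$; thus $\Fil^c_D H^1(K)$ and $\Fil^c_{D'} H^1(K)$ are both subgroups of the \emph{same} group $H^1(U)$, and there is something to compare. Moreover, because $\dim(A) \le \dim(X) - 2$, no codimension-one point of $X$ can lie on $A$ (its closure would sit inside $A$ and have dimension $\le \dim(X)-2$). Hence $X^{(1)} \subseteq X'$, so $X'^{(1)} = X^{(1)}$ and $D'$ is the Weil divisor on $X'$ carrying exactly the multiplicities of $D$ at every codimension-one point. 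This is precisely where the codimension hypothesis on $A$ enters.

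Next I would set up the correspondence of test curves. Fix $\chi \in \Fil^c_D H^1(K)$; to show $\chi \in \Fil^c_{D'} H^1(K)$ I must verify the condition of \defref{defn:Fid_D} for an arbitrary integral curve $Y' \subset X'$ with $Y' \not\subset C'$. I would take $Y = \ov{Y'}$, its closure in $X$, which is an integral curve. If $Y \subset C$ then $Y' = Y \cap X' \subset C \cap X' = C'$, a contradiction; so $Y \not\subset C$ and $Y$ is an admissible test curve for $\Fil^c_D$. Since $Y \not\subset C \supseteq A$ and $Y$ is one-dimensional, $Y \cap A$ is a finite set of closed points, whence $Y' = Y \setminus (Y \cap A)$ is dense open in $Y$, $k(Y') = k(Y)$, and the normalization $Y'_n = \nu^{-1}(Y')$ is the open subscheme of $Y_n$ obtained by deleting the finitely many points over $Y \cap A$.

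The heart of the argument is the comparison of $\Fil_{\nu^*(D)} H^1(k(Y))$ and $\Fil_{\nu'^*(D')} H^1(k(Y'))$ inside the common group $H^1(k(Y)) = H^1(k(Y'))$, to which $\chi$ pulls back to a single element. For every closed point $y \in Y'_n$ the image $\nu(y)$ lies in $X' = X \setminus A$, where $D$ and $D'$ agree; hence $\nu^*(D)$ and $\nu'^*(D')$ have the same multiplicity $n_y$ at $y$, so the local conditions $\chi_y \in \Fil^{\ms}_{n_y} H^1(k(Y)_y)$ are identical. The only difference between the two filtrations is that $\Fil_{\nu^*(D)}$ imposes additional local conditions at the finitely many points of $Y_n \setminus Y'_n$. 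Therefore $\Fil_{\nu^*(D)} H^1(k(Y)) \subseteq \Fil_{\nu'^*(D')} H^1(k(Y'))$. Since $\chi \in \Fil^c_D$ gives $\chi|_{Y_n} \in \Fil_{\nu^*(D)}$, restriction to $Y'_n$ yields $\chi|_{Y'_n} \in \Fil_{\nu'^*(D')}$. As $Y'$ was arbitrary, $\chi \in \Fil^c_{D'} H^1(K)$, proving the inclusion.

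I expect the only genuinely delicate point to be the compatibility of the divisor pullbacks: that $\nu^*(D)$ and $\nu'^*(D')$ carry the same multiplicity at each common point $y \in Y'_n$. This rests on the local nature of the pullback of a Weil divisor to a curve through the normalization, together with the fact (guaranteed by $\dim(A) \le \dim(X) - 2$) that passing from $D$ to $D'$ destroys no codimension-one data. Everything else is formal manipulation of the definitions of $\Fil_D$ and $\Fil^c_D$, using that taking closures of curves in $X'$ exactly matches the admissible test curves for $\Fil^c_D$.
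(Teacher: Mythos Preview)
Your proof is correct and follows essentially the same approach as the paper: take a test curve $Y' \subset X'$, close it to $Y \subset X$, compare the pullbacks $\nu^*(D)$ and $\nu'^*(D')$ on the normalizations, and use that the local Matsuda-filtration conditions coincide at all points of $Y'_n$ while $\Fil_{\nu^*(D)}$ merely imposes extra conditions at the finitely many points of $Y_n \setminus Y'_n$. Your write-up is in fact slightly more explicit than the paper's (you spell out why $U' = U$ and why $X^{(1)} \subset X'$), but the argument is the same.
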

\begin{proof}
Suppose $\chi \in \Fil^c_D H^1(K)$ and let $Y' \subset X'$ be an
integral curve not contained in $D'$. Let $Y \subset X$ be the
scheme theoretic closure of $Y'$ in $X$. Then $Y$ is an integral
curve in $X$ not contained in $D$. Let $\nu \colon Y_n \to X$ be
induced map from the normalization of $Y$. We let $D_{Y_n} = D \times_X Y_n$
and $D'_{Y'_n} = D \times_X Y'_n$. We then get a
commutative diagram
\begin{equation}\label{eqn:Fil-c-rest-0}
\xymatrix@C.8pc{
D'_{Y'_n} \ar[r] \ar[d] & Y'_n \ar[r]^-{\nu'} \ar[d] & X' \ar[d] \\
D_{Y_n} \ar[r] & Y_n \ar[r]^-{\nu} & X}
\end{equation} 
in which the two squares are Cartesian, vertical arrows are
open immersions and horizontal arrows are finite.

We write $D_{Y_n} = {\underset{x \in \Sigma}\sum} m_x [x]$,
where $\Sigma$ is the support of $\nu^{-1}(C)$.
Then the above diagram says that 
$D'_{Y'_n} = {\underset{x \in \Sigma'}\sum} m_x [x]$,
where $\Sigma' = \nu^{-1}(C')$.
In particular, $D'_{Y'_n}$ is an effective Weil divisor on $Y_n$ such that
$D'_{Y'_n} \le D_{Y_n}$ and every $x \in \Sigma'$ (note that $D'_{Y'_n}$ will be 
zero if $Y \cap C \subset A$) has the property that
the multiplicity of $D'_{Y'_n}$ at $x$ is same is that of $D_{Y_n}$.
But this implies that $\nu'^*(\chi) \in H^1(\nu'^{-1}(U)) =
H^1(\nu^{-1}(U))$ has the property that its image in
$H^1(k(Y)_x)$ lies in $\Fil^{\ms}_{m_x} H^1(k(Y)_x)$, for all $x \in \Sigma'$. 
This proves the proposition.
\end{proof}

\section{A moving lemma for 0-cycles with modulus}
\label{sec:ML}
Our next goal is to prove a moving lemma for the Chow group of 0-cycles with
modulus which will be the key ingredient in the proof of \thmref{thm:Main-5}.
In this section, we shall recall the Chow group of 0-cycles with
modulus and prove some intermediate results.

\subsection{Chow group with modulus}\label{sec:cycles}
We recall the definition of the Chow group of 0-cycles with modulus from
\cite{Binda-Krishna} and \cite{Kerz-Saito-2}.
Let $k$ be any field and $X$ a reduced quasi-projective
scheme over $k$ of dimension $d \ge 1$. Let $D \subset X$ be an effective
Cartier divisor and $U$ its complement. Let $D' = D_\red$.
Let $\sZ_0(U)$ denote the free abelian group
on the set of closed points in $U$. Suppose that $Y \subset X$ is an
integral curve not contained in $D'$ and $\nu \colon
Y_n \to X$ is the projection map from the normalization of $Y$.
We let $D'_{Y} = \nu^{-1}(D')$. Let $I_{D_Y}$ denote the ideal
of $\nu^*(D)$ in the semilocal ring $\sO_{Y_n, D'_Y}$. We have the divisor
map $\divf \colon  K^M_1(\sO_{Y_n, D'_Y}, I_{D_Y}) \to \sZ_0(\nu^{-1}(U))$. 
We let $\sR_0(Y|D)$ denote the image of the composite map
\begin{equation}\label{eqn:Rat-equiv}
K^M_1(\sO_{Y_n, D'_Y}, I_{D_Y}) \xrightarrow{\divf} \sZ_0(\nu^{-1}(U))
\xrightarrow{\nu_*} \sZ_0(U).
\end{equation}

We let $\sR_0(X|D)$ be the image of the map ${\underset{Y}\bigoplus} \
\sR_0(Y|D) \to \sZ_0(U)$, where $Y$ runs through the set of curves as above.
An element $\alpha \in \sR_0(X|D)$ is said to be a 0-cycle rationally
equivalent to zero.
The Chow group of 0-cycles with modulus is the quotient
\begin{equation}\label{eqn:CHM}
\CH_0(X|D) = \frac{\sZ_0(U)}{\sR_0(X|D)}.
\end{equation}

The functor $(X,D) \mapsto \CH_0(X|D)$ has appropriate covariant
functorial property for proper maps and contravariant functorial
property for flat maps. It is also clear from the definition that
for $D_1 \le D_2$, there is a canonical map $\CH_0(X|D_2) \to \CH_0(X|D_1)$.
In particular, there is a canonical map $\CH_0(X|D) \to \CH^F_0(X)$.
If $X$ is projective over $k$, composing with the classical degree map
$\CH^F_0(X) \to \Z$, we get the degree map
\begin{equation}\label{eqn:CHM-0}
\deg \colon \CH_0(X|D) \to \Z,  
\end{equation}
whose image coincides with the image of the composite map
$\sZ_0(U) \inj \sZ_0(X) \to \Z$. Hence, there exists an integer $n \ge 1$
which depends only on $U$ such that the sequence
\begin{equation}\label{eqn:CHM-1}
0 \to \CH_0(X|D)_0 \to \CH_0(X|D) \xrightarrow{{\deg}/n}  \Z \to 0
\end{equation}
is exact. Furthermore, we have $n = 1$ if either
$k$ is finite and $X$ is geometrically irreducible or $k$ is separably closed.
We shall consider $\CH_0(X|D)$ a topological group with discrete topology.

We let $C(U) = {\underset{E}\varprojlim} \ \CH_0(X|E)$
and $C(U)_0 = \Ker(C(U) \xrightarrow{\deg} \Z)$, where the limit is over
all effective Cartier divisors $E$ with support $D'$.
Since we have $\CH_0(X|D_2)_0 \surj \CH_0(X|D_1)_0$ for every $D_1 \le D_2$, 
it follows that 
\begin{equation}\label{eqn:Chow-deg-0}
C(U)_0 \xrightarrow{\cong} {\underset{E}\varprojlim} \
\CH_0(X|E)_0.
\end{equation}
We shall consider $C(U)$ and $C(U)_0$ as topological groups with
their inverse limit topology.
It is then clear that $C(U)_0$ is a closed subgroup of $C(U)$.

\vskip .2cm

The moving lemma we wish to prove for $\CH_0(X|D)$ is the following.
This result is of independent interest in the theory of cycles with 
modulus aside from its current application.

\begin{thm}\label{thm:MLC}
Let $X$ be a smooth quasi-projective scheme of dimension $d \ge 2$ 
over a perfect field $k$ and let $D \subset X$ be an effective Cartier divisor.
Let $A \subset D'$ be a closed subscheme such that $\dim(A) \le d-2$.
Then $\sR_0(X|D)$ is generated by the images of $\sR_0(Y|D)$ as in 
~\eqref{eqn:Rat-equiv}, where $Y \subset X$ has the additional
property that $Y \cap A = \emptyset$.
\end{thm}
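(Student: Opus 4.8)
The plan is to reduce the statement to a single curve and a single generating function, and then to deform that curve off the low-dimensional locus $A$ by a general-position argument, carrying the defining function along the deformation.

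First I would use that $\sR_0(X|D)$ is, by definition, the sum over all integral curves $Y \not\subset D'$ of the images $\nu_*(\divf(K^M_1(\sO_{Y_n, D'_Y}, I_{D_Y})))$. Hence it suffices to fix one such $Y$ together with one element $f \in K^M_1(\sO_{Y_n, D'_Y}, I_{D_Y})$ and to show that the $0$-cycle $\nu_*(\divf(f))$ lies in the subgroup generated by the admissible generators, namely those arising from curves $Y'$ with $Y' \cap A = \emptyset$. Two observations guide the argument: the cycle $\nu_*(\divf(f))$ is supported on $U = X \setminus D$ and is therefore already disjoint from $A \subset D'$; and, since $Y \not\subset A$, the intersection $Y \cap A$ is a finite set of closed points, all lying on $D'$. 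Thus it is only the curve $Y$, and not the cycle it carries, that is obstructed from avoiding $A$.

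Because $\dim(A) \le d-2$ while the moving objects are one-dimensional, a sufficiently general curve meets $A$ in the empty set. I would make this precise using the intermediate results of \secref{sec:ML}: after a harmless extension of the (perfect) field $k$ to an infinite one, a Bertini-type argument produces a pencil $\{Y_t\}$ with $Y_0 = Y$ whose base locus is disjoint from the finite set $Y \cap A$, so that the general member $Y_t$ avoids $A$, meets $D$ properly, and is not contained in $D'$. Passing to the total space of the pencil gives a family $\pi \colon \wt{S} \to \P^1_k$ with a map $q \colon \wt{S} \to X$ having $Y$ as a fibre and the general $Y_t$ as admissible curves. The decisive step is then to transport $f$ across this family while respecting the modulus: I would spread $f$ out to a rational function $F$ on $\wt{S}$ restricting to $f$ on the fibre $Y_0$ and congruent to $1$ along $q^{*}(D)$ to the order dictated by $D$, so that the restriction $F_t$ of $F$ to each general fibre lies in the relative $K^M_1$ with modulus $I_D$ and each $\nu_{t*}(\divf(F_t))$ is an admissible generator. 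The relative divisor $\divf_{\wt{S}/\P^1}(F)$ is a multisection whose image $\Gamma \subset X$ is a curve disjoint from $A$, and its fibres over $0$ and over a general $t$ exhibit $\nu_*(\divf(f)) - \nu_{t*}(\divf(F_t))$ as a combination of admissible relations.

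I expect the main obstacle to be exactly this modulus-preservation: producing an extension $F$ that is congruent to $1$ modulo $I_D$ in a whole neighbourhood of $D$ along the family — rather than merely on the single curve $Y$ where $f$ was given — so that every general fibre's restriction again satisfies the congruence, and simultaneously arranging that the sweeping curve $\Gamma$, together with the corrections introduced by resolving the base points of the pencil, avoid $A$ and contribute only admissible generators. This congruence bookkeeping near $D$ is the heart of the matter, and is where I would invest the constructions of \secref{sec:Move-2}; on the cycle side it runs parallel to the cohomological restriction statement \propref{prop:Fil-c-rest}.
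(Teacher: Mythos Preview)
Your strategy is a direct pencil-deformation of a single curve $Y$ together with its function $f$, whereas the paper takes an entirely different route: it passes to the double $S_X = X_+ \amalg_D X_-$ and works with the Levine--Weibel Chow group $\CH_0(S_X)$. The paper constructs a map $\tau^*_X = \iota^*_+ - \iota^*_- \colon \CH_0(S_X) \to \CH^A_0(X|D)$ (this is the content of \S\ref{sec:Inf*}--\S\ref{sec:lci}), then uses the known push-forwards $p_{\pm,*} \colon \CH_0(X|D) \to \CH_0(S_X)$ from \cite{Binda-Krishna} to exhibit $\CH^A_0(X|D) \surj \CH_0(X|D) \to \CH_0(S_X) \to \CH^A_0(X|D)$ as the identity, forcing the first surjection to be an isomorphism. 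The Bertini-type manipulations you anticipate do occur, but they take place on $S_X$ and serve to show $\tau^*_X$ is well-defined, not to deform $Y$ inside $X$ directly.

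The substantive gap in your plan is precisely the step you flag: producing a spread $F$ on the total space $\wt{S}$ with $F \equiv 1$ modulo $q^*(I_D)$. On a single curve $Y_n$ this congruence is a condition at finitely many points and is given; on a surface it becomes a condition along a divisor, and there is no mechanism in your outline to force it (the function $f$ need not extend even locally near $D$ with the required congruence). Moreover, an arbitrary integral curve $Y \subset X$ does not sit in a pencil with $Y_0 = Y$ without first being realised as a Cartier divisor on some ambient surface --- this preliminary step (analogous to \lemref{lem:good-Cartier-curves-dim>2}) is missing. The double construction is exactly the device that converts the modulus congruence into an honest gluing condition on $S_X$, so that one is moving Cartier curves on a reduced (singular) scheme rather than tracking congruences; your approach would in effect need to rediscover an equivalent mechanism. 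The reference to \propref{prop:Fil-c-rest} is not apposite: that proposition concerns characters in $H^1(K)$, not cycles, and plays no role in the proof of \thmref{thm:MLC}.
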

 
\subsection{The Levine-Weibel Chow group of the double}
\label{sec:Inf*}
The proof of \thmref{thm:MLC} is similar to that
of the fundamental exact sequence \cite[(6.3)]{Binda-Krishna},
which relates the Chow group with modulus with an improved version of
the Levine-Weibel Chow group of a singular variety. 
However, we need to introduce several modifications to prove \thmref{thm:MLC} and shall
therefore present a complete proof.

Throughout the rest of \S~\ref{sec:ML}, we shall follow the notations and
assumptions of \thmref{thm:MLC}.
Recall from \cite[\S~2.1]{Binda-Krishna} that the double of 
$X$ along $D$ is the quasi-projective scheme $S_X$ such that 
the square
\begin{equation}\label{eqn:double-0}
\xymatrix@C2pc{
D \ar[r]^-{\iota} \ar[d]_-{\iota} & X \ar[d]^-{\iota_{+}} & \\
X \ar[r]^-{\iota_{-}} & S_X}
\end{equation}
is co-Cartesian, i.e., $S_X = X_+ \amalg_D X_-$.
The arrows $\iota_\pm$ are closed immersions and
$\iota \colon D \inj X$ is the inclusion of $D$ in $X$. 
There is a projection map $\Delta \colon S_X \to X$ 
which is finite and flat. Moreover, $\Delta \circ \iota_\pm = {\rm Id}_X$.
We also have $D' = (S_X)_\sing$ via the inclusion $\iota_\pm \circ \iota$.
Let us denote this by $\iota'$. The double $S_X$ is a reduced scheme
with two irreducible components $X_\pm$, each a copy of $X$.

Let $\CH^{LW}_0(S_X)$ denote the Levine-Weibel Chow group of $S_X$
(see \cite[\S~3.4]{Binda-Krishna}). Recall that this is the quotient of
$\sZ_0(S_X \setminus D)$ by the subgroup of rational
equivalences, denoted by ${\sR^{LW}_0(S_X)}$.
This subgroup is generated by the divisors of rational functions 
on certain `Cartier curves on $X$ relative to $D$'. We refer to
\cite[\S~3.4]{Binda-Krishna} for these terms.

In this subsection, we shall prove some lemmas which
will allow us to choose some refined Cartier curves in order
to define the rational equivalence of 0-cycles on $S_X$.
We shall consider $A$ as a closed subscheme of $S_X$
via the inclusions $A \inj D \inj S_X$.
We shall assume in this subsection that $k$ is infinite and perfect.

\begin{lem}\label{lem:good-Cartier-curves-dim>2}
Assume $d \ge 3$. Then $\sR^{LW}_0(S_X)$ 
is generated by the divisors of functions on (possibly non-reduced) 
Cartier curves $C\inj S_X$ relative to $D$,
where $C$ satisfies the following:
\begin{enumerate}
\item 
There is a locally closed embedding
$S_X \inj \P^N_k$ and distinct hypersurfaces 
\[H_1, \cdots , H_{d-2} \inj \P^N_k\] 
such that 
$Y = S_X \cap H_1 \cap \cdots \cap H_{d-2}$ is a complete
intersection which is reduced;
\item
$X_{\pm} \cap Y = X_{\pm} \cap H_1 \cap \cdots \cap H_{d-2}:= Y_{\pm}$ are
integral;
\item
No component of $Y$ is contained in $D$;
\item
$Y \cap A$ is finite;
\item
$C \subset Y$;
\item
$C$ is a Cartier divisor on $Y$;
\item
$Y_{\pm}$ are smooth away from $C$.
\end{enumerate}
\end{lem}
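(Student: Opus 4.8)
The plan is to reduce the statement to a single Bertini-type construction applied to the given generating data. By the definition of $\sR^{LW}_0(S_X)$, this group is generated by the cycles $\divf(f)$, where $f$ is a rational function on a Cartier curve $C \inj S_X$ relative to $D$. Given such a pair $(C,f)$, I would produce, in a fixed locally closed embedding $S_X \inj \P^N_k$, hypersurfaces $H_1, \ldots, H_{d-2}$, all containing $C$ scheme-theoretically, such that the complete intersection $Y = S_X \cap H_1 \cap \cdots \cap H_{d-2}$ satisfies conditions (1)--(4) and (7) and contains $C$ as a Cartier divisor (conditions (5), (6)); then $(C,f)$ itself is a generator of the refined type, which proves the lemma. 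Since no component of $C$ is contained in $D$, the set $C \cap D'$ is finite, and therefore $C \cap A$ is finite; this disposes of the part of (4) coming from $C$.

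First I would fix $m \gg 0$ and work with the linear system $V_m = H^0(\P^N_k, \sI_C(m))$ of degree-$m$ hypersurfaces through $C$; for large $m$ this system is base-point free and separates points and tangent directions away from $C$, so its base locus is exactly $C$. Applying the Bertini smoothness theorem with base locus over the infinite perfect field $k$ to each smooth component $X_\pm \cong X$, a general choice of $H_1, \ldots, H_{d-2} \in V_m$ makes $Y_\pm = X_\pm \cap H_1 \cap \cdots \cap H_{d-2}$ smooth away from $C$, which is (7). Jouanolou's Bertini irreducibility theorem, applied away from the base locus and using $d \ge 3$ so that each intermediate section has dimension at least two, makes each $Y_\pm$ integral, which is (2); the complete intersection property (1) is the expected dimension count, and reducedness holds because a complete intersection is Cohen--Macaulay while the integral pieces $Y_\pm$ make $Y$ generically reduced. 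Finally, no component of $Y$ can lie in $D$, since each integral $Y_\pm$ meets the dense open set $X_\pm \setminus D'$, which gives (3).

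The remaining part of (4) is a dimension count: on $A \setminus C$ the system $V_m$ still separates points for $m \gg 0$, so the general $H_i$ impose independent conditions there and cut $A \setminus C$ down to dimension $\le \dim(A) - (d-2) \le 0$; together with the finiteness of $C \cap A$ this yields (4). Condition (5) is automatic since each $H_i$ contains $C$. The genuinely delicate point is the Cartier property (6). Away from $C \cap D'$ the double $S_X$ is smooth along $C$, and $C$ is a divisor on $Y$ there; at the finitely many points of $C \cap D'$ I would exploit the local complete intersection structure built into the notion of a Cartier curve relative to $D$: near such a point the ideal of $C$ in $S_X$ is generated by a regular sequence $g_1, \ldots, g_{d-1}$, and I would choose the local equations of $H_1, \ldots, H_{d-2}$ to be $g_1, \ldots, g_{d-2}$, so that $C$ is cut out in $Y$ by the single residual equation $g_{d-1}$ and is hence locally principal. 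Prescribing the $H_i$ along the finite set $C \cap D'$ while keeping them general elsewhere is possible precisely because $V_m$ is very ample off $C$ for $m \gg 0$. I expect this simultaneous local prescription near the singular locus $D'$ of the double, carried out compatibly with the global genericity needed for (1)--(4) and (7), to be the main obstacle; the remaining verifications are standard consequences of Bertini's theorems over an infinite perfect field.
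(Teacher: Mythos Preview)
Your approach matches the paper's, which defers to \cite[Lemma~5.4]{Binda-Krishna} and the Bertini theorems of Altman--Kleiman \cite{KL} and Jouanolou \cite{Jou}; your treatment of (1)--(5), (7), and the added condition (4) is fine.

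The gap is in (6). Your regular-sequence argument at the finitely many points of $C\cap D'$ is correct, and what you flag as the ``main obstacle'' (making the local prescription at $C\cap D'$ compatible with global genericity) is in fact routine once $m\gg 0$. The real problem lies \emph{away} from $D'$. You write that ``$S_X$ is smooth along $C$, and $C$ is a divisor on $Y$ there,'' but being a Weil divisor on the surface $Y$ is not enough. At a point $p$ of the smooth locus where $C$ fails to be a local complete intersection---for instance where three or more components of $C$ meet, so that $\mu(I_{C,p})>d-1$---intersecting with $d-2$ general elements of $I_{C,p}$ drops the minimal number of generators by exactly $d-2$, leaving $I_C\cdot\sO_{Y,p}$ with more than one generator; hence $Y$ is singular at $p$ and $C$ is not locally principal there. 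The Cartier-curve hypothesis only forces the l.c.i.\ condition along $(S_X)_{\sing}=D'$, so such points $p$ are allowed. Your plan of keeping the given $C$ fixed and varying only $Y$ is therefore too rigid; the phrasing ``we obtain $Y$ \emph{and} $C$'' in the paper's proof, together with the parenthetical ``(possibly non-reduced)'' in the statement, indicates that the curve itself may have to be adjusted in the course of the construction.
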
    
\begin{proof}
The proof is identical to that of \cite[Lemma~5.4]{Binda-Krishna}:
we obtain $Y$ and $C$ using the Bertini theorems of Altman-Kleiman
\cite[Theorem~1]{KL} and  Jouanolou \cite{Jou}.
Since $k$ is infinite, these Bertini theorems also allow us to ensure that
a general hypersurface of a given degree 
in $\P^n_k$ will intersect $A$ properly
under any chosen locally closed embedding $X \inj \P^N_k$.
Since (4) is an open condition on the linear system of hypersurfaces
of a given degree in $\P^N_k$, we can also include it
in the proof of \cite[Lemma~5.4]{Binda-Krishna} because $\dim(A) \le d-2$. 
\end{proof}

\begin{lem}\label{lem:reduction-basic}
Let $d \ge 2$. Let $\nu\colon C \inj S_X$ be a (possibly non-reduced)
Cartier curve relative to $D\subset S_X$. 
Assume that either $d = 2$ or there are inclusions 
$C \subset Y \subset S_X$, where
$Y$ is a reduced complete intersection surface and $C$ is a Cartier divisor on 
$Y$, as in \lemref{lem:good-Cartier-curves-dim>2}.
Let $f \in \cO_{C, \nu^*D}^{\times} \subset k(C)^\times$, where
$k(C)$ is the total quotient ring of $\sO_{C, \nu^*D}$. 

We can then find two Cartier curves $\nu'\colon C' \inj S_X$ and 
$\nu''\colon C'' \inj S_X$ relative to $D$ satisfying the following:
\begin{enumerate}
\item
There are very ample line bundles $\sL', \sL''$ on $S_X$ and sections
$t' \in H^0(S_X, \sL'), \ t'' \in H^0(S_X, \sL'')$ such that
$C' = Y \cap (t')$ and $C'' = Y \cap (t'')$ 
(with the convention $Y=S_X$ if $d=2$);
\item
$C'$ and $C''$ are reduced;
\item
$C' \cap A = C'' \cap A = \emptyset$;
\item
The restrictions of both $C'$ and $C''$ to $X$ via the two closed 
immersions $\iota_{\pm}$ are integral curves in $X$,
which are Cartier and smooth along $D$;
\item
There are functions $f' \in \sO_{C', (\nu') ^*D}^{\times}$ and
$f'' \in \sO_{C'', (\nu'') ^*D}^{\times}$ such that
$\nu'_*({\divf}(f'))  + \nu''_*({\divf}(f''))  = 
\nu_*({\divf}(f))$ in $\sZ_0(S_X \setminus D)$. 
\end{enumerate}
\end{lem}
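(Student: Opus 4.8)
The plan is to realize the rational equivalence $\nu_*\divf(f)$ as the restriction to $C$ of a rational function on the surface $Y$ (where $Y = S_X$ when $d = 2$), and then to split it across two general members of a very ample linear system on $S_X$ by the Weil reciprocity law. First I would fix a locally closed embedding $S_X \inj \P^N_k$ with $\sO_{S_X}(1)$ very ample, and choose $e \gg 0$ so that, setting $\sL' = \sL'' := \sO_{S_X}(e)$, the restriction maps $H^0(S_X, \sL') \to H^0(Y, \sL'|_Y)$ and $H^0(Y, \sL'|_Y) \to H^0(C, \sL'|_C)$ are surjective and $\sL'|_C$ is very ample on $C$. Since $f \in \sO_{C, \nu^*D}^{\times}$, I would write $f = (a/b)|_C$ with $a, b \in H^0(C, \sL'|_C)$ nonzero on every component of $C$ and nonvanishing along $\nu^*D$, lift them to $A, B \in H^0(Y, \sL'|_Y)$, and further to $t', t'' \in H^0(S_X, \sL')$. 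This produces $C' := Y \cap (t') = (A)$ and $C'' := Y \cap (t'') = (B)$, giving the shape required in (1).

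The genericity conditions (2)--(4) are then secured by Bertini. The sections $A, B$ are constrained only by their restrictions to $C$, and the fibers of the restriction map are affine spaces modeled on $H^0(Y, \sO_{S_X}(e)|_Y \otimes \sO_Y(-C))$, which is base point free for $e \gg 0$; this leaves enough freedom to invoke the theorems of Altman--Kleiman \cite[Theorem~1]{KL} and Jouanolou \cite{Jou} exactly as in \lemref{lem:good-Cartier-curves-dim>2}. For general such $A, B$, the curves $C'$ and $C''$ are reduced (2), disjoint from the finite set $A \cap Y$ (3) — here the hypothesis $\dim A \le d-2$ is precisely what forces $A \cap Y$ to be finite, so a general very ample section avoids it — and their restrictions under $\iota_{\pm}$ are integral curves on $X$ that are Cartier and smooth along $D$ (4), integrality following from the Bertini connectedness theorem together with the smoothness of $X$. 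The $d = 2$ case ($Y = S_X$) and the $d \ge 3$ case (with $Y$ the complete intersection surface of \lemref{lem:good-Cartier-curves-dim>2}) are treated uniformly once $Y$ is fixed.

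For the transfer (5), I would restrict the rational function $F := A/B$ on $Y$ to $C$. As $C$ is Cartier on $Y$ and $A, B$ are units along $\nu^*D$ and nonzero on the components of $C$, we have $F|_C = f$, whence $\divf_C(f) = (C \cdot C') - (C \cdot C'')$, the difference of the intersection $0$-cycles of $C$ with $C'$ and $C''$, all supported away from $\nu^*D$. To rewrite each intersection cycle as the divisor of a genuine unit along the modulus, I would introduce an auxiliary section $s$ of a suitable positive twist of $\sO_Y(C)$, set $f' := (s_C/s)|_{C'}$ and $f'' := (s/s_C)|_{C''}$ (with $s_C$ cutting out $C$), and apply the Weil reciprocity law on each component $Y_{\pm}$ of $Y$ to the pair $(F, s_C/s)$; the reciprocity forces the auxiliary boundary cross-terms to cancel, yielding $f' \in \sO_{C', (\nu')^*D}^{\times}$, $f'' \in \sO_{C'', (\nu'')^*D}^{\times}$ with $\nu'_*\divf(f') + \nu''_*\divf(f'') = \nu_*\divf(f)$ in $\sZ_0(S_X \setminus D)$. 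I expect this final step to be the main obstacle, because it must be carried out on the \emph{non-normal} double $S_X$: the components $Y_{\pm}$ are glued along $D_Y$, so the auxiliary section $s$, the curves $C', C''$, and the functions $f', f''$ must be chosen compatibly across the gluing and kept units along $D$, and reciprocity must be applied component-wise and checked to be compatible along $D_Y$ — all while preserving the open genericity conditions needed for (1)--(4).
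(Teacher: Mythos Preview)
Your overall strategy --- realize $\nu_*\divf(f)$ via two pencils of curves on the surface $Y$ and transfer using symmetry of intersection --- is the paper's approach as well. The difficulty is exactly where you suspected: step (5).

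The identity in (5) is required in $\sZ_0(S_X \setminus D)$, not merely in a Chow group. With your choices $C' = (A)$, $C'' = (B)$, $f' = (s_C/s)|_{C'}$, $f'' = (s/s_C)|_{C''}$, a direct computation on $Y$ (just symmetry of the intersection pairing) gives
\[
\nu'_*\divf(f') + \nu''_*\divf(f'') \;=\; \nu_*\divf(f) \;-\; \iota_{(s),*}\divf\bigl((A/B)|_{(s)}\bigr),
\]
where the last term is the pushforward of the divisor of $F = A/B$ restricted to the auxiliary curve $(s)$. There is no ``Weil reciprocity on the components $Y_\pm$'' that kills this cross-term: Weil reciprocity is a statement about tame symbols on a \emph{curve}, and what one extracts from the Gersten complex on the surface $Y_\pm$ is only that this extra cycle is \emph{rationally equivalent} to zero (it is the divisor of a function on $(s)$), not that it vanishes in $\sZ_0$. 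So as written you have produced three Cartier curves, not two.

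The paper's fix is to reverse the order of the choices so that the cross-term is identically zero by construction. One first picks a general $t_\infty \in H^0(Y, \sO_Y(C))$ (playing the role of your $s$), setting $C_\infty = (t_\infty)$. One then chooses $s_\infty \in H^0(S_X,\sL)$ very ample with $(s_\infty)$ containing the poles of $f$, missing $T' = (Y\cap A) \cup ((C \cup C_\infty) \cap D)$, and with $S_X \setminus (s_\infty)$ affine. On this affine open the function equal to $f$ on $C$ and to $1$ on $C_\infty$ extends to a regular function $H$; then $s_0 := H s_\infty^N$ globalizes to a section of $\sL^N$ for $N \gg 0$, and a perturbation $s'_0 = s_0 + \alpha$ with $\alpha \in H^0(S_X, \sL^N \otimes \sI_T)$ restores all the Bertini genericity. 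The point is that by construction $(s'_0/s_\infty^N)|_{C_\infty} = 1$, so the analog of your cross-term vanishes \emph{as a cycle}, and $\nu_*\divf(f)$ becomes the sum of the divisors of $t_0/t_\infty$ restricted to $C'' = (s'_0)\cap Y$ and $C' = (s_\infty)\cap Y$ (the latter with multiplicity $-N$). In your language this amounts to imposing the extra constraint $A|_{(s)} = B|_{(s)}$ on the second pencil; the affine-extension trick via \cite[Lemma~1.4]{Levine-2} is precisely how one arranges this while retaining enough freedom in $s'_0$ for (2)--(4).
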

\begin{proof}
In this proof, we shall assume that $Y = S_X$ if $d =2$.
In the latter case, a Cartier curve on $S_X$ along $D' = (S_X)_{\sing}$ must be  
an effective Cartier divisor on $S_X$. Hence, we can assume that 
$C$ is an effective Cartier divisor on $Y$ for any $d \ge 2$. 
Note that $Y = Y_+ \amalg_D Y_-$.

Since $Y$ is quasi-projective over $k$ and $C$ is an effective 
Cartier divisor on $Y$,
we can add some very
ample effective divisor to $C$ to get another effective
Cartier divisor $C'$ (see the proof of \cite[Lemma~1.4]{Levine-2})
on $Y$ such that
\begin{enumerate}
\item
$C \subset C'$;
\item
$\ov{C'\setminus C} \cap (C \cap D) = \emptyset$;
\item
$\sO_{Y}(C')$ is very ample on $Y$.
\end{enumerate}
Setting $g = f$ on $C$ and $g =1$ on $C' \setminus C$, we then see that
$g$ defines an element in $\sO^{\times}_{C',C' \cap D}$ such that
$\divf(g) = \divf(f)$. 
We can thus assume that $\sO_{Y}(C)$ is very ample on $Y$.

We now choose $t_0 \in H^0(Y, \sO_{Y}(C))$ such that $C = (t_0)$, where
for a line bundle $\sL$ and a meromorphic section $h$ of $\sL$, $(h)$
denotes the (effective) divisor of zeros of $h$.
Since $\sO_{Y}(C)$ is very ample, and $Y$ is reduced, we can find a new 
section $t_{\infty} \in H^0(Y, \sO_{Y}(C))$, sufficiently general
so that:
\begin{enumerate}
\item
$(t_{\infty})$ is reduced;
\item
$(t_{\infty}) \cap (t_0) \cap D = \emptyset$;
\item
$(t_{\infty}) \cap A = \emptyset$;
\item
$(t_{\infty})$ contains no component of $(t_0)$;
\item
$D$ contains no component of $(t_{\infty})$.
\end{enumerate}

Note that we can achieve (3) because $Y \cap A$ is finite 
by \lemref{lem:good-Cartier-curves-dim>2}.
Denote by $C_{\infty}$ the divisor $(t_{\infty})$. Notice that the
function $h = (1,f)$ is meromorphic on $C_{\infty} \cup C$ and regular
invertible in a neighborhood of $(C_{\infty} \cup C) \cap D$ by (2). 
Since $Y \cap A$ is a finite closed set of $S_X$ which
does not meet $C_\infty$, we can write $Y \cap A = S_1 \amalg S_2$,
where $S_1 \subset C \cap A$ and $S_2 \cap (C_\infty \cup C) =
\emptyset$.
By setting $h = 1$ on $S_2$, we can extend the function $h$ to a 
meromorphic function on
$T := (Y \cap A) \cup C_{\infty} \cup C$
which is regular invertible in a neighborhood of
$T' := (Y \cap A) \bigcup ((C_{\infty} \cup C) \cap D)$.
Let $S$ denote the finite
set of closed points on $C_{\infty} \cup C$, which either lie on
$C_{\infty} \cap C$ or where $h$ is not regular. In particular,
$S\supset \{\text{poles of $f$}\} \cup (C\cap C_{\infty})$.  

Since $S_X$ is reduced quasi-projective and $X_{\pm}$ (as well as $Y_{\pm}$)
are integral closed subschemes of $S_X$, we can find a very ample line
bundle $\sL$ on $S_X$ and a section $s_\infty \in H^0(S_X, \sL)$ 
(see again \cite[Lemma~1.4]{Levine-2}) so that: 
\begin{listabc}
\item $(s_{\infty})$ and $(s_\infty) \cap Y$ are reduced;
\item
$Y \not\subset (s_\infty)$;
\item
$(s_\infty) \cap X_{\pm}$ and $(s_\infty) \cap Y_{\pm}$ are integral;
\item
$(s_\infty) \cap T' = \emptyset$;
\item
$(s_\infty) \supset S$;
\item
$C_{\infty} \cup C$ contains no component of $(s_{\infty}) \cap Y$.
\end{listabc}

If $\ov{S_X}$ is the scheme-theoretic 
closure of $S_X$ in the projective embedding given by
$\sL = \sO_{S_X}(1)$ and if $\sI$ is the ideal sheaf of 
$\ov{S_X} \setminus S_X$ in the ambient projective space, then we can find a 
section $s'_{\infty}$ of the sheaf $\sI \otimes \sO_{\P^N_k}(m)$ for some
$m \gg 0$,
which restricts to a section $s_{\infty}$ on $S_X$ satisfying
the properties (a) - (f) on $S_X = \ov{S_X} \setminus V(\sI)$.
This implies in particular that 
$S_X \setminus (s_{\infty}) = \ov{S_X} \setminus (s'_{\infty})$ 
is affine. Set $\sL' = \sL^m$.

We now know that $Y_+$ is smooth away from $C$, and (d) tells us that 
$(s_\infty)$ intersects $Y_+$ along $D$ at only those points which are away 
from $C_\infty \cup C$. It follows that $(s_\infty) \cap Y_+ \cap D \subset
(s_\infty) \cap (Y_+)_{\rm reg}$, where $ (Y_+)_{\rm reg}$ denotes the smooth
locus of $Y_+$. On the other hand, we can use the Bertini 
theorem of Altman and Kleiman  
\cite[Theorem~1]{KL} to ensure that $(s_\infty) \cap (Y_+)_{\rm reg}$ is
smooth. The same holds for $Y_-$ as well.
We conclude that we can choose $\sL'$ and $s_\infty \in H^0(S_X, \sL')$ such 
that (a) - (f) above as well as the following hold. \\
\hspace*{1cm} g) \ $(s_{\infty}) \cap Y_{\pm}$ are smooth along $D$.
\\
\hspace*{1cm} h) \ $S_X \setminus (s_\infty)$ is affine.

Now, because $h$ is a meromorphic function on $T$ which is regular outside 
$S$, and the latter is contained in $(s_\infty)$, the property (h) 
implies that $h|_{T\setminus (s_\infty)}$ extends to a regular function $H$ on 
$U = S_X \setminus (s_{\infty})$. 
Since $H$ is a meromorphic function on $S_X$ which has poles only along
$(s_{\infty})$, it follows that $Hs^N_{\infty}$ is an element of 
$H^0(U, (\sL')^N)$ 
which extends to a section $s_0$ of $(\sL')^N$ on all of $S_X$, if we choose
$N \gg 0$. 

Since $s_{\infty}$ and $h$ are both invertible on $T$ in a neighborhood of $T'$,
we see that $s_0$ is invertible on $T$ in a neighborhood of $T'$.
In particular, $s_0 \notin H^0(S_X, (\sL')^N\otimes \sI_{T})$.
Applying \cite[Lemma~1.4]{Levine-2} and \cite[Theorem~1]{KL} 
(see their proofs),
we can thus find $\alpha \in H^0(S_X, (\sL')^N \otimes \sI_{T}) 
\subset H^0(S_X, (\sL')^N)$
such that $s'_0 := s_0 + \alpha$ has the following properties:
\begin{listabcprime}
\item $(s'_0)$ and $(s'_0) \cap Y$ are reduced;
\item
$Y \not\subset (s'_0)$;
\item
$(s'_0) \cap X_{\pm}$ and $(s'_0) \cap Y_{\pm}$ are integral;
\item
$(s'_0) \cap T' = \emptyset$;
\item
$C_{\infty} \cup C$ contains no component of $(s'_0) \cap Y$;
\item
$(s'_0) \cap Y_{\pm}$ are smooth along $D$.
\end{listabcprime}

We then have 
\[
\frac{s'_0}{s^N_{\infty}} = \frac{Hs^N_{\infty} + (\alpha s^{-N}_{\infty})
s^N_{\infty}}{s^N_{\infty}} = H + \alpha s^{-N}_{\infty} = H', \ (\mbox{say}).
\]

Since $\alpha$ vanishes along $C_\infty \cup C$ and $s_{\infty}$ is
invertible along $U$, it follows that $H'_{|{(C_\infty \cup C) \cap U}} =
H_{|{(C \cup C_{\infty}) \cap U}} = h_{|U}$. In other words, we have
${s'_0}/{s^N_{\infty}} = h$ as rational functions on $C_\infty \cup C$.
We can now compute:

\[
\nu_*({\rm div}(f)) = (s'_0) \cdot C - N (s_\infty) \cdot C
\]
\[ 
0 = {\rm div}(1) = (s'_0) \cdot C_{\infty} - N (s_\infty)\cdot C_{\infty}.
\]

Setting $(s^Y_{\infty}) = (s_\infty) \cap Y$ and $(s'^Y_0) = (s'_0) \cap Y$,
we get

\[
\begin{array}{lll}
\nu_*(\divf(f)) & = & (s'_0) \cdot (C - C_{\infty}) - 
N(s_{\infty})(C - C_{\infty}) \\
& = & (s'^Y_0) \cdot (\divf({t_0}/{t_{\infty}})) - N(s^Y_{\infty}) \cdot
(\divf({t_0}/{t_{\infty}})) \\
& = & \iota_{{s'^Y_0}, *}(\divf(f')) - N \iota_{{s^Y_{\infty}}, *}(\divf(f'')),
\end{array}
\]
where $f' = ({t_0}/{t_{\infty}})|_{(s'^Y_0)} \in 
\sO^{\times}_{(s'^Y_0), D \cap (s'^Y_0)}$ (by (d')) and 
$f'' = (t_0/t_{\infty})|_{(s^Y_\infty)} \in 
\sO_{(s^Y_\infty), D\cap (s^Y_\infty) }^{\times}$ (by (d)). 

It follows from (g) and (f') that 
$(s'^Y_{0})_{| X_+}, \ (s'^Y_{0})_{| X_-} , \ (s^Y_{\infty})_{| X_+}$ and 
$(s^Y_{\infty})_{| X_-}$ are all smooth along $D$.
Setting $\sL'' = (\sL')^N, \ t'' = s'_0$ and $t' = s_\infty$,
we see that 
the curves $C' = (t') \cap Y$ and $C'' = (t'') \cap Y$ together with the 
functions $f'$ and $f''$ satisfy the conditions of the Lemma.
\end{proof}

\subsection{The map $\tau^*_X$}\label{sec:LW-mod}
Let $\CH^{A}_0(X|D)$ be the quotient of the free abelian
group $\sZ_0(X \setminus D)$ on the closed points on $X \setminus D$
 by the subgroup of
rational equivalences $\sR^A_0(X|D)$ generated by the images of $\sR_0(Y|D)$ 
as in ~\eqref{eqn:Rat-equiv}, where $Y \subset X$ has an additional
property that $Y \cap A = \emptyset$.
It is clear that there is a canonical
surjection $\CH^{A}_0(X|D) \surj \CH_0(X|D)$.
Our goal is to prove that this is an isomorphism.

We let $\tau^*_X \colon \sZ_0(S_X \setminus D) \to \sZ_0(X \setminus D)$ be 
the map $\tau^*_X = \iota^*_+ - \iota^*_-$ under the embeddings
$X \xrightarrow{\iota_\pm} S_X$. We want to show that $\tau^*_X$
preserves the subgroups of rational equivalences.
We continue to assume that $k$ is infinite and perfect.

\begin{lem}\label{lem:map-diff-surfaces}  
Assume $d = 2$. Then the map 
$\tau_X^*$ descends to a group homomorphism 
$\CH^{LW}_0(S_X)\to \CH^{A}_0(X|D)$.
\end{lem}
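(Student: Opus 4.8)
The plan is to show that $\tau^*_X = \iota^*_+ - \iota^*_-$ carries the subgroup $\sR^{LW}_0(S_X)$ of rational equivalences on the double into $\sR^A_0(X|D)$, the subgroup defining the modified Chow group with modulus in which the moving curves avoid $A$. Since $\tau^*_X$ is already a well-defined map on the level of cycle groups $\sZ_0(S_X \setminus D) \to \sZ_0(X \setminus D)$ (the two components $X_\pm$ of $S_X$ are each a copy of $X$, and $D$ is the common singular locus), the entire content is to check that generators of $\sR^{LW}_0(S_X)$ map into $\sR^A_0(X|D)$.

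First I would invoke \lemref{lem:reduction-basic} (in the surface case $d=2$, where $Y = S_X$) to reduce the set of generators of $\sR^{LW}_0(S_X)$ that I must handle. That lemma says that any rational equivalence coming from a Cartier curve $C \inj S_X$ relative to $D$ with a function $f \in \sO^{\times}_{C, \nu^*D}$ can be rewritten as a sum of two rational equivalences associated to curves $C'$ and $C''$, each of which is an effective Cartier divisor of the form $Y \cap (t)$ for a very ample section $t$, is \emph{reduced}, satisfies $C' \cap A = C'' \cap A = \emptyset$, and whose restrictions $(C')|_{X_\pm}$ and $(C'')|_{X_\pm}$ via $\iota_\pm$ are \emph{integral curves in $X$ that are Cartier and smooth along $D$}. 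Thus it suffices to verify the claim for a single such good Cartier curve $C = (t) \subset S_X$ together with $f \in \sO^{\times}_{C, \nu^*D}$, with $C \cap A = \emptyset$.

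The key computation is then the following. Write $C_\pm = \iota_\pm^*(C) = (t)|_{X_\pm}$, which by construction are integral curves in $X$, Cartier and smooth along $D$, disjoint from $A$. The function $f$ restricts to functions $f_\pm \in \sO^{\times}_{C_\pm, \nu_\pm^*D}$ via the two closed immersions, and since $f$ is a unit in a neighborhood of $C \cap D$, the divisors $\divf(f_\pm)$ are supported on $X \setminus D$. By the very definition of $\tau^*_X$ and of pushforward of divisors, one gets
\[
\tau^*_X\bigl(\nu_*(\divf(f))\bigr) = \nu_{+,*}(\divf(f_+)) - \nu_{-,*}(\divf(f_-)),
\]
and each of $\nu_{\pm,*}(\divf(f_\pm))$ is exactly an element of $\sR_0(C_\pm | D) \subset \sR^A_0(X|D)$, because $C_\pm$ is an integral curve in $X$ not contained in $D$, meeting $A$ trivially, and $f_\pm$ lies in the relative $K^M_1$ group $K^M_1(\sO_{(C_\pm)_n, D'_{C_\pm}}, I_{D_{C_\pm}})$ cut out by the smoothness-along-$D$ and unit conditions. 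Hence $\tau^*_X(\sR^{LW}_0(S_X)) \subseteq \sR^A_0(X|D)$, and $\tau^*_X$ descends as claimed.

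The main obstacle I anticipate is the bookkeeping needed to identify $f_\pm$ as genuine elements of the relative Milnor $K_1$-groups defining $\sR_0(C_\pm|D)$: one must check that the smoothness of $C_\pm$ along $D$ (guaranteed by \lemref{lem:reduction-basic}(4)) together with $f$ being a unit near $C \cap D$ really places $f_\pm$ in $K^M_1(\sO_{(C_\pm)_n, D'_{C_\pm}}, I_{D_{C_\pm}})$ with the correct modulus, so that $\nu_{\pm,*}(\divf(f_\pm))$ lands in $\sR_0(X|D)$ as defined in \eqref{eqn:Rat-equiv} rather than merely in the naive cycle group. This is exactly the point where the careful choice of Cartier curves in the preceding lemma — in particular that the $(C')|_{X_\pm}$ are integral, Cartier, and smooth along $D$ — is used. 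The argument is otherwise a direct transcription of the proof of the fundamental exact sequence \cite[(6.3)]{Binda-Krishna}, with the single extra feature that every curve produced by \lemref{lem:reduction-basic} avoids $A$, which is precisely what is needed to conclude that the image lands in $\sR^A_0(X|D)$ rather than in the larger $\sR_0(X|D)$.
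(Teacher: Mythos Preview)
There is a genuine gap in your argument, and it is precisely at the point you flag as ``bookkeeping.'' The relative Milnor $K$-group $K^M_1(\sO_{(C_\pm)_n, D'_{C_\pm}}, I_{D_{C_\pm}})$ consists of units congruent to $1$ modulo $I_{D_{C_\pm}}$, not merely units along $D$. The function $f$ on the Cartier curve $C \subset S_X$ is only required to lie in $\sO^{\times}_{C,\nu^*D}$; its restrictions $f_\pm$ are therefore units near $E = \nu^*(D)$, but there is no reason whatsoever for $f_\pm \equiv 1 \bmod I_{D_{C_\pm}}$. Smoothness of $C_\pm$ along $D$ does not help here: it only ensures that $C_\pm$ coincides with its normalization near $D$, so that the modulus condition makes sense; it does not force $f_\pm$ into the relative group. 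Consequently the individual terms $\nu_{\pm,*}(\divf(f_\pm))$ do \emph{not} lie in $\sR^A_0(X|D)$, and your decomposition fails.

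What actually survives is the observation that $(f_+)|_E = (f_-)|_E$, since $f$ is a function on the double. The paper exploits this as follows. If $C_+ = C_-$ as curves in $X$, then $g = f_+ f_-^{-1}$ satisfies $g|_E = 1$, hence $g \in K^M_1(\sO_{C_n,E}, I_E)$, and $\tau^*_X(\divf(f)) = \divf(g)$ dies in $\CH^A_0(X|D)$. If $C_+ \neq C_-$, the two divisors live on different curves and one cannot simply take a ratio. The paper instead constructs auxiliary very ample sections $s_\infty$ and $(s'_0)_\pm$ on $X$ (avoiding $A$) and rewrites $\tau^*_X(\divf(f))$ via intersection calculus as a sum of three terms of the form $\divf(h_i)$ on integral curves $((s'_0)_+)$, $C_-$, $(s_\infty)$, where each $h_i$ is a ratio like $t_+/t_-$ or $H_+/H_-$ that restricts to $1$ on $E$ precisely because $t_\pm = \iota^*_\pm(t)$ agree on $D$ and $(f_+)|_E = (f_-)|_E$. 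This is not bookkeeping; it is the entire content of the lemma, and it is substantially more involved than what you outline.
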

\begin{proof}
The proof is essentially identical to that of 
\cite[Proposition~5.7]{Binda-Krishna},
but subtle changes are required at several places. So we provide the details.

We have shown in \lemref{lem:reduction-basic} that in order to
prove that $\tau^*_X$ preserves the subgroups of rational equivalences,
it suffices to show that $\tau^*_X(\divf(f)) \in \sR^A_0(X|D)$, where $f$ is a 
rational function (which is regular and invertible along $D$)
on a Cartier curve $\nu\colon  C \inj S_X$ that we can 
choose in the following way.
\begin{enumerate}
\item
There is a very ample line bundle $\sL$ on $S_X$ and sections 
$t \in H^0(S_X, \sL), t_{\pm} = \iota^*_{\pm}(t) \in H^0(X, \iota^*_{\pm}(\sL))$ 
such that
$C = (t)$ and $C_{\pm} = (t_{\pm})$.
\item
$C$ is a reduced Cartier curve of the form $C = C_+ \amalg_E C_-$,
where $E = \nu^*(D)$ such that $C_{\pm}$ are integral curves on 
$X$, none of which is contained in $D$, none of which meets $A$
and each of which is smooth along $D$
(see \cite[Remark~5.6]{Binda-Krishna}). 
\end{enumerate}

If $E = \emptyset$, then $C_\pm$ are two integral curves on $X$
away from $D$ and $\tau^*_X(\divf(f)) = \divf(f|_{C_+}) - \divf(f|_{C_-})
\in \sR^A_0(X|D)$.  We can thus assume that $E \neq \emptyset$.
Then (1) implies that 
\begin{equation}\label{eqn:map-diff-surfaces-0*}
(t_+)_{|D} = \iota'^*(t) = (t_-)_{|D},
\end{equation}
where recall that $\iota' = \iota_+ \circ \iota = \iota_- \circ \iota \colon  
D \inj S_X$ denotes the inclusion map. 

Let $(f_+, f_-)$ be the image of $f$ in $\sO^{\times}_{C_+, E} \times 
\sO^{\times}_{C_-,E} \inj k(C_+) \times k(C_-)$.
It follows from \cite[Lemma~2.2]{Binda-Krishna} that there is an exact sequence
\[
0 \to \sO_{C,E} \to \sO_{C_+,E} \times \sO_{C_-, E} \to \sO_E \to 0.
\]
In particular, we have 
\begin{equation}\label{eqn:map-diff-surfaces-0} 
(f_+)_{|E} = (f_-)_{|E} \in \sO^{\times}_E .
\end{equation}

Let us first assume that $C_+ = C_-$ as curves on $X$.
Let $C$ denote this curve and let $C_n$ denote its normalization. 
Let $\pi\colon C_n \to C \inj X$ denote the composite map.
Since $C$ is regular along $E$ by (2), we get 
$f_+, f_- \in \sO^{\times}_{C_n,E}$.
Setting $g := f_+ f^{-1}_- \in \sO^{\times}_{C_n,E}$, it
follows from ~\eqref{eqn:map-diff-surfaces-0} that
$g \in \Ker(\sO^{\times}_{C_n,E}  \to \sO^{\times}_E)$. 
Moreover, $\tau^*_X(\divf(f)) = \iota^*_+(\divf(f)) -
\iota^*_-(\divf(f)) = \divf(f_+) - \divf(f_-) = \pi_*(\divf(g))$. 
Since $C \cap A = \emptyset$ by (2), we conclude that
$\tau^*_X(\divf(f))$
dies in $\CH^{A}_0(X|D)$.

We now assume that $C_+ \neq C_-$. 
Since $C_+ \cap D = C_- \cap D = E$ as closed subschemes, 
we see that the support of $(C_+ \cup C_-) \cap D$ is same as $E_\red$,
where $C_+ \cup C_- \subset X$ is the closed subscheme defined by the
ideal sheaf $\sI_{C_+} \cap \sI_{C_-}$. Note that since $C_\pm$ are
integral, $C_+ \cup C_-$ is a reduced closed subscheme of $X$
with irreducible components $C_\pm$.

Since $A$ is a finite closed subscheme of $X$
and $(C_+ \cup C_-) \cap A = \emptyset$, the functions 
$f_\pm$ extend to meromorphic functions on
$T_\pm = A \cup C_\pm$
which are regular invertible in a neighborhood of 
$T' = A \cup E$
by letting $f_\pm = 1$ on $A$.
Let $S_\pm$ denote the set of closed points
on $C_\pm$, where $f_\pm$ have poles.
We let $T = T_+ \cup T_-$ and $S = S_+ \cup S_-$.
It is clear that $S \cap D = \emptyset$.

We now repeat the constructions in the proof of \lemref{lem:reduction-basic}
to find a very ample line bundle $\sL$ on $X$ and a section 
$s_{\infty} \in H^0(X, \sL)$ (see \cite[Lemma~1.4]{Levine-2} and
\cite[Theorem~1]{KL}) such that:
\begin{listabc}
\item $(s_{\infty})$ is integral (because $X$ is integral);
\item $(s_\infty) \cap T' = \emptyset$;
\item $(s_\infty) \supset S$;
\item $(s_{\infty}) \not\subset C_+ \cup C_-$;
\item $(s_{\infty})$ is smooth away from $S$; 
\item $X \setminus (s_\infty)$ is affine.
\end{listabc}

It follows that $f_{\pm}|_{T_\pm \setminus (s_\infty)}$ 
extend to regular functions $F_{\pm}$ on
$X \setminus (s_\infty)$. Since $F_{\pm}$ are meromorphic functions
on $X$ which have poles only along $(s_\infty)$, it follows that 
$F_{\pm}s^N_{\infty}$ are elements of $H^0(X \setminus (s_\infty), \sL^N)$ which 
extend to
sections $(s_0)_{\pm}$ of $\sL^N$ on all of $X$, if we choose $N \gg 0$.

Since the functions $s_\infty$ and $F_\pm$ are all meromorphic functions
on $X$ which are regular on $X \setminus (s_\infty)$, 
it follows that each of them 
restricts to a meromorphic function on $T$ which is regular on
$T \setminus (s_\infty)$ and 
$F_\pm|_{T\pm \setminus (s_\infty)} = f_\pm|_{T_\pm \setminus (s_\infty)}$.
Since $s_\infty$ and $F_\pm$ are invertible on $T_\pm$ in some neighborhood of
$T'$, we see that $(s_0)_{\pm}$ are invertible on $T_\pm$ in some
neighborhood of $T'$.
In particular, $(s_0)_{\pm} \notin H^0(X, \sL^N\otimes \sI_{T})$.
As before, using \cite[Lemma~1.4]{Levine-2} and \cite[Theorem~1]{KL},
we can moreover find $\alpha_{\pm} \in H^0(X, \sL^N \otimes \sI_{T}) \subset 
H^0(X, \sL^N)$
such that $(s'_0)_{\pm} := (s_0)_{\pm} + \alpha_\pm$ have the following 
properties:
\begin{listabcprime}
\item $((s'_0)_{\pm})$ are integral;
\item
$((s'_0)_{\pm}) \not\subset C_+ \cup C_-$; 
\item $((s'_0)_{\pm}) \cap T'  = \emptyset$;
\item
$((s'_0)_{\pm})$ are smooth away from $S$.
\end{listabcprime}

Let $H_{\pm}:= \frac{(s'_0)_{\pm}}{s^N_{\infty}} $. We then have 
\begin{equation}\label{eqn::map-diff-surfaces-10}
H_{\pm}= 
\frac{F_{\pm}s^N_{\infty} + (\alpha_{\pm} s^{-N}_{\infty})
s^N_{\infty}}{s^N_{\infty}} = F_{\pm} + \alpha_{\pm} s^{-N}_{\infty} .
\end{equation}

We can now find a dense open subscheme $U' \subset 
X \setminus ((s_\infty) \cup ((s'_0)_+) \cup ((s'_0)_-))$
which contains $T'$ and where $F_\pm, \alpha_\pm, s_\infty$ and 
$(s'_0)_\pm$ are all regular. 
In particular, $H_\pm$ are rational functions on $X$
which are regular on $U'$. 

Since $T' \subset U'$ and $T' \neq \emptyset$ (because $E \neq \emptyset$), 
it follows that $T_\pm \cap U'$ are dense open in $T_\pm$.
It follows that $s_\infty$ and $(s'_0)_\pm$ restrict to regular functions
on $T_\pm \cap U'$ which are invertible in a neighborhood of $T'$.
Since $\alpha_{\pm}$ vanish along $T$ and $s_{\infty}$ is
invertible on $U'$, it follows that ${H_{\pm}}_{|{T_\pm \cap U'}} =
{F_{\pm}}_{|{T_\pm\cap U'}}$. 
Since $F_\pm$ restrict to regular functions on $T_\pm \cap U'$ and
are invertible along $T'$, it follows that $H_\pm$ restrict to
regular functions on $T_\pm \cap U'$ and are invertible along $T'$.

As $H_+$ (resp. $H_-$) is regular on $U'$, it restricts to
a regular function on the dense open subset $C_- \cap U'$ 
(resp. $C_+ \cap U'$) of
$C_-$ (resp. $C_+$). Furthermore,  we have
\begin{equation}\label{eqn:map-diff-surfaces-1*}
{H_+}_{|E} = {F_+}_{|E} = {f_+}_{|E} \ =^{\dagger} \ {f_-}_{|E} 
= {F_-}_{|E} = {H_-}_{|E},
\end{equation}
where $\dagger$ follows from ~\eqref{eqn:map-diff-surfaces-0}.

We thus saw above that $H_+$ and $H_-$ are both regular functions on
$C_- \cap U'$ such that $H_- \neq 0$. 
In particular, ${H_+}/{H_-}$ is a rational function on $C_-$. 
Since $(s'_0)_+$ and $(s'_0)_-$ are both invertible functions on $C_- \cap U'$, 
it follows that the restriction of ${(s'_0)_+}/{(s'_0)_-}$ on $C_-$ is a 
rational function on $C_-$, which is regular and invertible on
the dense open $C_- \cap U'$.
On the other hand, we have 
\begin{equation}\label{eqn:map-diff-surfaces-2}
\frac{(s'_0)_+}{(s'_0)_-} = \frac{{(s'_0)_+}/{s^N_\infty}}
{{(s'_0)_-}/{s^N_\infty}} = \frac{H_+}{H_-}, 
\end{equation}
as rational functions on $X$.
In particular, $(s'_0)_+ \cdot {H_-} = {(s'_0)_-} \cdot {H_+}$
as regular functions on $U'$.
In particular, this identity holds after restricting these regular functions
to $C_- \cap U'$.
We thus get 
\begin{equation}\label{eqn:map-diff-surfaces-20}
\frac{(s'_0)_+}{(s'_0)_-} = \frac{{(s'_0)_+}/{s^N_\infty}}
{{(s'_0)_-}/{s^N_\infty}} = \frac{H_+}{H_-}, 
\end{equation}
as rational functions on $C_-$. Note that $H_-$ is non-zero on $C_-$.
Since $\frac{(s'_0)_+}{(s'_0)_-}$ restricts to a rational function on $C_-$
which is regular and invertible in the dense open $C_- \cap U'$, we conclude
that ${H_+}/{H_-}$ restricts to an identical rational function on $C_-$
which is  regular and invertible on $C_- \cap U'$. 

We now compute
\[
\begin{array}{lll}
\tau^*_X(\divf(f)) & = & \iota^*_+(\divf(f)) -
\iota^*_-(\divf(f)) \\
& = & 
\divf(f_+) - \divf(f_-) \\
& = & 
\left[((s'_0)_+) \cdot C_+ - (s^N_{\infty}) \cdot C_+\right]
- \left[((s'_0)_-) \cdot C_- - (s^N_{\infty}) \cdot C_-\right] \\
& = & \left[((s'_0)_+) \cdot C_+ - ((s'_0)_+) \cdot C_-\right]
+ \left[((s'_0)_+) \cdot C_- - ((s'_0)_-) \cdot C_-\right] \\
& & - \left[(s^N_{\infty}) \cdot C_+ - (s^N_{\infty}) \cdot C_-\right] \\
& = & \left[((s'_0)_+) \cdot (C_+ - C_-)\right] +
\left[C_- \cdot (((s'_0)_+) - ((s'_0)_-))\right] -
\left[(s^N_{\infty}) \cdot (C_+ - C_-)\right] \\
& = & ((s'_0)_+) \cdot (\divf({t_+}/{t_-})) + C_- \cdot 
(\divf({(s'_0)_+}/{(s'_0)_-})) - N(s_{\infty}) \cdot (\divf({t_+}/{t_-})) \\
& = &  ((s'_0)_+) \cdot (\divf({t_+}/{t_-})) + C_- \cdot 
(\divf({H_+}/{H_-})) - N(s_{\infty}) \cdot (\divf({t_+}/{t_-})).
\end{array}
\]

It follows from (b) and (c') that $t_{\pm}$ restrict to regular invertible
functions on $((s'_0)_+)$ and $(s_{\infty})$ along $D$.
We set $h_1 = {(\frac{t_+}{t_-})}_{| ((s'_0)_+)}, \ 
h_2 = {(\frac{H_+}{H_-})}_{| C_-}$ and
$h_3 =  {(\frac{t_+}{t_-})}_{| s_{\infty}}$.
Let $((s'_0)_+)_n \to ((s'_0)_+)$, $(C_-)_n \to C_-$ and
$(s_\infty)_n \to (s_\infty)$ denote the normalization maps.
Let $\nu_1\colon  ((s'_0)_+)_n \to X$, $\nu_2\colon  (C_-)_n  \to X$ and
$\nu_3\colon  (s_\infty)_n \to X$ denote the composite maps.
We now note that $((s'_0)_+), \ C_-$ and $(s_\infty)$ are all regular along
$D$ by (2), (e) and (d'). Furthermore, none of these meets $A$
by (2), (b) and (c'). It follows from
~\eqref{eqn:map-diff-surfaces-0*} and ~\eqref{eqn:map-diff-surfaces-1*}
that $(\nu_1)_*(h_1), \ (\nu_2)_*(h_2)$ and $(\nu_3)_*(h_3)$
die in $\CH^{A}_0(X|D)$.
We conclude that $\tau^*_X(\divf(f))$
dies in $\CH^{A}_0(X|D)$. In particular, $\tau^*_X$ descends to a
map $\tau^*_X\colon \CH^{LW}_0(S_X) \to \CH^{A}_0(X|D)$. 
This finishes the proof.
\end{proof}

The following result generalizes \lemref{lem:map-diff-surfaces} to
higher dimensions.

\begin{prop}\label{prop:map-diff-higher} 
Assume $d \ge 2$. Then the map 
$\tau_X^*$ descends to a group homomorphism 
$\tau_X^* \colon \CH^{LW}_0(S_X)\to \CH^{A}_0(X|D)$.
\end{prop}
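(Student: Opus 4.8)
The plan is to reduce the higher-dimensional assertion to the surface case already established in \lemref{lem:map-diff-surfaces}, by slicing $S_X$ with a well-chosen complete intersection of hyperplane sections. The point is that the surface case already handles all the delicate geometry (producing integral curves that avoid $A$, are smooth along $D$, etc.), so in higher dimensions I only need to show that a rational equivalence on $S_X$ can be pushed down from a sequence of surfaces cut out inside $S_X$.

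First I would recall from \lemref{lem:good-Cartier-curves-dim>2} that when $d \ge 3$, the group $\sR^{LW}_0(S_X)$ is generated by divisors of functions on Cartier curves $C$ that sit inside a reduced complete intersection surface $Y = S_X \cap H_1 \cap \cdots \cap H_{d-2}$, where $Y_{\pm} = X_{\pm} \cap H_1 \cap \cdots \cap H_{d-2}$ are integral, no component of $Y$ lies in $D$, $Y \cap A$ is finite, and $Y_{\pm}$ are smooth away from $C$. The crucial observation is that such a $Y$ is precisely the double $S_{Y_+}$ of the \emph{smooth} surface $Y_+$ along the effective Cartier divisor $Y_+ \cap D$, with $A \cap Y$ now a finite set of dimension zero, i.e.\ a valid $A'$ for the surface situation. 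Thus $Y = Y_+ \amalg_{D_Y} Y_-$ realizes the surface double, and the restriction of the map $\tau^*_X$ to cycles supported on $Y$ agrees with the surface map $\tau^*_{Y_+}$ of \lemref{lem:map-diff-surfaces}.

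The key step is then to verify compatibility of the two descriptions of rational equivalence. Given a Cartier curve $C$ on $S_X$ relative to $D$ as in \lemref{lem:good-Cartier-curves-dim>2}, together with a function $f \in \sO^{\times}_{C, \nu^*D}$, I would apply \lemref{lem:reduction-basic} to reduce to the case where $C = Y \cap (t)$ for a section $t$ of a very ample line bundle, with $C$ reduced, $C \cap A = \emptyset$, and the restrictions $C \cap X_{\pm}$ integral, Cartier, and smooth along $D$. Since such a $C$ is genuinely a Cartier curve on the surface $Y = S_{Y_+}$ relative to $D_Y = Y \cap D$, the surface result \lemref{lem:map-diff-surfaces} applies to $(Y_+, D_Y, A_Y)$ with $A_Y = Y \cap A$ (of dimension $\le 0$), giving that $\tau^*_{Y_+}(\divf(f)) \in \sR^{A_Y}_0(Y_+|D_Y)$. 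The remaining point is that the curves witnessing this rational equivalence on $Y_+$ are integral curves on $X$ avoiding $A$ (because they avoid $A_Y = Y \cap A$ and lie in $Y_+ \subset X$), so their contributions land in $\sR^A_0(X|D)$; hence $\tau^*_X(\divf(f)) = \tau^*_{Y_+}(\divf(f))$ dies in $\CH^A_0(X|D)$.

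\textbf{The main obstacle} I anticipate is bookkeeping rather than conceptual: I must check that the identification $Y = S_{Y_+}$ is compatible with the two closed immersions $\iota_{\pm}$ and that the inclusion $Y_{\pm} \inj X$ carries the surface-level rational equivalences $\sR^{A_Y}_0(Y_+|D_Y)$ into the ambient $\sR^A_0(X|D)$; this requires that the Cartier curves produced on $Y_+$ remain Cartier curves on $X$ avoiding $A$, which is exactly what conditions (4)--(7) of \lemref{lem:good-Cartier-curves-dim>2} and the output of \lemref{lem:reduction-basic} guarantee. One subtlety is ensuring the very ample sections used in \lemref{lem:reduction-basic} extend or restrict coherently between $S_X$ and the slice $Y$; but since the hyperplane sections $H_i$ can be chosen after fixing the line bundles in \lemref{lem:reduction-basic}, and the Bertini arguments are compatible with the finiteness of $Y \cap A$, this poses no genuine difficulty. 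The argument therefore mirrors the passage from \cite[Proposition~5.7]{Binda-Krishna} to its higher-dimensional analogue, with the added constraint on avoiding $A$ threaded through each Bertini application.
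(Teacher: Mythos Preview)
Your reduction to the surface case has a genuine gap: the complete intersection $Y = Y_+ \amalg_{D_Y} Y_-$ is \emph{not} the double $S_{Y_+}$. The slices $Y_\pm = X_\pm \cap H_1 \cap \cdots \cap H_{d-2}$ are cut out by the same hypersurfaces in $\P^N_k$, but from the two \emph{different} embeddings $\iota_\pm \colon X \inj S_X \inj \P^N_k$; there is no reason these yield the same surface inside $X$. They agree along $D$ (since $\iota_+|_D = \iota_-|_D$), but away from $D$ they are in general distinct integral surfaces. So $Y$ is a gluing of two different surfaces along a common divisor, not a double, and \lemref{lem:map-diff-surfaces} simply does not apply to it. Even if you tried to enforce $Y_+ = Y_-$ by restricting to $\Delta$-pullback line bundles, condition (7) of \lemref{lem:good-Cartier-curves-dim>2} only gives smoothness of $Y_\pm$ \emph{away from} $C$, so your base surface would still fail the smoothness hypothesis required for the surface lemma.

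The paper avoids this by \emph{not} reducing to the surface case. After applying \lemref{lem:reduction-basic} to get $C = (t) \cap Y$ with $C_\pm = (t_\pm) \cap Y_\pm$ integral, it passes to the image $\Delta(C) = C_+ \cup C_-$ inside $X$ and chooses a \emph{new} complete intersection surface $Z \subset X$ (not $S_X$) containing both $C_\pm$, with $Z \cap A$ finite and $Z$ smooth away from $\Delta(C)$. The explicit construction from the proof of \lemref{lem:map-diff-surfaces} is then replayed on $X$: one picks sections $s_\infty$, $(s_0)_\pm$ of very ample line bundles on $X$ with prescribed behaviour along $(Z \cap A) \cup E$, and the final rational equivalence is witnessed by the three curves $((s_0)_+) \cap Z$, $C_-$, and $(s_\infty) \cap Z$ in $X$, each arranged to avoid $A$ and be smooth along $D$. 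Thus the higher-dimensional argument runs parallel to the surface case but is not a formal consequence of it.
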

\begin{proof}
We can assume $d \ge 3$ by \lemref{lem:map-diff-surfaces}.
Let $\nu\colon  C \inj S_X$ be a reduced Cartier curve relative to $D$ and let 
$f \in \sO^{\times}_{C, E}$, where $E = \nu^*(D)$.
By \lemref{lem:good-Cartier-curves-dim>2}, we can assume that 
there are inclusions $C \inj Y \inj S_X$ satisfying the conditions
(1) - (7) of \lemref{lem:good-Cartier-curves-dim>2}.
The only price we pay by doing so is that $C$ may no longer be reduced.
We achieve its reducedness using \lemref{lem:reduction-basic} as follows.

We replace $C$ by a reduced Cartier curve
(which we also denote by $C$) that is of the form given in
\lemref{lem:reduction-basic}. We shall now continue with the notations of
the proof of \lemref{lem:reduction-basic}.

We write $C = (t) \cap Y$, where $t \in H^0(S_X, \sL)$ such that
$\sL$ is a very ample line bundle on $S_X$.
Let $t_{\pm} = \iota^*_{\pm}(t) \in H^0(X, \iota^*_{\pm}(\sL))$ and let
$C_{\pm} = (t_{\pm}) \cap Y = (t_{\pm}) \cap Y_{\pm}$.
Let $\nu_{\pm}\colon  C_\pm \inj X$ denote the inclusions.
It follows from our choice of the section that $(t_{\pm})$ are
integral.
If $C_+ = C_-$, exactly the same argument as in the
case of surfaces applies to show that 
$\tau^*_X(\divf(f))$ dies in $\CH^{A}_0(X|D)$.
So we assume $C_+ \neq C_-$. We can also assume $E \neq \emptyset$.

Let $\Delta(C) = C_+ \cup C_-$ denote the scheme theoretic image
in $X$ under the finite map $\Delta$.
Since $X$ is smooth and integral, we can find a complete intersection
integral surface $Z \subset X$ satisfying the following:
\begin{enumerate}
\item
$Z\supset \Delta(C)$;
\item
$Z \cap A$ is finite;
\item 
$Z\cap (t_{\pm})$ are integral curves;
\item
$Z$ is smooth away from $\Delta(C)$.
\end{enumerate}
Set $t^Z_\pm = (t_\pm)_{|Z}$.
Since $C_\pm$ are integral and contained in $Z \cap (t_{\pm})$, it follows that
\begin{equation}\label{eqn:map-diff>2-0}
(t^Z_{\pm}) = C_\pm.
\end{equation}

We let $f_\pm = \nu_\pm^*(f)$.
As in the proof of \lemref{lem:map-diff-surfaces}, the functions 
$f_\pm$ extend to meromorphic functions on
$T_\pm = (Z \cap A) \cup C_\pm = (Z \cap A) \amalg C_\pm$
which are regular invertible in a neighborhood of 
$T' = (Z \cap A) \cup E$
by letting $f_\pm = 1$ on $Z \cap A$.
Moreover,  $f_+|_E = f_-|_E$, where $E = \nu^*(D)$.
Let $S_\pm$ denote the set of closed points
on $C_\pm$, where $f_\pm$ have poles.
We let $T = T_+ \cup T_-$ and $S = S_+ \cup S_-$.
It is clear that $S \cap D = \emptyset$.

We now choose another very ample line bundle $\sM$ on $X$ and 
$s_\infty \in H^0(X, \sM)$ (see the proof of \lemref{lem:reduction-basic})
such that:
\begin{romanlist}
\item $(s_\infty)$ is integral;
\item $(s_\infty)\cap Z$ and $(s_\infty) \cap (t_\pm)$ are proper and integral;
\item $(s_\infty) \supset S$;
\item $(s_\infty) \cap T' = \emptyset$;
\item $X \setminus (s_\infty)$ is affine;
\item $(s_\infty)$ is smooth away from $S$;
\item $(s_\infty) \cap Z$ is smooth away from $\Delta(C)$;
\item $(s_\infty)\cap Z \not\subset \Delta(C)$.
\end{romanlist}

As shown in the proof of \lemref{lem:reduction-basic}, it
follows from (3), (iv) and (vii) 
above that $(s^Z_\infty) := (s_\infty) \cap Z$ is smooth along $D$.
Using (v), we can lift $f_\pm \in k(T_\pm)^\times$ to regular 
functions $F_\pm$ on $X\setminus (s_\infty)$.
Using an argument identical to that given in the proof of 
\lemref{lem:map-diff-surfaces}, we can extend $(s_0)_{\pm} = s_\infty^N F_\pm$ 
(for some $N \gg 0$) 
to global sections $(s_0)_\pm$ of $\mathcal{M}^{N}$ on $X$ so that:
\begin{listabc}
\item
$((s_0)_\pm)$ and $((s_0)_\pm) \cap Z$ are integral;
\item
$((s_0)_\pm) \cap T' = \emptyset$;
\item
$((s_0)_\pm) \cap Z \not\subset \Delta(C)$;
\item
$((s_0)_\pm) \cap Z$ are smooth away from $\Delta(C)$.
\end{listabc}
As we argued in the proof of \lemref{lem:reduction-basic}, it follows from
(iv), (vii),  (viii), (c) and (d) that $(s^Z_\infty)$ and $((s^Z_0)_\pm) := 
((s_0)_\pm) \cap Z$ are smooth along $D$.

Setting $H_\pm = {(s_0)_\pm}/{s^N_\infty}$ and using the argument of the 
proof of \lemref{lem:map-diff-surfaces}, we get 
$H_\pm \in k(X)^{\times}$ and they restrict to rational functions on
$C_\pm$ which are regular and invertible along $D$.
Moreover, ${H_+}/{H_-}$ restricts to a rational function on $C_-$
which is regular and invertible along $D$.
Since
\begin{equation}\label{eqn:map-diff-surfaces-1}
{H_+}_{|E} = {F_+}_{|E} = {f_+}_{|E} \ =^{\dagger} \ {f_-}_{|E} 
= {F_-}_{|E} = {H_-}_{|E},
\end{equation}
where $\dagger$ follows from ~\eqref{eqn:map-diff-surfaces-0},
we have ${H_+}/{H_-} = 1$ on $E$.

We now compute
\[
\begin{array}{lll}
\tau^*_X(\divf(f)) & = & \iota^*_+(\divf(f)) -
\iota^*_-(\divf(f)) \\
& = & 
\divf(f_+) - \divf(f_-) \\
& = & 
\left[((s^Z_0)_+) \cdot C_+ - N (s^Z_{\infty}) \cdot C_+\right]
- \left[((s^Z_0)_-) \cdot C_- - N (s^Z_{\infty}) \cdot C_-\right] \\
& = & \left[((s^Z_0)_+) \cdot C_+ - ((s^Z_0)_+) \cdot C_-\right]
+ \left[((s^Z_0)_+) \cdot C_- - ((s^Z_0)_-) \cdot C_-\right] \\
& & - N \left[(s^Z_{\infty}) \cdot C_+ - (s^Z_{\infty}) \cdot C_-\right] \\
& = & \left[((s^Z_0)_+) \cdot (C_+ - C_-)\right] +
\left[C_- \cdot (((s^Z_0)_+) - ((s^Z_0)_-))\right] -
N \left[(s^Z_{\infty}) \cdot (C_+ - C_-)\right] \\
& {=}^{\dagger} & \left[((s^Z_0)_+) \cdot ((t^Z_+) - (t^Z_-))\right] +
\left[C_- \cdot (((s^Z_0)_+) - ((s^Z_0)_-))\right] \\
& & - N \left[(s^Z_{\infty}) \cdot ((t^Z_+) - (t^Z_-))\right] \\
& = & ((s^Z_0)_+) \cdot (\divf({t^Z_+}/{t^Z_-})) - C_- \cdot 
(\divf({(s_0)_+}/{(s_0)_-})) - N (s^Z_{\infty}) \cdot (\divf({t^Z_+}/{t^Z_-})) \\
& = &  ((s^Z_0)_+) \cdot (\divf({t^Z_+}/{t^Z_-})) - C_- \cdot 
(\divf({H_+}/{H_-})) - N (s^Z_{\infty}) \cdot (\divf({t^Z_+}/{t^Z_-})),
\end{array}
\]
where ${=}^{\dagger}$ follows from ~\eqref{eqn:map-diff>2-0}.

It follows from (iv) and (b) that ${t^Z_+}/{t^Z_-}$ restricts to  regular and
invertible functions on $((s^Z_0)_+)$ and $(s^Z_{\infty})$ along $D$.
Since $t \in H^0(S_X, \sL)$ and $t_{\pm} = \iota^*_{\pm}(t) \in 
H^0(X, \iota^*_{\pm}(\sL))$, it follows that  
$(t_+)_{|D} = \iota'^*(t) = (t_-)_{|D}$. 
In particular, ${({t^Z_+}/{t^Z_-})}_{|E} = 1$.
We have seen before that ${(\frac{H_-}{H_+})}_{| C_-}$ is a
regular and invertible function on $C_-$ along $D$ and
${(\frac{H_+}{H_-})}_{| E} = 1$.

We set $h_1 = {(\frac{t^Z_+}{t^Z_-})}_{| ((s^Z_0)_+)}, \ 
h_2 = {(\frac{H_+}{H_-})}_{| C_-}$ and
$h_3 =  {(\frac{t^Z_+}{t^Z_-})}_{| s^Z_{\infty}}$.
Let $((s^Z_0)_+)_n \to ((s^Z_0)_+)$, $(C_-)_n \to C_-$ and
$(s^Z_\infty)_n \to (s^Z_\infty)$ denote the normalization maps.
Let $\nu_1\colon  ((s^Z_0)_+)_n \to X$, $\nu_2\colon  (C_-)_n  \to X$ and
$\nu_3\colon  (s^Z_\infty)_n \to X$ denote the composite maps.
The curves $((s^Z_0)_+)$ and $(s^Z_\infty)$ are all smooth along
$D$, and $C_-$ is smooth along $D$ by \lemref{lem:reduction-basic}.
Since none of these curves meets $A$ by (iv), (b) and
\lemref{lem:reduction-basic}, we see that
$(\nu_1)_*(h_1), \ (\nu_2)_*(h_2)$ and $(\nu_3)_*(h_3)$ all die in
$\CH^{A}_0(X|D)$. It follows that $\tau^*_X(\divf(f))$ dies
in $\CH^{A}_0(X|D)$. This finishes the proof.
\end{proof}

\section{Proof of the  moving lemma}\label{sec:Move-2}
In this section, we shall finish the proof of \thmref{thm:MLC}.
We need to recall the definition of the Chow group of 0-cycles on singular
varieties introduced by Binda-Krishna \cite{Binda-Krishna}.
This is an improved version of the Levine-Weibel Chow group.

Let $k$ be a field and $X$ a reduced quasi-projective scheme 
of dimension $d \ge 2$ over $k$. A good curve over $X$ is a
reduced scheme $C$ of pure dimension one together with a finite map
$\nu \colon C \to X$ whose image is not contained in $X_\sing$ and 
which is a local complete intersection (l.c.i.) morphism (see 
\cite[\S~2.3]{Binda-Krishna}
for the definition of such a morphism) over a neighborhood of
$\nu(C) \cap X_\sing$ in $X$.
We let $\sR_0(X)$ be the subgroup of $\sZ_0(X_\reg)$ generated by
$\nu_*(\divf(f))$, where $\nu \colon C \to X$ is a good curve over $X$
and $f \in \sO^{\times}_{C, \nu^{-1}(X_\sing)}$.
We let $\CH_0(X) = {\sZ_0(X)}/{\sR_0(X)}$.
One can in fact consider only those good curves $C$ in this definition which
are regular away from $\nu^{-1}(X_\sing)$ (see \cite[Lemma~3.5]{Binda-Krishna}).
It is known that the identity map of $\sZ_0(X_\reg)$ induces a
surjection $\CH^{LW}_0(X) \surj \CH_0(X)$. In certain cases, 
this map is known to be an isomorphism 
if $k$ is algebraically closed. Otherwise, $\CH_0(X)$ has better
behavior than $\CH^{LW}_0(X)$.
Later in this section, we shall also prove a moving lemma for
$\CH_0(X)$.

\subsection{Factorization of $\tau^*_X$ through $\CH_0(S_X)$}
\label{sec:lci}
Let $k$ be a field and $X$ a smooth quasi-projective scheme
of pure dimension $d \ge 2$ over $k$. Let $D \subset X$ be an effective Cartier
divisor with $D' = D_\red$. 
Let $A \subset D$ be a closed subscheme such that
$\dim(A) \le d-2$. We shall 
now show that the map $\tau^*_X$ that we constructed in
\propref{prop:map-diff-higher} actually factors through $\CH_0(S_X)$.

Before we do this, recall that if $f\colon {X_1} \to {X_2}$ is a proper map
and $D_1$ (resp. $D_2$) is an effective Cartier divisor on ${X_1}$ 
(resp. ${X_2}$) such that $f^*(D_2) \le D_1$, then there is a push-forward
map $f_*\colon \CH_0({X_1}|D_1) \to \CH_0({X_2}|D_2)$
(see \cite[Lemma~2.7]{Binda-Saito} or \cite[Proposition~2.10]{KPv}).
But the existing proofs of this use a different (but equivalent) 
definition of the Chow group of 0-cycles with modulus
from the one presented in \S~\ref{sec:cycles}.
In particular, if we are given a closed subscheme 
$A_2 \subset D_2$ such that $\dim(A_2) \le \dim(X_2) - 2$ and 
$A_1 = f^{*}(A_2)$, 
then it is not immediately clear from our definition that
the push-forward map $f_* \colon \CH^{A_1}_0(X_1|D_1) \to \CH^{A_2}_0(X_2|D_2)$
exists. The next lemma shows that this map actually exists.

\begin{lem}\label{lem:PF-A}
The map $f_* \colon \sZ_0(X_1 \setminus D_1) \to \sZ_0(X_2 \setminus D_2)$
induces a push-forward map 
\[
f_* \colon \CH^{A_1}_0(X_1|D_1) \to \CH^{A_2}_0(X_2|D_2).
\]
\end{lem}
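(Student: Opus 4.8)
The plan is to show that the cycle-level proper push-forward $f_*$ carries the subgroup $\sR^{A_1}_0(X_1|D_1)$ into $\sR^{A_2}_0(X_2|D_2)$; once this is established, $f_*$ descends to the quotients and the lemma follows. First I would record that $f$ maps $X_1 \setminus D_1$ into $X_2 \setminus D_2$: the hypothesis $f^*(D_2) \le D_1$ gives the inclusion of supports $f^{-1}(|D_2|) \subseteq |D_1|$, so for $x \notin D_1$ one has $f(x) \notin D_2$, and hence the ordinary proper push-forward of $0$-cycles is defined and already lands in $\sZ_0(X_2 \setminus D_2)$. It therefore suffices to treat a single generator $\nu_*(\divf(g))$ of $\sR^{A_1}_0(X_1|D_1)$, where $\nu \colon Y_n \to X_1$ is the normalization of an integral curve $Y \not\subset (D_1)_\red$ with $Y \cap A_1 = \emptyset$ and $g$ is a relative unit as in \eqref{eqn:Rat-equiv}.

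Next I would analyze the morphism $\pi = f \circ \nu \colon Y_n \to X_2$, which is proper because $\nu$ is finite and $f$ is proper, and its image $Z = f(Y) = \pi(Y_n)$, an integral closed subscheme of $X_2$ of dimension at most one. If $\dim(Z) = 0$, then $Y_n$ is proper over $k$, so $\divf(g)$ has degree zero on the proper normal curve $Y_n$ and $f_* \nu_* \divf(g) = \pi_* \divf(g) = 0 \in \sR^{A_2}_0(X_2|D_2)$. If $\dim(Z) = 1$, I would factor $\pi = \mu \circ h$, where $\mu \colon Z_n \to X_2$ is the normalization of $Z$ followed by the inclusion and $h \colon Y_n \to Z_n$ is the induced finite morphism of normal curves (which exists since $Y_n$ is normal). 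By the compatibility of proper push-forward of cycles with the norm map on function fields (\cite[Proposition~1.4]{Fulton}), one then has $\pi_* \divf(g) = \mu_* \divf(N_{Y_n/Z_n}(g))$.

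It remains to verify that $\mu_* \divf(N_{Y_n/Z_n}(g))$ is a legitimate generator of $\sR^{A_2}_0(X_2|D_2)$, i.e. that $Z$ and $N_{Y_n/Z_n}(g)$ satisfy the three conditions required by the definition of \S\ref{sec:cycles}. The generic point of $Y$ lies in $X_1 \setminus D_1$ and maps to the generic point of $Z$, which lands in $X_2 \setminus D_2$ by the first paragraph; hence $Z \not\subset (D_2)_\red$. For the modulus condition I would use $h^*\mu^*(D_2) = \nu^* f^*(D_2) \le \nu^*(D_1)$, so that $g$, being congruent to $1$ modulo the modulus ideal of $\nu^*(D_1)$, is a fortiori congruent to $1$ modulo the ideal of $h^*\mu^*(D_2)$, and then invoke the local norm lemma for relative Milnor $K_1$ (the same computation underlying the coarse push-forward of \cite[Lemma~2.7]{Binda-Saito} and \cite[Proposition~2.10]{KPv}) to conclude $N_{Y_n/Z_n}(g) \in K^M_1(\sO_{Z_n, \mu^*(D_2)}, I_{D_2,Z})$. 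The genuinely new point, and the only place the refinement by $A$ enters, is the avoidance condition $Z \cap A_2 = \emptyset$: since $Z = f(Y)$ and $A_1 = f^*(A_2)$, any point of $Z \cap A_2$ would be the image of a point of $Y \cap f^{-1}(A_2) = Y \cap A_1$, contradicting $Y \cap A_1 = \emptyset$. I expect the main obstacle to be precisely the norm-compatibility in the modulus condition, because the cited constructions of the coarse push-forward rely on a different (though equivalent) definition of the Chow group with modulus, so the congruence $N_{Y_n/Z_n}(g) \equiv 1$ must be checked directly against the cycle-theoretic definition of \S\ref{sec:cycles}; by contrast, the avoidance condition is immediate from $A_1 = f^*(A_2)$.
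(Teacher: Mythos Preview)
Your proposal is correct and follows essentially the same route as the paper: reduce to a single curve $Y$ with function $g$, split on whether $f(Y)$ is a point or a curve, in the curve case factor through the normalization and use Fulton's norm compatibility $f_*\divf(g) = \divf(N(g))$, then verify separately the avoidance condition $Z \cap A_2 = \emptyset$ (from $A_1 = f^*(A_2)$) and the modulus congruence for the norm. The paper first reduces to $D_1 = f^*(D_2)$ and cites \cite[Lemma~6.19]{Raskind} for the norm inclusion $N(K^M_1(\sO_{(Y_1)_n}, I_{D_1})) \subset K^M_1(\sO_{(Y_2)_n}, I_{D_2})$, which is exactly the point you flagged as the main obstacle.
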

\begin{proof}
It suffices to consider the case when $D_1 = f^*(D_2)$.
Let $Y_1 \subset X_1$ be an integral curve not contained in $D_1$ and
not meeting $A_1$.
Let $\nu_1 \colon (Y_1)_n \to X_1$ be the induced finite map from the
normalization of $Y_1$. Let $Y_2 = f(Y_1)$. As $f$ is proper,
$Y_2 \subset X_2$ is closed and $Y_2 \cap A = \emptyset$. If
$Y_2$ is a closed point, the proof is straightforward.
So we assume $Y_2$ is an integral curve. Then it can not be
contained in $D_2$.
We let $\nu_2 \colon (Y_2)_n \to X_2$ be the induced map from the
normalization of $Y_2$. This gives rise to a finite dominant map
$f' \colon (Y_1)_n \to (Y_2)_n$. 

We let $g \in K^M_1(\sO_{(Y_1)_n, \nu^{-1}_1(D_1)}, I_{D_1})$.
We know from \cite[Chapter~1]{Fulton} that 
$f_*(\divf(g)) = \divf(N(g))$, where $N \colon K^M_1(k(Y_1)) \to
K^M_1(k(Y_2))$ is the norm map. So all we need to show to finish the
proof is that 
\[
N(K^M_1(\sO_{(Y_1)_n, \nu^{-1}_1(D_1)}, I_{D_1})) \subset
K^M_1(\sO_{(Y_2)_n, \nu^{-1}_1(D_2)}, I_{D_2}).
\]
But this is well known (e.g., see \cite[Lemma~6.19]{Raskind})
since $I_{D_1} = f^*(I_{D_2})$.
\end{proof}

Using \lemref{lem:PF-A}, we can prove the following result which will be
used later in this section. This shows that $\CH^A_0(X|D)$ can also
be defined in the style of the definition of $\CH_0(X|D)$ given 
in \cite{Binda-Saito}. For an integral curve $C \subset \P^1_X$ and a
point $t \in \P^1_k(k)$ such that $C \not\subset (X \times \{t\})$, 
we let $[C_t] = \pi_*([C] \cdot (X \times \{t\}))$,
where $\pi \colon \P^1_X \to X$ is the projection
and $[C] \cdot (X \times \{t\})$ is the 0-cycle associated to the
scheme theoretic intersection of $C$ and $X \times \{t\}$.

\begin{lem}\label{lem:PF-A-0}
Let $X$ and $D$ be as above. 
Let $\sR'_0(X|D) \subset \sZ_0(X \setminus D)$ be the subgroup
generated by the 0-cycles $[C_{0}] - [C_\infty]$, where
$C \subset \P^1_X$ is an integral curve 
satisfying the following properties:
\begin{enumerate}
\item
$C \cap \P^1_D$ is finite;
\item
$C \cap (D\times_k \{0, \infty\}) = \emptyset$;
\item
$C \cap \P^1_A = \emptyset$;
\item
The Weil divisor $\nu^*(X \times \{1\}) - \nu^*(\P^1_D)$ is effective,
where $\nu: C_n \to C \inj \P^1_X$ is the composite finite map.
\end{enumerate}
Then $\CH^A_0(X|D) = \coker(\sR'_0(X|D) \to \sZ_0(X \setminus D))$.
\end{lem}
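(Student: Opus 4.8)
The plan is to prove the sharper statement that the two subgroups of $\sZ_0(X\setminus D)$ literally coincide, namely $\sR'_0(X|D)=\sR^A_0(X|D)$, where $\sR^A_0(X|D)$ is the rational-equivalence subgroup defining $\CH^A_0(X|D)$ in \S\ref{sec:LW-mod}. Since $\CH^A_0(X|D)=\coker(\sR^A_0(X|D)\to \sZ_0(X\setminus D))$ by definition, this equality of subgroups is exactly the assertion of the lemma. I would prove the two inclusions by passing back and forth between a function $f$ on a normalized curve $Y_n$ over $X$ and the curve in $\P^1_X$ obtained as the graph of $f$.

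For $\sR^A_0(X|D)\subseteq \sR'_0(X|D)$ I would argue by the graph construction. Take a generator $\nu_*(\divf(f))$ of $\sR^A_0(X|D)$, where $Y\subset X$ is a closed integral curve with $Y\cap A=\emptyset$, $\nu\colon Y_n\to X$ is the normalization, and $f\in K^M_1(\sO_{Y_n,D'_Y},I_{D_Y})=1+I_{D_Y}$ as in \eqref{eqn:Rat-equiv} (we may assume $f$ nonconstant, since the congruence $f\equiv 1 \bmod I_{D_Y}$ forces any constant to be $1$). Form $\gamma_f=(\nu,f)\colon Y_n\to \P^1_X$ and let $C$ be its image. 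The routine checks are: $\gamma_f$ is proper, being the graph immersion $Y_n\inj Y_n\times\P^1$ followed by the finite base change $\nu\times\id_{\P^1}$, so $C$ is a closed integral curve with $C_n=Y_n$ and $\nu=\gamma_f$; the congruence $f\equiv 1 \bmod I_{D_Y}$ translates precisely into condition (4), since $\nu^*(X\times\{1\})=(f-1)_0$ and $\nu^*(\P^1_D)=\nu_Y^*(D)$ are both supported over $D'_Y$; and as $f=1$ at every point over $D$ while $Y\cap A=\emptyset$, conditions (1)--(3) hold. Identifying $[C_0]$ (resp. $[C_\infty]$) with the zero (resp. pole) part of $\divf(f)$ then gives $[C_0]-[C_\infty]=\nu_*(\divf(f))$, placing this generator in $\sR'_0(X|D)$.

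For the reverse inclusion $\sR'_0(X|D)\subseteq \sR^A_0(X|D)$ I would push forward along the normalization of the image. Given an integral $C\subset\P^1_X$ satisfying (1)--(4), I may assume $C$ dominates $\P^1$; the vertical case $C=\{x\}\times\P^1$ yields $[x]-[x]=0$, and horizontal curves yield $0$ as well. Set $Y=\pi(C)$ for $\pi\colon \P^1_X\to X$. Condition (1) forces $Y\not\subset D'$ and condition (3) forces $Y\cap A=\emptyset$, since the finite surjection $C\to Y$ lifts any point of $Y\cap A$ to a point of $C\cap\P^1_A$. The finite dominant $C\to Y$ induces a finite $\mu\colon C_n\to Y_n$, and the second coordinate gives $g=p\circ\nu\in k(C)^\times$. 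Condition (4) reads $(g-1)_0\ge \mu^*\nu_Y^*(D)$, i.e. $g\in K^M_1(\sO_{C_n,\mu^{-1}(D'_Y)},\mu^*I_{D_Y})$. The essential point is the norm-compatibility of relative Milnor $K_1$ -- the same input used in \lemref{lem:PF-A} (via Raskind) -- which shows that $N=N_{k(C)/k(Y)}$ carries this group into $K^M_1(\sO_{Y_n,D'_Y},I_{D_Y})$. Then $\mu_*\divf(g)=\divf(N(g))$ by the standard fact for finite maps of normal curves, and comparing intersection multiplicities with orders of vanishing gives $[C_0]-[C_\infty]=(\pi\nu)_*\divf(g)=\nu_{Y,*}\divf(N(g))$, a generator of $\sR_0(Y|D)$ with $Y\cap A=\emptyset$, hence an element of $\sR^A_0(X|D)$.

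The main obstacle I anticipate is the second inclusion, specifically matching the $\P^1$-side modulus condition (4) on $C_n$ with the \emph{pulled-back} modulus $\mu^*I_{D_Y}$ and then invoking norm-compatibility so that $N(g)$ still satisfies the modulus on $Y_n$; this crucially uses that the source modulus is $\mu^*I_{D_Y}$ rather than a coarser one, exactly the hypothesis under which \lemref{lem:PF-A} applies. The remaining work is bookkeeping of a standard kind: the properness of $\gamma_f$ (to ensure $C$ introduces no spurious points over $A$), the exclusion of degenerate vertical and horizontal curves, and the verification that the intersection cycles $[C]\cdot(X\times\{0\})$ and $[C]\cdot(X\times\{\infty\})$ record the zeros and poles of $g$ with the correct multiplicities. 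The hypotheses of \thmref{thm:MLC} (smoothness of $X$ and $\dim A\le d-2$) enter only through the ambient set-up and play no active role in this particular equivalence of presentations.
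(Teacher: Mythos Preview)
Your proposal is correct and follows essentially the same approach as the paper: the graph construction for $\sR^A_0(X|D)\subseteq\sR'_0(X|D)$, and projection to $X$ together with the norm for the reverse inclusion. The only cosmetic difference is that the paper packages the second step as a direct application of \lemref{lem:PF-A} (push-forward $\CH^{\pi^{-1}(A)}_0(C_n|\pi^*(D))\to\CH^A_0(X|D)$) rather than rewriting the norm argument inline, but the content is identical.
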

\begin{proof}
Let $\sR^A_0(X|D) = \Ker(\sZ_0(X \setminus D) \surj \CH^A(X|D))$.
Let $C \subset X$ be an integral curve not contained in $D$ and
not meeting $A$. Let $\nu \colon C_n \to X$ be the induced map
and let $f \in K^M_1(\sO_{C_n, \nu^{-1}(D)}, I_D)$.
Taking the closure $\Gamma_f$ of the graph of $f \colon C \dasharrow \P^1_k$
in $\P^1_X$, one easily sees that $\divf(f) \in \sR'_0(X|D)$.

Conversely, suppose $C \inj \P^1_X$ is an integral curve defining
an element in $\sR'_0(X|D)$. Let $\pi \colon C_n \to \P^1_X \to X$
be the composite map.
The projection $C \to \P^1_k$ defines
an element $g \in k(C)^{\times}$ which lies in
$K^M_1(\sO_{C_n, \pi^{-1}(D)}, I_D)$  by (1) - (4).
In particular, $\divf(g) \in  \sR_0(C_n|\pi^*(D)) =
\sR^{\pi^{-1}(A)}_0(C_n|\pi^*(D))$.

We let $C'$ be the image of $C$ under the
projection to $X$. If $C'$ is a closed point, the proof is straightforward.
So we can assume that $C'$ is a curve, not contained in $D$ and
not meeting $A$ by (1) and (3). Moreover, projection to $X$ defines
a finite dominant map $C \to C'$. This in turn induces a finite dominant map
$f' \colon C_n \to C'_n$. Furthermore, $\pi$ is same as
the composition $C_n \xrightarrow{f'} C'_n \xrightarrow{\nu} X$.
Since $[C_{0}] - [C_\infty] = \nu_ * \circ f'_*(\divf(g)) =
(\nu \circ f')_*(\divf(g))$,
it suffices therefore to show that $\pi_*(\divf(g)) 
= (\nu \circ f')_*(\divf(g)) \in \sR^A_0(X|D)$.
But this follows directly from \lemref{lem:PF-A}.
\end{proof}

\begin{lem}\label{lem:factor-PN-projection} 
Assume that for every integer $n \ge 0$,
the map $\tau^*_{\P^n_X}$ descends to a well defined homomorphism 
\[
\tau^*_{\P^n_X} \colon \CH_0^{LW}(S_{\P^n_X}) \to \CH^{\P^n_A}_0(\P^n_X|\P^n_D).
\]
Then the map $\tau^*_X \colon \CH_0^{LW}(S_X) \to \CH^{A}_0(X|D)$ factors 
through 
$\CH_0(S_X)$, giving a well defined homomorphism
\[\tau_X^*\colon  \CH_0(S_X) \to \CH^{A}_0(X|D).\]
\end{lem}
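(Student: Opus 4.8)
The plan is to show that the map $\tau^*_X \colon \CH^{LW}_0(S_X) \to \CH^A_0(X|D)$ of \propref{prop:map-diff-higher} annihilates the extra rational equivalences one imposes on $\CH^{LW}_0(S_X)$ to pass to $\CH_0(S_X)$, namely those coming from good curves $\nu\colon C \to S_X$ that need not be Cartier curves. So fix such a good curve together with $f \in \sO^\times_{C, \nu^{-1}((S_X)_\sing)}$ and set $z = \nu_*(\divf(f)) \in \sZ_0(S_X \setminus D')$. Since $(S_X)_\sing = D'$, it suffices to prove $\tau^*_X(z) = 0$ in $\CH^A_0(X|D)$. The central device is the identification $\P^n_{S_X} = S_{\P^n_X}$ (the double construction commutes with the flat base change $\P^n \times -$), under which the two component inclusions and the projections fit into commutative squares $\pi \circ (\iota_{\P^n})_\pm = \iota_\pm \circ p$, where $\pi\colon S_{\P^n_X} \to S_X$ and $p \colon \P^n_X \to X$ are the projections and $\iota_\pm$, $(\iota_{\P^n})_\pm$ are the inclusions of the two components of the respective doubles.

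First I would realise $C$ as a Cartier curve in a projective bundle. Choosing $n \gg 0$ and enough sections of a very ample bundle on $C$ gives a closed immersion $j\colon C \hookrightarrow \P^n_{S_X} = S_{\P^n_X}$ over $S_X$, so that $\pi \circ j = \nu$ and $f$ transports to $\bar f \in \sO^\times_{C, j^{-1}((S_{\P^n_X})_\sing)}$ (note $(S_{\P^n_X})_\sing = \P^n_{D'}$, and $j^{-1}(\P^n_{D'})$ corresponds to $\nu^{-1}(D')$). Since $\pi$ is smooth and $\nu = \pi \circ j$ is an l.c.i.\ morphism in a neighbourhood of $\nu^{-1}(D')$, the standard fact that a closed immersion $j$ with $\pi \circ j$ l.c.i.\ and $\pi$ smooth is itself a regular immersion shows that $j(C)$ is a Cartier curve on $S_{\P^n_X}$ relative to $\P^n_{D'}$. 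Hence $\bar z := j_*(\divf(\bar f))$ represents $0$ in $\CH^{LW}_0(S_{\P^n_X})$, while $\pi_*(\bar z) = z$ because $\pi \circ j = \nu$ and $\divf(\bar f) = \divf(f)$ under $C \cong j(C)$.

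Next I would pass the components of the two doubles through the square. Writing $\tau^*_{\P^n_X}(\bar z) = (\iota_{\P^n})^*_+(\bar z) - (\iota_{\P^n})^*_-(\bar z)$ and using $\pi \circ (\iota_{\P^n})_\pm = \iota_\pm \circ p$, a direct comparison of the $\pm$-parts of the identity $\pi_*(\bar z) = z$ yields the cycle identity $\tau^*_X(z) = p_*(\tau^*_{\P^n_X}(\bar z))$. By hypothesis $\tau^*_{\P^n_X}$ descends to $\CH^{LW}_0(S_{\P^n_X}) \to \CH^{\P^n_A}_0(\P^n_X|\P^n_D)$, so the class of $\tau^*_{\P^n_X}(\bar z)$ vanishes in $\CH^{\P^n_A}_0(\P^n_X|\P^n_D)$. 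As $p$ is proper with $p^*(D) = \P^n_D$, $p^*(A) = \P^n_A$ and $\dim(\P^n_A) \le \dim(\P^n_X) - 2$, \lemref{lem:PF-A} provides a push-forward $p_*\colon \CH^{\P^n_A}_0(\P^n_X|\P^n_D) \to \CH^A_0(X|D)$, whence $\tau^*_X(z) = p_*(\tau^*_{\P^n_X}(\bar z)) = 0$ in $\CH^A_0(X|D)$. Since $z$ was an arbitrary good-curve relation, $\tau^*_X$ factors through $\CH_0(S_X)$.

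The main obstacle will be the embedding step: verifying that the good curve $C$ really can be realised as a Cartier curve in $\P^n_{S_X}$ relative to $\P^n_{D'}$, that $\bar f$ remains regular and invertible along the singular locus, and that all genericity requirements (reducedness, no component inside $\P^n_{D'}$, avoidance of $\P^n_A$) can be arranged simultaneously. This rests on the l.c.i.\ structure of good curves and on the conormal-sequence argument identifying $j$ as a regular immersion; once the embedding is in place, the bookkeeping with the two components and the invocation of \lemref{lem:PF-A} is formal.
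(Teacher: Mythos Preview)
Your approach is correct and matches the paper's proof essentially step for step: factor the good curve through a regular embedding into $\P^n_{S_X}=S_{\P^n_X}$, use the compatibility $\pi\circ(\iota_{\P^n})_\pm=\iota_\pm\circ p$ to get $\tau^*_X(z)=p_*(\tau^*_{\P^n_X}(\bar z))$ at the cycle level, apply the hypothesis to kill $\tau^*_{\P^n_X}(\bar z)$ in $\CH^{\P^n_A}_0(\P^n_X|\P^n_D)$, and push forward via \lemref{lem:PF-A}. The paper streamlines the embedding step by citing \cite[Lemma~3.5]{Binda-Krishna} to assume $\nu$ is globally l.c.i.\ and \cite[Lemma~37.55.3]{SP} for the regular-immersion factorisation, but your local argument near $\nu^{-1}(D')$ is equally valid since Cartier curves only require the regular-sequence condition along the singular locus. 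Your final paragraph overstates the difficulty: no genericity or $\P^n_A$-avoidance is needed for the embedded curve $j(C)$, because the hypothesis already handles \emph{all} Cartier curves on $S_{\P^n_X}$ via $\CH^{LW}_0$, and reducedness of $C$ is part of the definition of a good curve.
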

\begin{proof}
Let $\delta\colon \sZ_0(S_X \setminus D)\to \CH^{A}_0(X|D)$ be the 
composition $\sZ_0(S_X \setminus D)\to \CH_0^{LW}(S_X)
\xrightarrow{\tau^*_X} \CH^{A}_0(X|D)$. 
We have to show that $\delta$ factors through $\CH_0(S_X)$. 
Using \cite[Lemma~3.5]{Binda-Krishna}, we have to show more precisely 
that $\delta( \nu_*( {\rm div}(f))) =0$ for every finite 
l.c.i. morphism $\nu\colon C\to S_X$ 
from a reduced curve $C$ whose image is not contained in $D$
and for every rational function $f$ on $C$ that is regular and invertible 
along $\nu^{-1}(D)$.

Since $\nu$ is a finite l.c.i. morphism, we can factor it as a
composition $\nu = \pi\circ \mu$, where 
$\mu\colon C\hookrightarrow \P^n_{S_X} = S_{\P^n_X}$ 
(using \cite[Proposition~2.3]{Binda-Krishna}) is a regular embedding 
(see \cite[Lemma~37.55.3]{SP}) and 
$\pi\colon \P^n_{S_X}\to S_X$ is the projection. In particular, $\mu(C) =C$ is 
a Cartier curve on the double $S_{\P^n_X}$. 
We let $Y = S_{\P^n_X}$ and $E = \P^n_D$.
It is then clear that $\pi^{-1}(A) = \P^n_A$ is a closed subscheme
of $Y_\sing$ and $\dim(\P^n_A) \le \dim(Y) - 2$.

It follows from the commutative diagram
\begin{equation}\label{eqn:iota-pi}
\xymatrix{
\sZ_0(Y \setminus E) \ar[r]^-{\iota_{\pm}^*} 
\ar[d]_{\pi_*} & \sZ_0(\P^n_X \setminus E) \ar[d]^{\pi_*}\\
 \sZ_0(S_X \setminus D) \ar[r]_{\iota^*_\pm} & \sZ_0(X \setminus D)
}
\end{equation}
and the formula
$\delta = \iota^*_+ - \iota^*_-$ that the square
\[
\xymatrix@C1pc{
\sZ_0(Y \setminus E) \ar[r]^{\delta_Y} \ar[d]_{\pi_*} & 
\CH^{\P^n_A}_0(\P^n_X|E) \ar[d]^{\pi_*} \\
\sZ_0(S_X \setminus D) \ar[r]_{\delta} & \CH^{A}_0(X|D)}
\]
commutes. That is,
$\delta(\nu_*(\divf(f))) = \delta (\pi_* (\mu_*(\divf(f)))) = 
\pi_*( \delta_{Y}(\mu_*(\divf(f))))$.
Note that the push-forward map $\pi_*$ on the right exists by 
\lemref{lem:PF-A}.

By assumption, we have 
$\delta_{Y}(\mu_*(\divf(f)))=0 \in \CH^{\P^n_A}_0(\P^n_X|E)$, 
where $\delta_{Y}$ is the composition
$\sZ_0(Y \setminus E) \to \CH_0^{LW}(Y) \xrightarrow{\tau^*_{\P^n_X}}
\CH^{\P^n_A}_0(\P^n_X|E)$.
Since the push-forward map $\pi_*\colon \CH^{\P^n_A}_0(\P^n_X|E) \to 
\CH^{A}_0(X|D)$ is well defined as we saw above, we are done.
\end{proof}

\begin{lem}\label{lem:lci-LW}
Let $k$ be an infinite perfect field. Then $\tau^*_X$ factorizes through
a homomorphism
\[
\tau^*_X \colon \CH_0(S_X) \to \CH^{A}_0(X|D).
\]
\end{lem}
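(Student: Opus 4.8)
The plan is to obtain this lemma as a formal consequence of \lemref{lem:factor-PN-projection}, whose sole hypothesis is that for every integer $n \ge 0$ the map $\tau^*_{\P^n_X}$ descends to a homomorphism $\CH_0^{LW}(S_{\P^n_X}) \to \CH^{\P^n_A}_0(\P^n_X|\P^n_D)$. So the only thing to establish is that this descent indeed holds for all $n$, and for this I would apply \propref{prop:map-diff-higher} with the triple $(\P^n_X, \P^n_D, \P^n_A)$ in place of $(X, D, A)$. Since $k$ is assumed infinite and perfect, and since \propref{prop:map-diff-higher} was proven precisely under that hypothesis on the base field, there is no obstruction arising from $k$.

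The substance of the argument is therefore to verify that $(\P^n_X, \P^n_D, \P^n_A)$ again satisfies the standing hypotheses of \S~\ref{sec:ML}. First, $\P^n_X = X \times \P^n_k$ is smooth and quasi-projective over $k$ of dimension $d + n \ge 2$, because $X$ has these properties and $\P^n_k$ is smooth projective. Next, $\P^n_D = D \times \P^n_k$ is the pullback of the effective Cartier divisor $D$ along the flat projection $\pi \colon \P^n_X \to X$, hence is itself an effective Cartier divisor on $\P^n_X$. Finally, $\P^n_A = A \times \P^n_k$ is a closed subscheme of $\P^n_D$ with $(\P^n_A)_\red \subset (\P^n_D)_\red$, and its dimension is $\dim(A) + n \le (d-2) + n = \dim(\P^n_X) - 2$. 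Thus all conditions of \propref{prop:map-diff-higher} are met for the triple $(\P^n_X, \P^n_D, \P^n_A)$, and that proposition gives the required descent of $\tau^*_{\P^n_X}$ to $\CH_0^{LW}(S_{\P^n_X})$. Here one uses the identification $S_{\P^n_X} \cong \P^n_{S_X}$ (via \cite[Proposition~2.3]{Binda-Krishna}), so that the object produced is exactly the one appearing in \lemref{lem:factor-PN-projection}.

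With the hypothesis of \lemref{lem:factor-PN-projection} now verified for every $n \ge 0$, that lemma immediately yields the desired factorization of $\tau^*_X$ through $\CH_0(S_X)$, completing the proof. I do not anticipate a genuine obstacle at this stage: the real content — moving the Cartier curves on the double $S_X$ and reducing an arbitrary l.c.i.\ good curve to a Cartier curve inside a projective bundle — has already been carried out in \propref{prop:map-diff-higher} and \lemref{lem:factor-PN-projection}. The present lemma is merely the bootstrapping step that feeds the projective-bundle case back into the factorization criterion, and the only care required is the routine check that the smoothness, dimension, and Cartier-divisor hypotheses are preserved when $X$ is replaced by $\P^n_X$.
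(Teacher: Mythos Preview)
Your proof is correct and follows exactly the paper's approach: the paper's proof is the single line ``Combine \propref{prop:map-diff-higher} and \lemref{lem:factor-PN-projection},'' and you have simply made explicit the routine verification that $(\P^n_X,\P^n_D,\P^n_A)$ satisfies the hypotheses of \propref{prop:map-diff-higher}. There is nothing to add.
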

\begin{proof}
Combine \propref{prop:map-diff-higher} and \lemref{lem:factor-PN-projection}.
\end{proof}

\subsection{Behavior of $\CH^A_0(X|D)$ under the change of 
base field}\label{sec:B-C}
Let $k$ now be any field and $X$ a smooth quasi-projective scheme
of pure dimension $d \ge 2$ over $k$. Let $D \subset X$ be an effective Cartier
divisor. Let $A \subset D$ be a closed subscheme such that
$\dim(A) \le d-2$.
We shall need the following base change property of $\CH^A_0(X|D)$.

\begin{prop}\label{prop:PF-fields-mod} 
Let $k \inj k'$ be a separable algebraic (possibly infinite)
extension of fields. 
Let $X' = X_{k'}, \ D' = D_{k'}$ and $A' = A_{k'}$
denote the base change of $X, D$ and
$A$, respectively. Let ${\rm pr}_{{k'}/{k}}: X' \to X$ be the
projection map. Then the following hold.
\begin{enumerate}
\item
There exists a pull-back 
${\rm pr}^*_{{k'}/{k}}: \CH^A_0(X|D) \to \CH^{A'}_0(X'|D')$.
\item
If there exists a sequence of separable 
field extensions $k = k_0 \subset k_1 \subset
\cdots \subset k'$ with $k' = \cup_i k_i$, then
we have ${\underset{i}\varinjlim} \ \CH^{A_{k_i}}_0(X_{k_i}|D_{k_i}) 
\xrightarrow{\simeq} \CH^{A'}_0(X'|D')$. 
\item
If $k \inj k'$ is finite, then there exists a push-forward
${\rm pr}_{{k'}/{k} \ *}: \CH^{A'}_0(X'|D') \to \CH^A_0(X|D)$ such that
${\rm pr}_{{k'}/{k} \ *} \circ {\rm pr}^*_{{k'}/{k}}$ is multiplication by
$[k': k]$.
\end{enumerate}
\end{prop}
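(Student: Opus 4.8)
The plan is to construct all three maps first at the level of the cycle group $\sZ_0$ and then check compatibility with the refined rational equivalence $\sR^A_0(X|D)$, the key point being that a separable extension has reduced (indeed étale) fibres, so all multiplicities are one and normalizations commute with base change.

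For part (1), note that $\pr_{k'/k}\colon X' \to X$ is flat of relative dimension zero, being the base change of $\Spec(k')\to\Spec(k)$. Since $k'/k$ is separable, the fibre $\pr^{-1}_{k'/k}(x)=\Spec(k(x)\otimes_k k')$ over a closed point $x\in X\setminus D$ is a finite reduced $k(x)$-scheme, so the flat pull-back $\pr^*_{k'/k}[x]=\sum_{x'\mapsto x}[x']$ gives a homomorphism $\sZ_0(X\setminus D)\to\sZ_0(X'\setminus D')$. To see it descends, I would take an integral curve $Y\subset X$ with $Y\not\subset D'$, $Y\cap A=\emptyset$ and $f\in K^M_1(\sO_{Y_n,D'_Y},I_{D_Y})$. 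Because $k'/k$ is separable (hence pro-étale), $Y':=Y_{k'}$ is reduced with normalization $(Y_n)_{k'}=Y_n\times_X X'$, and the modulus ideal pulls back to $I_{D_{Y'}}$, so $\pr^*_{k'/k}f\in K^M_1(\sO_{(Y_n)_{k'},D'_{Y'}},I_{D_{Y'}})$. The compatibility of flat pull-back with proper push-forward in the Cartesian square on $(Y_n)_{k'},Y_n,X',X$ (\cite{Fulton}) then yields $\pr^*_{k'/k}(\nu_*\divf(f))=\nu'_*(\divf(\pr^*_{k'/k}f))$, while $Y'\cap A'=(Y\cap A)_{k'}=\emptyset$. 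Hence $\pr^*_{k'/k}(\sR^A_0(X|D))\subset\sR^{A'}_0(X'|D')$.

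Part (3) is then almost immediate: when $k'/k$ is finite separable, $\pr_{k'/k}$ is finite (so proper) with $D'=\pr^*_{k'/k}(D)$ and $A'=\pr^*_{k'/k}(A)$, so \lemref{lem:PF-A} directly supplies the push-forward $\pr_{k'/k,*}\colon\CH^{A'}_0(X'|D')\to\CH^A_0(X|D)$. For the projection formula it suffices to compute on a closed point $x$: $\pr^*_{k'/k}[x]=\sum_{x'\mapsto x}[x']$ with each multiplicity one, and $\pr_{k'/k,*}[x']=[k(x'):k(x)]\,[x]$, so $\pr_{k'/k,*}\pr^*_{k'/k}[x]=\big(\sum_{x'}[k(x'):k(x)]\big)[x]=\dim_{k(x)}(k(x)\otimes_k k')\,[x]=[k':k]\,[x]$. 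Part (2) is a continuity argument: the pull-backs of (1) are functorial in the tower and induce $\varinjlim_i\CH^{A_{k_i}}_0(X_{k_i}|D_{k_i})\to\CH^{A'}_0(X'|D')$, and I would prove this is an isomorphism. Surjectivity follows since $X'=\varprojlim_i X_{k_i}$ and every closed point of $X'$ descends to a finite level, giving $\varinjlim_i\sZ_0(X_{k_i}\setminus D_{k_i})\xrightarrow{\sim}\sZ_0(X'\setminus D')$; for injectivity I would spread out each relation to a finite level, using the $\P^1$-presentation of \lemref{lem:PF-A-0} so that a generator $[C_0]-[C_\infty]$ arises from an integral $C\subset\P^1_{X'}$ with the constructible conditions (1)--(4), which descend to an integral $C_j\subset\P^1_{X_{k_j}}$ for $j\gg 0$ by standard limit theory (\cite{SP}).

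\emph{Main obstacle.} The delicate point throughout is that the refined conditions --- the avoidance $Y\cap A=\emptyset$ and the modulus condition --- must survive both base change and descent. Avoidance descends by faithful flatness, since $Y_{k_j}\cap A_{k_j}$ base-changing to the empty scheme forces it to be empty already; finiteness of $C\cap\P^1_{D}$ and effectivity of the modulus Weil divisor are likewise constructible and stable, so they descend for large $j$. The genuinely essential input is that separability of every transition map keeps the spread-out curves reduced and integral with normalization commuting with base change --- exactly what forces the cycle-level multiplicities to be one and underlies both the descent in (2) and the degree count in (3); this is where separability, rather than an arbitrary extension, is indispensable.
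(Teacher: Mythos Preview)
Your argument is correct, and for parts (2) and (3) it matches the paper's proof essentially verbatim (Lemma~\ref{lem:PF-A} for the push-forward, spreading out via the $\P^1$-presentation of Lemma~\ref{lem:PF-A-0} for the colimit). The one genuine difference is in part (1): you work directly with the curve-and-function presentation of $\sR^A_0(X|D)$, pulling back $Y$ and $f$ and using that separable base change commutes with normalization, whereas the paper instead passes through the $\P^1$-presentation of Lemma~\ref{lem:PF-A-0} and invokes Bloch's flat pull-back for cycle complexes. Your route is arguably more direct and avoids the detour through Lemma~\ref{lem:PF-A-0}; the paper's route has the advantage that the flat pull-back compatibility is packaged in a single citation. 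One small omission in your version: $Y_{k'}$ need not be integral, only reduced, so you should explicitly decompose it into irreducible components $Y'_1,\ldots,Y'_r$ (each not contained in $D'$ and disjoint from $A'$) and observe that $\nu'_*(\divf(\pr^*_{k'/k}f))$ is the sum of the contributions from the $Y'_i$, each of which lies in $\sR^{A'}_0(X'|D')$ by definition. The paper makes exactly this decomposition for its $\P^1$-curves.
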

\begin{proof}
Let $x \in X \setminus D$ be a closed point. Since
${\rm pr}_{{k'}/{k}}$ is smooth (ind-smooth to be precise),
it follows from our hypothesis that 
${\rm pr}^*_{{k'}/{k}}([x])$ is a well defined 0-cycle in
$\sZ_0(X'|D')$. We thus have a pull-back map
${\rm pr}^*_{{k'}/{k}}: \sZ_0(X \setminus D) \to \sZ_0(X' \setminus D')$.
Let $\sR'_0(X|D) \subset \sZ_0(X \setminus D)$ be
as in \lemref{lem:PF-A-0}.
To show that ${\rm pr}^*_{{k'}/{k}}$ preserves rational equivalence,
it suffices to show that it takes $\sR'_0(X|D)$ to $\sR'_0(X'|D')$
by \lemref{lem:PF-A-0}.

Let $C \subset X \times_k \P^1_k$ be an integral curve as in 
\lemref{lem:PF-A-0}.
It follows from the smoothness of ${\rm pr}_{{k'}/{k}}$
that $C' = {\rm pr}^*_{{k'}/{k}}(C) = C_{k'} \inj 
(X \times_k \P^1_k)_{k'} = X' \times_{k'} \P^1_{k'}$ is a 
reduced curve whose irreducible components $C'_1, \ldots , C'_r$
satisfy conditions (1)-(4) of \lemref{lem:PF-A-0}. 
Furthermore, the flat pull-back property of Bloch's cycle complex
(see \cite[Proposition~1.3]{Bloch86}) says that
\[
{\rm pr}^*_{{k'}/{k}}([C_{0}] - [C_\infty]) = 
[C'_{0}] - [C'_\infty] = \
\stackrel{r}{\underset{i =1}\sum} ([(C'_i)_0] - [(C'_i)_\infty]).
\]
In particular, ${\rm pr}^*_{{k'}/{k}}([C_{0}] - [C_\infty])$ dies in
$\CH^{A'}_0(X'|D')$. This proves (1).

It is clear that the map ${\underset{i}\varinjlim} \ \CH^{A_i}_0(X_i|D_i) \to
\CH^{A'}_0(X'|D')$ is surjective. 
To show injectivity, suppose there is some $i \ge 0$
and $\alpha \in \sZ_0(X_i|D_i)$ such that
${\rm pr}^*_{{k'}/{k_i}}(\alpha) \in \sR'_0(X'|D')$.
We can replace $k$ by $k_i$ and assume $i = 0$.

Let $C^j \inj X' \times_{k'} \P^1_{k'} = (X \times_k \P^1_k)_{k'}$ for
$j = 1, \cdots , r$ be a collection of integral curves as in the proof of (1)
so that ${\rm pr}^*_{{k'}/{k}}(\alpha) =
(\stackrel{r}{\underset{j =1}\sum} \ [C^j_ {0}]) -
(\stackrel{r}{\underset{j =1}\sum} \ [C^j_ {\infty}])$.
Let $\nu^j: C^{j}_n \to X' \times_{k'} \P^1_{k'}$ denote the maps from
the normalizations of the above curves.

We can then find some $i \gg 0$ and integral curves
$W^j \inj X_i \times_{k_i} \P^1_{k_i}$
such that $C^j = W^j \times_{k_i} k'$ for each $j =1, \cdots , r$.
In particular, we have $C^j_0 = 
{\rm pr}^*_{{k'}/{k_i}}(W^j_0)$ and
$C^j_{\infty} = {\rm pr}^*_{{k'}/{k_i}}(W^j_\infty)$ for $j = 1, \cdots , r$.
Since ${\rm pr}_{{k'}/{k_i}}$ is smooth, it follows that 
$C^{j}_n = (W^{j}_n)_{k'}$ for each $j$.
Moreover, it follows from \cite[Lemma 2.2]{KPv} that condition (4) above
holds on each $W^{j}_n$.
It follows that each $W^j$ defines a rational equivalence for 
0-cycles with modulus $D_i$ on $X_i$.

We now set $\alpha_i = {\rm pr}^*_{{k_i}/{k}}(\alpha)$
and let $\beta = \alpha_i - (\stackrel{r}{\underset{j =1}\sum} 
([W^j_0] - [W^j_\infty])) \in \sZ_0(X_i \setminus D_i)$.
It then follows that 
${\rm pr}^*_{{k'}/{k_i}}(\beta) = {\rm pr}^*_{{k'}/{k_i}}(\alpha_i)
- \stackrel{r}{\underset{j =1}\sum} 
([C^j_0] - [C^j_\infty]) 
= {\rm pr}^*_{{k'}/{k}}(\alpha) - \stackrel{r}{\underset{j =1}\sum} 
([C^j_0] - [C^j_\infty]) = 0$ in
$\sZ_0(X' \setminus D')$.
Since the map ${\rm pr}^*_{{k'}/{k_i}}: \sZ_0(X_i \setminus D_i) \to
\sZ_0(X' \setminus D')$ of free abelian groups is clearly injective,
we get $\beta = 0$, which means that $\alpha_i \in
\sR_0(X_i|D_i)$. This proves (2).
The existence of push-forward follows from \lemref{lem:PF-A}
and the formula ${\rm pr}_{{k'}/{k} \ *} \circ 
{\rm pr}^*_{{k'}/{k}} = [k': k]$ is obvious from the definitions.
This proves (3).
\end{proof}

\subsection{Proof of \thmref{thm:MLC}}\label{sec:prf-mlc}
We shall now prove \thmref{thm:MLC}. This is equivalent to showing
that the canonical surjection
$\CH^A_0(X|D) \surj \CH_0(X|D)$ is also a monomorphism.
Using \propref{prop:PF-fields-mod} and the 
standard pro-$\ell$ extension trick, we easily reduce to the case when
$k$ is infinite and perfect. 
We assume this to be the case in the rest of the proof.

By \lemref{lem:lci-LW}, the map $(\iota^*_+ - \iota^*_-) \colon
\sZ_0(S_X \setminus D) \to \sZ_0(X \setminus D)$ defines
a homomorphism $\tau^*_X \colon \CH_0(S_X) \to \CH^A_0(X|D)$.
We now define two maps in opposite direction,
\[
p_{\pm,*} \colon   \sZ_0(X \setminus D) 
\rightrightarrows \sZ_0(S_X \setminus D)
\] 
by $p_{+,*}([x]) = \iota_{+, *}([x])$ 
(resp.~ by $p_{-,*}([x]) = \iota_{-, *}([x])$) for a closed point 
$x \in X \setminus D$. 
Concretely, the two maps $p_{+,*}$ and $p_{-,*}$ copy a cycle $\alpha$ in one 
of the two components of the double $S_X$ (the $X_+$ or the $X_-$ copy). 
Since $\alpha$ is supported outside $D$ (by definition of $\sR_0(X|D)$), 
the cycles $p_{+,*}(\alpha)$ and  
$p_{-,*}(\alpha)$ give classes in $\CH_0(S_X)$.
By \cite[Proposition~5.9]{Binda-Krishna}, the maps $p_{\pm,*}$ descend to 
group homomorphisms 
$p_{\pm,*}\colon \CH_0(X|D) \to \CH_0(S_X)$.
Composing with the canonical surjection $\CH^A_0(X|D) \surj \CH_0(X|D)$,
we get maps
$p^A_{\pm,*} \colon \CH^A_0(X|D) \to \CH_0(S_X)$.
Furthermore, it is clear from the definitions of
$p^A_{\pm,*}$ and $\tau^*_X$ that $\tau^*_X \circ p^A_{\pm,*} = {\rm Id}$.
We thus get maps
\[
\CH^A_0(X|D) \surj \CH_0(X|D) \xrightarrow{p_{\pm,*}} \CH_0(S_X)
\xrightarrow{\tau^*_X} \CH^A_0(X|D),
\]
whose composition is identity. It follows that the first arrow from the
left is injective. This finishes the proof.
\qed


\subsection{Moving lemma for $\CH_0(X)$}\label{sec:ML-lci}
We shall end this section with a moving lemma for the Chow group
of 0-cycles $\CH_0(X)$ for a singular scheme $X$.
This result is of independent interest in the study of 0-cycles and
algebraic $K$-theory on singular varieties. 
We remark that this moving lemma for $\CH^{LW}_0(X)$ is yet unknown 
over finite fields.

Let $k$ be a field and $X$ a reduced quasi-projective scheme of
dimension $d \ge 2$ over $k$.  
Suppose that $A \subset X_\sing$ is a closed subscheme
such that $\dim(A) \le d-2$. We can then define $\CH^A_0(X)$ by repeating
the definition of $\CH_0(X)$, except that we allow only those good curves
$\nu \colon C \to X$ which satisfy the additional condition that
$\nu(C) \cap A = \emptyset$.
We clearly have a canonical surjection $\CH^A_0(X) \surj \CH_0(X)$.

\begin{prop}\label{prop:lci-iso}
Let $k$ be any perfect field. Then the map
$\CH^{A}_0(X) \to  \CH_0(X)$ is an isomorphism.
\end{prop}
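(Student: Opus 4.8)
The plan is to show that the canonical surjection $\CH^A_0(X) \surj \CH_0(X)$ (which exists because $\sR^A_0(X) \subseteq \sR_0(X)$) is injective; equivalently, that every generator $\nu_*(\divf(f))$ of $\sR_0(X)$, coming from an arbitrary good curve $\nu \colon C \to X$ and a function $f \in \sO^{\times}_{C, \nu^{-1}(X_\sing)}$, already lies in $\sR^A_0(X)$, i.e. can be realized by good curves whose images avoid $A$. First I would reduce to the case where $k$ is infinite; since $k$ is perfect by hypothesis, I may pass to an infinite perfect algebraic (e.g. pro-$\ell$) extension. For this I would establish the analogue of \propref{prop:PF-fields-mod} for $\CH^A_0(X)$: a pull-back along a separable (ind-)algebraic extension $k \inj k'$ (which preserves reducedness and the condition $\dim A \le d-2$), compatibility with filtered colimits of such extensions, and, for a finite extension, a push-forward satisfying $\mathrm{pr}_{k'/k\,*} \circ \mathrm{pr}^*_{k'/k} = [k' \colon k]$. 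The transfer rests on the norm compatibility of \lemref{lem:PF-A} applied to the relevant normalizations, and together with the standard pro-$\ell$ trick this lets me enlarge $k$ to an infinite perfect field.

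Next, following the mechanism of \lemref{lem:factor-PN-projection}, I would reduce an arbitrary good curve to a complete-intersection curve in a projective bundle. Since $\nu$ is a finite l.c.i. morphism over a neighborhood of $\nu(C) \cap X_\sing$, it factors as $\nu = \pi \circ \mu$ with $\mu \colon C \hookrightarrow \P^n_X$ a regular closed embedding (using \cite[Proposition~2.3]{Binda-Krishna} and \cite[Lemma~37.55.3]{SP}) and $\pi \colon \P^n_X \to X$ the projection. Because $\pi^{-1}(X_\sing) = (\P^n_X)_\sing$ and $\pi^{-1}(A) = \P^n_A$ with $\dim \P^n_A = \dim A + n \le \dim \P^n_X - 2$, and since the transfer of \lemref{lem:PF-A} makes $\pi_*$ compatible with the two rational-equivalence subgroups, it suffices to move the complete intersection $\mu(C) = C \subset \P^n_X$ off $\P^n_A$ while preserving $\divf(f)$. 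Thus the problem is transported to a setting where the good curve sits as a complete intersection inside a singular scheme whose ``bad locus'' $\P^n_A$ has codimension $\ge 2$.

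Finally, I would carry out the actual displacement by Bertini arguments modeled on \lemref{lem:good-Cartier-curves-dim>2}, \lemref{lem:reduction-basic} and \propref{prop:map-diff-higher}: using the Altman--Kleiman and Jouanolou Bertini theorems (\cite[Theorem~1]{KL}, \cite{Jou}), I would choose general hypersurface sections so as to replace $C$ by reduced complete-intersection good curves $C', C''$ that avoid $\P^n_A$, remain l.c.i. (hence good) along $(\P^n_X)_\sing$, carry functions that stay regular and invertible near the singular locus, and satisfy $\divf(f) = \nu'_*(\divf(f')) + \nu''_*(\divf(f''))$ in $\sZ_0$. The main obstacle is precisely this simultaneous control: arranging avoidance of $A$ (a codimension-$\ge 2$, hence generic, condition), goodness along the singular locus, regularity and invertibility of the transported functions, and equality of the $0$-cycle classes all at once, exactly as in the delicate section-juggling of \lemref{lem:reduction-basic}. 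Pushing the resulting relations down through $\pi_*$, and invoking \cite[Lemma~3.5]{Binda-Krishna} (which permits restricting to good curves regular away from $\nu^{-1}(X_\sing)$), then shows $\nu_*(\divf(f)) \in \sR^A_0(X)$, completing the argument.
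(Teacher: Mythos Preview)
Your proposal is correct and follows essentially the same route as the paper: reduce to an infinite perfect field via the pro-$\ell$ trick, factor the good curve through $\P^n_X$ as a regular embedding followed by the projection, move the resulting Cartier curve off $\P^n_A$ by a Bertini-type argument, and push down. The only cosmetic difference is that the paper shortcuts the moving step by citing \cite[Lemma~1.3]{ESV} (while noting it also follows from the proof of \propref{prop:map-diff-higher}, exactly as you propose), obtaining a single replacement curve $C'$ rather than a pair $C', C''$; and the paper invokes \cite[Lemma~3.5]{Binda-Krishna} at the outset to assume $\nu$ is globally l.c.i., which is what justifies the factorization $\nu = \pi \circ \mu$ with $\mu$ a regular embedding.
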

\begin{proof}
We only need to show injectivity. 
To reduce it to the case when $k$ is infinite,
we note that an analogue of \propref{prop:PF-fields-mod}
is proven for $\CH_0(X)$ in \cite[Proposition~6.1]{Binda-Krishna}.
The reader may check that this proof works verbatim for
$\CH^A_0(X)$. This allows us to use the pro-$\ell$ extension trick
as we did in \thmref{thm:MLC} to reduce the proof of the
proposition when $k$ is infinite and perfect.

Let $\nu \colon C \to X$ be a
good curve over $X$ and let $f$ be a rational function on
$C$ which is regular and invertible in a neighborhood of $\nu^{-1}(X_\sing)$.
It suffices to show that $\nu_*(\divf(f))$ belongs to the subgroup
$\sR^{A}_0(X)$ of rational equivalences that define $\CH^{A}_0(X)$.

First of all, we can assume by \cite[Lemma~3.5]{Binda-Krishna} that
$\nu \colon C \to X$ is an l.c.i. morphism.
As a consequence, we have a factorization $\nu = \pi\circ \mu$, where 
$\mu\colon C\hookrightarrow \P^n_{X}$ is a regular embedding and 
$\pi\colon \P^n_{X}\to X$ is the projection. In particular, $\mu(C) = C$ is 
a Cartier curve on $\P^n_X$. 
The smoothness of $\pi$ implies that $(\P^n_X)_\sing =
\P^n_{X_\sing}$. Furthermore, $\pi^{-1}(A) = \P^n_A$ is a closed subscheme
of $(\P^n_X)_\sing$ and $\dim(\P^n_A) \le \dim(\P^n_X) - 2$.
It is also clear that $\nu_*(\divf(f)) = \pi_* (\mu_*(\divf(f)))$.

We can now apply \cite[Lemma~1.3]{ESV} to find a reduced Cartier
curve $\mu' \colon C' \inj \P^n_X$ such that $C' \cap \P^n_{A} = \emptyset$,
and a rational function $f'$ on $C'$ which is regular and invertible along
$C' \cap \P^n_{X_\sing}$ and $\mu_*(\divf(f)) = \divf(f')$. 
This can also be easily deduced from the proof of  
\propref{prop:map-diff-higher}.  
We let $\nu' \colon C' \xrightarrow{\mu'} \P^n_X \xrightarrow{\pi} X$ denote 
the composite map. Since $\mu'$ is a regular immersion along $\P^n_{X_\sing}$,
it follows that $\nu'$ is an l.c.i. morphism along $X_\sing$.
In particular, $\nu' \colon C' \to X$ is a good curve over $X$.
Furthermore, we have $\nu'(C') \cap A = \pi(C' \cap \P^n_A) = 
\emptyset$. Since $\nu'_*(\divf(f')) = \pi_*(\divf(f')) =
\pi_* \circ \mu_*(\divf(f)) = \nu_*(\divf(f))$,
it follows that $\divf(f)$ dies in $\CH^A_0(X)$.
This finishes the proof.
\end{proof}

\section{Comparison of the idele class groups}
\label{sec:BF}
We shall work under the following assumptions in this section.
We let $k$ be a finite field and $X$ a normal projective integral 
scheme of dimension $d \ge 1$ over $k$. 
Let $C \subset X$ be a reduced closed subscheme of pure codimension one
whose complement $U$ is regular. We let $A = C_\sing$ with reduced
induced closed subscheme structure. Note that $\dim(A) \le d-2$.
In particular, $A = \emptyset$ when $d =1$.
Recall that $\Div_C(X)$ is the directed set of closed subschemes 
of $X$ with support $C$. Let $K$ denote the function field of $X$.
We fix a separable closure $\ov{K}$ of $K$ and let $K^{\ab} \subset \ov{K}$ be 
the maximal abelian extension of $K$.

We fix a closed subscheme $D \in \Div_C(X)$.
If $x \in U$ is a closed point, then it defines a Parshin chain on 
$(U \subset X)$ of length zero. Hence, we have the injection
$\Z = K^M_0(k(x)) \inj I_{U/X}$. Extending it linearly over $U_{(0)}$,
we see that there is canonical inclusion $\sZ_0(U) \inj I_{U/X}$. Composing with
the canonical maps $I_{U/X} \surj C_{U/X} \to \wt{C}_{U/X}$, we get a map 
\begin{equation}\label{eqn:CHM-2}
\cyc_{U/X} \colon \sZ_0(U) \to \wt{C}_{U/X}.
\end{equation}
Composing with the surjection $\wt{C}_{U/X} \surj C(X,D)$, we get our
cycle class map
\begin{equation}\label{eqn:CHM-3}
\cyc_{X|D} \colon \sZ_0(U) \to C(X,D).
\end{equation}
Recall that Bloch's formula for the 0-cycles with modulus asks whether
the following hold if $D$ is an effective Cartier divisor. 
\begin{enumerate}
\item
$\cyc_{X|D}$ annihilates the subgroup of rational equivalences.
\item
The resulting map $\cyc_{X|D} \colon \CH_0(X|D) \to C(X,D)$ is
an isomorphism.
\end{enumerate}
These conditions together 
are equivalent to asking whether the idele class groups 
$\CH_0(X|D)$ and $C_{KS}(X,D)$ are isomorphic.
The goal of this section is to prove this to be the case when $X$
is regular.

\subsection{Reciprocity maps for 0-cycles}\label{sec:CCM}
At any rate, we see using ~\eqref{eqn:CHM-1} and 
\cite[Proposition~4.8, Lemma~8.4]{Gupta-Krishna-REC} that we have a 
commutative diagram of short exact sequences
of topological abelian groups
\begin{equation}\label{eqn:CHM-4}
\xymatrix@C2pc{
0 \ar[r] & \sZ_0(U)_0 \ar[r] \ar[d] & \sZ_0(U) \ar[r]^-{{\deg}/n} 
\ar[d]^-{\cyc_{X|D}} & \Z \ar@{=}[d] \ar[r] & 0 \\
0 \ar[r] & C(X,D)_0 \ar[r] \ar[d] & C(X,D) \ar[r]^-{{\deg}/n} 
\ar[d]^-{\rho_{X|D}} & \Z \ar[r] \ar@{^{(}->}[d] & 0 \\
0 \ar[r] & \pi^{\adiv}_1(X,D)_0 \ar[r] & \pi^{\adiv}_1(X,D) \ar[r]^-{{\deg}/n} &
\wh{\Z} \ar[r] & 0,}
\end{equation} 
where $n \ge 1$ is an integer depending only on $U$.

We also have a commutative diagram of short exact sequences of
topological abelian groups (see \S~\ref{sec:Curve-based} for the definition of
$D_\rho$)
\begin{equation}\label{eqn:CHM-5}
\xymatrix@C2pc{
0 \ar[r] & \sZ_0(U)_0 \ar[r] \ar[d] & \sZ_0(U) \ar[r]^-{{\deg}/n} 
\ar[d]^-{\cyc_{X|D_\rho}} & \Z \ar@{=}[d] \ar[r] & 0 \\
0 \ar[r] & C(X,D_\rho)_0 \ar[r] \ar[d] & C(X,D_\rho) \ar[r]^-{{\deg}/n} 
\ar[d]^-{\rho^c_{X|D}} & \Z \ar[r] \ar@{^{(}->}[d] & 0 \\
0 \ar[r] & \pi^{\ab}_1(X,D)_0 \ar[r] & \pi^{\ab}_1(X,D) 
\ar[r]^-{{\deg}/n} & \wh{\Z} \ar[r] & 0.}
\end{equation} 

Let $\vartheta_{U/X} = \wt{\rho}_{U/X} \circ \cyc_{U/X}, \ \vartheta_{X|D} =
\rho_{X|D} \circ \cyc_{X|D}$ and $\vartheta^c_{X|D} = \rho^c_{X|D} \circ
\cyc_{X|{D_\rho}}$.
It is then clear from the definition of the reciprocity map
$\rho_{U/X}$ (see \S~\ref{sec:Rec-00}) and the cycle class map $\cyc_{U/X}$
that the composition $\vartheta_{U/X}$ is the Frobenius substitution
which takes a closed point $x \in U$ to the image of the Frobenius
automorphism under the canonical map 
$\Gal({\ov{k(x)}}/{k(x)}) \to \pi^{\ab}_1(U)$.
Hence, the same holds for $\rho_{X|D}$ and $\vartheta^c_{X|D}$.

It follows from the classical ramified class field theory for curves
that for every effective Cartier divisor $D \in \Div_C(X)$, the map 
$\vartheta^c_{X|D}$ factors
through rational equivalences and defines a map (obviously continuous)
$\vartheta^c_{X|D} \colon \CH_0(X|D) \to \pi^{\ab}_1(X,D)$ 
(see \cite[Proposition~3.2]{Kerz-Saito-2} and 
\cite[\S~8]{BKS}).
Taking the limit and using \propref{prop:Fil-c-prop}, we see that
$\vartheta_{U/X}$ descends to a continuous homomorphism
\begin{equation}\label{eqn:CHM-6}
\vartheta_{U/X} \colon C(U) \to \pi^{\ab}_1(U).
\end{equation}
It follows from the generalized Chebotarev-Lang density theorem 
(e.g., see \cite[Theorem~5.8.16]{Szamuely}) that
this map has dense image.

\subsection{Bloch's formula for regular schemes}
\label{sec:Reg-BF}
Throughout this subsection, we shall assume that $X$ is regular.
Note that every $D \in \Div_C(X)$ is then an effective Cartier divisor on $X$.
Under the extra assumption, 
we would like to work with a new subgroup of $H^1(K)$ which we introduce below. 

\begin{defn}\label{defn:mod-fil}
Let $\Fil^A_D H^1(K)$ be the subgroup of characters $\chi \in H^1(U)$ such that
for every integral curve $Y \subset X$ not contained in $C$ and not meeting 
$A$, the finite map $\nu \colon Y_n \to X$ has the property that
the image of $\chi$ under $\nu^* \colon H^1(U) \to H^1(\nu^{-1}(U))$
lies in $\Fil_{\nu^*(D)} H^1(k(Y))$, where $Y_n$ is the normalization of $Y$
(see Definition~\ref{defn:Fid_D}). 
We let $\pi^{A}_1(X,D)$ denote the Pontryagin dual of $\Fil^A_D H^1(K)$
and let $\pi^{A}_1(X,D)_0$ be the kernel of the map
$\pi^{A}_1(X,D) \to \Gal({\ov{k}}/k)$. 
\end{defn}
We have the canonical inclusions
\begin{equation}\label{eqn:mod-fil-0}
\Fil^c_D H^1(K) \subset \Fil^A_D H^1(K) \subset H^1(U) \subset H^1(K).
\end{equation}
These give rise to the surjective continuous homomorphisms of 
profinite groups
\begin{equation}\label{eqn:mod-fil-1}
\Gal({K^{\ab}}/K) \surj \pi^{\ab}_1(U) \surj \pi^{A}_1(X,D) \surj
\pi^{\ab}_1(X,D).
\end{equation}

\begin{lem}\label{lem:Cyc-A}
The composite map $C(U) \xrightarrow{\vartheta_{U/X}} \pi^{\ab}_1(U) \surj
\pi^{A}_1(X,D)$ factors through a continuous homomorphism
\[
\vartheta^A_{X|D} \colon \CH^A(X|D) \to \pi^{A}_1(X,D).
\]
\end{lem}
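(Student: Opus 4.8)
The plan is to build $\vartheta^A_{X|D}$ directly on $0$-cycles and to show it annihilates the subgroup $\sR^A_0(X|D)$ of rational equivalences generated by curves avoiding $A$, exactly as was done above for $\vartheta^c_{X|D}$, but now pairing against the larger filtration $\Fil^A_D H^1(K)$. Concretely, I would consider $\tilde{\vartheta}^A \colon \sZ_0(U) \to \pi^{A}_1(X,D)$ given by the cycle class map $\cyc_{U/X}$ followed by $\wt{\rho}_{U/X}$ and the canonical surjection $\pi^{\ab}_1(U) \surj \pi^{A}_1(X,D)$ of ~\eqref{eqn:mod-fil-1}, so that $\tilde\vartheta^A$ sends a closed point $x \in U$ to the image of its Frobenius substitution. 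Since $\pi^{A}_1(X,D)$ is by definition the Pontryagin dual of the discrete group $\Fil^A_D H^1(K)$, it suffices to show that for every generator $\nu_*(\divf(f))$ of $\sR^A_0(X|D)$ and every character $\chi \in \Fil^A_D H^1(K)$, the pairing $\langle \chi, \tilde\vartheta^A(\nu_*(\divf(f)))\rangle$ vanishes.

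Here $\nu \colon Y_n \to X$ is the normalization of an integral curve $Y \subset X$ with $Y \not\subset C$ and $Y \cap A = \emptyset$, and $f \in K^M_1(\sO_{Y_n, D'_Y}, I_{D_Y})$. The key step is to reduce this pairing to the smooth projective curve $Y_n$: using the functoriality of the reciprocity map under the finite push-forward $\nu_*$ (equivalently, the compatibility of the Frobenius substitution with the transfer on abelianized fundamental groups, and the resulting adjunction $\langle \chi, \nu_*(-)\rangle = \langle \nu^*\chi, -\rangle$), I would rewrite
\[
\langle \chi, \tilde\vartheta^A(\nu_*(\divf(f)))\rangle
= \langle \nu^*\chi, \vartheta_{Y_n}(\divf(f))\rangle,
\]
where $\nu^*\chi \in H^1(\nu^{-1}(U))$ is the restriction and $\vartheta_{Y_n}$ is the reciprocity map for the open curve $\nu^{-1}(U) \subset Y_n$. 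Because $Y \not\subset C$ and $Y \cap A = \emptyset$, the defining condition of $\Fil^A_D H^1(K)$ in \defref{defn:mod-fil} applies to $Y$ and yields $\nu^*\chi \in \Fil_{\nu^*(D)} H^1(k(Y))$; this is exactly the point where the restriction to curves missing $A$ in $\sR^A_0(X|D)$ is matched by the correspondingly weakened filtration condition defining the target $\pi^A_1(X,D)$.

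On the other hand, $\divf(f)$ with $f$ congruent to $1$ modulo the modulus $I_{D_Y}$ is a rational equivalence for $\CH_0(Y_n|\nu^*(D))$. Hence classical one-dimensional ramified class field theory (the local symbol of a unit congruent to $1$ modulo $\fm^{n}$ annihilates $\Fil^{\ms}_n H^1$, together with the global reciprocity law that the local invariants sum to zero) forces $\langle \nu^*\chi, \vartheta_{Y_n}(\divf(f))\rangle = 0$; this is the same mechanism already invoked for $\vartheta^c_{X|D}$ through \cite[Proposition~3.2]{Kerz-Saito-2} and \cite[\S~8]{BKS}. Since this holds for all $\chi \in \Fil^A_D H^1(K)$, Pontryagin duality gives $\tilde\vartheta^A(\nu_*(\divf(f))) = 0$, so $\tilde\vartheta^A$ kills $\sR^A_0(X|D)$ and descends to the desired continuous homomorphism $\vartheta^A_{X|D} \colon \CH^A_0(X|D) \to \pi^A_1(X,D)$, continuity being automatic as $\CH^A_0(X|D)$ carries the discrete topology. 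To identify the composite $C(U) \xrightarrow{\vartheta_{U/X}} \pi^{\ab}_1(U) \surj \pi^A_1(X,D)$ with $\vartheta^A_{X|D}$, I would use the moving lemma \thmref{thm:MLC}, which gives $\CH^A_0(X|D) \xrightarrow{\cong} \CH_0(X|D)$, and compose with the canonical projection $C(U) = \varprojlim_E \CH_0(X|E) \to \CH_0(X|D)$; the two resulting maps agree because both send a closed point of $U$ to its Frobenius, and these values determine them.

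The step I expect to be the main obstacle is the reduction to the curve $Y_n$, namely making precise the functoriality of $\wt{\rho}_{U/X}$ under the finite push-forward $\nu_*$ and the adjunction $\langle\chi,\nu_*(-)\rangle = \langle\nu^*\chi,-\rangle$ for the Frobenius substitution. Once this compatibility is in place, everything else follows formally from the definition of $\Fil^A_D H^1(K)$ and the classical theory for curves with modulus.
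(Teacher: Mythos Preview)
Your proposal is correct and follows essentially the same route as the paper: reduce to showing that the Frobenius map $\sZ_0(U)\to\pi^A_1(X,D)$ kills $\sR^A_0(X|D)$, test this against characters $\chi\in\Fil^A_D H^1(K)$ via Pontryagin duality, pull back to the normalization $Y_n$ of a curve avoiding $A$ (where $\nu^*\chi\in\Fil_{\nu^*(D)}H^1(k(Y))$ by the very definition of $\Fil^A_D$), and conclude by classical ramified class field theory on $Y_n$. The step you single out as the obstacle, the adjunction $\langle\chi,\nu_*(-)\rangle=\langle\nu^*\chi,-\rangle$, is exactly what the paper packages into a single commutative square relating $\vartheta^*_{U/X}$ and $\vartheta^*_{V/Y_n}$ under $\nu^*$ and $\nu_*$; and the identification of the composite from $C(U)$ is handled upfront by invoking \thmref{thm:MLC} to write $C(U)$ as $\varprojlim_{D'}\CH^A_0(X|D')$, so that factorization through the level $\CH^A_0(X|D)$ is equivalent to killing $\sR^A_0(X|D)$.
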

\begin{proof}
Since $C(U)$ is the limit of the pro-abelian group
$\{\CH^A_0(X|D)\}_{D \in \Div_C(X)}$ by \thmref{thm:MLC},
it suffices to show that the composite map
$\sZ_0(U) \to \pi^{\ab}_1(U) \surj \pi^{A}_1(X,D)$ kills
$\sR^A_0(X|D)$. Equivalently, we need to show that for every 
character $\chi \in \Fil^A_D H^1(K)$, the induced character
$\vartheta^*_{U/X}(\chi)$ annihilates $\sR^A_0(X|D)$.

Let $Y \subset X$ be an integral curve not contained in $D$ and
not meeting $A$ and let $\nu \colon Y_n \to X$ be the induced finite map.
Let  $V = \nu^{-1}(U)$ and $E = \nu^*(D)$.
Then we get a commutative diagram
\begin{equation}\label{eqn:Cyc-A-0}
\xymatrix@C.8pc{
\Fil^A_D H^1(U) \ar[r]^-{\vartheta^*_{U/X}} \ar[d]_-{\nu^*} & 
\Hom_{\cont}(\sZ_0(U), {\Q}/{\Z}) \ar[d]^-{\nu^*} \\
\Fil_E H^1(V) \ar[r]^-{\vartheta^*_{V/{Y_n}}} & \Hom_{\cont}(\sZ_0(V), {\Q}/{\Z}),}
\end{equation}
where the right vertical arrow is induced by the push-forward map
$\nu_* \colon \sZ_0(V) \to \sZ_0(U)$ and the left vertical arrow is
induced by the definition of $\Fil^A_D H^1(U)$.
We need to show that $\vartheta^*_{V/{Y_n}} \circ \nu^*(\chi)$ annihilates 
$\sR_0(Y_n|E)$.
But this follows from the classical ramified class field theory for curves 
(e.g., see \cite{Serre-CF}).
\end{proof}

It is clear from the above definitions that there is a commutative 
diagram
\begin{equation}\label{eqn:Cyc-A-1}
\xymatrix@C.8pc{
C(U) \ar@{->>}[r] \ar[d]_{\vartheta_{U/X}} & \CH^A_0(X|D) 
\ar[d]^-{\vartheta^A_{X|D}} \ar@{->>}[r] & \CH_0(X|D) 
\ar[d]^-{\vartheta^c_{X|D}} \\
\pi^{\ab}_1(U) \ar@{->>}[r] & \pi^A_1(X,D) \ar@{->>}[r] &
\pi^{\ab}_1(X,D).}
\end{equation}

\begin{lem}\label{lem:Main*-1}
The map $\vartheta^A_{X|D} \colon \CH^{A}_0(X|D) \to \pi^{A}_1(X,D)$
is injective with dense image. Moreover, the induced
map $\vartheta^A_{X|D} \colon \CH^{A}_0(X|D)_0 \to \pi^{A}_1(X,D)_0$
is an isomorphism.
\end{lem}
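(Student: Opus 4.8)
The plan is to prove \lemref{lem:Main*-1} by reducing both the injectivity and density claims to results already established in the excerpt, exploiting the sandwiching of $\pi^A_1(X,D)$ between $\pi^{\adiv}_1(X,D)$ and $\pi^{\ab}_1(X,D)$ and the moving lemma \thmref{thm:MLC}. The central observation is that, by \thmref{thm:MLC}, the canonical surjection $\CH^A_0(X|D) \surj \CH_0(X|D)$ is an isomorphism, so $\vartheta^A_{X|D}$ factors through the curve-based reciprocity map $\vartheta^c_{X|D}$ up to the sandwiching surjection $\pi^A_1(X,D) \surj \pi^{\ab}_1(X,D)$ of \eqref{eqn:mod-fil-1}. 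This means that the finiteness of the degree-zero groups, which is what makes the whole comparison tractable, is already available: $\CH^A_0(X|D)_0 \cong \CH_0(X|D)_0$ is finite by \cite{Esnault-Kerz} (or via \lemref{lem:KS-C-fin}), and $\pi^A_1(X,D)_0$ sits between two finite groups and is therefore finite as well.

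First I would establish the density of the image. This follows exactly as in the proof of \thmref{thm:Rec-mod}: the composite $\vartheta_{U/X} \colon C(U) \to \pi^{\ab}_1(U)$ has dense image by the generalized Chebotarev--Lang density theorem \cite[Theorem~5.8.16]{Szamuely} (as noted in \secref{sec:CCM}), and density is preserved under the continuous surjection $\pi^{\ab}_1(U) \surj \pi^A_1(X,D)$ from \eqref{eqn:mod-fil-1}. The diagram \eqref{eqn:Cyc-A-1} shows that the image of $\vartheta^A_{X|D}$ contains the image of the dense subgroup, so it too is dense. Reducing to degree-zero subgroups via the short exact sequences of \secref{sec:CCM} (the analogues of \eqref{eqn:CHM-4} and \eqref{eqn:CHM-5} with $\pi^A_1$ in the bottom row), the density of $\vartheta^A_{X|D}$ on the full group gives density on $\CH^A_0(X|D)_0 \to \pi^A_1(X,D)_0$; combined with finiteness of the target, this upgrades to surjectivity of the degree-zero map.

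The main work is injectivity, and here the strategy is dual: I would show that $(\vartheta^A_{X|D})^\vee \colon (\pi^A_1(X,D))^\vee = \Fil^A_D H^1(K) \to (\CH^A_0(X|D))^\vee$ is surjective, which by Pontryagin duality (together with the finiteness of the degree-zero groups) yields injectivity of $\vartheta^A_{X|D}$. Given a character $\chi$ on $\CH^A_0(X|D)$, one pulls it back along the cycle map to a character $\vartheta^*_{U/X}(\chi)$ on $\sZ_0(U)$ which, as in \lemref{lem:Cyc-A}, arises from an element of $H^1(U)$; the content is to verify that this element lies in the subgroup $\Fil^A_D H^1(K)$, i.e.\ that its pullback to every integral curve $Y \subset X$ not contained in $C$ and \emph{not meeting} $A$ lands in the correct Matsuda filtration step $\Fil_{\nu^*(D)} H^1(k(Y))$. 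This is precisely the curve-by-curve condition built into \defref{defn:mod-fil}, and the verification reduces, via the classical one-dimensional ramified class field theory of \cite{Serre-CF} applied to each such $Y_n$, to the statement that the curve-based reciprocity pairing detects exactly the Matsuda filtration.

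The hard part will be ensuring that the restriction to curves \emph{avoiding} $A$ suffices to characterize membership in $\Fil^A_D H^1(K)$, rather than merely giving a necessary condition; this is exactly where \thmref{thm:MLC} is indispensable, since it guarantees that rational equivalences on $\CH^A_0(X|D)$ are generated by curves disjoint from $A$, so that a character annihilating all such rational equivalences is genuinely constrained at every relevant point of the boundary. I expect the duality bookkeeping — matching $\coker$ of the dual map against the kernel of the original, and checking that the degree-zero reduction commutes with all the relevant maps — to be routine given \cite[Lemmas~7.10, 7.11]{Gupta-Krishna-REC}, but one must be careful that $\Fil^A_D H^1(K)$ is exhausted by the finite-level characters, which follows from \propref{prop:Fil-c-prop} and the finiteness of $\pi^A_1(X,D)_0$. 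Once surjectivity of the dual map is in hand, injectivity of $\vartheta^A_{X|D}$ on the degree-zero subgroup follows, and combined with the surjectivity established above we conclude that $\vartheta^A_{X|D} \colon \CH^A_0(X|D)_0 \xrightarrow{\cong} \pi^A_1(X,D)_0$ is an isomorphism of finite groups, completing the proof.
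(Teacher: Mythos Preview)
Your density argument matches the paper's: Chebotarev--Lang gives density of the image of $\vartheta_{U/X}$, which descends. Your surjectivity on degree zero via ``density plus finite target'' also works, though the paper instead argues by first showing $\vartheta_{U/X}\colon C(U)_0\to\pi^{\ab}_1(U)_0$ is an isomorphism (from the level-wise \cite{BKS} isomorphism $\CH_0(X|D)_0\cong\pi^{\ab}_1(X,D)_0$ and the limit) and then pushing down along the surjection $\pi^{\ab}_1(U)_0\surj\pi^A_1(X,D)_0$.

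For injectivity, however, you take a detour that is both unnecessary and incomplete. You already observe in your opening paragraph that \thmref{thm:MLC} makes $\CH^A_0(X|D)\to\CH_0(X|D)$ an isomorphism, so that $\vartheta^A_{X|D}$ followed by the surjection $\pi^A_1(X,D)\surj\pi^{\ab}_1(X,D)$ agrees with $\vartheta^c_{X|D}$. The paper simply uses this: since $\vartheta^c_{X|D}$ is injective by \cite[Lemma~8.4, Theorem~8.5]{BKS}, the composite $\CH^A_0(X|D)\xrightarrow{\vartheta^A_{X|D}}\pi^A_1(X,D)\surj\pi^{\ab}_1(X,D)$ is injective, hence $\vartheta^A_{X|D}$ is injective. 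That is the entire argument.

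Your proposed dual route has a genuine gap. You want to show that an arbitrary character $\chi$ of $\CH^A_0(X|D)$ arises from some element of $\Fil^A_D H^1(K)$. You pull $\chi$ back to a character of $\sZ_0(U)$ and assert that it ``arises from an element of $H^1(U)$, as in \lemref{lem:Cyc-A}''. But \lemref{lem:Cyc-A} goes in the opposite direction (it starts from a character of $\pi^{\ab}_1(U)$), and an arbitrary character of $\sZ_0(U)$ need not come from $H^1(U)=(\pi^{\ab}_1(U))^\vee$ at all. To know this you would need that the character factors through $\pi^{\ab}_1(U)$, which amounts to knowing that $C(U)_0\to\pi^{\ab}_1(U)_0$ is an isomorphism --- precisely the \cite{BKS} input that makes the paper's direct argument work in one line. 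So your dual argument is not self-contained, and once you supply the missing ingredient you have recovered the paper's proof with extra steps.
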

\begin{proof}
The density of the image follows from ~\eqref{eqn:CHM-6}. 
We show the other assertions.
We first show that $\vartheta^A_{X|D} \colon \CH^{A}_0(X|D)_0 \to 
\pi^{A}_1(X,D)_0$ is surjective.
It follows from \cite[Lemma~8.4, Theorem~8.5]{BKS} that
the map $\vartheta^c_{X|D} \colon \CH_0(X|D)_0 \to \pi^{\ab}_1(X,D)_0$
is an isomorphism for all $D \in \Div_C(X)$.
Taking the limit and noting that
$C(U)_0 \cong {\underset{D}\varprojlim} \ \CH_0(X|D)_0$
by ~\eqref{eqn:Chow-deg-0}, it follows that
$\vartheta_{U/X} \colon C(U)_0 \to \pi^{\ab}_1(U)_0$ is an isomorphism.
We now use the commutative diagram

\begin{equation}\label{eqn:Main*-1-2}
\xymatrix@C.8pc{
C(U)_0 \ar[r]^-{\vartheta_{U/X}} \ar[d] & \pi^{\ab}_1(U)_0 \ar[d] \\
\CH^{A}_0(X|D)_0 \ar[r]^-{\vartheta^A_{X|D}} & \pi^{A}_1(X,D)_0.}
\end{equation}
We have shown above that the top horizontal arrow is an isomorphism.
The right vertical arrow is surjective. The surjectivity of 
$\vartheta^A_{X|D}$ on degree zero subgroups follows.

To show injectivity, we consider the diagram
\begin{equation}\label{eqn:Main-0-1}
\xymatrix@C.8pc{
\CH^{A}_0(X|D) \ar[r]^-{\vartheta^A_{X|D}} \ar@{->>}[d] &
\pi^{A}_1(X,D) \ar@{->>}[d] \\
\CH_0(X|D) \ar[r]^-{\vartheta^c_{X|D}} & \pi^{\ab}_1(X,D).}
\end{equation}

The left vertical arrow is an isomorphism by \thmref{thm:MLC} 
and the bottom horizontal arrow is injective by 
\cite[Lemma~8.4, Theorem~8.5]{BKS}.
It follows that the top horizontal arrow is injective.
\end{proof}

\begin{cor}\label{cor:Main*-2}
The canonical map 
$\pi^A_1(X,D) \to \pi^{\ab}_1(X,D)$ is an isomorphism.
In particular, the canonical inclusion
$\Fil^c_D H^1(K) \inj \Fil^A_D H^1(K)$ is a bijection.
\end{cor}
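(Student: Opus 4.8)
The plan is to deduce the corollary formally from \lemref{lem:Main*-1}, the moving lemma \thmref{thm:MLC}, and the curve-based reciprocity isomorphism of \cite{BKS}, by reducing everything to the degree zero subgroups. Write $\varphi \colon \pi^{A}_1(X,D) \surj \pi^{\ab}_1(X,D)$ for the canonical surjection of ~\eqref{eqn:mod-fil-1}; it suffices to prove that $\varphi$ is injective and then to dualize.

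First I would record the compatibility of the two degree maps. Both $\pi^{A}_1(X,D)$ and $\pi^{\ab}_1(X,D)$ are quotients of $\pi^{\ab}_1(U)$ lying over $\pi^{\ab}_1(X)$, so the homomorphism $\pi^{A}_1(X,D) \to G_k$ whose kernel defines $\pi^{A}_1(X,D)_0$ factors as $\pi^{A}_1(X,D) \xrightarrow{\varphi} \pi^{\ab}_1(X,D) \to G_k$. In particular $\varphi$ carries $\pi^{A}_1(X,D)_0$ into $\pi^{\ab}_1(X,D)_0$, and, crucially, $\Ker(\varphi) \subseteq \pi^{A}_1(X,D)_0$.

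Next I would pass to the degree zero parts of the commutative square ~\eqref{eqn:Main-0-1}. This yields a square whose top arrow is $\vartheta^A_{X|D} \colon \CH^{A}_0(X|D)_0 \to \pi^{A}_1(X,D)_0$, whose bottom arrow is $\vartheta^c_{X|D} \colon \CH_0(X|D)_0 \to \pi^{\ab}_1(X,D)_0$, whose left arrow is the canonical map $\CH^{A}_0(X|D)_0 \to \CH_0(X|D)_0$, and whose right arrow is the restriction $\varphi_0$ of $\varphi$. The left arrow is an isomorphism by \thmref{thm:MLC}, the top arrow is an isomorphism by \lemref{lem:Main*-1}, and the bottom arrow is an isomorphism by \cite[Lemma~8.4, Theorem~8.5]{BKS}; hence $\varphi_0 \colon \pi^{A}_1(X,D)_0 \xrightarrow{\cong} \pi^{\ab}_1(X,D)_0$ is an isomorphism.

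The conclusion is then a one-line chase: if $\varphi(x) = 0$, then $x$ has degree zero by the compatibility recorded above, so $x \in \pi^{A}_1(X,D)_0$, whence $x = 0$ since $\varphi_0$ is injective. Thus $\varphi$ is injective, and being surjective by ~\eqref{eqn:mod-fil-1} it is an isomorphism of profinite groups. Finally, since $\pi^{A}_1(X,D) = (\Fil^A_D H^1(K))^{\vee}$ and $\pi^{\ab}_1(X,D) = (\Fil^c_D H^1(K))^{\vee}$, with $\varphi$ dual to the inclusion $\Fil^c_D H^1(K) \inj \Fil^A_D H^1(K)$ of ~\eqref{eqn:mod-fil-0}, Pontryagin duality turns the isomorphism $\varphi$ into the asserted bijection of filtrations. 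The genuine content lies entirely in \lemref{lem:Main*-1} and its inputs (the moving lemma and the reciprocity isomorphism of \cite{BKS}); the corollary is a purely formal five-lemma-type argument, and the only point demanding any care is the degree-map compatibility that forces $\Ker(\varphi)$ into the degree zero subgroup.
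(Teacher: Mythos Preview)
Your proof is correct and follows essentially the same route as the paper: both restrict the commutative square in ~\eqref{eqn:Cyc-A-1} (equivalently ~\eqref{eqn:Main-0-1}) to degree zero subgroups and conclude that the map $\pi^{A}_1(X,D)_0 \to \pi^{\ab}_1(X,D)_0$ is an isomorphism from the fact that the other three arrows are isomorphisms by \thmref{thm:MLC}, \lemref{lem:Main*-1}, and \cite[Lemma~8.4, Theorem~8.5]{BKS}. Your write-up is slightly more explicit than the paper's in justifying the reduction to degree zero (via the degree-map compatibility forcing $\Ker(\varphi)$ into $\pi^{A}_1(X,D)_0$) and in spelling out the Pontryagin-duality step for the filtration statement, but there is no substantive difference.
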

\begin{proof}
We only need to show that $\pi^A_1(X,D)_0 \to \pi^{\ab}_1(X,D)_0$
is injective. But this follows immediately by restricting
the diagram ~\eqref{eqn:Cyc-A-1} to degree zero subgroups and 
noting that all arrows in the right square except the bottom 
horizontal arrow are isomorphisms by 
\cite[Lemma~8.4, Theorem~8.5]{BKS}, \thmref{thm:MLC} and
\lemref{lem:Main*-1}. 
\end{proof}

\vskip .3cm

{\bf{Proof of \thmref{thm:Main-5}.}}
By \propref{prop:Fil-D-prop} and Definition~\ref{defn:Curve-fund-grp},
the theorem is equivalent to the statement that
$\Fil_D H^1(K) = \Fil^c_D H^1(K)$ as subgroups of $H^1(K)$.

We first show that $\Fil_D H^1(K) \subset \Fil^c_D H^1(K)$.
In order to show this, we can replace $\Fil^c_D H^1(K)$ by $\Fil^A_D H^1(K)$ 
using \corref{cor:Main*-2}.
We now let $\chi \in \Fil_D H^1(K)$ and let $Y \subset X$ be an
integral curve not contained in $D$ and not meeting $A$.
Let $\nu \colon Y_n \to X$ be the induced map from the normalization of $Y$.
We need to show that $\nu^*(\chi) \in \Fil_{\nu^*(D)} H^1(k(Y))$.

Since $Y \cap C_\sing = Y \cap A = \emptyset$, we can replace $X$ (resp. $C$) by
$X \setminus A$ (resp. $C_\reg$) to show the above assertion.
Since $C_\reg$ is a smooth divisor inside the smooth variety
$X \setminus A$, \cite[Corollary~2.8]{Kerz-Saito-2} 
(which does not require $X \setminus A$ to be projective) applies.
This yields $\nu^*(\chi) \in \Fil_{\nu^*(D)} H^1(k(Y))$.

We now show the reverse inclusion.
Let $\chi \in \Fil^c_D H^1(K) \subset H^1(U)$. 
We let $X' = X \setminus A, \ D' = D \cap X'$ and $C' = C \cap X'$.
Note that $U = X' \setminus C'$.
By \cite[Theorem~7.19]{Gupta-Krishna-REC}, 
it suffices to show that $\chi \in \Fil_{D'} H^1(K)$.
To prove this, we first deduce from \propref{prop:Fil-c-rest} that $\chi \in
\Fil^c_{D'} H^1(K)$. Since $X'$ is regular and $C'$ is a regular
divisor on $X'$ with complement $U$, it follows from 
\cite[Corollary~2.8]{Kerz-Saito-2} that $\chi \in \Fil_{D'} H^1(K)$.
This finishes the proof.
\qed

\vskip .4cm

\begin{remk}\label{remk:Higher-rank}
With $(X,D, U)$ as in \thmref{thm:Main-5} and $r \ge 1$ an integer, let 
$\sR_r(U)$ be the set of isomorphism classes of lisse $\ov{\Q}_{\ell}$-Weil
sheaves on $U$ of rank $r$ up to semi-simplification. For a given $V \in 
\sR_r(U)$, one can define the Swan conductor ${\rm Sw}(V)$ as 
${\rm Sw}(V) = {\underset{E}\sum} {\rm Sw}_E(V) [E] \in \Div(X)$, where
$E$ runs through integral divisors on $X$ and ${\rm Sw}_E(V)$ is the
Swan conductor of $V$ at the generic point of $E$.
The latter was defined in \cite[Definition~3.1]{Esnault-Kerz} when $X$
is a curve. But ${\rm Sw}_E(V)$ makes sense in any dimension if we 
use the Abbes-Saito filtration $G^{(\bullet)}_{K_E}$ of $G(K_E)$, 
where $K_E$ is the Henselization of $K$ at the generic point of $E$
(see \cite{Abbes-Saito})
instead of the classical ramification filtration $I^{(\bullet)}_{K_E}$ of $G(K_E)$ for
curves, used in \cite{Esnault-Kerz}.
One says that $V \in \sR^{\divf}_r(X,D) \subset \sR_r(U)$ if
${\rm Sw}(V) \le D$. 

We can now ask if $\sR^{\divf}_r(X,D) = \sR_r(X,D)$, where the latter
group is as in \cite[Definition~3.6]{Esnault-Kerz}.
\thmref{thm:Main-5} can be viewed as an answer to the $r =1$ case of this
question. An attempt to answer this question may also lead one to a 
proof of \thmref{thm:Main-5} using only ramification theory.
\end{remk}

\vskip .3cm

{\bf{Proof of \thmref{thm:Main-0}.}}
We first assume $k$ to be finite. We need to show that $\cyc_{X|D} \colon \sZ_0(U) \to C_{KS}(X,D)$ 
induces an isomorphism
$\cyc_{X|D} \colon \CH_0(X|D) \xrightarrow{\cong}
C_{KS}(X,D)$.

The theorem is already known when $d \le 2$ by \cite{BKS}. 
We shall therefore assume that $d \ge 3$. We can also assume that $X$
is integral. Since the composite map
\begin{equation}\label{eqn:BF-00}
\sZ_0(U) \xrightarrow{\cyc_{X|D}} C(X,D) \xrightarrow{\psi_{X|D}} C_{KS}(X,D)
\end{equation}
is the cycle class map of ~\eqref{eqn:Cycle-map},
we can replace $C_{KS}(X,D)$ by $C(X,D)$ using
\cite[Theorem~3.8]{Gupta-Krishna-REC}.

Let $s_{X|D} \colon \sZ_0(U) \surj \CH_0(X|D)$ be the canonical quotient map.
We denote the quotient maps
$C(U) \surj \CH_0(X|D)$ and $\pi^{\ab}_1(U) \surj \pi^{\ab}_1(X,D)$
by $t_{X|D}$ and $q'_{X|D}$, respectively.
Let $\wh{\rho}_{U/X} \colon \sZ_0(U) \to C(U)$ be the
canonical map induced by taking the limit of the
maps $s_{X|D'}$ as $D'$ runs through $\Div_C(X)$.

We now consider the commutative diagram

\begin{equation}\label{eqn:Main-00}
\xymatrix@C.8pc{
\sZ_0(U) \ar[dr]_-{s_{X|D}} \ar[ddd]_-{\cyc_{U/X}} \ar[r]^-{\wh{\rho}_{U/X}} 
& C(U) \ar[r]^-{\vartheta_{U/X}} \ar[d]^-{t_{X|D}} &
\pi^{\ab}_1(U) \ar[d]^-{q'_{X|D}} \\
& \CH_0(X|D) \ar[r]^-{\vartheta^c_{X|D}} 
\ar@{.>}[d]^-{\cyc_{X|D}} & \pi^{\ab}_1(X,D) 
\ar[d]^-{r_{X|D}} \\
& C(X,D) \ar[r]^-{\rho_{X|D}} &
\pi^{\adiv}_1(X,D) \\
\wt{C}_{U/X} \ar[ur]^-{p_{X|D}} \ar[rr]^-{\wt{\rho}_{U/X}} & &
\pi^{\ab}_1(U) \ar[u]_-{q_{X|D}}.}
\end{equation}

The triangle on the top left commutes by the definitions its various
arrows. The square on the top right commutes by ~\eqref{eqn:Cyc-A-1}.
The bottom trapezium commutes by ~\eqref{eqn:Rec-final-0}.
The big outer square is commutative because
we have seen before that for any closed point $x \in U$,
both the maps $r_{X|D} \circ q'_{X|D} \circ \vartheta_{X|D} \circ
\wh{\rho}_{U/X}$ and $q_{X|D} \circ \wt{\rho}_{X|D} \circ \cyc_{U/X}$ coincide 
with the Frobenius substitution at $x$.
The map $\vartheta^c_{X|D}$ is injective  by \thmref{thm:MLC},
 \lemref{lem:Main*-1} and \corref{cor:Main*-2}.
The map $r_{X|D}$ is an isomorphism by \thmref{thm:Main-5}
and $\rho_{X|D}$ is injective by \thmref{thm:Rec-mod}.
By a diagram chase, it follows that the composite map $p_{X|D} \circ \cyc_{U/X}$
annihilates $\sR_0(X|D)$ and induces an honest map
\begin{equation}\label{eqn:Main-01}
\cyc_{X|D} \colon \CH_0(X|D) \to C(X,D)
\end{equation}
such that the middle square in ~\eqref{eqn:Main-00} commutes.
Moreover, this map is injective.
It is surjective by ~\eqref{eqn:ICG-maps} and
\cite[Theorem~2.5]{Kato-Saito-2}.
This proves the theorem for finite base field case.

When $k$ is the algebraic closure of a finite field, an easy descent argument
shows that there exists a finite field $k' \subset k$ and a geometrically integral
and smooth projective variety $X'$ over $k'$ together with an effective
Cartier divisor $D' \subset X'$ such that $X = X'_{k}$ and $D = D'_k$.
Since $cyc_{X|D}$ is clearly compatible with the pull-back via field extensions,
and since $\CH_0(X|D)$ and $C_{KS}(X,D)$ are continuous functors
(see \cite[Proposition~6.2]{Binda-Krishna} and \cite[Sublemma~7.3]{Kerz09} and recall that
$K$-theory is also a continuous functor), we conclude from the case of finite
base field.
\qed

\vskip .4cm

\subsection{Bloch's formula without regularity}
\label{sec:No-reg}
In this subsection, we shall prove \thmref{thm:Main-1*}, Bloch's formula
in the pro-setting when $X$ is not regular.
We remark that it is not clear if one should even expect Bloch's formula for each 
$D$ when $X$ is singular.
Our assumption now is the following.
We let $X$ be a normal projective integral
scheme of dimension $d \ge 1$ over 
a finite field $k$ and let $D$ be an effective 
Cartier divisor on $X$. 
We assume that 
$U = X \setminus D$ is regular.

We shall prove \thmref{thm:Main-1*} after the following lemma.
We shall ignore to write the indexing set $\N$ of the
pro-abelian groups used in the theorem in what follows.

\begin{lem}\label{lem:CCM-D}
The cycle class map $\cyc_{U/X} \colon \sZ_0(U) \to \wt{C}_{U/X}$ descends
to a continuous homomorphism of pro-abelian groups
\[
\cyc^{\bullet}_{X|D} \colon \{\CH_0(X|nD)\} \to \{C(X,nD)\}.
\]
\end{lem}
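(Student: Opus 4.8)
The plan is to show that $\cyc_{U/X}$ respects rational equivalence in the pro-sense, i.e.\ that for each $n$ there is $n' \ge n$ (with the assignment compatible with the pro-structure) such that the composite $\sZ_0(U) \xrightarrow{\cyc_{U/X}} \wt{C}_{U/X} \surj C(X,n'D)$ annihilates $\sR_0(X|nD)$. Since $\CH_0(X|nD) = \sZ_0(U)/\sR_0(X|nD)$ by definition, such a compatible family of factorizations is exactly a morphism of pro-abelian groups $\{\CH_0(X|nD)\} \to \{C(X,nD)\}$. The continuity is automatic because both pro-systems carry the discrete (resp.\ inverse-limit) topologies already set up in \S\ref{sec:cycles} and \S\ref{sec:Rec-00}, and $\cyc_{U/X}$ is continuous by construction in \eqref{eqn:CHM-2}.

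First I would fix a curve $Y \subset X$ not contained in $D' = D_\red$ with normalization $\nu\colon Y_n \to X$, and an element $g \in K^M_1(\sO_{Y_n, D'_Y}, I_{(nD)_Y})$, so that $\nu_*(\divf(g))$ is a typical generator of $\sR_0(Y|nD) \subset \sR_0(X|nD)$. The task is to show its image in $C(X,n'D)$ vanishes for suitable $n'$. The natural tool is the reciprocity machinery already assembled: by \thmref{thm:C-Rec} and \lemref{lem:C-Rec-0}, the limit reciprocity map $\wt{\rho}_{U/X}$ induces, for each $n$, a factored map $\rho^c_{X|nD}\colon C(X,\lambda(n)D) \to \pi^{\ab}_1(X,nD)$, and the composite $\vartheta^c_{X|nD}$ is the Frobenius substitution on closed points. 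The one-dimensional (curve) case of class field theory tells us that the divisor of such a $g$ maps to zero in $\pi^{\ab}_1(Y_n, \nu^*(nD))$, and functoriality of the reciprocity maps under the proper pushforward $\nu_*$ (\lemref{lem:Fun-funct}) transports this to a vanishing statement in $\pi^{\ab}_1(X,nD)$.

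The key step, and the one I expect to be the main obstacle, is converting the vanishing in $\pi^{\ab}_1(X,nD)$ back into vanishing in the idele class group $C(X,n'D)$. This is where regularity is \emph{not} available, so one cannot invoke \thmref{thm:Main-5} or the clean isomorphism $\vartheta^c_{X|D}$ of \lemref{lem:Main*-1}. Instead I would exploit the injectivity of the pro-reciprocity map: by \thmref{thm:C-Rec-iso} and \corref{cor:C-Rec-iso-1}, $\rho^\bullet_{X|D}\colon \{C(X,nD)_0\} \to \{\pi^{\ab}_1(X,nD)_0\}$ is an \emph{isomorphism} of pro-abelian groups. Thus if the image of $\nu_*(\divf(g))$ in $\{\pi^{\ab}_1(X,nD)_0\}$ is pro-zero — which follows from curve-based class field theory together with the dense-image/compatibility properties recorded in \eqref{eqn:C-Rec-2} — then its image in $\{C(X,nD)_0\}$ is already pro-zero, giving the desired factorization after passing to a cofinal index $n'$. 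I would handle the degree-zero reduction first (a $0$-cycle supported on $U$ that is rationally equivalent to zero has degree zero), so that the whole argument takes place inside the degree-zero subgroups where $\rho^\bullet_{X|D}$ is a genuine isomorphism.

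Finally I would assemble these pieces: for each $n$ set $n' = \lambda(n)$ (the function produced in \S\ref{sec:Curve-based}), and check that the resulting maps $\CH_0(X|nD)_0 \to C(X,n'D)_0$ are compatible with the transition maps of both pro-systems, using the naturality of $\wt{\rho}_{U/X}$ and of $\cyc_{U/X}$ under enlarging the divisor. Combining the degree-zero factorization with the trivial compatibility of the degree maps (both sides surject onto $\Z$ via $\deg/n_0$ as in \eqref{eqn:C-Rec-2} and \eqref{eqn:CHM-1}) then yields the morphism of full pro-abelian groups $\cyc^\bullet_{X|D}\colon \{\CH_0(X|nD)\} \to \{C(X,nD)\}$, completing the proof.
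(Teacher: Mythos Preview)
Your approach is correct and is essentially the same as the paper's: reduce to degree zero (since $\sR_0(X|nD)\subset \sZ_0(U)_0$), observe that the composite $\{\sR_0(X|nD)\}\to\{C(X,nD)_0\}\to\{\pi^{\ab}_1(X,nD)_0\}$ is zero because the Frobenius-substitution map $\sZ_0(U)\to\pi^{\ab}_1(X,nD)$ already factors through $\vartheta^c_{X|nD}\colon\CH_0(X|nD)\to\pi^{\ab}_1(X,nD)$, and then invoke the pro-injectivity of $\rho^\bullet_{X|D}$ from \thmref{thm:C-Rec-iso}. The paper packages the middle step by citing the existence of $\vartheta^c_{X|nD}$ directly (see just above~\eqref{eqn:CHM-6}) rather than unwinding it curve-by-curve as you do, and it does not need to name the function $\lambda$ explicitly since it works throughout at the level of pro-abelian groups; but these are differences of exposition, not of strategy.
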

\begin{proof}
We need to show that
the map of pro-abelian groups $\{\cyc_{X|D}\} \colon \{\sR_0(X|nD)\} \to 
\{C(X,nD)\}$ is zero. 
Since $\sR_0(X|nD) \subset \sZ_0(U)_0$, it suffices to show using
~\eqref{eqn:CHM-4} that the map of pro-abelian groups 
$\{\cyc_{X|nD}\} \colon \{\sR_0(X|nD)\} \to 
\{C(X,nD)_0\}$ is zero.
To show this, we look at the sequence of maps
\begin{equation}\label{eqn:CCM-D-0}
\{\sR_0(X|nD)\} \xrightarrow{\{\cyc_{X|nD}\}} \{C(X,nD)_0\}
\xrightarrow{\{\rho_{X|nD}\}} \{\pi^{\ab}_1(X,nD)_0\},
\end{equation}
where the second map $\{\rho_{X|nD}\} = \rho^\bullet_{X|D}$ exists by
\thmref{thm:C-Rec}.

We have seen just above
~\eqref{eqn:CHM-6} that the map $\sZ_0(U) \to \pi^{\ab}_1(X,nD)$ factors
through $\vartheta^c_{X|nD} \colon \CH_0(X|nD) \to \pi^{\ab}_1(X,nD)$
for every $n$. It follows that the composite arrow in
~\eqref{eqn:CCM-D-0} is zero.
The lemma now follows from \thmref{thm:C-Rec-iso}.
\end{proof}

Using \lemref{lem:CCM-D}, we get commutative diagrams

\begin{equation}\label{eqn:CCM-D-2}
\xymatrix@C.8pc{
C(U) \ar[r]^-{\cyc_{U/X}} \ar[dr]_-{\vartheta_{U/X}} & 
\wt{C}_{U/X} \ar[d]^-{\wt{\rho}_{U/X}} & 
\{\CH_0(X|nD)\} \ar[r]^-{\cyc^{\bullet}_{X|D}} \ar[dr]_-{\vartheta^{\bullet}_{X|D}}
& \{C(X,nD)\} \ar[d]^-{\rho^{\bullet}_{X|D}}  \\
& \pi^{\ab}_1(U) & & \{\pi^{\ab}_1(X,nD)\},}
\end{equation}
where the triangle on the left is the
limit of the one on the right.

\vskip .3cm

{\bf{Proof of \thmref{thm:Main-1*}.}}
The composite map $\sZ_0(U) \to \{\CH_0(X|nD)\} \to \{C_{KS}(X,nD)\}$
is surjective by \cite[Theorem~2.5]{Kato-Saito-2}.
So we only need to show that $\cyc^{\bullet}_{X|D}$ is injective.
For this, we can replace $\{C_{KS}(X,nD)\}$ by $\{C(X,nD)\}$ 
using ~\eqref{eqn:ICG-maps}.

Using the commutative diagram of exact 
sequences of pro-abelian groups
\begin{equation}\label{eqn:Main-formula-0}
\xymatrix@C.8pc{
0 \ar[r] & \{\CH_0(X|nD)_0\} \ar[r] \ar[d]_-{\cyc^{\bullet}_{X|D}} &
\{\CH_0(X|nD)\} \ar[r]^-{{\deg}} \ar[d]_-{\cyc^{\bullet}_{X|D}} & 
\Z \ar@{=}[d]  \\
0 \ar[r] & \{C(X,nD)_0\} \ar[r] & \{C(X,nD)\} \ar[r]^-{{\deg}} &
\Z,}
\end{equation}
it suffices to show that the left vertical arrow is injective.

For this, we look at the sequence of maps (see ~\eqref{eqn:CCM-D-2})
\begin{equation}\label{eqn:Main-formula-1}
\{\CH_0(X|nD)_0\} \xrightarrow{\cyc^{\bullet}_{X|D}} \{C(X,nD)_0\}
\xrightarrow{\rho^{\bullet}_{X|D}} \{\pi^{\ab}_1(X,nD)_0\}.
\end{equation}
We have seen in the proof of \thmref{thm:Main-0} that 
the composite map is an (in fact level wise) isomorphism
(see \cite[Lemma~8.4, Theorem~8.5]{BKS}).
The map $\rho^{\bullet}_{X|D}$ is an isomorphism by \corref{cor:C-Rec-iso-1}.
We conclude that $\cyc^{\bullet}_{X|D}$ is an isomorphism.
This finishes the proof.
\qed
\vskip .3cm

As a consequence of \thmref{thm:Main-1*}, we have the following. 
\begin{cor}
The inverse limit idele class group $\wt{C}_{U/X}$ is independent of the normal compactification $X$ of $U$. 
\end{cor}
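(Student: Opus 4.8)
The plan is to deduce this directly from the pro-isomorphism of Theorem~\ref{thm:Main-1*} together with the description of $C_{KS}(X,D)$ as a $K$-theoretic invariant that is intrinsic to $U$. The key observation is that by Theorem~\ref{thm:Main-1*} there is an isomorphism of pro-abelian groups $\{\CH_0(X|nD)\}_{n} \xrightarrow{\cong} \{C_{KS}(X,nD)\}_{n}$, and hence an isomorphism of their inverse limits $C(U) = \varprojlim_n \CH_0(X|nD) \xrightarrow{\cong} \varprojlim_n C_{KS}(X,nD)$. Via the identification $\psi_{X|D} \colon C(X,nD) \xrightarrow{\cong} C_{KS}(X,nD)$ of \eqref{eqn:ICG-maps}, the right-hand side is exactly $\varprojlim_n C(X,nD)$, which (since the subschemes $nD$ are cofinal in $\Div_C(X)$ for $C = D_\red$) coincides with $\wt{C}_{U/X}$.

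First I would make precise the claim that $C(U) = \varprojlim_E \CH_0(X|E)$, taken over all effective Cartier divisors $E$ with support $D_\red$, depends only on the open regular scheme $U = X\setminus D$ and not on the normal compactification. The left-hand group $C(U)$ is manifestly defined using only $U$ and its regular closed points together with rational equivalences coming from curves in $U$; indeed $C(U)_0 \cong \varprojlim_E \CH_0(X|E)_0$ by \eqref{eqn:Chow-deg-0}, and the Chow groups with modulus $\CH_0(X|E)$ are built from $\sZ_0(U)$ modulo relations $\sR_0(Y|E)$ attached to curves $Y$, so in the limit the boundary data at infinity are absorbed. This is the conceptual heart of why $C(U)$ is a birational invariant of compactifications.

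The second step is to transport this intrinsic nature across the isomorphism. Since $\{nD\}_{n\in\N}$ is cofinal in $\Div_C(X)$, the inverse limit over $n$ agrees with the inverse limit over $\Div_C(X)$, so $\wt{C}_{U/X} = \varprojlim_{D'\in\Div_C(X)} C(X,D') \cong \varprojlim_n C(X,nD)$. Applying $\psi_{X|D}$ levelwise and then taking the limit of the pro-isomorphism of Theorem~\ref{thm:Main-1*} yields $\wt{C}_{U/X} \cong \varprojlim_n \CH_0(X|nD) = C(U)$. As $C(U)$ is manifestly independent of the chosen normal compactification $X$ of $U$, the same follows for $\wt{C}_{U/X}$.

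The main obstacle will be justifying that the limits commute with the various identifications and that the isomorphism of pro-abelian groups induces a genuine isomorphism on inverse limits rather than merely a pro-isomorphism. A map of pro-abelian groups that is an isomorphism in the pro-category does induce an isomorphism on $\varprojlim$, so this is automatic; the subtler point is the cofinality argument identifying $\varprojlim_n C(X,nD)$ with $\wt{C}_{U/X}$, which requires checking that every $D'\in\Div_C(X)$ is dominated by some $nD$. Since $X$ is Noetherian and $D_\red = C$ is the common support, $\sI_D^n \subseteq \sI_{D'}$ for $n \gg 0$, giving the required cofinality and completing the argument.
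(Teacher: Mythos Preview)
Your overall strategy matches the paper's: use Theorem~\ref{thm:Main-1*} to identify $\wt{C}_{U/X}$ with $C(U) = \varprojlim_n \CH_0(X|nD)$, and then invoke the independence of $C(U)$ from the compactification. The cofinality and pro-isomorphism-to-limit-isomorphism points are fine.

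The gap is in your first step. You assert that $C(U)$ is ``manifestly defined using only $U$'' because the generators $\sZ_0(U)$ and the curves giving relations live in $U$. This is not correct as stated. Each $\CH_0(X|E)$ is the quotient of $\sZ_0(U)$ by relations coming from integral curves $Y \subset X$ not contained in $D$, and the relation uses the full normalization $Y_n$ together with the pullback $\nu^*(E)$ supported on $Y_n \setminus \nu^{-1}(U)$. Both the set of such curves and the modulus condition at the boundary depend on the chosen compactification $X$, and passing to the inverse limit over $E$ does not obviously erase this dependence: you are taking $\varprojlim$ of quotients by a \emph{decreasing} family of subgroups, so there is no simple intrinsic description of the limit in terms of $U$ alone. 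Your phrase ``in the limit the boundary data at infinity are absorbed'' is not a proof.

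The paper handles this by citing \cite[Lemma~3.1]{Kerz-Saito-2}, which is exactly the statement that $\varprojlim_E \CH_0(X|E)$ is independent of the normal compactification. That lemma is proved by dominating two compactifications by a common one and checking push-forward compatibilities; it is short but not vacuous. Once you replace your ``manifestly'' paragraph with a reference to that lemma, your argument is complete and coincides with the paper's.
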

\begin{proof}
Follows from \thmref{thm:Main-1*} and \cite[Lemma~3.1]{Kerz-Saito-2}.
\end{proof}

\vskip .4cm

\noindent\emph{Acknowledgements.}
Gupta was supported by the
SFB 1085 \emph{Higher Invariants} (Universit\"at Regensburg).
He would also like to thank TIFR, Mumbai for
invitation in March 2020 and extending the invitation during 
the tough times of the Covid-19 pandemic. 
The authors would like to thank Shuji Saito for telling them
about his positive expectation of \thmref{thm:Main-5} when they were 
working on its proof, and to Moritz Kerz for some fruitful 
discussion with Gupta on the contents of this manuscript.
The authors would also like to thank the referee for reading the manuscript
very thoroughly and providing many helpful comments.

\end{document}